%------------------------------------------------------------------------------
% Beginning of journal.tex
%------------------------------------------------------------------------------
%
% AMS-LaTeX version 2 sample file for journals, based on amsart.cls.
%
%        ***     DO NOT USE THIS FILE AS A STARTER.      ***
%        ***  USE THE JOURNAL-SPECIFIC *.TEMPLATE FILE.  ***
%
% Replace amsart by the documentclass for the target journal, e.g., tran-l.
%
\documentclass[12pt]{amsart}
% load packages first

% -------------------------------------------------------

%     If your article includes graphics, uncomment this command.
%\usepackage{jis}
%\usepackage[utf8]{inputenc}
%\setcounter{tocdepth}{2} 

\usepackage{graphicx}
\usepackage{amssymb}
\usepackage{tikz}
\usetikzlibrary{matrix,arrows.meta,bending}
\usepackage{color}
\usepackage[numbers]{natbib}
%\makeatletter
%\let\oldl@section\l@section
%\renewcommand{\l@section}[2]{\addvspace{6pt}\oldl@section{#1}{#2}}
%\makeatother
%\usepackage{tocloft}
%\setlength{\cftbeforesecskip}{6pt} % adjust 6pt to taste
%\usepackage{titletoc}
%\usepackage{tocloft} 
\usepackage{hyperref} % customize ToC
%\usepackage[pdfencoding=auto,unicode]{hyperref} % load hyperref after tocloft
%\usepackage[pdfencoding=auto,unicode]{hyperref}
% ensure geometry is loaded and left/right margins are exactly 3cm
% For A4 paper, force left and right margins to 3 cm
%\usepackage{tocloft} 
%\usepackage[a4paper,left=3cm,right=3cm]{geometry}
% in preamble (after \documentclass{...})
\usepackage[a4paper, left=3cm, right=3cm, top=3cm, bottom=3cm, footskip=15mm]{geometry}

\newtheorem{theorem}{Theorem}[section]

\newtheorem{lemma}[theorem]{Lemma}
\newtheorem{proposition}[theorem]{Proposition}
\newtheorem{corollary}[theorem]{Corollary}
\newtheorem{conjecture}[theorem]{Conjecture}
\theoremstyle{definition}
\newtheorem{definition}[theorem]{Definition}
\newtheorem{example}[theorem]{Example}

\theoremstyle{remark}
\newtheorem{remark}[theorem]{Remark}
\newtheorem{question}[theorem]{Question}
\numberwithin{equation}{section}

%    Absolute value notation

%    Blank box placeholder for figures (to avoid requiring any
%    particular graphics capabilities for printing this document).

\makeindex

% pagination: centered at bottom for all pages
\usepackage{fancyhdr}
\pagestyle{fancy}
\fancyhf{}                      % clear header/footer
\cfoot{\thepage}                % center page number in footer

% --- Force exact 3cm L/R margins (A4) even if earlier code changed layout ---
\usepackage{calc} % ensures dimension arithmetic works robustly

\AtBeginDocument{%
  % set text width to paperwidth minus 6cm (3cm left + 3cm right)
  \setlength{\textwidth}{\dimexpr\paperwidth - 6cm\relax}%
  % set the left margin so TeX's 1in offset + \oddsidemargin = 3cm
  \setlength{\oddsidemargin}{\dimexpr 3cm - 1in\relax}%
  % same for even pages (safe even for one-side docs)
  \setlength{\evensidemargin}{\dimexpr 3cm - 1in\relax}%
  % if you use marginal notes, reduce marginparwidth to avoid overlap
  \setlength{\marginparwidth}{2.5cm}%
}

\setlength{\textwidth}{15.00cm}            % 21cm - 6cm
\setlength{\oddsidemargin}{0.46cm}         % 3cm - 1in = 0.46cm
\setlength{\evensidemargin}{0.46cm}

\begin{document}

\title{Expansivity theory and Sendov's conjecture}

%    Information for first author
\author{T. Agama}
%    Address of record for the research reported here
\address{Department of Mathematics, African Institute for Mathematical science, Ghana
}
%    Current address
%\curraddr{Department of Mathematics and Statistics,
%{Case Western Reserve University, Cleveland, Ohio 43403}
\email{theophilus@aims.edu.gh/emperordagama@yahoo.com}
%    \thanks will become a 1st page footnote.
%\thanks{The first author was supported in part by NSF Grant \#000000.}

%    General info
\subjclass[2010]{Primary 37B50; Secondary 12Y05 }

\date{\today}

%\dedicatory{This paper is dedicated to our advisors.}

\keywords{Expansion, inverse, tuple, rank, degree, limits, polynomial, local number, dimension, norm}

%%%%%%%%%%%%%%%%%%%%%%%%%%%%%%%%%%%%%%%%%%%%%%%%%%%%%%%%%%%%%%%%%%%%%%%%
\footnote{
\par
.}%
%%%%%%%%%%%%%%%%%%%%%%%%%%%%%%%%%%%%%%%%%%%%%%%%%%%%%%%%%%%%%%%%%%%%%%%%
.

\begin{abstract}
In this paper, we introduce and develop the concept of \emph{expansivity} of a tuple whose entries are elements of the polynomial ring $\mathbb{R}[x]$. As an inverse problem, we examine how to recover a tuple from an expanded tuple at any given phase of expansion. We convert the celebrated Sendov conjecture concerning the distribution of zeros of polynomials and their critical points into this language and prove some weak variants of this conjecture. We also apply this to the existence of solutions to differential equations. In particular, we show that a certain system of differential equations has no non-trivial solution.
\end{abstract}

%\addcontentsline{toc}{section}{Abstract}

\maketitle

\begingroup
  \setlength{\parskip}{6pt} % <--- change this number to taste
  \tableofcontents
\endgroup

\section{Introduction}

The classical \emph{Sendov conjecture} (also called the Ilieff--Sendov conjecture) asserts that for every complex polynomial $P$ of degree $n\geq 2$ whose zeros lie in the closed unit disk, each zero of $P$ is within distance~1 of some critical point of $P$ (equivalently, of some zero of $P'$). This conjecture has attracted sustained attention in complex analysis and polynomial geometry; partial results include verifications for low degrees and many special configurations of zeros, and there has been substantial recent progress towards asymptotic and large-degree regimes \cite{brown1991sendov,brown1999proof,borcea1996sendov,miller1993sendov,degot2014sendov}.\\

Motivated by the geometric and combinatorial intuition behind Sendov and by constructions from abstract algebra, topology, group theory, and dynamical systems, we develop a new structural framework, which we call \emph{expansivity theory} for tuples associated with real polynomials. In brief, an element of our theory is a tuple
$$
\mathcal{S}=(g_1,g_2,\dots,g_n)
$$
formed from the coefficient-terms (or canonical monomial components) of a polynomial, and we study a canonical \emph{expansion} operator
$$
\mathcal{E}:=\gamma^{-1}\circ\beta\circ\gamma\circ\nabla
$$
and its inverse-like recovery maps
$$
\mathcal{R}:=\Delta\circ\gamma^{-1}\circ\beta^{-1}\circ\gamma,
$$
together with the algebraic, topological, and dynamical structures generated by iterating these maps. The basic objects that emerge are:
\begin{itemize}
  \item \emph{Boundary points} (zeros of certain derived tuples) and their derived \emph{co-boundary} elements,
  \bigskip
  
  \item a measure-like quantity $\mathcal{N}(\cdot)$ (``measure of expansion'') and the derived \emph{mass} $\mathcal{H}$ and \emph{momentum} $\mathcal{M}$ of a phase,
  \bigskip
  
  \item the \emph{speed} $\upsilon(\mathcal{S})=\mathcal{N}(\mathcal{S})/\deg(\mathcal{S})$ and the index/invariants that quantify how the boundary mass redistributes under phase transitions,
  \bigskip
  
  \item finitary algebraic structure: the set generated by powers of $\mathcal{E}$ and $\mathcal{R}$ together with the identity forms a group of operations on tuples (composition gives the group law),
  \bigskip
  
  \item geometric/topological descriptors: interior/exterior regions, neighbourhoods, compactness and deformation of phase boundaries,
  \bigskip
  
  \item statistical and integral diagnostics (a specialized ``boundary integral'') that relate the geometry of a phase boundary to density, spacing and sub-expansion structure.
\end{itemize}
\bigskip

In the paper, two recurring themes are explored. First, we reinterpret the Sendov conjecture as a tension between \emph{co-boundary} points (zeros of an original polynomial) and \emph{boundary} points (zeros of derived tuples / derivatives), and we investigate quantitative mechanisms that force (or prevent) these two sets from separating as phases advance. Second, we exploit the algebraic/dynamical viewpoint (iterate maps, recoveries, group composition) to pose and address inverse problems: given the $n^{th}$-phase expanded tuple, when and how can one reconstruct the ancestor tuple(s)? These inverse questions are of intrinsic interest and also have consequences for locating critical points relative to zeros.

\subsection*{Key ideas and conceptual novelties}

In the following, we summarize the main novel concepts, expressed so that they are accessible to readers with a background of algebra, topology, or dynamical systems.
\bigskip

\paragraph{Tuple viewpoint:}  Writing a polynomial in its ``tuple form'' (monomial components as coordinates) transforms operations like differentiation, coefficient permutation, and simple linear operations into maps acting on finite-dimensional vectors. This change of language makes it natural to compare expansions, measure their norms, and import geometric/topological notions (interior/exterior, neighbourhoods, compactness) to the study of zeros and critical points.
\bigskip

\paragraph{Expansion/recovery as a dynamical pair:}  The maps $\mathcal{E}$ and $\mathcal{R}$ behave as forward/backward time evolution. Iterating $\mathcal{E}$ produces higher phase expansions and a stratified family of boundaries; iterating $\mathcal{R}$ attempts to recover earlier phases. The algebra of compositions is elementary but consequential: powers of $\mathcal{E}$ and $\mathcal{R}$ generate a discrete group of transformations that organizes the inverse problem. This perspective is intentionally reminiscent of ideas from expansive dynamics (see, e.g., \cite{katok1995introduction}) and helped shape our definitions of \emph{speed}, \emph{mass}, and \emph{momentum}. For a standard algebraic background, we refer to \cite{dummit3abstract}.
\bigskip

\paragraph{Mass, momentum and index:}  Our measure $\mathcal{N}$ and the derived mass $\mathcal{H}$ provide coarse but robust quantifiers of how ``large'' a phase boundary can be. The momentum $\mathcal{M}=\upsilon\cdot\mathcal{H}$ blends speed and mass, giving an invariant that is stable under several natural operations and which is central to embedding and isomorphism criteria for expansions. These quantities also enable inequalities that control how many sub-expansions can embed into a mother expansion (index estimates).

\paragraph{Geometry via a boundary integral:}  To study fine distributional properties of boundary points, we introduce a bespoke integral along the boundary of a phase: it packages componentwise integrals between nearby boundary tuples to capture area-like and nearest-neighbour information. This device bridges discrete combinatorial data (counts, spacing between boundary tuples) with continuous mass estimates and supplies usable criteria for (i) detecting when boundary points are forced close together and (ii) deciding when a given phase must possess interior points--an essential ingredient when relating boundary geometry to Sendov-type proximity claims.
\bigskip

\paragraph{Compact and regular phases; consequences for Sendov:}  We isolate two practically useful structural regimes: \emph{compact} expansions (intuitively, a higher-phase boundary clusters uniformly around the previous-phase boundary) and \emph{regular} expansions (mass strictly decreases at some successive phase). For compact expansions, we prove a uniform diminishing mass property across phases; in this regime the theory produces a direct (and quantitative) route to the conclusion that each co-boundary point lies within unit distance of some boundary point of the next phase--a formulation that is a natural counterpart of the Sendov conjecture in our language. The conceptual advantage is that the dynamical/algebraic apparatus makes it possible to constructively locate the relevant boundary points and to quantify the neighbourhoods in which they live.

\subsection*{Relation to existing literature}

Our approach is deliberately synthetic: it strongly borrows from ideas in several areas:
\begin{itemize}
  \item \emph{Classical Sendov results.}  The conjecture has been settled for many low-degree and special configurations (for example, the cases up to small fixed degrees are classical; specific definitive partial results are due to Brown, Brown--Xiang, Borcea, Miller and D\'egot among others) \cite{brown1991sendov,brown1999proof,borcea1996sendov,miller1993sendov,degot2014sendov}, and these works provide the mathematical backdrop for studying phasewise mass/density phenomena that can enforce proximity between zeros and critical points.
  \bigskip
  
  \item \emph{Dynamical and geometric viewpoints.}  Treating iteration, expansivity-like separation, and recurrence phenomena as organizing principles is a perspective borrowed from modern dynamics; see \cite{katok1995introduction} for standard references and exposition.
  \bigskip
  
  \item \emph{Algebraic structure.}  The simple group-theoretic structure generated by the expansion and recovery maps owes its clarity to elementary group theory and the algebra of polynomial operations; see \cite{dummit3abstract} for background.
\end{itemize}

\subsection*{Main contributions and roadmap}

This paper develops the formalism of expansivity theory for polynomial tuples and establishes several structural and quantitative results that together illuminate how boundary/co-boundary geometry constrains the relative positions of zeros and critical points.

Concretely, the paper contains:\\

\begin{enumerate}
  \item a thorough formulation of expansions, co-boundaries, free points, speed, mass, and momentum (Sections \ref{sec:expansion formalism and statistics}),
  \bigskip
  
  \item algebraic properties and the group structure generated by expansion and recovery maps (Section \ref{sec:inverse}),
  \bigskip
  
  \item existence and uniqueness results for inverse problems at low phases together with explicit recovery compositions (Section \ref{sec:inverse}),
  \bigskip
  
  \item a boundary integral and distributional tools that relate integral-type area measures to spacing and packing of boundary points (Section \ref{sec:Boundary point distribution}),
  \bigskip
  
  \item criteria (compactness, regularity) that force mass diminution and an explicit theorem showing a Sendov-type proximity conclusion in those regimes (Section \ref{sec:simple and compact expansion}),
  \bigskip
  
  \item applications, examples, and discussion of extensions and open problems (final sections).
\end{enumerate}

\subsection*{Notation and conventions}

Throughout the paper $\mathbb{R}[x]$ denotes the real polynomial ring, and tuples are finite ordered lists of polynomial terms taken in monomial-structured form. The map $\nabla$ denotes the canonical differentiation-type operator on tuples (componentwise), $\gamma,\beta$ denotes fixed linear or permutation-type maps acting on the tuple coordinates, and $\Delta$ denotes the formal one-copy recovery operator introduced in Section~\ref{sec:inverse}. We follow standard asymptotic notation; undefined terms are those used in the references in common literature cited above.

\subsection*{Concluding remarks}

Expansivity theory is intended as a structural language that isolates the algebraic, geometric, and dynamical mechanisms controlling how zeros and critical points interact under natural tuple-operations. The results we present provide several new links between boundary-mass phenomena, embedding/sub-expansion structure, and Sendov-type proximity statements. We hope this framework will stimulate further cross-disciplinary interaction: algebraic constraints (permutations, coefficient symmetries), topological constraints (clustering/compactness), and dynamical constraints (iterative separation / expansivity) together form a promising toolkit for future progress on polynomial zero--critical point problems.

\section{Additional notations}

Although every notation in this paper has been thoroughly explained where it is used, it may be useful to explain them here. In the paper, a tuple will always be represented by $\mathcal{S}$ or $\mathcal{S}_{j}$ where $j$ is contained in the natural indexing set $\mathbb{N}$. Occasionally, we will use the tuple $\mathcal{S}_{\mathbb{R}}$ to denote a tuple of the base field $\mathbb{R}$ and $\mathcal{S}_{\mathbb{R}[x]}$ for a tuple of $\mathbb{R}[x]$. We set $\mathcal{S}_{0}:=(0,0,\ldots,0)$ and call it the null tuple. Similarly, we denote the tuple $\mathcal{S}_{e}:=(1,1,\ldots, 1)$ and call it the unit tuple. We denote the rank of an expansion on $\mathcal{S}$ by $\mathcal{R}(\mathcal{S})$, the limit of expansion on $\mathcal{S}$ by $\lim (\mathcal{S}^{m})$, the local number of expansion on $\mathcal{S}$ by $\mathcal{L}(\mathcal{S})$, the degree of expansion on $\mathcal{S}$ by $\mathrm{deg}(\mathcal{S})$, the dimension of an expansion on $\mathcal{S}$ by $\mathrm{dim}(\mathcal{S})$, and the measure of an expansion on $\mathcal{S}$ by $\mathcal{N}(\mathcal{S})$. Furthermore, we set $\mathcal{S}(a):=(f_1(a),f_2(a),\ldots,f_{n}(a))$, where $\mathcal{S}=(f_1,f_2,\ldots,f_n)$. We denote the $n^{th}$ phase expanded tuple of $\mathcal{S}$ by $\mathcal{S}^{n}$. A function $f$ and $g$ with $f\asymp g$, means that there exist some numbers $\alpha_1$ and $\alpha_{2}$ such that 
\begin{align}
\alpha_1f(n)\leq g(n) \leq \alpha_2f(n)\nonumber
\end{align}
for sufficiently large values of $n$. 

\section{Calculus on tuples on tuples of polynomials}\label{sec:calculus on tuples}

In this section, we extend the concept of differentiation and integration on tuples whose entries are elements of the polynomial ring $\mathbb{R}[x]$.

\subsection{Differentiation on tuples of polynomials}

\begin{definition}\label{basic}
Let $\mathcal{S}=(f_1,f_2, \ldots , f_n)$ be such that $f_i\in \mathbb{R}[x]$. By the derivative of $\mathcal{S}$, denoted by $\nabla(\mathcal{S})$, we mean  
$$
\nabla(\mathcal{S})=\left(\frac{df_1}{dx},\frac{df_2}{dx},\ldots,\frac{df_n}{dx}\right).
$$ 
The value of the derivative at $a$, denoted by $\nabla_{a}(\mathcal{S})$ is 
$$
\nabla_{a}(\mathcal{S})=\left(\frac{df_1(a)}{dx},\frac{df_2(a)}{dx}, \ldots,\frac{df_n(a)}{dx}\right).
$$
\end{definition}
\bigskip

We examine some basic properties of the derivative on the tuples of $\mathbb{R}[x]$. These properties naturally follow from the properties of differentiation of functions.

\subsection{Properties of differentiation on tuples of polynomials}

\begin{proposition}\label{differentiation}
Let $\mathcal{S}_1$ and $\mathcal{S}_2$ be tuples of the elements in $\mathbb{R}[x]$ and $c\in \mathbb{R}$. We have the following properties:

\begin{enumerate}
\item [(i)] $\nabla(\mathcal{S}_1\pm \mathcal{S}_2)=\nabla(\mathcal{S}_1)\pm \nabla(\mathcal{S}_2)$.

\item [(ii)] $\nabla(c\mathcal{S}_1)=c\nabla(\mathcal{S}_1)$.
\end{enumerate}
\end{proposition}

\begin{proof}
\begin{enumerate}
\item [(i)] Assume that the tuples $\mathcal{S}_1:=(f_1,f_2,\ldots, f_n)$ and $\mathcal{S}_2:=(g_1,g_2,\ldots, g_n)$ are such that $f_i,g_i\in \mathbb{R}[x]$ for $i=1,\ldots,n$. We have 
$$
\mathcal{S}_1 \pm \mathcal{S}_2=(f_1,f_2,\ldots,f_n)\pm (g_1,g_2,\ldots,g_n)=(f_1\pm g_1, f_2\pm g_2, \ldots, f_n\pm g_n)
$$ 
Using the definition \ref{basic} and the algebras of the tuples, we get  
\begin{align}
\nabla(\mathcal{S}_1\pm \mathcal{S}_2)&=\bigg(\frac{d(f_1\pm g_1)}{dx},\frac{d(f_2\pm g_2)}{dx},\ldots,\frac{d(f_n\pm g_n)}{dx}\bigg)\nonumber \\&=\bigg(\frac{df_1}{dx}\pm \frac{dg_1}{dx}, \frac{df_2}{dx}\pm \frac{dg_2}{dx},\ldots,\frac{df_n}{dx}\pm \frac{dg_n}{dx}\bigg)\nonumber \\&=\bigg(\frac{df_1}{dx},\frac{df_2}{dx},\ldots,\frac{df_n}{dx}\bigg)\pm \bigg(\frac{dg_1}{dx},\frac{dg_2}{dx},\ldots,\frac{dg_n}{dx}\bigg)\nonumber \\&=\nabla(\mathcal{S}_1)\pm \nabla(\mathcal{S}_2).\nonumber
\end{align}
\bigskip

\item [(ii)] Fix $c\in \mathbb{R}$ and suppose that $\mathcal{S}_1=(f_1,f_2,\ldots, f_n)$ is such that $f_i\in \mathbb{R}[x]$. We have  $c\mathcal{S}_1:=(cf_1,cf_2,\ldots,cf_n)$. Using the definition \ref{basic} and the fundamental algebras on tuples, we get 
\begin{align}
\nabla(c\mathcal{S}_1)&=\bigg(\frac{d(cf_1)}{dx},\frac{d(cf_2)}{dx},\ldots,\frac{d(cf_n)}{dx}\bigg)\nonumber \\&=\bigg(c\frac{df_1}{dx},c\frac{df_2}{dx},\ldots,c\frac{df_n}{dx}\bigg)\nonumber \\&=c\bigg(\frac{df_1}{dx},\frac{df_2}{dx},\ldots,\frac{df_n}{dx}\bigg)\nonumber \\&=c\nabla(\mathcal{S}_1).\nonumber
\end{align}
\end{enumerate}
\end{proof}
\bigskip

The property $(ii)$ of Theorem \ref{differentiation} suggests that a derivative of any constant multiple of a tuple of polynomials can be controlled by the derivatives of the tuple with entries that dilate the original tuple.

\subsection{Integration on tuples of polynomials}

In this section, we do the complete opposite of the work done in the previous section.

\begin{definition}
Let $\mathcal{S}:=(f_1,f_2,\ldots,f_n)$ be such that $f_i\in \mathbb{R}[x]$ for $i=1,\ldots,n$. The integral on the tuple $\mathcal{S}$, denoted by $\Delta(\mathcal{S})$, is the operation $$
\Delta(\mathcal{S})=\left(\int f_1dx,\int f_2dx,\ldots,\int f_ndx\right).
$$
\end{definition}

\begin{proposition}
Let $\mathcal{S}_1$ and $\mathcal{S}_2$ be tuples of $\mathbb{R}[x]$ and $c\in \mathbb{R}$. The following properties hold:
\begin{enumerate}
\item [(i)]$\Delta(\mathcal{S}_1\pm \mathcal{S}_2)=\Delta(\mathcal{S}_1)\pm \Delta(\mathcal{S}_2)$.

\item [(ii)]$\Delta(c\mathcal{S}_1)=c\Delta(\mathcal{S}_1)$.
\end{enumerate}
\end{proposition} 

\begin{proof}
\begin{enumerate}
\item [(i)] Let $\mathcal{S}_1=(f_1,f_2,\ldots,f_n)$ and $\mathcal{S}_2=(g_1,g_2,\ldots,g_n)$ be such that each $f_i,g_i\in \mathbb{R}[x]$. We deduce 
\begin{align}
\Delta(\mathcal{S}_1\pm \mathcal{S}_2)&=\bigg(\int (f_1\pm g_1)dx,\int (f_2\pm g_2)dx,\ldots,\int (f_n\pm g_n)dx\bigg)\nonumber \\&=\bigg(\int f_1dx\pm \int g_1dx,\int f_2dx \pm \int g_2dx,\ldots,\int f_ndx \pm \int g_ndx\bigg)\nonumber \\&=\bigg(\int f_1dx,\int f_2dx,\ldots,\int f_ndx\bigg)\pm \bigg(\int g_1dx,\int g_2dx,\ldots,\int g_ndx\bigg)\nonumber \\&=\Delta(\mathcal{S}_1)\pm \Delta(\mathcal{S}_2).\nonumber
\end{align}
\bigskip

\item [(ii)] Fix $c\in \mathbb{R}$ and let $\mathcal{S}_1=(f_1,f_2,\ldots,f_n)$ be such that each $f_i\in \mathbb{R}[x]$. We have $c\mathcal{S}_1=(cf_1,cf_2,\ldots,cf_n)$ and deduce 
\begin{align}
\Delta(c\mathcal{S}_1)&=\bigg(\int cf_1dx,\int cf_2dx,\ldots,\int cf_ndx\bigg)\nonumber \\&=\bigg(c\int f_1dx,c\int f_2dx,\ldots, c\int f_ndx\bigg)\nonumber \\&=c\bigg(\int f_1dx,\int f_2dx,\ldots,\int f_ndx\bigg)\nonumber \\&=c\Delta(\mathcal{S}_1).\nonumber
\end{align}
\end{enumerate}
\end{proof}
\bigskip

With these extensions of integration and differentiation on tuples of $\mathbb{R}[x]$, we are now ready to introduce the concept of expansivity of tuples of $\mathbb{R}[x]$. The operation of differentiation is of immediate importance, whereas the concept of integration will be useful for the inverse problem.

\section{Expansion on a tuple of polynomials}\label{sec:expansion formalism and statistics}

In this section, we study the concept of expansion of a tuple of $\mathbb{R}[x]$.

\begin{definition}\label{expansion}
Let $\mathcal{S}=(f_1,f_2,\ldots,f_n)$ be such that each $f_i\in \mathbb{R}[x]$. We say that $\mathcal{S}$ is \emph{expanded} if \begin{align}
(f_1,f_2,\ldots,f_n)\longrightarrow \bigg(\sum \limits_{i\neq 1}\frac{df_i}{dx},\sum \limits_{i\neq 2}\frac{df_i}{dx},\ldots,\sum \limits_{i\neq n}\frac{df_i}{dx}\bigg).\nonumber
\end{align}
If $\mathcal{S}$ is the tuple, then we denote by $\mathcal{S}^{1}$ the expanded tuple, and the value of the expanded tuple at the number $a\in \mathbb{R}$, denoted by $\mathcal{S}^{1}_{a}$, is \begin{align}
\mathcal{S}^{1}(a)=\bigg(\sum \limits_{i\neq 1}\frac{df_i(a)}{dx},\sum \limits_{i\neq 2}\frac{df_i(a)}{dx},\ldots,\sum \limits_{i\neq n}\frac{df_i(a)}{dx}\bigg).\nonumber
\end{align}
\end{definition}

\begin{remark}
Throughout the paper, in situations where it is not mentioned, a tuple of $\mathbb{R}[x]$ will always be understood to have at least two entries with distinct degrees.
\end{remark}

\begin{proposition}\label{composite}
Let $\mathcal{S}^{1}$ be the expanded tuple of the tuple $\mathcal{S}=(f_1,f_2,\ldots,f_n)$ with each $f_i\in \mathbb{R}[x]$ and $\{\mathcal{S}_i\}_{i=1}^{\infty}$ be the collection of all tuples of $\mathbb{R}[x]$. An expansion is a composite map
\begin{align}
\gamma^{-1} \circ \beta \circ \gamma \circ \nabla:\{\mathcal{S}_i\}_{i=1}^{\infty}\longrightarrow \{\mathcal{S}_i\}_{i=1}^{\infty},\nonumber
\end{align}
where $\nabla(\mathcal{S})=(f'_1,f'_2,\ldots,f'_n)$ and \begin{align}
\gamma(\mathcal{S})=
\begin{pmatrix}f_1\\f_2\\\vdots \\f_n \end{pmatrix} \quad \text{and}\quad \beta(\gamma(\mathcal{S}))=\begin{pmatrix}0 & 1 &\cdots 1\\1 & 0 & 1 \cdots 1\\ \vdots & \vdots & \cdots \vdots\\1 & 1 & \cdots 0 \end{pmatrix}\begin{pmatrix}f_1\\f_2\\\vdots \\f_n \end{pmatrix}.\nonumber
\end{align}  
\end{proposition}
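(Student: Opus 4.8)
The plan is to simply chase an arbitrary tuple $\mathcal{S}=(f_1,f_2,\ldots,f_n)$ through the four maps in turn and verify that the output coincides, coordinate by coordinate, with the right-hand side of the arrow in Definition \ref{expansion}. First I would apply $\nabla$ to obtain $\nabla(\mathcal{S})=(f_1',f_2',\ldots,f_n')$ by Definition \ref{basic}. Then $\gamma$ merely re-packages this $n$-tuple as the column vector $(f_1',\ldots,f_n')^{T}$; since $\gamma$ is the tautological bijection between $n$-tuples and $n\times 1$ column vectors over $\mathbb{R}[x]$, its inverse $\gamma^{-1}$ is well defined and performs the reverse identification, which is what makes the composite meaningful in the first place.

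The one computational step with any content is the action of $\beta$. Writing the matrix appearing in the definition of $\beta$ as $J_n-I_n$, where $J_n$ is the $n\times n$ all-ones matrix and $I_n$ the identity, the matrix--vector product $(J_n-I_n)(f_1',\ldots,f_n')^{T}$ has $j$-th coordinate $\sum_{i=1}^{n}f_i'-f_j'=\sum_{i\neq j}f_i'$. Hence $\beta(\gamma(\nabla(\mathcal{S})))$ is the column vector whose $j$-th entry is $\sum_{i\neq j}f_i'$.

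Finally, applying $\gamma^{-1}$ turns this column vector back into the $n$-tuple $\left(\sum_{i\neq 1}f_i',\sum_{i\neq 2}f_i',\ldots,\sum_{i\neq n}f_i'\right)$, which is precisely $\mathcal{S}^{1}$ as prescribed by Definition \ref{expansion}. As $\mathcal{S}$ was arbitrary, the composite $\gamma^{-1}\circ\beta\circ\gamma\circ\nabla$ agrees with the expansion map on all of $\{\mathcal{S}_i\}_{i=1}^{\infty}$, and both carry this set to itself. The only point that warrants even a word of care — and it is a mild one — is confirming that $\gamma^{-1}$ is legitimately applied, i.e.\ that the expansion does not leave the collection of tuples of $\mathbb{R}[x]$; this is immediate because differentiation preserves $\mathbb{R}[x]$ and finite sums of elements of $\mathbb{R}[x]$ remain in $\mathbb{R}[x]$, so each coordinate $\sum_{i\neq j}f_i'$ again lies in $\mathbb{R}[x]$. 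I therefore expect no genuine obstacle here; the proposition is essentially a repackaging of the definition in linear-algebraic language.
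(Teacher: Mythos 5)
Your proof is correct and follows essentially the same route as the paper's: unwind the four maps and check that the composite reproduces the tuple $\bigl(\sum_{i\neq 1}f_i',\ldots,\sum_{i\neq n}f_i'\bigr)$ of Definition \ref{expansion}. You are in fact more explicit than the paper, which merely asserts the identity after invoking the invertibility of $\gamma$, whereas you carry out the $(J_n-I_n)$ matrix--vector computation coordinate by coordinate; this is a welcome addition but not a different argument.
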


\begin{proof}
Let $\{\mathcal{S}_i\}_{i=1}^{\infty}$ denote the collection of all tuples of $\mathbb{R}[x]$. Choose arbitrarily the tuple $\mathcal{S}:=(f_1,f_2,\ldots,f_n)$ from this collection. By definition \ref{expansion}, we find that the expanded tuple 
$$
\mathcal{S}_1=(f_2'+f_3'+\cdots +f_n',f_1'+f_3'+\cdots +f_n',\ldots,f_1'+f_2'+\cdots +f_{n-1}')
$$ 
Since $\gamma$ is invertible, we can write 
\begin{align} 
(f_2'+f_3'+\cdots +f_n',f_1'+f_3'+\cdots +f_n',\ldots,f_1'+f_2'+\cdots +f_{n-1}')&=\gamma^{-1}\circ\beta \circ\gamma\circ\nabla(\mathcal{S}).\nonumber
\end{align}
\end{proof}

\begin{proposition}
Let $\{\mathcal{S}_i\}_{i=1}^{\infty}$ be the collection of all tuples of $\mathbb{R}[x]$ that satisfy a certain initial condition at each phase of expansion. The expansion 
$$
\gamma^{-1} \circ \beta \circ\gamma\circ\nabla:\{\mathcal{S}_i\}_{i=1}^{\infty}\longrightarrow \{\mathcal{S}_i\}_{i=1}^{\infty}
$$ 
is bijective.
\end{proposition}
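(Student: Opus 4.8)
The plan is to show that the expansion map $E := \gamma^{-1} \circ \beta \circ \gamma \circ \nabla$ is both injective and surjective on the space $\{\mathcal{S}_i\}_{i=1}^{\infty}$ of tuples satisfying the prescribed initial conditions at each phase. The natural strategy is to exhibit an explicit two-sided inverse built from the integration operator $\Delta$ introduced earlier, together with the matrix inverse of $\beta$. First I would note that by Proposition \ref{composite}, $E = \gamma^{-1} \circ \beta \circ \gamma \circ \nabla$, so a candidate inverse is $E^{-1} = \Delta \circ \gamma^{-1} \circ \beta^{-1} \circ \gamma$, where $\Delta$ is applied entrywise and the constants of integration are pinned down by the initial conditions attached to each phase. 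Thus the whole problem reduces to two sub-tasks: (a) checking that the $n \times n$ matrix $B$ appearing in $\beta$ — the all-ones matrix minus the identity, $J - I$ — is invertible; and (b) verifying that $\Delta$ is a genuine inverse of $\nabla$ once we fix the integration constants via the initial data.

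For sub-task (a), I would compute that $B = J - I$ has eigenvalue $n-1$ (once, on the all-ones vector) and eigenvalue $-1$ (with multiplicity $n-1$, on the hyperplane $\sum x_i = 0$), so $\det B = (-1)^{n-1}(n-1) \neq 0$ precisely because each tuple has at least two entries, i.e. $n \geq 2$; in fact one can write down $B^{-1} = -I + \frac{1}{n-1}J$ explicitly. This makes $\beta$ a linear isomorphism of column vectors over the field of fractions $\mathbb{R}(x)$, and since $\beta$ maps tuples of $\mathbb{R}[x]$ with the appropriate degree/initial structure back to such tuples (here the ``satisfying certain initial condition at each phase'' hypothesis is what keeps us inside the class), $\gamma^{-1} \circ \beta \circ \gamma$ is a bijection on $\{\mathcal{S}_i\}_{i=1}^{\infty}$. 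For sub-task (b), I would invoke Theorem on integration: $\Delta$ is linear, and $\nabla \circ \Delta = \mathrm{Id}$ always, while $\Delta \circ \nabla = \mathrm{Id}$ on the nose once the constant terms are restored — and restoring them is exactly what the initial conditions at each phase supply. Composing, $E^{-1} \circ E = \Delta \circ (\gamma^{-1}\beta^{-1}\gamma) \circ (\gamma^{-1}\beta\gamma) \circ \nabla = \Delta \circ \nabla = \mathrm{Id}$, and $E \circ E^{-1} = \mathrm{Id}$ symmetrically, giving bijectivity.

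I would then assemble these two observations into a short formal argument: define $E^{-1}$ as above, verify the two compositions collapse to the identity using linearity of $\nabla$, $\Delta$ and the matrix identity $B B^{-1} = B^{-1} B = I$, and conclude. The main obstacle — and the point I would be most careful about — is sub-task (b): making precise what ``satisfying certain initial condition at each phase of expansion'' means, because without pinning down the constants of integration, $\Delta$ is only a right inverse of $\nabla$ and $E$ fails to be injective (any two tuples differing by a constant tuple in the kernel of $\nabla$ would collide). So the crux is to state the initial-condition hypothesis cleanly enough that $\Delta$ becomes a genuine two-sided inverse of $\nabla$ on the restricted domain; once that is granted, the matrix part is routine linear algebra. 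A secondary technical point worth checking is that $\beta^{-1}$ preserves polynomiality and the relevant initial structure, so that the inverse map really lands back in $\{\mathcal{S}_i\}_{i=1}^{\infty}$ rather than escaping to rational functions.
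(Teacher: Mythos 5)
Your proposal is correct and follows essentially the same route as the paper: both arguments reduce bijectivity of the composite to invertibility of each factor, with the initial-condition hypothesis used exactly as you describe to make $\nabla$ injective (so that $\Delta$, with constants pinned down, is a genuine two-sided inverse), and with $\gamma$ trivially bijective. The only substantive addition on your side is the explicit eigenvalue computation showing $\det(J-I)=(-1)^{n-1}(n-1)\neq 0$ and the formula $B^{-1}=-I+\tfrac{1}{n-1}J$, which the paper merely asserts by stating that the matrix is invertible.
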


\begin{proof}
Since the composite of a bijective map is still bijective, it suffices to show that each of the maps that contributes to an expansion is bijective. By Proposition \ref{composite}, we find that an expansion is the composite map
\begin{align}
\gamma^{-1} \circ \beta \circ \gamma \circ \nabla:\{\mathcal{S}_i\}_{i=1}^{\infty}\longrightarrow \{\mathcal{S}_i\}_{i=1}^{\infty},\nonumber
\end{align}
where $\nabla(\mathcal{S})=(f'_1,f'_2,\ldots,f'_n)$ and \begin{align}
\gamma(\mathcal{S})=
\begin{pmatrix}f_1\\f_2\\\vdots \\f_n \end{pmatrix} \quad \text{and}\quad \beta(\gamma(\mathcal{S}))=\begin{pmatrix}0 & 1 &\cdots 1\\1 & 0 & 1 \cdots 1\\ \vdots & \vdots & \cdots \vdots\\1 & 1 & \cdots 0 \end{pmatrix}\begin{pmatrix}f_1\\f_2\\\vdots \\f_n \end{pmatrix}.\nonumber
\end{align}
Suppose that $\nabla(\mathcal{S}_{1})=\nabla(\mathcal{S}_{2})$ for any two tuples $\mathcal{S}_{1}$ and $\mathcal{S}_{2}$ having the same initial condition. That is, $\nabla(\mathcal{S}_{1})=\nabla(\mathcal{S}_{2})$ and $\mathcal{S}_{1}(a)=\mathcal{S}_{2}(a)$ for any $a\in \mathbb{R}$. By the linearity of $\nabla$, we have $\nabla(\mathcal{S}_{1}-\mathcal{S}_{2})=\mathcal{S}_{0}$. This implies that $\mathcal{S}_{1}-\mathcal{S}_{2}=\mathcal{S}_{b}$, where $\mathcal{S}_{b}$ is a tuple of $\mathbb{R}$. Since both $\mathcal{S}_{1}$ and $\mathcal{S}_{2}$ satisfy the same initial condition, we get $\mathcal{S}_{b}=\mathcal{S}_{0}$. This establishes injectivity. For any $\mathcal{S}_{1}\in\{\mathcal{S}_i\}_{i=1}^{\infty}$, there is a unique tuple $\mathcal{S}\in \{\mathcal{S}_i\}_{i=1}^{\infty}$ that satisfies a certain initial condition with $\nabla(\mathcal{S})=\mathcal{S}_{1}$. Thus, $\nabla$ is indeed bijective. Now we proceed to that 
\begin{align}
\gamma(\mathcal{S})=\begin{pmatrix}f_1\\f_2\\\vdots \\f_n \end{pmatrix}\nonumber
\end{align}
is also bijective. Suppose that $\gamma(\mathcal{S}_{1})=\gamma(\mathcal{S}_{2})$, where $\mathcal{S}_{1}=(f_1,f_2,\ldots,f_n)$ and $\mathcal{S}_{2}=(g_1,g_2,\ldots,g_n)$. We deduce
\begin{align}
\begin{pmatrix}
f_1\\f_2\\\vdots \\f_n \end{pmatrix}=\begin{pmatrix}g_1\\g_2\\\vdots \\g_n \end{pmatrix}\nonumber
\end{align}
and implies $f_{i}=g_{i}$ for all $1\leq i\leq n$. Thus $\mathcal{S}_{1}=\mathcal{S}_{2}$. Surjectivity can be deduced and $\gamma$ is bijective. Finally, we remark that $\beta$ is bijective, since the matrix
\begin{align}
\begin{pmatrix}
0 & 1 &\cdots 1\\1 & 0 & 1 \cdots 1\\ \vdots & \vdots & \cdots \vdots\\1 & 1 & \cdots 0 
\end{pmatrix}\nonumber
\end{align}
is invertible. Thus, each of the maps is invertible and the result follows immediately.
\end{proof}

\begin{remark}
The requirement that each tuple of $\mathbb{R}[x]$ satisfies a certain initial condition at each phase of an expansion is very important for the inverse problems.
\end{remark}

\begin{proposition}
An expansion 
$$
\gamma^{-1}\circ\beta\circ\gamma\circ\nabla:\{\mathcal{S}_i\}_{i=1}^{\infty}\longrightarrow \{\mathcal{S}_i\}_{i=1}^{\infty}
$$ 
is linear.
\end{proposition}

\begin{proof}
It suffices to show that each of the operators $\nabla:\{\mathcal{S}_i\}_{i=1}^{\infty}\longrightarrow \{\mathcal{S}_i\}_{i=1}^{\infty}$, $\gamma :\{\mathcal{S}_i\}_{i=1}^{\infty}\longrightarrow \{\mathcal{S}_i\}_{i=1}^{\infty}$ and $\beta \circ \gamma :\{\mathcal{S}_i\}_{i=1}^{\infty}\longrightarrow \{\mathcal{S}_i\}_{i=1}^{\infty}$ is linear, since the map $ \gamma:\{\mathcal{S}_i\}_{i=1}^{\infty}\longrightarrow \{\mathcal{S}_i\}_{i=1}^{\infty}$ is bijective. Let $\mathcal{S}_a=(f_1,f_2,\ldots, f_n), \mathcal{S}_b=(g_1,g_2,\ldots, g_n) \in \mathcal{F}=\{\mathcal{S}_i\}_{i=1}^{\infty}$ and let $\lambda, \mu \in \mathbb{R}$. We deduce
\begin{align}
\nabla(\lambda \mathcal{S}_a+\mu \mathcal{S}_b)&=\nabla(\lambda (f_1,f_2,\ldots,f_n)+\mu (g_1,g_2,\ldots,g_n))\nonumber \\&=\nabla((\lambda f_1,\lambda f_2, \ldots, \lambda f_n)+(\mu g_1, \mu g_2,\ldots,\mu g_n))\nonumber \\&=\nabla((\lambda f_1+\mu g_1, \lambda f_2+\mu g_2, \ldots \lambda f_n+\mu g_n))\nonumber \\&=((\lambda f_1+\mu g_1)', (\lambda f_2+\mu g_2)', \ldots, (\lambda f_n+\mu g_n)')\nonumber \\&=(\lambda f_1'+\mu g_1', \lambda f_2'+\mu g_2',\ldots, \lambda f_n'+\mu g_n')\nonumber \\&=(\lambda f_1',\lambda f_2',\ldots,\lambda f_n')+(\mu g_1', \mu g_2', \ldots, \mu g_n')\nonumber \\&=\lambda (f_1',f_2',\ldots, f_n')+\mu (g_1',g_2',\ldots,g_n')\nonumber \\&=\lambda \nabla(\mathcal{S}_a)+\mu \nabla(\mathcal{S}_b).\nonumber
\end{align}
Similarly, we deduce
\begin{align}
\gamma(\lambda \mathcal{S}_a+\mu \mathcal{S}_b)&=
\begin{pmatrix}\lambda f_1+\mu g_1\\\lambda f_2+\mu g_2\\\vdots \\\lambda f_n+\mu g_n\end{pmatrix}\nonumber \\&=\begin{pmatrix}\lambda f_1\\\lambda f_2\\ \vdots \\\lambda f_n  \end{pmatrix}+\begin{pmatrix}\mu g_1 \\\mu g_2 \\ \vdots \\\mu g_n\end{pmatrix}\nonumber \\&=\lambda \gamma(\mathcal{S}_a)+\mu \gamma(\mathcal{S}_b).\nonumber
\end{align}
Similarly, we have 
\begin{align}
\beta \circ \gamma(\lambda \mathcal{S}_a+\mu \mathcal{S}_b)&=\begin{pmatrix}0 & 1 &\cdots 1\\1 & 0 & 1 \cdots 1\\ \vdots & \vdots & \cdots \vdots\\1 & 1 & \cdots 0 \end{pmatrix} \begin{pmatrix}\lambda f_1+\mu g_1\\\lambda f_2+\mu g_2\\ \vdots \\\lambda f_n+\mu g_n\end{pmatrix}\nonumber \\&=\begin{pmatrix}0 & 1 &\cdots 1\\1 & 0 & 1 \cdots 1\\ \vdots & \vdots & \cdots \vdots\\1 & 1 & \cdots 0 \end{pmatrix}\bigg\{ \begin{pmatrix}\lambda f_1\\\lambda f_2\\ \vdots \\\lambda f_n\end{pmatrix}+\begin{pmatrix}\mu g_1\\ \mu g_2\\ \vdots \\ \mu g_n  \end{pmatrix}\bigg\}\nonumber \\&=\lambda \begin{pmatrix}0 & 1 &\cdots 1\\1 & 0 & 1 \cdots 1\\ \vdots & \vdots & \cdots \vdots\\1 & 1 & \cdots 0 \end{pmatrix}\begin{pmatrix}f_1\\ f_2\\ \vdots \\f_n\end{pmatrix}+\mu \begin{pmatrix}0 & 1 &\cdots 1\\1 & 0 & 1 \cdots 1\\ \vdots & \vdots & \cdots \vdots\\1 & 1 & \cdots 0 \end{pmatrix}\begin{pmatrix}g_1\\g_2\\ \vdots \\g_n  \end{pmatrix}\nonumber \\&=\lambda (\beta \circ \gamma)(\mathcal{S}_a)+\mu (\beta \circ \gamma)(\mathcal{S}_b).\nonumber
\end{align}
This proves the linearity of an expansion.
\end{proof}

\begin{proposition}\label{finite}
A tuple of $\mathbb{R}[x]$ can admit only a finite number of expansions.
\end{proposition}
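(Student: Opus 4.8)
The plan is to produce a non-negative integer quantity attached to a tuple that strictly decreases with every expansion, forcing the process to terminate. The natural candidate is the largest degree occurring among the entries: for $\mathcal{S}=(f_1,f_2,\ldots,f_n)$ a tuple of $\mathbb{R}[x]$, set $D(\mathcal{S}):=\max_{1\le i\le n}\deg(f_i)$, using the convention $\deg(0)=-1$ so that "differentiation lowers the degree by at least one" holds without exception. I would first check how $D$ behaves under the three non-differentiation constituents of an expansion, using Proposition \ref{composite}: the maps $\gamma$ and $\gamma^{-1}$ only transpose a tuple into column-vector form and back, so they preserve the multiset of entries and hence $D$; and $\beta$ replaces the column $(g_1,\ldots,g_n)^{T}$ by the column whose $j$-th entry is $\sum_{i\ne j}g_i$, so, since a sum of polynomials has degree at most the maximum of the summands' degrees, $D\big((\beta\circ\gamma)(\mathcal{S})\big)\le D(\mathcal{S})$.

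Next I would bring in the one genuinely contracting ingredient, $\nabla$. For any polynomial $f$ we have $\deg(f')\le\deg(f)-1$ (the convention handling constants and the zero polynomial), hence $D(\nabla(\mathcal{S}))\le D(\mathcal{S})-1$ whenever $\mathcal{S}$ has at least one entry of positive degree. Composing with the previous step gives
\[
D(\mathcal{S}^{1})=D\big((\gamma^{-1}\circ\beta\circ\gamma\circ\nabla)(\mathcal{S})\big)\le D(\nabla(\mathcal{S}))\le D(\mathcal{S})-1
\]
as long as $\mathcal{S}$ is not entirely constant.

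To close the argument, start from a tuple $\mathcal{S}$ with $D(\mathcal{S})=d$. As long as the successive expanded tuples $\mathcal{S},\mathcal{S}^{1},\mathcal{S}^{2},\ldots$ each still possess an entry of positive degree, the integers $D(\mathcal{S})>D(\mathcal{S}^{1})>D(\mathcal{S}^{2})>\cdots$ form a strictly decreasing sequence, which cannot persist; so after at most $d$ expansions every entry is a constant. One further expansion then applies $\nabla$ to a constant tuple, yielding $\nabla(\mathcal{S})=\mathcal{S}_{0}$ and hence $\mathcal{S}^{1}=(\gamma^{-1}\circ\beta\circ\gamma)(\mathcal{S}_{0})=\mathcal{S}_{0}$; and $\mathcal{S}_{0}$ is fixed by the expansion map. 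Thus no genuinely new tuple is produced beyond this point, and $\mathcal{S}$ admits at most $d+1$ expansions.

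I do not expect a substantive obstacle; the only delicate point is the bookkeeping around the zero polynomial and constant entries, so that the degree drop is literally by at least one and $D$ stays a bona fide non-negative integer until termination, together with reconciling this with the standing assumption that an admissible tuple has two entries of distinct degree — that is, making explicit that "admits finitely many expansions" means the chain of admissible tuples is finite, its tail being the null tuple (or a tuple that no longer satisfies the expansion hypothesis).
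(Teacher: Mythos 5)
Your proposal is correct and follows essentially the same route as the paper: the paper's proof also observes that differentiation drops the degree of each entry by one and concludes by induction that only finitely many expansions are possible. Your version is simply a more careful rendering of that argument — introducing the potential $D(\mathcal{S})=\max_i\deg(f_i)$, verifying that $\gamma$, $\beta$, $\gamma^{-1}$ do not increase it, and handling the constant/zero edge cases — which the paper glosses over.
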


\begin{proof}
The proposition \ref{composite} implies that an expansion is the map $\gamma^{-1} \circ \beta \circ \gamma \circ \nabla$. Pick arbitrarily a tuple $\mathcal{S}$ of $\mathbb{R}[x]$. Since the degrees of each entry of $\nabla(\mathcal{S})$ is one less than the degree of $\mathcal{S}$, it follows by induction that an expansion can only be applied a finite number of times.
\end{proof}
\bigskip

We observe that the expanded tuple is a tuple of $\mathbb{R}[x]$. Hence, the theory remains unchanged when we perform further expansions on the expanded tuple. This process can be repeated as long as the entries of the tuple do not vanish. This idea motivates the introduction of the concept of \emph{phase}, \emph{limits}, and \emph{rank} of an expansion.

\subsection{Phase expansions}

Let $\mathcal{S}_1=(f_1,f_2,\ldots, f_n)$ be such that each $f_i\in \mathbb{R}[x]$. Expanding, we get
\begin{align}
\mathcal{S}^{1}=\bigg(\sum \limits_{i\neq 1}\frac{df_i}{dx},\sum \limits_{i\neq 2}\frac{df_i}{dx},\ldots,\sum \limits_{i\neq n}\frac{df_i}{dx}\bigg),\nonumber
\end{align}
which we can write $\mathcal{S}^{1}:=(g_1,g_2,\ldots,g_n)$ with each $g_i\in \mathbb{R}[x]$. We call this expansion the \emph{first phase expansion}. We may repeat this process on $\mathcal{S}^{1}=(g_1,g_2,\ldots,g_n)$ to get
\begin{align}
\mathcal{S}^{2}=\bigg(\sum \limits_{i\neq 1}\frac{dg_i}{dx},\sum \limits_{i\neq 2}\frac{dg_i}{dx},\ldots,\sum \limits_{i\neq n}\frac{dg_i}{dx}\bigg)\nonumber
\end{align}
which we can rewrite as $\mathcal{S}^{2}:=(h_1,h_2,\ldots,h_n)$. We call this expansion the \emph{second phase expansion}. This expansion can be repeated, provided that the entries of the tuple do not vanish. In general, we denote the $n^{th}$ expanded tuple by $\mathcal{S}^{n}$ for $n\geq 1$. To make this expansion process meaningful, we introduce the concept of \emph{order}, \emph{rank}, and \emph{limit of expansion}.

\begin{example}\label{ex1}
Let us consider the tuple $\mathcal{S}=(x^4+x^2,x^5-x^3,x^2+1)$ of $\mathbb{R}[x]$. We obtain for the \emph{first phase} expanded tuple 
$$
\mathcal{S}^{1}=(5x^4-3x^2+2x,4x^3+4x,5x^4+4x^3-3x^2+2x).
$$ 
For the \emph{second phase} expanded tuple, we get 
$$
\mathcal{S}^{2}=(20x^3+24x^2-6x+6,40x^3+12x^2-12x+4,20x^3+12x^2-6x+6).
$$ 
For the \emph{third phase} expanded tuple, we get
$$
\mathcal{S}^{3}=(180x^2+48x-18,120x^2+72x-12,180x^2+72x-18).
$$
For the fourth phase expanded tuple, we get 
$$
\mathcal{S}^{4}=(600x+144,720x+120,600x+120).
$$ 
The fifth phase expanded tuple is
$$
\mathcal{S}^{5}=(1320,1200,1320).
$$ 
This is essentially the last expanded tuple. 
\end{example}
\bigskip

We observe that the fifth expanded tuple in Example \ref{ex1} consists entirely of entries that are real numbers, and, therefore, the expanded tuple of the last non-vanishing phase of expansion. Now, we introduce once again the notion of a rank of expansion.

\subsection{The rank of an expansion}

\begin{definition}\label{rank}
Let $\mathcal{F}=\{\mathcal{S}_m\}^{\infty}_{m=1}$ be a family of tuples of $\mathbb{R}[x]$, each having at least two entries of distinct degrees. The value of $n$ such that the expansion $(\gamma^{-1}\circ\beta\circ\gamma\circ\nabla)^{n}(\mathcal{S})\neq \mathcal{S}_{0}$ and $(\gamma^{-1}\circ\beta\circ\gamma\circ\nabla)^{n+1}(\mathcal{S})=\mathcal{S}_{0}$ where $\mathcal{S}_{0}=(0,0,\ldots,0)$ is called the \emph{degree} of expansion denoted by $\mathrm{deg}(\mathcal{S})$, and $(\gamma^{-1}\circ\beta\circ\gamma\circ\nabla)^{n}(\mathcal{S})$ is the \emph{rank} of the expansion, denoted by $\mathcal{R}(\mathcal{S})$. 
\end{definition}

\begin{example}
The expanded tuple $(1320,1200,1320)$ is the rank of expansion of $\mathcal{S}$, because it is the last non-vanishing expanded tuple of $\mathcal{S}$. Hence 
$$
\mathcal{R}(\mathcal{S}):=(1320,1200,1320).
$$ 
\end{example}

\begin{remark}
Throughout this paper, all tuples are understood to have the same number of entries. This allows for the entrywise addition and subtractions.
\end{remark}

\begin{proposition}
Let $\mathcal{F}=\{\mathcal{S}_{k}\}^{\infty}_{k=1}$ be a family of tuples of $\mathbb{R}[x]$. Suppose that $\mathcal{S}_{i}$ and $\mathcal{S}_{j}\in \mathcal{F}$ are of the same degree $m$ of expansion. The following properties hold:
\begin{enumerate}
\item [(i)] $\mathcal{R}(\mathcal{S}_{i}+\mathcal{S}_{j})=\mathcal{R}(\mathcal{S}_{i})+\mathcal{R}(\mathcal{S}_{j})$.
\bigskip

\item [(ii)] $\mathcal{R}(c\mathcal{S}_{i})=c\mathcal{R}(\mathcal{S}_{i})$.
\end{enumerate}
\end{proposition}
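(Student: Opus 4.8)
The plan is to exploit the linearity of the expansion map, which was established in the preceding proposition, together with the definition of the rank as the image of the tuple under the $n$-th iterate of the expansion operator, where $n$ is the common degree of expansion. Write $E := \gamma^{-1} \circ \beta \circ \gamma \circ \nabla$ for the expansion map, so that $\mathcal{R}(\mathcal{S}_i) = E^m(\mathcal{S}_i)$ and $\mathcal{R}(\mathcal{S}_j) = E^m(\mathcal{S}_j)$ by hypothesis, since both have degree of expansion equal to $m$.

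For part (i), first I would observe that since $E$ is linear, so is each iterate $E^m$ (a composition of linear maps is linear, proved by a trivial induction). Hence $E^m(\mathcal{S}_i + \mathcal{S}_j) = E^m(\mathcal{S}_i) + E^m(\mathcal{S}_j)$. The remaining issue is to check that the right-hand side is genuinely the rank of $\mathcal{S}_i + \mathcal{S}_j$, i.e. that $\mathcal{S}_i + \mathcal{S}_j$ also has degree of expansion $m$, so that $E^m(\mathcal{S}_i + \mathcal{S}_j)$ is the last non-vanishing phase. Since $E^{m+1}(\mathcal{S}_i) = \mathcal{S}_0$ and $E^{m+1}(\mathcal{S}_j) = \mathcal{S}_0$, linearity gives $E^{m+1}(\mathcal{S}_i + \mathcal{S}_j) = \mathcal{S}_0$, so the degree of expansion of $\mathcal{S}_i + \mathcal{S}_j$ is at most $m$; and $E^m(\mathcal{S}_i + \mathcal{S}_j) = \mathcal{R}(\mathcal{S}_i) + \mathcal{R}(\mathcal{S}_j)$, which — because $\mathcal{R}(\mathcal{S}_i)$ and $\mathcal{R}(\mathcal{S}_j)$ are constant tuples and addition of tuples is componentwise — is again a constant tuple; one then invokes that this sum is non-null to conclude the degree is exactly $m$. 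Part (ii) is entirely analogous: $E^m(c\mathcal{S}_i) = c\, E^m(\mathcal{S}_i) = c\,\mathcal{R}(\mathcal{S}_i)$ by linearity of $E^m$, and $E^{m+1}(c\mathcal{S}_i) = c\,\mathcal{S}_0 = \mathcal{S}_0$, so (for $c \neq 0$) the rank of $c\mathcal{S}_i$ is $c\,\mathcal{R}(\mathcal{S}_i)$.

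The main obstacle — really the only delicate point — is the degenerate case in part (i) where $\mathcal{R}(\mathcal{S}_i) + \mathcal{R}(\mathcal{S}_j) = \mathcal{S}_0$ (cancellation of the leading constant tuples), and similarly $c = 0$ in part (ii); in these situations the "rank" of the sum is not literally the last non-vanishing phase in the strict sense of Definition \ref{rank}, and the identity should be read with the convention that the zero tuple is the rank of a tuple whose expansion has already terminated, or else the hypothesis should be understood to exclude this. I would add a brief remark handling this, after which the stated formulas follow formally from linearity. The bulk of the argument is the bookkeeping that the degree of expansion is preserved under these operations; the algebraic identities themselves are immediate once linearity of $E^m$ is in hand.
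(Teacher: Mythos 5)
Your proof is correct and follows essentially the same route as the paper: both arguments reduce the claim to the linearity of the expansion map $\gamma^{-1}\circ\beta\circ\gamma\circ\nabla$, iterated $m$ times by induction, so that $\mathcal{R}(\mathcal{S}_i+\mathcal{S}_j)=E^m(\mathcal{S}_i)+E^m(\mathcal{S}_j)=\mathcal{R}(\mathcal{S}_i)+\mathcal{R}(\mathcal{S}_j)$. You are in fact more careful than the paper, which silently identifies $\mathcal{R}(\mathcal{S}_i+\mathcal{S}_j)$ with $E^m(\mathcal{S}_i+\mathcal{S}_j)$ without checking that the sum still has degree of expansion $m$; your verification that $E^{m+1}$ annihilates the sum, and your explicit flagging of the degenerate cancellation case $\mathcal{R}(\mathcal{S}_i)+\mathcal{R}(\mathcal{S}_j)=\mathcal{S}_0$ (and $c=0$ in part (ii)), address a real gap in the stated hypotheses that the paper does not acknowledge.
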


\begin{proof}
\begin{enumerate}
\item [(i)] Choose arbitrarily $\mathcal{S}_{i}$, $\mathcal{S}_{j}\in \mathcal{F}$ with the same degree $m$ of expansion. We obtain $\mathcal{R}(\mathcal{S}_{i}+\mathcal{S}_{j})=(\gamma^{-1} \circ \beta \circ \gamma \circ \nabla)^{m}(\mathcal{S}_{i}+\mathcal{S}_{j})=(\gamma^{-1} \circ \beta \circ \gamma \circ \nabla)^{m-1})\circ (\gamma^{-1} \circ \beta \circ \gamma \circ \nabla)(\mathcal{S}_{i}+\mathcal{S}_{j})$. By the linearity of the expansion, we get $(\gamma^{-1} \circ \beta \circ \gamma \circ \nabla)^{m-1}\circ (\gamma^{-1} \circ \beta \circ \gamma \circ \nabla)(\mathcal{S}_{i}+\mathcal{S}_{j})=(\gamma^{-1} \circ \beta \circ \gamma \circ \nabla)^{m-1})((\gamma^{-1} \circ \beta \circ \gamma \circ \nabla)(\mathcal{S}_{i})+(\gamma^{-1} \circ \beta \circ \gamma \circ \nabla)(\mathcal{S}_{j}))$. By induction, have $(\gamma^{-1} \circ \beta \circ \gamma \circ \nabla)^{m}(\mathcal{S}_{i}+\mathcal{S}_{j})=(\gamma^{-1} \circ \beta \circ \gamma \circ \nabla)^{m}(\mathcal{S}_{i})+(\gamma^{-1} \circ \beta \circ \gamma \circ \nabla)^{m}(\mathcal{S}_{j})=\mathcal{R}(\mathcal{S}_{i})+\mathcal{R}(\mathcal{S}_{j})$.
\bigskip

\item [(ii)] The result follows by applying the linearity property of each of the $m$ copies of the map.
\end{enumerate}
\end{proof}
\bigskip

\begin{conjecture}\label{hardy-littlewood}
Let $\mathcal{R}(\mathcal{S})$ be the rank of an expansion on $\mathcal{S}$, where $\mathcal{S}$ consist of polynomials with integer coefficients each of the same parity. There exist some tuple $(b,b,\ldots, b)$ with $b\in \mathbb{Z}$ such that all the entries of $\mathcal{R}(\mathcal{S})+(b,b,\ldots, b)$ and $\mathcal{R}(\mathcal{S})-(b,b,\ldots, b)$ are all prime.
\end{conjecture}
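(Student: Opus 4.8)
I would split the argument into a rigid structural statement about the rank tuple and a (far harder) analytic statement about simultaneous prime values of linear forms.

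First, pin down the arithmetic shape of $\mathcal{R}(\mathcal{S})$. By Proposition \ref{composite} one expansion is the operator ``differentiate each entry, then left-multiply the resulting column by the fixed matrix $J-I$'' (with $J$ the all-ones matrix); since $J-I$ has constant entries it commutes with $\nabla$, so the $D$-fold expansion is $(J-I)^{D}\,\nabla^{D}$, where $D=deg(\mathcal{S})$ is the degree of expansion. Because the $(D{+}1)$-st expansion vanishes and $J-I$ is invertible (for $n\ge 2$), every entry of $\mathcal{S}$ has degree $\le D$; hence $\nabla^{D}(\mathcal{S})$ is $D!$ times the integer tuple of $x^{D}$-coefficients of $\mathcal{S}$, and applying $(J-I)^{D}$ (an integer matrix) gives $\mathcal{R}(\mathcal{S})=(r_1,\dots,r_n)$ with $D!\mid r_i$ for every $i$. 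In particular, once $D\ge 2$ all $r_i$ are even; combined with the hypothesis that the entries of $\mathcal{S}$ have a common parity, this gives that $r_1,\dots,r_n$ all share one parity, which is precisely what is needed to push the sought primes off the residue class $0\bmod 2$.

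Next, recast the claim as an admissible-system problem: we must find $b\in\mathbb{Z}$ making the $2n$ linear forms $L_i^{\pm}(b)=r_i\pm b$ simultaneously prime. I would check the local conditions prime by prime. For $p\le D$ we have $p\mid r_i$, so $L_i^{+}(b)L_i^{-}(b)\equiv -b^{2}\pmod p$ and it suffices to keep $b\not\equiv 0\pmod p$. For $p>2n$ the forbidden residues $\{\pm r_i\bmod p\}$ number fewer than $p$, so a good residue class for $b$ always exists. Only the finitely many primes with $D<p\le 2n$ need an honest admissibility check; for those one verifies it from the explicit $r_i$, or (on a thin set of $b$) lets one of the forms equal $\pm p$. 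Granting admissibility, $\{L_i^{\pm}\}_{i=1}^{n}$ is an admissible family of linear forms in the single variable $b$, with each pair $L_i^{+},L_i^{-}$ summing to the fixed even number $2r_i$, so this is a simultaneous Goldbach / prime-$k$-tuples statement.

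The analytic step is where the genuine difficulty lies. Hardy--Littlewood heuristics predict $\asymp x/(\log x)^{2n}$ integers $b\le x$ at which all $2n$ forms are prime, hence infinitely many; one would hope to realize this with a Maynard--Tao sieve weight tuned to the tuple $(L_i^{\pm})$ together with a transference/circle-method treatment of the binary relations $L_i^{+}+L_i^{-}=2r_i$. \emph{The obstacle is that upgrading ``a positive proportion of the $2n$ forms are simultaneously prime'' to ``all $2n$ forms are simultaneously prime'' is exactly the prime $k$-tuples conjecture, which is open:} present sieve technology yields many but not all coordinates prime. So, as stated, the proposition sits at the same depth as a Hardy--Littlewood conjecture---which is why it is recorded here as a conjecture rather than a theorem---and a complete proof would require either new analytic input or a way to leverage the very strong divisibility $D!\mid r_i$ to collapse the whole system to a single binary Goldbach-type problem, which I do not presently see how to do.
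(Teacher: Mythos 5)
The paper does not prove this statement at all: it is recorded as Conjecture \ref{hardy-littlewood}, and the remark immediately following it concedes that it is ``reminiscent of the Hardy--Littlewood prime tuple conjecture.'' So there is no proof in the paper to compare yours against, and your own proposal, by its candid final paragraph, is also not a proof --- you correctly identify that the analytic core is equivalent in difficulty to a simultaneous prime $k$-tuples/Goldbach statement, which is open. Your assessment of the status of the claim therefore matches the paper's.

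That said, the structural half of your proposal is correct and is genuinely more than the paper offers. The observation that one expansion factors as $(J-I)\circ\nabla$ with $J-I$ constant and invertible for $n\ge 2$, hence that the $D$-fold expansion is $(J-I)^{D}\nabla^{D}$ and the rank tuple satisfies $D!\mid r_i$ (with $D=deg(\mathcal{S})$), is a real constraint on $\mathcal{R}(\mathcal{S})$ that the paper never records, and your local (mod $p$) admissibility analysis of the $2n$ forms $r_i\pm b$ is the right way to check that the conjecture is at least not trivially false. The remaining gap is exactly the one you name: no current sieve or circle-method technology upgrades ``many of the forms prime'' to ``all $2n$ forms prime,'' so the statement must stay a conjecture unless the divisibility $D!\mid r_i$ can somehow collapse the system, which neither you nor the paper shows how to do.
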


\begin{theorem}\label{rank1}
Let $\{\mathcal{S}_{k}\}^{\infty}_{k=1}$ be the family of tuples of $\mathbb{R}[x]$ such that each tuple has at least two entries of distinct degrees. Let $\mathcal{S}_{i}$, $\mathcal{S}_{j}\in \{\mathcal{S}_{k}\}^{\infty}_{k=1}$ with $\mathrm{deg}(\mathcal{S}_{i})=\mathrm{deg}(\mathcal{S}_{j})=n$. We have $\mathcal{R}(\mathcal{S}_{i})=\mathcal{R}(\mathcal{S}_{j})$ if and only if $\mathcal{S}_{i}-\mathcal{S}_{j}=(a_1,a_2,\ldots,a_n)$ for each $a_i\in \mathbb{R}$. 
\end{theorem}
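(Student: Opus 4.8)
The plan is to prove both directions of the equivalence by exploiting the linearity of expansion together with the structure of the map $\nabla$, which is where the real content lies. For the easy direction, suppose $\mathcal{S}_i - \mathcal{S}_j = (a_1, a_2, \ldots, a_n)$ with each $a_k \in \mathbb{R}$, i.e.\ $\mathcal{S}_i - \mathcal{S}_j$ is a tuple of the base field $\mathbb{R}$. Since all $a_k$ are constants, $\nabla(\mathcal{S}_i - \mathcal{S}_j) = \mathcal{S}_0$, and hence one application of the expansion map $\gamma^{-1} \circ \beta \circ \gamma \circ \nabla$ already sends $\mathcal{S}_i - \mathcal{S}_j$ to $\mathcal{S}_0$. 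By linearity of the expansion map, for any $m \geq 1$ we have $(\gamma^{-1} \circ \beta \circ \gamma \circ \nabla)^{m}(\mathcal{S}_i) - (\gamma^{-1} \circ \beta \circ \gamma \circ \nabla)^{m}(\mathcal{S}_j) = (\gamma^{-1} \circ \beta \circ \gamma \circ \nabla)^{m}(\mathcal{S}_i - \mathcal{S}_j) = \mathcal{S}_0$ as soon as $m \geq 1$. Taking $m = n$ (the common degree of expansion) gives $\mathcal{R}(\mathcal{S}_i) = \mathcal{R}(\mathcal{S}_j)$.

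For the converse, suppose $\mathcal{R}(\mathcal{S}_i) = \mathcal{R}(\mathcal{S}_j)$, i.e.\ $(\gamma^{-1} \circ \beta \circ \gamma \circ \nabla)^{n}(\mathcal{S}_i) = (\gamma^{-1} \circ \beta \circ \gamma \circ \nabla)^{n}(\mathcal{S}_j)$. By linearity, setting $\mathcal{T} := \mathcal{S}_i - \mathcal{S}_j$, we have $(\gamma^{-1} \circ \beta \circ \gamma \circ \nabla)^{n}(\mathcal{T}) = \mathcal{S}_0$. I would then argue downward: since $\gamma^{-1}$, $\gamma$, and $\beta$ are all bijective (shown earlier in the excerpt), the vanishing of $(\gamma^{-1} \circ \beta \circ \gamma \circ \nabla)^{n}(\mathcal{T})$ forces $\nabla\big((\gamma^{-1} \circ \beta \circ \gamma \circ \nabla)^{n-1}(\mathcal{T})\big) = \mathcal{S}_0$, so the entries of $(\gamma^{-1} \circ \beta \circ \gamma \circ \nabla)^{n-1}(\mathcal{T})$ are all constants; but then applying $\beta \circ \gamma$ to a tuple of constants and inverting $\gamma$ keeps it a tuple of constants, and for the expansion to have reached rank exactly at step $n-1$ or earlier one inspects the recursion. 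The cleanest way is: $(\gamma^{-1} \circ \beta \circ \gamma \circ \nabla)^{n}(\mathcal{T}) = \mathcal{S}_0$ means that $n$ applications of $\nabla$ (interleaved with invertible linear maps that preserve being-a-tuple-of-$\mathbb{R}$ and preserve degree) annihilate $\mathcal{T}$; tracking degrees, this gives $\deg \mathcal{T} \leq n$ in the relevant sense, and one must push this down to show that in fact $\nabla(\mathcal{T}) = \mathcal{S}_0$, i.e.\ $\mathcal{T}$ is a constant tuple $(a_1, \ldots, a_n)$.

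The main obstacle is precisely this last reduction in the converse: from ``$n$ interleaved applications kill $\mathcal{T}$'' down to ``one application of $\nabla$ kills $\mathcal{T}$.'' This does \emph{not} hold for a general tuple of $\mathbb{R}[x]$ — it relies essentially on the hypothesis that $\deg(\mathcal{S}_i) = \deg(\mathcal{S}_j) = n$, so that $\mathcal{T} = \mathcal{S}_i - \mathcal{S}_j$ has degree of expansion at most $n$, together with the fact (from the invertibility/initial-condition framework discussed in the excerpt) that each phase of expansion genuinely drops the degree by exactly one until it stabilizes. One must be careful that $\beta$ does not raise the degree: since $\beta$ acts by a fixed constant matrix on the column vector $\gamma(\mathcal{S})$, each entry of $\beta(\gamma(\mathcal{S}))$ is an $\mathbb{R}$-linear combination of the original entries, so $\deg(\beta \circ \gamma(\mathcal{S})) \leq \deg(\mathcal{S})$, and similarly for $\gamma^{-1}$; hence the only degree-lowering step in the composite is $\nabla$, which lowers it by exactly one whenever the entries are non-constant. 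Iterating, after $n$ phases the top-degree part of $\mathcal{T}$ (degree $\leq n$) has been differentiated away, and a short induction on the degree confirms that $\mathcal{T}$ must already have had all non-constant parts cancel, i.e.\ $\mathcal{T} = (a_1, a_2, \ldots, a_n)$ with $a_k \in \mathbb{R}$. I would present the degree bookkeeping as the heart of the argument and relegate the bijectivity-bookkeeping to a citation of the propositions already proved above.
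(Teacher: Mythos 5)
Your forward direction is fine and is essentially the paper's: a constant tuple is annihilated by a single application of $\gamma^{-1}\circ\beta\circ\gamma\circ\nabla$ (since $\nabla$ kills constants), and linearity then gives $\mathcal{R}(\mathcal{S}_i)=\mathcal{R}(\mathcal{S}_j)$. The problem is the converse, where you correctly locate the difficulty but do not resolve it. From $(\gamma^{-1}\circ\beta\circ\gamma\circ\nabla)^{n}(\mathcal{T})=\mathcal{S}_0$ with $\mathcal{T}=\mathcal{S}_i-\mathcal{S}_j$, your degree bookkeeping yields only that every entry of $\mathcal{T}$ has degree strictly less than $n$ (each phase differentiates once and the interleaved maps are degree-non-increasing); it does not yield $\nabla(\mathcal{T})=\mathcal{S}_0$. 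The ``short induction on the degree'' you invoke to bridge this is not supplied and cannot be supplied: take $\mathcal{S}_i=(x^3,x^2)$ and $\mathcal{S}_j=(x^3+x,x^2)$. Both have two entries of distinct degrees, both have degree of expansion $3$, and a direct computation of the phases gives $\mathcal{R}(\mathcal{S}_i)=\mathcal{R}(\mathcal{S}_j)=(0,6)$, yet $\mathcal{S}_i-\mathcal{S}_j=(-x,0)$ is not a tuple of constants. So the converse implication is false as stated, and no amount of degree bookkeeping will recover it.

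For comparison, the paper's own proof of the converse is no more rigorous: it asserts $\mathcal{R}(\mathcal{S}_i-\mathcal{S}_j)=\mathcal{S}_0$ ``by the properties of the rank'' (the additivity property was proved only for summands with equal degree of expansion, which $\mathcal{S}_i-\mathcal{S}_j$ need not share with $\mathcal{S}_i$) and then concludes ``to avoid a contradiction'' that the difference is constant, without identifying any contradiction. You have not missed an argument that the paper possesses; the gap lies in the statement itself, and an honest write-up would either weaken the conclusion of the converse to ``$\mathcal{S}_i-\mathcal{S}_j$ is annihilated by fewer than $n$ phases of expansion'' or impose hypotheses strong enough to exclude examples like the one above.
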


\begin{proof}
Choose arbitrarily $\mathcal{S}_{i}$, $\mathcal{S}_{j}\in \{\mathcal{S}_{k}\}^{\infty}_{k=1}$, the family of tuples of $\mathbb{R}[x]$ such that $\mathrm{deg}(\mathcal{S}_{i})=\mathrm{deg}(\mathcal{S}_{j})$. Assume  $\mathcal{S}_{i}-\mathcal{S}_{j}=(a_1,a_2,\ldots,a_n)$ for each $a_i\in \mathbb{R}$. Applying $n$ copies of the expansion on both sides of the relation, we get 
$$
(\gamma^{-1} \circ \beta \circ \gamma \circ \nabla)^{n}(\mathcal{S}_{i}-\mathcal{S}_{j})=\mathcal{S}_{0}
$$
where $\mathcal{S}_{0}$ is the null tuple. Since an expansion is linear, we find that 
$$
(\gamma^{-1} \circ \beta \circ \gamma \circ \nabla)^{n}(\mathcal{S}_{i})-(\gamma^{-1} \circ \beta \circ \gamma \circ \nabla)^{n}(\mathcal{S}_{j})=\mathcal{S}_{0}.
$$ 
This implies 
$$
\mathcal{R}(\mathcal{S}_{i})=\mathcal{R}(\mathcal{S}_{j})+\mathcal{S}_{0}=\mathcal{R}(\mathcal{S}_{j}).
$$ 
Conversely, suppose that $\mathcal{R}(\mathcal{S}_{i})=\mathcal{R}(\mathcal{S}_{j})$. By the properties of the rank, we get 
$$
\mathcal{R}(\mathcal{S}_{i}-\mathcal{S}_{j})=\mathcal{S}_{0}.
$$ 
This implies that the entries of $\mathcal{S}_{i}$ and $\mathcal{S}_{j}$ must differ by elements in $\mathbb{R}$.
\end{proof}
\bigskip

The condition $\mathcal{R}(\mathcal{S})=\mathcal{R}(\mathcal{S}_{j})$ on the tuples in $\{\mathcal{S}_{k}\}^{\infty}_{k=1}$ induces an equivalence relation and consequently partitions $\{\mathcal{S}_{k}\}^{\infty}_{k=1}$ into infinitely disjoint classes. Denote by $\mathcal{S}_{i}\sim \mathcal{S}_{j}$ if and only if $\mathcal{R}(\mathcal{S}_{i})=\mathcal{R}(\mathcal{S}_{j})$. It follows that $\mathcal{S}\sim \mathcal{S}$, since $\mathcal{R}(\mathcal{S})=\mathcal{R}(\mathcal{S})$, a reflexive relation is fulfilled. It is also clear that the relation is symmetric. Suppose that $\mathcal{S}_{a}\sim \mathcal{S}_{b}$ and $\mathcal{S}_{b}\sim \mathcal{S}_{c}$. We get $\mathcal{R}(\mathcal{S}_{a})=\mathcal{R}(\mathcal{S}_{b})$ and $\mathcal{R}(\mathcal{S}_{b})=\mathcal{R}(\mathcal{S}_{c})$. It implies $\mathcal{R}(\mathcal{S}_{a})=\mathcal{R}(\mathcal{S}_{c})$, fulfilling transitivity.
\bigskip

\begin{theorem}\label{rank2}
Let $\mathcal{S}_{1}$ and $\mathcal{S}_{2}$ be tuples of $\mathbb{R}[x]$ with $\mathrm{deg}(\mathcal{S}_1)>\mathrm{deg}(\mathcal{S}_{2})$ that satisfy certain initial conditions at each phase of expansion. If $\mathcal{R}(\mathcal{S}_{1})=\mathcal{R}(\mathcal{S}_{2})$, then there exists some $j$ satisfying $1\leq j <\mathrm{deg}(\mathcal{S}_{1})$ such that 
$$
(\gamma^{-1} \circ \beta \circ \gamma \circ \nabla)^{j}(\mathcal{S}_1):=\mathcal{S}_{1}^{j}=\mathcal{S}_{2}.
$$
\end{theorem}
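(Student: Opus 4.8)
The plan is to use the fact (from Definition \ref{rank} and Proposition \ref{finite}) that the expansion map strictly drops the degree of a tuple at each phase, together with the bijectivity of expansion on tuples satisfying the prescribed initial conditions. First I would set $d_1 = \deg(\mathcal{S}_1)$ and $d_2 = \deg(\mathcal{S}_2)$, so $d_1 > d_2$ by hypothesis. Since one phase of expansion lowers the degree of expansion by exactly one, applying $j := d_1 - d_2$ phases to $\mathcal{S}_1$ produces a tuple $\mathcal{S}_1^{j}$ whose degree of expansion is $d_1 - j = d_2$, and clearly $1 \le j < d_1 = \deg(\mathcal{S}_1)$ since $1 \le d_2 < d_1$. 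So the candidate tuple is $\mathcal{S}_1^{j}$ with this specific $j$; it remains to show it equals $\mathcal{S}_2$.

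Next I would compare ranks. By construction $\mathcal{S}_1^{j}$ has the same rank as $\mathcal{S}_1$: each phase of expansion preserves the rank, because $\mathcal{R}(\mathcal{S})$ is by definition the last non-vanishing iterate $(\gamma^{-1}\circ\beta\circ\gamma\circ\nabla)^{n}(\mathcal{S})$, and iterating the expansion does not change which tuple that terminal iterate is — it only changes how many further steps remain. Hence $\mathcal{R}(\mathcal{S}_1^{j}) = \mathcal{R}(\mathcal{S}_1) = \mathcal{R}(\mathcal{S}_2)$ by hypothesis. Now both $\mathcal{S}_1^{j}$ and $\mathcal{S}_2$ are tuples of $\mathbb{R}[x]$ with the \emph{same} degree of expansion $d_2$ and the \emph{same} rank, so Theorem \ref{rank1} applies and gives $\mathcal{S}_1^{j} - \mathcal{S}_2 = (a_1, a_2, \ldots, a_n)$ with each $a_i \in \mathbb{R}$.

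Finally I would kill the constant discrepancy $(a_1,\ldots,a_n)$ using the initial conditions. Both $\mathcal{S}_1^{j}$ (as a $j$-th phase expansion of $\mathcal{S}_1$) and $\mathcal{S}_2$ are required to satisfy the same initial condition at the relevant phase of expansion; evaluating the relation $\mathcal{S}_1^{j} - \mathcal{S}_2 = (a_1, \ldots, a_n)$ at the point $a \in \mathbb{R}$ prescribed by that initial condition forces $(a_1, \ldots, a_n) = \mathcal{S}_0$, exactly as in the injectivity argument used to prove bijectivity of expansion earlier. Therefore $\mathcal{S}_1^{j} = \mathcal{S}_2$, as required. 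The main obstacle I anticipate is the bookkeeping around what ``satisfying certain initial conditions at each phase of expansion'' means precisely for the tuple $\mathcal{S}_1^{j}$ versus $\mathcal{S}_2$ — i.e. making sure the two tuples are normalized at a common base point so that the constant-tuple difference genuinely vanishes rather than merely being some fixed element of $\mathcal{S}_{\mathbb{R}}$; everything else is a direct concatenation of Proposition \ref{finite}, Theorem \ref{rank1}, and the degree-drop property of $\nabla$.
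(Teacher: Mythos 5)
Your proof is correct and follows essentially the same route as the paper: both take $j=\deg(\mathcal{S}_1)-\deg(\mathcal{S}_2)$ and deduce $\mathcal{S}_1^{j}=\mathcal{S}_2$ from the equality of ranks. The only difference is that the paper cancels the $\deg(\mathcal{S}_2)$ common copies of the (bijective) expansion map directly, whereas you route that cancellation through Theorem \ref{rank1} and then eliminate the residual constant tuple using the initial conditions --- a slightly more explicit justification of the same step.
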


\begin{proof}
Suppose that $\mathcal{S}_{1}$ and $\mathcal{S}_{2}$ are tuples of $\mathbb{R}[x]$. Let $\mathrm{deg}(\mathcal{S}_{1})=k_1$ and $\mathrm{deg}(\mathcal{S}_{2})=k_2$. By definition \ref{rank}, we can write 
$\mathcal{R}(\mathcal{S}_{1})=(\gamma^{-1} \circ \beta \circ \gamma \circ \nabla)^{k_1}(\mathcal{S}_1)$ and $\mathcal{R}(\mathcal{S}_{2})=(\gamma^{-1} \circ \beta \circ \gamma \circ \nabla)^{k_2}(\mathcal{S}_2)$. Under the assumption that $\mathcal{R}(\mathcal{S}_{1})=\mathcal{R}(\mathcal{S}_{2})$, we have $(\gamma^{-1} \circ \beta \circ \gamma \circ \nabla)^{k_2}(\mathcal{S}_2)=(\gamma^{-1} \circ \beta \circ \gamma \circ \nabla)^{k_1}(\mathcal{S}_1)$ if and only if $(\gamma^{-1} \circ \beta \circ \gamma \circ \nabla)^{k_1-k_2}(\mathcal{S}_1)=\mathcal{S}_{2}$. Since $1\leq k_1-k_2<k_1=\mathrm{deg}(\mathcal{S}_{1})$, the claim follows immediately.
\end{proof}

\subsection{The limit of an expansion}

\begin{definition}\label{limit}
Let $\{\mathcal{S}^{m}\}^{\infty}_{m=1}$ be a family of expanded tuples of $\mathcal{S}$ that have at least two entries with distinct degrees. The \emph{limit} of expansion of $\mathcal{S}$ is the first expanded tuple $\mathcal{S}^{j}=(g_1,g_2,\ldots,g_n)$ such that $\mathrm{deg}(g_1)=\mathrm{deg}(g_2)=\cdots =\mathrm{deg}(g_n)$ for $n\geq 3$ and $1\leq j\leq m$. We denote the limit by 
\begin{align}
\lim (\mathcal{S}^{n})=\mathcal{S}^{j}.\nonumber
\end{align}
\end{definition} 

\begin{example}\label{ex2}
Let us consider the tuple $\mathcal{S}=(x^4+x^2,x^5-x^3,x^2+1)$. For the \emph{first phase} expanded tuple, we have 
$$
\mathcal{S}^{1}=(5x^4-3x^2+2x,4x^3+4x,5x^4+4x^3-3x^2+2x).
$$ 
For the \emph{second phase} expanded tuple, we have 
$$
\mathcal{S}^{2}=(20x^3+24x^2-6x+6,40x^3+12x^2-12x+4,20x^3+12x^2-6x+6).
$$ 
We get for the \emph{third phase} expanded tuple 
$$
\mathcal{S}^{3}=(180x^2+48x-18,120x^2+72x-12,180x^2+72x-18).
$$ 
For the \emph{fourth phase} expanded tuple, we have $\mathcal{S}^{4}=(600x+144,720x+120,600x+120)$. For the \emph{fifth phase} expanded tuple, we have 
$$
\mathcal{S}^{5}=(1320,1200,1320).
$$ 
By definition \ref{limit}, we find that $\mathcal{S}^{2}$ is the limit of the expansion. 
\end{example}
\bigskip

Now we prove a result about the existence of the limit of expansion of any tuple of $\mathbb{R}[x]$ having at least two entries of distinct degrees. The method of proof is basically an argument by infinite descent.

\begin{theorem}\label{exist}
Let $\{\mathcal{S}^{m}\}^{\infty}_{m=1}$ be a family of expansions of the tuple $\mathcal{S}$ of $\mathbb{R}[x]$ such that at least two entries have distinct degrees. The limit of expansions $\lim (\mathcal{S}^{n})$ of $\mathcal{S}$ exists.
\end{theorem}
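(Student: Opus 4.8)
The plan is to argue by infinite descent on the maximal degree appearing among the entries of an expanded tuple. First I would introduce the quantity $D(\mathcal{T}) := \max_{1\le i\le n}\deg(g_i)$ for a tuple $\mathcal{T}=(g_1,g_2,\ldots,g_n)$, and the quantity $d(\mathcal{T}) := \min_{1\le i\le n}\deg(g_i)$, together with the gap $\Gamma(\mathcal{T}) := D(\mathcal{T})-d(\mathcal{T})$. By the standing assumption, $\mathcal{S}$ has at least two entries of distinct degrees, so $\Gamma(\mathcal{S})\ge 1$. The limit of expansion, as defined in Definition \ref{limit}, is precisely the first phase $\mathcal{S}^{j}$ at which $\Gamma(\mathcal{S}^{j})=0$, i.e.\ all entries share a common degree. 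So it suffices to show that after finitely many expansions the gap drops to $0$.

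The key step is to track how a single expansion $\gamma^{-1}\circ\beta\circ\gamma\circ\nabla$ acts on $D$ and on the set of entry-degrees. If $\mathcal{T}=(g_1,\ldots,g_n)$ and $\mathcal{T}^{1}=\bigl(\sum_{i\neq 1}g_i',\ldots,\sum_{i\neq n}g_i'\bigr)$, then each new entry $\sum_{i\neq k}g_i'$ has degree equal to $\max_{i\neq k}(\deg g_i - 1)$, \emph{provided} the leading terms do not cancel — and here the "certain initial conditions at each phase" hypothesis, or simply working over $\mathbb{R}[x]$ where leading coefficients of distinct-degree polynomials cannot cancel, guarantees this. Consequently, as long as at least two entries attain the current maximum degree $D$, every new entry still has degree $D-1$, so $D$ decreases by exactly $1$ and the gap can only shrink or stay the same. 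The more delicate case is when exactly one entry, say $g_k$, attains the maximum $D$: then the $k$-th new entry has degree $D-2$ (it omits $g_k'$), while every other new entry has degree $D-1$. In either situation $D(\mathcal{T}^{1}) < D(\mathcal{T})$, so $D$ strictly decreases at every phase.

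From strict monotonicity of $D$ the descent is immediate: $D(\mathcal{S}) > D(\mathcal{S}^{1}) > D(\mathcal{S}^{2}) > \cdots$ is a strictly decreasing sequence of nonnegative integers, hence finite; by Proposition \ref{finite} the process in any case terminates at the rank. I would then argue that the gap must hit $0$ strictly before the tuple becomes null: once only one entry attains the maximum, the next expansion pulls that entry's degree down by $2$ while the others drop by $1$, which tends to equalize the degrees; more carefully, one shows that the multiset of entry-degrees cannot forever keep a strictly-largest element, because each phase either reduces the number of "top" entries' lead or, when there is a unique top, closes the gap between it and the second-largest by $1$. Formalizing this last combinatorial point — that the gap $\Gamma$ is eventually forced to $0$ rather than the tuple vanishing first — is the main obstacle, and it is where the infinite-descent wording in the theorem is really used: if the limit did \emph{not} exist, one would have an infinite strictly decreasing chain of gaps or of degrees, contradicting well-ordering of $\mathbb{N}$. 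Thus $\lim(\mathcal{S}^{n}) = \mathcal{S}^{j}$ exists for some $j$ with $1\le j\le \deg(\mathcal{S})$.
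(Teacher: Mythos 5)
Your strategy is at bottom the same as the paper's — an infinite-descent/contradiction argument resting on the finiteness of the expansion process — but you invest most of your effort in a refined degree-bookkeeping claim that the paper never needs, and you correctly flag that you have not closed it. The paper's actual proof is precisely the soft argument you compress into your final sentence: assume no phase $\mathcal{S}^{j}$ has all entries of equal degree; then every phase has two entries of distinct degrees, hence (having a nonconstant entry) is neither the null tuple $\mathcal{S}_0$ nor the rank $\mathcal{R}(\mathcal{S})$ — observe that if every component $\sum_{i\neq j}f_i'$ of an expansion vanished, summing over $j$ would give $\sum_{i}f_i'=0$ and hence $f_j'=0$ for every $j$, so only a tuple of constants can expand to $\mathcal{S}_0$. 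Therefore a further nontrivial expansion always exists, $\mathcal{S}$ admits infinitely many expansions, and Proposition \ref{finite} is contradicted. In particular, the step you single out as the main obstacle — that the gap $\Gamma$ reaches $0$ before the tuple vanishes — never has to be established as a separate combinatorial fact: the contradiction forces the last non-vanishing phase to have all entries of equal degree, so a first such phase exists and Definition \ref{limit} is satisfied. Two cautions about the bookkeeping you do attempt: the identity $\deg\bigl(\sum_{i\neq k}g_i'\bigr)=\max_{i\neq k}(\deg g_i-1)$ can fail when several entries share the top degree and their leading coefficients cancel (nothing in the ``initial conditions'' hypothesis obviously rules this out), so the exact decrements of $D$ you assert are not airtight; and $\Gamma$ itself need not be monotone, so the descent is more safely run on the maximal degree alone, exactly as in the proof of Proposition \ref{finite}. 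None of this is fatal, since the compressed contradiction argument you end with already suffices.
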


\begin{proof}
Let $\{\mathcal{S}^{m}\}^{\infty}_{m=1}$ be a family of expansions of the tuple $\mathcal{S}$ of $\mathbb{R}[x]$ having at least two entries of distinct degree. Suppose that the limit of expansion does not exist and let $\mathcal{S}^{1}=(f_1,f_2,\ldots,f_n)$ be the first phase expansion of $\mathcal{S}$. We deduce that  $\mathrm{deg}(f_i)\neq \mathrm{deg}(f_j)$ for some $1\leq i,j\leq n$ with $i\neq j$. It follows, in particular, that $\mathcal{S}^{1}\neq \mathcal{R}(\mathcal{S})$ and $\mathcal{S}^{1}\neq \mathcal{S}_{0}$. Thus, the second phase of the expansion exists. Let $\mathcal{S}^{2}=(g_1,g_2,\ldots,g_n)$ be the second phase expanded tuple. By hypothesis, we have $\mathrm{deg}(g_i)\neq \mathrm{deg}(g_{j})$ for some $1\leq i,j\leq n$ with $i\neq j$. We deduce $\mathcal{S}^{2}\neq \mathcal{R}(\mathcal{S})$ and $\mathcal{S}^{2}\neq \mathcal{S}_{0}$. Thus, the third phase expansion also exists. By induction, it follows that the tuple $\mathcal{S}$ of $\mathbb{R}[x]$ admits an infinite number of expansions. This violates Proposition \ref{finite}.
\end{proof}
\bigskip

\begin{theorem}\label{lemma1}
Let $\{\mathcal{S}^{n}\}_{n=1}^{\infty}$ be a family of expanded tuples of the tuple $\mathcal{S}$ of $\mathbb{R}[x]$ such that at least two entries have distinct degrees and satisfy certain initial conditions at each phase of expansion. There exists some number $k$ called the \emph{dimension} of expansion, denoted by $\mathrm{dim}(\mathcal{S})$, such that 
$$
\lim (\mathcal{S}^{n})=(\Delta\circ\gamma^{-1}\circ\beta^{-1}\circ\gamma)^{k}(\mathcal{R}(\mathcal{S}))
$$ 
for some $k<\mathrm{deg}(\mathcal{S})$. 
\end{theorem}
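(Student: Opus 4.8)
The plan is to reverse the expansion process step by step, using the fact that each of the four constituent maps $\nabla$, $\gamma$, $\beta$, $\gamma^{-1}$ is invertible once initial conditions are fixed (as established in the preceding propositions). First I would observe that, by Theorem~\ref{exist}, the limit $\lim(\mathcal{S}^{n})=\mathcal{S}^{j}$ exists for some $j\geq 1$, and by Definition~\ref{rank} the rank $\mathcal{R}(\mathcal{S})=(\gamma^{-1}\circ\beta\circ\gamma\circ\nabla)^{n}(\mathcal{S})$ where $n=deg(\mathcal{S})$. Since $\mathcal{S}^{j}$ is itself an expanded tuple of $\mathbb{R}[x]$ and the expansion process continues from it to eventually reach $\mathcal{R}(\mathcal{S})$, there is some $k\geq 0$ with $(\gamma^{-1}\circ\beta\circ\gamma\circ\nabla)^{k}(\mathcal{S}^{j})=\mathcal{R}(\mathcal{S})$; concretely $k=n-j$, and since $j\geq 1$ we get $k<deg(\mathcal{S})$, which is the bound claimed. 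I would name this $k$ the dimension $dim(\mathcal{S})$.

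Next I would invert the relation $(\gamma^{-1}\circ\beta\circ\gamma\circ\nabla)^{k}(\mathcal{S}^{j})=\mathcal{R}(\mathcal{S})$. For a single phase, $\gamma^{-1}\circ\beta\circ\gamma\circ\nabla$ has inverse $\nabla^{-1}\circ\gamma^{-1}\circ\beta^{-1}\circ\gamma$, where $\nabla^{-1}$ is the integration operator $\Delta$ pinned down by the prescribed initial condition (this is exactly the role the initial conditions play, per the remark following the bijectivity proposition), and $\beta^{-1}$ exists because the all-ones-off-diagonal matrix is invertible. Hence $(\gamma^{-1}\circ\beta\circ\gamma\circ\nabla)^{-1}=\Delta\circ\gamma^{-1}\circ\beta^{-1}\circ\gamma$, and applying this $k$ times gives
\begin{align}
\mathcal{S}^{j}=(\Delta\circ\gamma^{-1}\circ\beta^{-1}\circ\gamma)^{k}(\mathcal{R}(\mathcal{S})),\nonumber
\end{align}
which is precisely $\lim(\mathcal{S}^{n})=(\Delta\circ\gamma^{-1}\circ\beta^{-1}\circ\gamma)^{k}(\mathcal{R}(\mathcal{S}))$.

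The main obstacle is justifying that $\Delta$ genuinely inverts $\nabla$ on the relevant tuples, i.e. that the two maps $\Delta\circ\gamma^{-1}\circ\beta^{-1}\circ\gamma$ and $\gamma^{-1}\circ\beta\circ\gamma\circ\nabla$ are mutually inverse on the family $\{\mathcal{S}^{m}\}$. On the nose, $\nabla\circ\Delta=\mathrm{Id}$ always, while $\Delta\circ\nabla=\mathrm{Id}$ only after the integration constants are fixed by the initial conditions — so the whole argument relies on the standing hypothesis that each tuple in the family satisfies prescribed initial conditions at every phase. I would spell this out carefully: each successive application of $\Delta\circ\gamma^{-1}\circ\beta^{-1}\circ\gamma$ lands in the family (degrees go up by one, initial conditions select the unique antiderivative), so the composition is well-defined and undoes the corresponding forward step. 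Everything else — linearity, the count $k=deg(\mathcal{S})-j$, the strict inequality $k<deg(\mathcal{S})$ — then follows routinely from Propositions~\ref{composite} and~\ref{finite} and Theorem~\ref{exist}.
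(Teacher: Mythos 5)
Your proposal is correct and reaches the same conclusion by essentially the same machinery (existence of the limit via Theorem~\ref{exist}, invertibility of each factor of the expansion map under the standing initial conditions), but you establish the bound $k<deg(\mathcal{S})$ by a genuinely different and cleaner route. The paper merely asserts the existence of $k$ ("it is clear there will exist such a number") and then proves $k<deg(\mathcal{S})$ by contradiction: assuming $k\geq deg(\mathcal{S})$, applying $k$ copies of the expansion to $\lim(\mathcal{S}^{n})$ forces $\mathcal{R}(\mathcal{S})=\mathcal{S}_{0}$, contradicting Definition~\ref{rank}. You instead identify $k$ explicitly as $k=deg(\mathcal{S})-j$, where $\mathcal{S}^{j}$ is the limiting phase, so that $(\gamma^{-1}\circ\beta\circ\gamma\circ\nabla)^{k}(\mathcal{S}^{j})=\mathcal{R}(\mathcal{S})$ by construction and the strict inequality follows at once from $j\geq 1$. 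This buys two things: the existence of $k$ is no longer hand-waved but exhibited, and the quantity $dim(\mathcal{S})=deg(\mathcal{S})-\mathcal{L}(\mathcal{S})$ that the paper only extracts later (the "principal equation" $n+k=deg(\mathcal{S})$) is already visible in your argument. Your explicit discussion of why $\Delta\circ\gamma^{-1}\circ\beta^{-1}\circ\gamma$ genuinely inverts $\gamma^{-1}\circ\beta\circ\gamma\circ\nabla$ — that $\Delta\circ\nabla=\mathrm{Id}$ only once the integration constants are pinned by the initial conditions — also makes explicit a point the paper leaves implicit, and it is exactly where the hypothesis of "certain initial conditions at each phase" is consumed.
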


\begin{proof}
Let $\mathcal{S}$ be any tuple of $\mathbb{R}[x]$ that can be expanded, with at least two entries having distinct degrees. The limit exists by Theorem \ref{exist} and since an expansion can only be applied a finite number of times and the map $\Delta\circ\gamma^{-1}\circ\beta^{-1}\circ\gamma$ is a recovery that exists, there exists such a number $k$, so that  
$$
\lim (\mathcal{S}^{n})=(\Delta\circ\gamma^{-1}\circ\beta^{-1}\circ\gamma)^{k}(\mathcal{R}(\mathcal{S})).
$$ 
We only need to show that $k$ lies in the stated range. Suppose that 
$$
\lim (\mathcal{S}^{n})=(\Delta\circ\gamma^{-1}\circ\beta^{-1}\circ\gamma)^{k}(\mathcal{R}(\mathcal{S}))
$$ 
for any $k\geq \mathrm{deg}(\mathcal{S})$. Since  the map is a bijection, it follows that $(\gamma^{-1}\circ\beta\circ\gamma\circ\nabla)^{k}(\lim (\mathcal{S}^{n}))=\mathcal{R}(\mathcal{S})$. We get 
$$
(\gamma^{-1}\circ\beta\circ\gamma\circ\nabla)^{k}(\lim (\mathcal{S}^{n})=\mathcal{S}_{0}
$$ 
which implies $\mathcal{R}(\mathcal{S})=\mathcal{S}_{0}$, and so the rank of an expansion is null. This violates the definition \ref{rank}.
\end{proof}
\bigskip

The above result suggests that we only need the rank and the dimension to determine the limit of an expansion. Here, we use this result to prove an important property concerning the limit of an expansion.

\begin{theorem}
Let $\mathcal{S}_{1}$ and $\mathcal{S}_{2}$ be tuples of the polynomial ring $\mathbb{R}[x]$ with their corresponding family of expanded tuples $\{\mathcal{S}_{1}^{m}\}_{m=1}^{\infty}$ and $\{\mathcal{S}_{2}^{n}\}_{n=1}^{\infty}$ satisfying certain initial conditions. Assume that the limit of each expansion exists. We have $\lim (\mathcal{S}_{1}^{m})=\lim (\mathcal{S}_{2}^{n})$ if and only if $\mathcal{S}_{1}-\mathcal{S}_{2}=(b_1,b_2,\ldots, b_n)$ where $b_{i}\in \mathbb{R}$ for $1\leq i\leq n$. 
\end{theorem}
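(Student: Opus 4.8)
The plan is to deduce both directions from the rank theory already in place, chiefly Theorem \ref{rank1}, using only the linearity of an expansion and the fact that the portion of the expansion process lying beyond the limit is entirely determined by the limit. Throughout I write $\Phi:=\gamma^{-1}\circ\beta\circ\gamma\circ\nabla$ for a single expansion, so that $\mathcal{S}^{m}=\Phi^{m}(\mathcal{S})$, and I recall that $\Phi$ is linear (shown above).

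First I would dispose of the easy implication. Suppose $\mathcal{S}_{1}-\mathcal{S}_{2}=(b_1,b_2,\ldots,b_n)$ with every $b_i\in\mathbb{R}$. Since $\nabla$ annihilates any tuple of $\mathbb{R}$ and $\gamma,\beta,\gamma^{-1}$ are linear, $\Phi(b_1,b_2,\ldots,b_n)=\mathcal{S}_{0}$; hence by linearity $\mathcal{S}_{1}^{1}-\mathcal{S}_{2}^{1}=\Phi(\mathcal{S}_{1}-\mathcal{S}_{2})=\mathcal{S}_{0}$, i.e. $\mathcal{S}_{1}^{1}=\mathcal{S}_{2}^{1}$. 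Applying $\Phi$ repeatedly gives $\mathcal{S}_{1}^{m}=\mathcal{S}_{2}^{m}$ for every $m\geq 1$, so the two families of expanded tuples coincide from the first phase onward; consequently the first phase at which all entries attain a common degree is the same for both, and $\lim(\mathcal{S}_{1}^{m})=\lim(\mathcal{S}_{2}^{n})$.

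For the converse, set $\mathcal{T}:=\lim(\mathcal{S}_{1}^{m})=\lim(\mathcal{S}_{2}^{n})$ and let $j_1,j_2$ be the phases at which this common limit is attained, so $\Phi^{j_1}(\mathcal{S}_{1})=\mathcal{T}=\Phi^{j_2}(\mathcal{S}_{2})$. The crucial remark is that $\mathcal{T}$ is itself a tuple of $\mathbb{R}[x]$ with its own intrinsic degree of expansion $deg(\mathcal{T})$, and that the expanded tuples of $\mathcal{S}_i$ from phase $j_i$ on are precisely those of $\mathcal{T}$; hence $\mathcal{R}(\mathcal{S}_1)=\mathcal{R}(\mathcal{T})=\mathcal{R}(\mathcal{S}_2)$ and $deg(\mathcal{S}_i)=j_i+deg(\mathcal{T})$. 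Using that, as in Theorem \ref{rank1}, the two tuples carry the same number of entries and the same degree of expansion, we obtain $j_1=j_2$ and, more importantly, $\mathcal{R}(\mathcal{S}_1)=\mathcal{R}(\mathcal{S}_2)$ with $deg(\mathcal{S}_1)=deg(\mathcal{S}_2)$. Theorem \ref{rank1} then applies directly and forces $\mathcal{S}_{1}-\mathcal{S}_{2}=(a_1,a_2,\ldots,a_n)$ with every $a_i\in\mathbb{R}$, which is the desired conclusion. (Alternatively one can run this step through Theorem \ref{lemma1}, recovering $\mathcal{R}(\mathcal{S}_i)$ from $\mathcal{T}$ by $deg(\mathcal{T})$ applications of $\Phi$.)

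The step I expect to be the main obstacle is this bookkeeping in the converse: one must be certain that the common limit is reached at the same phase from both sides before identifying the ranks. This is exactly where the hypothesis that $\mathcal{S}_1$ and $\mathcal{S}_2$ have equal degree of expansion is indispensable — without it $\mathcal{S}_2$ could be a later phase expansion of $\mathcal{S}_1$ (cf. Theorem \ref{rank2}), sharing its limit while differing from it by a genuinely non-constant tuple. Once the degrees agree, the remainder is a routine appeal to linearity of $\Phi$ and to the rank results already established.
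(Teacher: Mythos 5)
Your forward direction is fine and matches what the paper dismisses as ``straightforward'': a tuple of $\mathbb{R}$ is annihilated by a single expansion, so by linearity the two expansion families coincide from the first phase onward and the limits agree. Your route to $\mathcal{R}(\mathcal{S}_1)=\mathcal{R}(\mathcal{T})=\mathcal{R}(\mathcal{S}_2)$ in the converse is also sound and is arguably cleaner than the paper's: the paper instead writes each limit as $(\Delta \circ \gamma^{-1}\circ \beta^{-1} \circ \gamma)^{k_i}(\mathcal{R}(\mathcal{S}_i))$ via Theorem \ref{lemma1} and forces $k_1=k_2$ by noting that otherwise one rank would be a non-trivial expansion of the other and hence null. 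Both arguments then funnel into the same final step, an appeal to Theorem \ref{rank1}.

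The gap sits exactly where you suspected, and you have not closed it. Theorem \ref{rank1} requires $deg(\mathcal{S}_1)=deg(\mathcal{S}_2)$, and your sentence asserting that ``the two tuples carry the same number of entries and the same degree of expansion, we obtain $j_1=j_2$'' imports that equality as a hypothesis rather than deriving it: the theorem as stated assumes only the initial conditions and the existence of the limits, not equal degrees of expansion. From $deg(\mathcal{S}_i)=j_i+deg(\mathcal{T})$, equal degrees is equivalent to $j_1=j_2$, and $j_1=j_2$ genuinely does not follow from equality of the limits --- as you yourself observe, taking $\mathcal{S}_2=(\gamma^{-1}\circ\beta\circ\gamma\circ\nabla)(\mathcal{S}_1)$ gives $j_2=j_1-1$, identical limits, and a difference $\mathcal{S}_1-\mathcal{S}_2$ that is not a tuple of $\mathbb{R}$. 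So what you have actually proved is the theorem under the added hypothesis $deg(\mathcal{S}_1)=deg(\mathcal{S}_2)$, together with the observation that this hypothesis cannot be dropped. That is a legitimate criticism of the statement (and, to be fair, the paper's own proof establishes only $k_1=k_2$, i.e.\ equal dimensions, and likewise never verifies the equal-degree hypothesis of Theorem \ref{rank1} before invoking it), but as a proof of the theorem as written your converse is incomplete at the step ``$j_1=j_2$''.
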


\begin{proof}
By Theorem \ref{lemma1}, we can write $\lim (\mathcal{S}_{1}^{m})=(\Delta \circ \gamma^{-1}\circ \beta^{-1} \circ \gamma)^{k_1}(\mathcal{R}(\mathcal{S}_1))$ and $\lim (\mathcal{S}_{2}^{n})=(\Delta \circ \gamma^{-1}\circ \beta^{-1} \circ \gamma)^{k_2}(\mathcal{R}(\mathcal{S}_2))$ for some $k_1,k_2\in \mathbb{N}$. Suppose that $\lim (\mathcal{S}_{1}^{m})=\lim (\mathcal{S}_{2}^{n})$, then we must have $(\Delta \circ \gamma^{-1}\circ \beta^{-1} \circ \gamma)^{k_1}(\mathcal{R}(\mathcal{S}_1))=(\Delta \circ \gamma^{-1}\circ \beta^{-1} \circ \gamma)^{k_2}(\mathcal{R}(\mathcal{S}_2))$ if and only if $(\Delta \circ \gamma^{-1}\circ \beta^{-1} \circ \gamma)^{k_1-k_2}(\mathcal{R}(\mathcal{S}_1))=\mathcal{R}(\mathcal{S}_2)$. We claim $k_1=k_2$. Suppose $k_1>k_2$, then we get $(\gamma^{-1} \circ \beta \circ \gamma \circ \nabla)^{k_1-k_2}(\mathcal{R}(\mathcal{S}_2))=\mathcal{S}_{0}=\mathcal{R}(\mathcal{S}_{1})$, which is a contradiction. Again, if $k_2>k_1$, then we have 
$$
(\gamma^{-1} \circ \beta \circ \gamma \circ \nabla)^{k_2-k_1}({\mathcal{R}}(\mathcal{S}_1))=\mathcal{R}(\mathcal{S}_2)=\mathcal{S}_{0}
$$
which cannot hold. Therefore, we must have $k_1=k_2$. It follows that $\mathcal{R}(\mathcal{S}_1)=\mathcal{R}(\mathcal{S}_2)$. Now, thanks to Theorem \ref{rank1}, it implies $\mathcal{S}_{1}-\mathcal{S}_{2}=(b_1,b_2,\ldots, b_n)$ where $b_i\in \mathbb{R}$ for $1\leq i\leq n$. The converse, on the other hand, is straight-forward.
\end{proof}
\bigskip

\subsection{The local number}

\begin{definition}
Let $\mathcal{S}$ be a tuple of $\mathbb{R}[x]$ and $\{\mathcal{S}^{m}\}_{m=1}^{\infty}$ be the family of expanded tuples of $\mathcal{S}$. By the \emph{local number} of expansions, denoted by $\mathcal{L}(\mathcal{S})$, we mean the value of $n$ such that 
$$
(\gamma^{-1}\circ\beta\circ\gamma\circ\nabla)^{n}(\mathcal{S})=\lim (\mathcal{S}^{m}).
$$
\end{definition}
\bigskip

Using Theorem \ref{lemma1}, we find that any tuple of $\mathbb{R}[x]$ satisfying certain initial conditions at each phase of expansion satisfies 
\begin{align}
(\gamma^{-1}\circ\beta\circ\gamma\circ\nabla)^{n}(\mathcal{S})=(\Delta\circ\gamma^{-1}\circ\beta^{-1}\circ\gamma)^{k}(\mathcal{R}(\mathcal{S}))\nonumber
\end{align}
if and only if 
\begin{align}
(\gamma^{-1}\circ\beta\circ\gamma\circ\nabla)^{n+k}(\mathcal{S})=\mathcal{R}(\mathcal{S})\nonumber
\end{align}
for some $k\geq 0$. By the definition of the rank of an expansion, it follows that 
\begin{align}
n+k=\mathrm{deg}(\mathcal{S}).\nonumber
\end{align}
We call this equation the \emph{principal equation}, where $\mathcal{L}(\mathcal{S})=n$, $\mathrm{dim}(\mathcal{S})=k$ and $\mathrm{deg}(\mathcal{S})$ are the local number, dimension, and degree of expansion, respectively, of the expansion of $\mathcal{S}$. It is interesting to recognize that the value of the local number $\mathcal{L}(\mathcal{S})$ is bounded and cannot be greater than the dimension of an expansion. This assertion is confirmed as follows. 
\bigskip

\begin{theorem}\label{local number}
Let $\mathcal{S}$ be a tuple of $\mathbb{R}[x]$ that satisfies a certain initial conditions at each phase with $\mathrm{deg}(\mathcal{S})\geq 4$. If $\mathrm{dim}(\mathcal{S})>2$, then the local number $\mathcal{L}(\mathcal{S})$ must satisfy the inequality 
\begin{align}
0 \leq \mathcal{L}(\mathcal{S})\leq 2.\nonumber
\end{align}
\end{theorem}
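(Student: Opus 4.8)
The plan is to exploit the principal equation $\mathcal{L}(\mathcal{S}) + \dim(\mathcal{S}) = \deg(\mathcal{S})$ established just above, together with the structural constraint forced by the definition of the limit of an expansion, namely that $\lim(\mathcal{S}^n)$ is the \emph{first} expanded tuple whose entries all share a common degree. The key observation is that once an expansion is applied, the discrepancy among the degrees of the entries can only shrink in a controlled way: if $\mathcal{S} = (f_1, \dots, f_n)$ and $d = \max_i \deg(f_i)$ while $d' = \max\{\deg(f_i) : \deg(f_i) < d\}$ is the second-largest degree appearing, then in the expanded tuple every entry $\sum_{i \neq j} f_i'$ has degree exactly $d - 1$ as soon as at least two entries attain the top degree $d$, and otherwise has degree between $d' - 1$ and $d - 1$. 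I would first make this precise as a short lemma (or invoke it informally): after one expansion the \emph{number} of entries realizing the current maximal degree is nondecreasing, and strictly increases unless it was already equal to $n$.

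Next I would count phases. Starting from a tuple with at least two entries of distinct degrees and $\deg(\mathcal{S}) \geq 4$, track the sequence of maximal degrees and the multiplicities with which they are attained. Each expansion either (a) reaches the limit (all degrees equal) or (b) strictly decreases the top degree while not yet equalizing — but case (b) can happen only a bounded number of times before equalization, because each application of $\nabla$ lowers all degrees by one and the gap between the top degree and the degrees of the lagging entries is at most the original gap. The cleanest route: let $g$ be the difference between the largest and smallest degree among the entries of $\mathcal{S}$; then after $g$ expansions every entry that started strictly below the maximum has been differentiated to a constant or below while the top entry has not, forcing equalization at the latest by phase $g$, so $\mathcal{L}(\mathcal{S}) \leq g$. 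Combining with $\mathcal{L}(\mathcal{S}) = \deg(\mathcal{S}) - \dim(\mathcal{S})$ and the hypothesis $\dim(\mathcal{S}) > 2$, i.e. $\dim(\mathcal{S}) \geq 3$, one would want to conclude $\mathcal{L}(\mathcal{S}) \leq \deg(\mathcal{S}) - 3$; the bound $\mathcal{L}(\mathcal{S}) \leq 2$ then should follow from a separate argument that $g$ itself, or $\deg(\mathcal{S}) - \dim(\mathcal{S})$, cannot exceed $2$ once the dimension is large — essentially because a large dimension means the rank is reached only after many \emph{post-limit} phases, and post-limit phases (where all degrees are already equal) constitute precisely $\dim(\mathcal{S})$, leaving little room for pre-limit phases. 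The nonnegativity $0 \leq \mathcal{L}(\mathcal{S})$ is immediate from the definition since the local number counts a nonnegative number of expansions.

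I expect the main obstacle to be pinning down exactly why $\mathcal{L}(\mathcal{S}) \leq 2$ rather than some other explicit constant — the paper's framework ties $\mathcal{L}$, $\dim$, and $\deg$ only through the single linear relation $\mathcal{L} + \dim = \deg$, so extracting an absolute bound on $\mathcal{L}$ requires an independent inequality relating $\deg(\mathcal{S})$ to $\dim(\mathcal{S})$ when $\dim(\mathcal{S}) > 2$. I would look for this in the behavior of the recovery map $\Delta \circ \gamma^{-1} \circ \beta^{-1} \circ \gamma$ applied to the rank: each application raises all degrees by one uniformly (since it integrates), so starting from $\mathcal{R}(\mathcal{S})$ — a tuple of constants — after $k$ applications one obtains a tuple all of whose entries have degree $k$, and this equals $\lim(\mathcal{S}^n)$. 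For $\lim(\mathcal{S}^n)$ to arise as the expansion of a genuinely unequal-degree tuple $\mathcal{S}$ after exactly $\mathcal{L}(\mathcal{S})$ phases, the pre-image structure must be compatible, and I suspect that $\deg(\mathcal{S}) \leq \dim(\mathcal{S}) + 2$ in general (the ``2'' being the minimal number of phases needed to equalize two entries of distinct degree plus one), which with $\dim(\mathcal{S}) \geq 3$ and $\deg(\mathcal{S}) \geq 4$ gives the claim. If that inequality is not literally true, the fallback is to examine small cases $\deg(\mathcal{S}) = 4, 5, \dots$ directly and argue by the degree-drop bookkeeping that any tuple with $\dim(\mathcal{S}) > 2$ must have had its entries' degrees equalize within the first two phases.
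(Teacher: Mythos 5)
Your proposal does not close the central gap, and you say so yourself: the inequality $\mathcal{L}(\mathcal{S})\leq 2$ is never actually derived. The detour through the degree gap $g$ only yields $\mathcal{L}(\mathcal{S})\leq g$, and $g$ can be arbitrarily large (take $\mathcal{S}=(x^{10},x^{2},1)$), so that bound is useless here. The fallback inequality you conjecture, $\deg(\mathcal{S})\leq \dim(\mathcal{S})+2$, is via the principal equation literally equivalent to $\mathcal{L}(\mathcal{S})\leq 2$, so invoking it is circular; and the hypothesis $\dim(\mathcal{S})>2$ cannot be the source of an upper bound on $\mathcal{L}$ through the single relation $\mathcal{L}+\dim=\deg$ alone, since that relation only converts a lower bound on $\dim$ into $\mathcal{L}<\deg(\mathcal{S})-2$, which is not an absolute constant.

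The irony is that the observation you state in your first paragraph and then abandon is exactly what finishes the argument. A tuple here has $n\geq 3$ entries. Let $d$ be the maximal degree among the entries of $\mathcal{S}$ and suppose it is attained by $k\geq 1$ of them. In $\mathcal{S}^{1}=(\sum_{i\neq j}f_i')_j$, every coordinate with $j$ not equal to the unique top-degree index (when $k=1$) contains some $f_i'$ of degree $d-1$, so at least $n-1\geq 2$ entries of $\mathcal{S}^{1}$ attain the new maximal degree $d-1$; if $k\geq 2$, all $n$ entries already do. Applying the same reasoning once more, every entry of $\mathcal{S}^{2}$ contains a derivative of one of those $\geq 2$ top-degree entries and hence has degree exactly $d-2$. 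Thus the degrees equalize by the second phase and $\mathcal{L}(\mathcal{S})\leq 2$ (modulo the tacit assumption, made throughout the paper, that leading coefficients do not cancel in the sums $\sum_{i\neq j}f_i'$). Had you completed this, you would have a direct argument that is genuinely different from the paper's: the paper instead argues by contradiction, assuming $\mathcal{L}(\mathcal{S})>2$, setting $\mathcal{S}_1=(\gamma^{-1}\circ\beta\circ\gamma\circ\nabla)^{\dim(\mathcal{S})+2}(\mathcal{S})$, invoking Theorem \ref{rank2} to get $\deg(\mathcal{S})-\deg(\mathcal{S}_1)=\dim(\mathcal{S})+2$, and then forcing $\deg(\mathcal{S}_1)=0$ by degree bookkeeping. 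As submitted, though, your proposal is a plan with its decisive step missing.
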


\begin{proof}
Suppose on the contrary that $\mathcal{L}(\mathcal{S})>2$. We deduce from the principal equation that $\mathrm{dim}(\mathcal{S})<\mathrm{deg}(\mathcal{S})-2$, so that $(\gamma^{-1} \circ \beta \circ \gamma \circ \nabla)^{\mathrm{dim}(\mathcal{S})+2}(\mathcal{S})\neq \mathcal{R}(\mathcal{S})$ and  $(\gamma^{-1} \circ \beta \circ \gamma \circ \nabla)^{\mathrm{dim}(\mathcal{S})+2}(\mathcal{S})\neq \mathcal{S}_{0}$. It follows that $(\gamma^{-1}\circ\beta\circ\gamma\circ \nabla)^{\mathrm{dim}(\mathcal{S})+2}(\mathcal{S})=\mathcal{S}_{1}$. Theorem \ref{rank2} gives $\mathcal{R}(\mathcal{S})=\mathcal{R}(\mathcal{S}_1)$. We have 
\begin{align}
(\gamma^{-1}\circ \beta \circ \gamma \circ \nabla)^{\mathrm{deg}(\mathcal{S})}(\mathcal{S})=(\gamma^{-1} \circ \beta \circ \gamma \circ \nabla)^{\mathrm{deg}(\mathcal{S}_{1})}(\mathcal{S}_{1})\nonumber
\end{align}
if and only if 
\begin{align}
(\gamma^{-1} \circ \beta \circ \gamma \circ \nabla)^{\mathrm{deg}(\mathcal{S})-\mathrm{deg}(\mathcal{S}_{1})}(\mathcal{S})=\mathcal{S}_{1}.\nonumber
\end{align}
This implies $\mathrm{deg}(\mathcal{S})-\mathrm{deg}(\mathcal{S}_{1})=\mathrm{dim}(\mathcal{S})+2$. Again, using the principal equation, we find that
\begin{align}
\mathcal{L}(\mathcal{S})=\mathrm{deg}(\mathcal{S}_{1})+2.\nonumber
\end{align}
We deduce $\mathrm{deg}(\mathcal{S}_{1})+2=\mathcal{L}(\mathcal{S})=\mathrm{deg}(\mathcal{S})-\mathrm{dim}(\mathcal{S})<\mathrm{deg}(\mathcal{S})-2$, so that $\mathrm{deg}(\mathcal{S}_{1})+4<\mathrm{deg}(\mathcal{S})$. Since $\mathrm{deg}(\mathcal{S})\geq 4$, we must have $\mathrm{deg}(\mathcal{S}_{1})+4\leq 4$. We get $\mathrm{deg}(\mathcal{S}_{1})\leq 0$. This leaves the only possibility that $\mathrm{deg}(\mathcal{S}_{1})=0$, violating the fact that $(\gamma^{-1}\circ\beta\circ\gamma\circ \nabla)^{\mathrm{dim}(\mathcal{S})+2}(\mathcal{S})\neq \mathcal{R}(\mathcal{S})$ and  $(\gamma^{-1}\circ\beta\circ\gamma\circ \nabla)^{\mathrm{dim}(\mathcal{S})+2}(\mathcal{S})\neq \mathcal{S}_{0}$. 
\end{proof}
\bigskip

\begin{theorem}\label{main}
Let $\mathcal{S}$ be a tuple of $\mathbb{R}[x]$ satisfying certain initial conditions at each phase of expansion such that $\mathcal{S}$ is not a tuple of $\mathbb{R}$. The system 
\begin{align}
\lim (\mathcal{S}^n)=\mathcal{S}_0,\nonumber
\end{align}
 where $\mathcal{S}_0$ is the null tuple has no non-trivial solution.
\end{theorem}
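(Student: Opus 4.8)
The plan is to argue by contradiction, exploiting the structural machinery built up in the preceding subsections, in particular the principal equation $\mathcal{L}(\mathcal{S})+\dim(\mathcal{S})=\deg(\mathcal{S})$ and the characterization of the limit in Theorem \ref{lemma1}. Suppose, for contradiction, that $\mathcal{S}$ is not a tuple of $\mathbb{R}$ yet $\lim(\mathcal{S}^n)=\mathcal{S}_0$. Since $\mathcal{S}$ is not a tuple of $\mathbb{R}$, it has at least two entries of distinct degree, so the limit exists by Theorem \ref{exist}, and by Theorem \ref{lemma1} we may write $\lim(\mathcal{S}^n)=(\Delta\circ\gamma^{-1}\circ\beta^{-1}\circ\gamma)^{k}(\mathcal{R}(\mathcal{S}))$ for some $k=\dim(\mathcal{S})<\deg(\mathcal{S})$.

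First I would apply the fact that the recovery map $\Delta\circ\gamma^{-1}\circ\beta^{-1}\circ\gamma$ is the inverse of an expansion $\gamma^{-1}\circ\beta\circ\gamma\circ\nabla$ (which is bijective on tuples satisfying the initial conditions). Applying $(\gamma^{-1}\circ\beta\circ\gamma\circ\nabla)^{k}$ to both sides of $\lim(\mathcal{S}^n)=\mathcal{S}_0$ and using Theorem \ref{lemma1}, we obtain $\mathcal{R}(\mathcal{S})=(\gamma^{-1}\circ\beta\circ\gamma\circ\nabla)^{k}(\mathcal{S}_0)$. Since an expansion applied to the null tuple returns the null tuple (each entry of $\nabla(\mathcal{S}_0)$ is $0$, hence every row-sum in the $\beta$-step is $0$), iterating $k$ times still yields $\mathcal{S}_0$, so $\mathcal{R}(\mathcal{S})=\mathcal{S}_0$. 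But this directly contradicts Definition \ref{rank}, which requires the rank of an expansion to be the last \emph{non-vanishing} expanded tuple, i.e. $\mathcal{R}(\mathcal{S})\neq\mathcal{S}_0$ whenever $\mathcal{S}$ admits a genuine expansion. This contradiction forces $\mathcal{S}$ to be a tuple of $\mathbb{R}$, so the only solution is the trivial one.

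Alternatively, and perhaps more cleanly, I would route through the local number: since $\lim(\mathcal{S}^n)=\mathcal{S}_0$ and $\mathcal{L}(\mathcal{S})$ is by definition the value $n$ with $(\gamma^{-1}\circ\beta\circ\gamma\circ\nabla)^{n}(\mathcal{S})=\lim(\mathcal{S}^m)=\mathcal{S}_0$, we would have $(\gamma^{-1}\circ\beta\circ\gamma\circ\nabla)^{\mathcal{L}(\mathcal{S})}(\mathcal{S})=\mathcal{S}_0$, meaning the expansion vanishes at phase $\mathcal{L}(\mathcal{S})$. Then $\mathcal{L}(\mathcal{S})>\deg(\mathcal{S})$ (since the rank occurs strictly before the null tuple), which via the principal equation $\mathcal{L}(\mathcal{S})+\dim(\mathcal{S})=\deg(\mathcal{S})$ forces $\dim(\mathcal{S})<0$, an impossibility as the dimension is a nonnegative count of recovery steps. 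Either route closes the argument.

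The main obstacle I anticipate is not the logical skeleton — which is short — but rather pinning down the edge cases cleanly: one must be careful that the hypothesis ``$\mathcal{S}$ is not a tuple of $\mathbb{R}$'' is genuinely equivalent to ``$\mathcal{S}$ admits at least one nontrivial expansion with two entries of distinct degree,'' so that Theorems \ref{exist} and \ref{lemma1} and the principal equation all apply; and one must verify the (trivial but necessary) lemma that $(\gamma^{-1}\circ\beta\circ\gamma\circ\nabla)(\mathcal{S}_0)=\mathcal{S}_0$ to run the first argument. The subtlety is really about ensuring the ``certain initial conditions'' are compatible with the null tuple sitting in the family, so that bijectivity of the expansion map can legitimately be invoked; once that bookkeeping is settled, the contradiction with Definition \ref{rank} is immediate.
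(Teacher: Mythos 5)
Your first route is essentially the paper's own argument: both proofs invoke Theorem \ref{lemma1} to write $\lim(\mathcal{S}^n)=(\Delta\circ\gamma^{-1}\circ\beta^{-1}\circ\gamma)^{\dim(\mathcal{S})}(\mathcal{R}(\mathcal{S}))$, undo the recovery map, observe that an expansion of the null tuple is again the null tuple, conclude $\mathcal{R}(\mathcal{S})=\mathcal{S}_0$, and contradict Definition \ref{rank} together with the hypothesis that $\mathcal{S}$ is not a tuple of $\mathbb{R}$. Your second route through the principal equation $\mathcal{L}(\mathcal{S})+\dim(\mathcal{S})=\deg(\mathcal{S})$ is a tidy variant the paper does not use, but it proves the same thing by the same underlying mechanism.

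The substantive divergence is in what the equation $\lim(\mathcal{S}^n)=\mathcal{S}_0$ is taken to mean, and here there is a gap relative to what the theorem is actually used for. You read it as an identity of polynomial tuples. The paper reads it pointwise: its proof begins by supposing there exists $a\neq 0$ with $\lim(\mathcal{S}^n)(a)=\mathcal{S}_0$, i.e.\ $a$ is a common root of all entries of the limit tuple, and this pointwise reading is the one required by the subsequent corollary on systems of differential equations (a tuple of polynomials can share a nonzero root without being identically zero). Under the pointwise reading your inversion step is unavailable: the forward map $\gamma^{-1}\circ\beta\circ\gamma\circ\nabla$ involves differentiation and cannot be applied to a tuple that has already been evaluated at $a$, so knowing $\lim(\mathcal{S}^n)(a)=\mathcal{S}_0$ for one point $a$ does not let you conclude $\mathcal{R}(\mathcal{S})=\mathcal{S}_0$. (The paper's own proof commits exactly this slip when it writes $\mathcal{R}(\mathcal{S})(a)=(\gamma^{-1}\circ\beta\circ\gamma\circ\nabla)^{\dim(\mathcal{S})}(\mathcal{S}_0)$ and then promotes the pointwise identity $\mathcal{R}(\mathcal{S})(a)=\mathcal{S}_0$ to the tuple identity $\mathcal{R}(\mathcal{S})=\mathcal{S}_0$.) So your argument is internally coherent for the tuple-identity version of the claim, but neither your proof nor the paper's establishes the pointwise statement that the application to differential systems actually needs.
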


\begin{proof}
Let $\mathcal{S}\in \mathbb{R}[x]$ and suppose that there exist some $a\in \mathbb{R}$ for $a\neq 0$ such that $\lim(\mathcal{S}^n)(a)=\mathcal{S}_0$. By Theorem \ref{lemma1}, we can write \begin{align}
(\Delta \circ \gamma^{-1}\circ \beta^{-1}\circ \gamma)^{\mathrm{dim}(\mathcal{S})}(\mathcal{R}(\mathcal{S}))(a)=\mathcal{S}_0.\nonumber
\end{align}
We get
\begin{align}
\mathcal{R}(\mathcal{S})(a)&=(\gamma^{-1}\circ\beta\circ\gamma \circ\nabla)^{\mathrm{dim}(\mathcal{S})}(\mathcal{S}_0)\nonumber \\&=\mathcal{S}_{0}.\nonumber
\end{align}
We deduce $\mathcal{R}(\mathcal{S})(a)=\mathcal{R}(\mathcal{S})=\mathcal{S}_0$. This can only happen if $\mathcal{S}$ is a tuple of $\mathbb{R}$, which violates the requirement that $\mathcal{S}$ is not a tuple of $\mathbb{R}$.
\end{proof}
\bigskip

This result can be used to investigate the existence of a solution to certain systems of differential equations. The following will illustrate this claim in clear detail.

\subsection{Application to solutions of systems of differential equations}

\begin{corollary}
Let $f_1,f_2,\ldots f_n\in \mathbb{R}[x]$ be such that $\mathrm{deg}(f_i)\neq \mathrm{deg}(f_j)$ for some $1\leq i,j\leq n$ and satisfy $f_i(a)=b_{i1}, f_i'(a)=b_{i2},\ldots f_i^{n}(a)=b_{in}$ for each $i=1,2,\ldots n$ for $a\in \mathbb{R}$. If \begin{align}
\mathrm{deg}\left(\sum \limits_{i\neq 1}\frac{df_i}{dx}\right)=\mathrm{deg}\left(\sum \limits_{i\neq 2}\frac{df_i}{dx}\right)=\ldots \mathrm{deg}\left(\sum \limits_{i\neq n}\frac{df_i}{dx}\right)\nonumber
\end{align}
then the system 
\begin{eqnarray}
f_2'+f_3'+\cdots +f_n'=0\nonumber \\f_1'+f_3'+\cdots +f_n'=0\nonumber \\ \vdots \vdots  \vdots \vdots \vdots \vdots \vdots \vdots \vdots \vdots \vdots \vdots \vdots \vdots \vdots \vdots \vdots \vdots \vdots \vdots \vdots \vdots \vdots \vdots \vdots \vdots \vdots \vdots \vdots \vdots \vdots \vdots \vdots \vdots \nonumber \\ f'_1+f_2'+\cdots +f'_{n-1}=0\nonumber
\end{eqnarray}
has no non-trivial solution.
\end{corollary}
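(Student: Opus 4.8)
The plan is to recognize this corollary as a direct reformulation of Theorem \ref{main} in the language of an explicit system of differential equations. The system displayed is precisely the statement that the first phase expanded tuple $\mathcal{S}^{1}$ vanishes identically, where $\mathcal{S}=(f_1,f_2,\ldots,f_n)$; that is, by Definition \ref{expansion}, the system is equivalent to $\mathcal{S}^{1}=\mathcal{S}_{0}$. First I would set $\mathcal{S}=(f_1,f_2,\ldots,f_n)$ and observe that since $\mathrm{deg}(f_i)\neq \mathrm{deg}(f_j)$ for some $i,j$, the tuple $\mathcal{S}$ has at least two entries with distinct degrees, so the expansion machinery applies; moreover $\mathcal{S}$ is not a tuple of $\mathbb{R}$ precisely because these two entries have different (hence at least one positive) degree. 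The initial-condition hypotheses $f_i(a)=b_{i1}, f_i'(a)=b_{i2},\ldots,f_i^{n}(a)=b_{in}$ supply the ``certain initial conditions at each phase of expansion'' required by Theorem \ref{main}.

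Next I would invoke the hypothesis that $\mathrm{deg}(\sum_{i\neq 1}\frac{df_i}{dx})=\cdots=\mathrm{deg}(\sum_{i\neq n}\frac{df_i}{dx})$, which says exactly that all entries of $\mathcal{S}^{1}$ share a common degree. By Definition \ref{limit}, this means $\mathcal{S}^{1}$ is itself the limit of expansion, i.e.\ $\lim(\mathcal{S}^{n})=\mathcal{S}^{1}$. Now suppose for contradiction the system has a non-trivial solution $(f_1,\ldots,f_n)$; then $\mathcal{S}^{1}=\mathcal{S}_{0}$, hence $\lim(\mathcal{S}^{n})=\mathcal{S}_{0}$. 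But Theorem \ref{main} asserts that for a tuple $\mathcal{S}$ of $\mathbb{R}[x]$ that is not a tuple of $\mathbb{R}$, the equation $\lim(\mathcal{S}^{n})=\mathcal{S}_{0}$ has no non-trivial solution. This contradiction forces the system to admit only the trivial solution, completing the argument.

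The main point requiring care is matching up the two notions of ``trivial solution'': in Theorem \ref{main} triviality refers to $\mathcal{S}$ being a tuple of $\mathbb{R}$ (equivalently, each $f_i$ constant), whereas for the differential system the natural trivial solution is $f_1=f_2=\cdots=f_n=0$. I would reconcile these by noting that if all entries of $\mathcal{S}$ are constants then the degree-distinctness hypothesis $\mathrm{deg}(f_i)\neq\mathrm{deg}(f_j)$ fails (all degrees would be $0$ or $-\infty$), so under the standing hypotheses $\mathcal{S}$ cannot be a tuple of $\mathbb{R}$; thus the only solutions excluded by Theorem \ref{main} are exactly the constant tuples, and within those the derivative system $\mathcal{S}^{1}=\mathcal{S}_0$ is automatically satisfied but corresponds to the degenerate case already ruled out. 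Hence the sole admissible possibility compatible with both the system and the hypotheses is the all-zero tuple, which is the trivial solution. The anticipated obstacle is purely bookkeeping: ensuring the initial-condition clause of Theorem \ref{main} is genuinely supplied by the stated pointwise data on $f_i$ and its derivatives at $a$, and that the degree hypothesis on the $\sum_{i\neq k}f_i'$ is literally Definition \ref{limit}'s condition; both are immediate once unwound.
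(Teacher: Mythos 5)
Your proposal is correct and follows essentially the same route as the paper: identify the system with $\mathcal{S}^{1}=\mathcal{S}_0$, use the equal-degree hypothesis to conclude $\lim(\mathcal{S}^n)=\mathcal{S}^1$, and then invoke Theorem \ref{main}. The only cosmetic difference is that you justify the identification of the limit via Definition \ref{limit} directly (arguably the cleaner citation) where the paper appeals to Theorem \ref{local number}, and you add a welcome clarification of the trivial/non-trivial bookkeeping that the paper leaves implicit.
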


\begin{proof}
Suppose that $f_1,f_2,\ldots, f_n\in \mathbb{R}[x]$ is such that $\mathrm{deg}(f_i)\neq \mathrm{deg}(f_j)$ for some $1\leq i,j \leq n$. Consider the tuple $\mathcal{S}:=(f_1,f_2,\ldots,f_n)$. It follows that $\mathcal{S}$ admits an expansion, since 
\begin{align}
\mathrm{deg}\left(\sum \limits_{i\neq 1}\frac{df_i}{dx}\right)=\mathrm{deg}\left(\sum \limits_{i\neq 2}\frac{df_i}{dx}\right)=\ldots \mathrm{deg}\left(\sum \limits_{i\neq n}\frac{df_i}{dx}\right).\nonumber
\end{align}
By Theorem \ref{local number}, we get 
\begin{align}
\lim(\mathcal{S}^n)=\left(f_2'+f_3'+\cdots+f_n',f_1'+f_3'+\cdots +f_n',\ldots,f'_1+f_2'+\cdots+f'_{n-1}\right).\nonumber 
\end{align}
Since $f_i$ for each $i=1,2,\ldots n$ with its higher order derivatives satisfy certain initial conditions, it follows that each phase of expansion of the tuple $\mathcal{S}$ satisfies a certain initial condition. The hypotheses of Theorem \ref{main} are satisfied and the system does not have a solution.
\end{proof}

\subsection{The measure of an expansion}

\begin{definition}\label{measure1}
Let $\mathcal{S}$ be a tuple of $\mathbb{R}[x]$. By the \emph{measure} of expansion on $\mathcal{S}$, denoted by $\mathcal{N}(\mathcal{S})$, we mean $\mathcal{N}(\mathcal{S})=||\mathcal{R}(\mathcal{S})||$, where $||\cdot ||$ is the usual norm in $\mathbb{R}^{n}$. 
\end{definition}
\bigskip

\begin{proposition}\label{measure2}
Let $\mathcal{S}_{1}$, $\mathcal{S}_{2}$ be tuples of $\mathbb{R}[x]$ each having the same degree of expansion. The following properties of the measure of expansions hold:
\begin{enumerate}
\item [(i)] $\mathcal{N}(\mathcal{S})\geq 0$. (Positivity)
\bigskip

\item [(ii)] $\mathcal{N}(\mu \mathcal{S})=\mu \mathcal{N}(\mathcal{S})$, for $\mu \in \mathbb{R}$. (Homogeneity)
\bigskip

\item [(iii)] $\mathcal{N}(\mathcal{S}_{1}+\mathcal{S}_{2})\leq \mathcal{N}(\mathcal{S}_{1})+\mathcal{N}(\mathcal{S}_{2})$. (Triangle inequality)
\end{enumerate}
\end{proposition}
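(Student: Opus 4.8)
The plan is to verify the three norm axioms by reducing everything to the corresponding facts about the Euclidean norm $\|\cdot\|$ on $\mathbb{R}^n$, using that $\mathcal{N}(\mathcal{S})=\|\mathcal{R}(\mathcal{S})\|$ by Definition \ref{measure1} together with the fact that $\mathcal{R}(\mathcal{S})$ is a genuine element of $\mathbb{R}^n$ (it is the last non-vanishing expanded tuple, whose entries are all constants). The only structural input I need beyond the definition of $\|\cdot\|$ is the already-established behaviour of the rank under the vector-space operations, namely $\mathcal{R}(\mathcal{S}_1+\mathcal{S}_2)=\mathcal{R}(\mathcal{S}_1)+\mathcal{R}(\mathcal{S}_2)$ and $\mathcal{R}(\mu\mathcal{S})=\mu\mathcal{R}(\mathcal{S})$, which hold here because $\mathcal{S}_1$ and $\mathcal{S}_2$ are assumed to have the same degree of expansion.

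For part (i), I would simply note that $\mathcal{N}(\mathcal{S})=\|\mathcal{R}(\mathcal{S})\|\geq 0$ since the Euclidean norm of any vector in $\mathbb{R}^n$ is non-negative. For part (ii), fix $\mu\in\mathbb{R}$; using the homogeneity of the rank I would write $\mathcal{N}(\mu\mathcal{S})=\|\mathcal{R}(\mu\mathcal{S})\|=\|\mu\,\mathcal{R}(\mathcal{S})\|=|\mu|\,\|\mathcal{R}(\mathcal{S})\|=|\mu|\,\mathcal{N}(\mathcal{S})$, which matches the stated identity (with the understanding that $\mu$ is read as $|\mu|$, exactly as the paper writes it). For part (iii), with $\mathcal{S}_1,\mathcal{S}_2$ of the same degree of expansion, additivity of the rank gives $\mathcal{R}(\mathcal{S}_1+\mathcal{S}_2)=\mathcal{R}(\mathcal{S}_1)+\mathcal{R}(\mathcal{S}_2)$, and then the ordinary triangle inequality for $\|\cdot\|$ in $\mathbb{R}^n$ yields
\begin{align}
\mathcal{N}(\mathcal{S}_1+\mathcal{S}_2)=\|\mathcal{R}(\mathcal{S}_1)+\mathcal{R}(\mathcal{S}_2)\|\leq \|\mathcal{R}(\mathcal{S}_1)\|+\|\mathcal{R}(\mathcal{S}_2)\|=\mathcal{N}(\mathcal{S}_1)+\mathcal{N}(\mathcal{S}_2).\nonumber
\end{align}

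The one point that deserves care — and the place I expect the only real friction — is making sure the rank-arithmetic identities actually apply: they were proved only under the hypothesis that the two tuples share the same degree of expansion, so that the $m$ applications of the linear map $\gamma^{-1}\circ\beta\circ\gamma\circ\nabla$ land both tuples at their ranks simultaneously. The hypothesis of the present proposition supplies exactly this, so the argument goes through; I would flag explicitly in the write-up that this common-degree assumption is what licenses the use of additivity and homogeneity of $\mathcal{R}$, and hence of $\mathcal{N}$. Beyond that, every step is a one-line appeal to a standard property of the Euclidean norm.
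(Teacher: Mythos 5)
Your proof is correct and follows essentially the same route as the paper's: reduce each axiom to the corresponding property of the Euclidean norm via $\mathcal{N}(\mathcal{S})=\lVert\mathcal{R}(\mathcal{S})\rVert$ together with the additivity and homogeneity of the rank (the paper additionally argues definiteness in (i), and your explicit use of $|\mu|$ in (ii) is in fact more careful than the paper's). Nothing further is needed.
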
 

\begin{proof}
\begin{enumerate}
\item [(i)] Clearly, $\mathcal{R}(\mathcal{S}_{0})=\mathcal{S}_{0}$ and it follows that $\mathcal{N}(\mathcal{S}_{0})=0$. Conversely, suppose that $\mathcal{N}(\mathcal{S})=0$, then $||\mathcal{R}(\mathcal{S})||=0$. This implies $\mathcal{R}(\mathcal{S})=\mathcal{S}_{0}$. By definition \ref{rank}, we get $\mathcal{S}=\mathcal{S}_{0}$. Thus, the positivity property follows immediately.
\bigskip

\item [(ii)] Let $\mu \in \mathbb{R}$. We have $\mathcal{N}(\mu \mathcal{S})=||\mathcal{R}(\mu \mathcal{S})||$. By the properties of rank, we get $\mathcal{N}(\mu \mathcal{S})=||\mu \mathcal{R}(\mathcal{S})||=||\mu|| ||\mathcal{R}(\mathcal{S})||=\mu ||\mathcal{R}(\mathcal{S})||=\mu \mathcal{N}(\mathcal{S})$. Thus, the homogeneity property is also satisfied.
\bigskip

\item [(iii)] Let $\mathcal{S}_{1}$ and $\mathcal{S}_{2}$ be any $n$ tuples of $\mathbb{R}[x]$ each having the same degree of expansion. We have $\mathcal{N}(\mathcal{S}_{1}+\mathcal{S}_{2})=||\mathcal{R}(\mathcal{S}_{1}+\mathcal{S}_{2})||$. Using the properties of rank, we get $\mathcal{N}(\mathcal{S}_{1}+\mathcal{S}_{2})=||\mathcal{R}(\mathcal{S}_{1}+\mathcal{S}_{2})||=||\mathcal{R}(\mathcal{S}_{1})+\mathcal{R}(\mathcal{S}_{2})||\leq ||\mathcal{R}(\mathcal{S}_{1})||+||\mathcal{R}(\mathcal{S}_{2})||=\mathcal{N}(\mathcal{S}_{1})+\mathcal{N}(\mathcal{S}_{2})$, and the triangle inequality is satisfied.
\end{enumerate}
\end{proof}
\bigskip

The proposition \ref{measure2} indicates that the measure of an expansion is a norm. It also assigns concrete values to expansions on the tuples of $\mathbb{R}[x]$. This measure is expected to be large in magnitude if and only if the expansion process is very long. That is, if the degree of expansion is very large, then we would expect the norm of expansion to be relatively large. This will serve as a criterion for determining the degree of expansion, which we shall discuss later. We will show that the norm for the expansion of any tuple of $\mathbb{R}[x]$ is unique up to rearrangement of entries and translation by a tuple of $\mathbb{R}$. 

\begin{lemma}\label{permutation1}
Let $\mathcal{\tau}$ be any permutation on the set $\{1,2,\ldots, n\}$. We have 
$$
\tau \circ (\gamma^{-1} \circ \beta \circ \gamma \circ \nabla)=(\gamma^{-1} \circ \beta \circ \gamma \circ \nabla) \circ \tau.
$$
\end{lemma}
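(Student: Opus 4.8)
The plan is to verify that a permutation $\tau$ (acting on a tuple by permuting its entries) commutes with each of the four constituent maps $\nabla$, $\gamma$, $\beta$, and $\gamma^{-1}$ separately, and then to compose these commutation relations. Since $(\gamma^{-1}\circ\beta\circ\gamma\circ\nabla)$ is built from these, if $\tau$ commutes with each factor then it commutes with the whole composite. The only subtlety is that $\tau$ acts on tuples $(f_1,\ldots,f_n)$ while on the intermediate column-vector stage (the image of $\gamma$) it must be realized as left multiplication by the associated permutation matrix $P_\tau$; one must be careful that ``the same'' $\tau$ is meant in both pictures, which is exactly what the identification $\gamma$ provides.

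First I would record that $\nabla$ commutes with $\tau$: since $\nabla(f_1,\ldots,f_n)=(f_1',\ldots,f_n')$ acts entrywise and differentiation does not see the position of an entry, applying $\tau$ and then differentiating coordinatewise gives the same tuple as differentiating and then applying $\tau$; this is immediate from Definition \ref{basic}. Next, $\gamma$ intertwines the permutation action on tuples with left multiplication by $P_\tau$ on column vectors, i.e. $\gamma\circ\tau=P_\tau\circ\gamma$ and hence $\tau\circ\gamma^{-1}=\gamma^{-1}\circ P_\tau$ after rewriting; this is just the statement that stacking a permuted tuple into a column is the same as permuting the rows of the stacked column. The key algebraic step is showing that $\beta$ (left multiplication by the all-ones-off-diagonal matrix $J-I$, where $J$ is the all-ones matrix) commutes with conjugation by $P_\tau$: because $P_\tau J P_\tau^{-1}=J$ and $P_\tau I P_\tau^{-1}=I$, we get $P_\tau(J-I)P_\tau^{-1}=J-I$, so $P_\tau\circ\beta=\beta\circ P_\tau$ as operators on column vectors.

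Finally I would chain these together:
\begin{align}
\tau\circ(\gamma^{-1}\circ\beta\circ\gamma\circ\nabla)
&=\gamma^{-1}\circ P_\tau\circ\beta\circ\gamma\circ\nabla\nonumber\\
&=\gamma^{-1}\circ\beta\circ P_\tau\circ\gamma\circ\nabla\nonumber\\
&=\gamma^{-1}\circ\beta\circ\gamma\circ\tau\circ\nabla\nonumber\\
&=\gamma^{-1}\circ\beta\circ\gamma\circ\nabla\circ\tau,\nonumber
\end{align}
using in turn the three commutation facts and, at the last step, that $\tau$ commutes with $\nabla$. This yields the claimed identity $\tau\circ(\gamma^{-1}\circ\beta\circ\gamma\circ\nabla)=(\gamma^{-1}\circ\beta\circ\gamma\circ\nabla)\circ\tau$.

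The main obstacle, such as it is, is bookkeeping rather than mathematics: one must fix once and for all a convention for how a permutation acts on a tuple versus on the stacked column vector and check that $\gamma$ really does carry one action to the other (equivalently, that the permutation matrix $P_\tau$ is defined with the consistent orientation). The genuinely load-bearing observation is the invariance $P_\tau(J-I)P_\tau^{-1}=J-I$, which holds precisely because the expansion matrix is symmetric under any relabeling of coordinates; if the matrix $\beta$ had any structure distinguishing the coordinates, the lemma would fail. Everything else is routine verification from the definitions already in the excerpt.
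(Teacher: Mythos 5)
Your proof is correct, but it takes a genuinely different route from the paper's. The paper argues by direct computation: it writes out both $(\gamma^{-1}\circ\beta\circ\gamma\circ\nabla)(\tau(\mathcal{S}))$ and $\tau\bigl((\gamma^{-1}\circ\beta\circ\gamma\circ\nabla)(\mathcal{S})\bigr)$ explicitly from the formula of Proposition \ref{composite} --- each side being the tuple whose $k$-th entry is $\sum_{i\neq k}f_{\tau(i)}'=\sum_{j\neq\tau(k)}f_j'$ --- and compares the two expressions entry by entry, the whole content being an index relabeling. You instead factor the commutation through the four constituent maps, realizing $\tau$ on the column-vector stage as left multiplication by the permutation matrix $P_\tau$ and isolating the single load-bearing fact $P_\tau(J-I)P_\tau^{-1}=J-I$. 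Both arguments are valid and about equally short. The paper's is more elementary but buries the reason the identity holds inside the relabeling of the sum; your version makes the mechanism explicit (the matrix defining $\beta$ is invariant under simultaneous row and column permutation, so any coordinate-symmetric matrix would serve) and would transfer without change to other such operators, whereas the paper's computation would have to be redone. Your closing remark about fixing one consistent convention for how $\tau$ acts on tuples versus on stacked columns is precisely the bookkeeping point the paper's proof passes over silently.
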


\begin{proof}
Let $\mathcal{S}=(f_1,f_2,\ldots,f_n)$ be a tuple of $\mathbb{R}[x]$. We get $\tau(\mathcal{S})=\tau((f_1,f_2,\ldots,f_n))=(f_{\tau(1)},f_{\tau(2)},\ldots,f_{\tau(n)})$ and deduce by Proposition \ref{composite} that $(\gamma^{-1} \circ \beta \circ \gamma \circ \nabla)(\tau(\mathcal{S}))=(f_{\tau(2)}'+f_{\tau(3)}'+\cdots +f_{\tau(n)}',f_{\tau(1)}'+f_{\tau(3)}'+\cdots +f_{\tau(n)}',\ldots,f_{\tau(1)}'+f_{\tau(2)}'+\cdots +f_{\tau(n-1)}')$. On the other hand, by proposition \ref{composite}, we deduce $\tau \circ (\gamma^{-1} \circ \beta \circ \gamma \circ \nabla)(\mathcal{S})=\tau(((f_2'+f_3'+\cdots +f_n',f_1'+f_3'+\cdots +f_n',\ldots,f_1'+f_2'+\cdots +f_{n-1}')))=(f_{\tau(2)}'+f_{\tau(3)}'+\cdots +f_{\tau(n)}',f_{\tau(1)}'+f_{\tau(3)}'+\cdots +f_{\tau(n)}',\ldots,f_{\tau(1)}'+f_{\tau(2)}'+\cdots+f_{\tau(n-1)}')$. Comparing both sides, the claim follows immediately.
\end{proof}

\begin{theorem}\label{permutation2}
Let $\mathcal{S}_{1}$ and $\mathcal{S}_{2}$ be any two $n$ tuples of $\mathbb{R}[x]$ each having the same degree of expansion. Then  $\mathcal{N}(\mathcal{S}_{1})=\mathcal{N}(\mathcal{S}_{2})$ if and only if there exists a tuple $\mathcal{S}_{a}$ with $\mathrm{deg}(\mathcal{S}_a)<\mathrm{deg}(\mathcal{S}_1)$ and a permutation $\tau:\{1,2,\ldots,n\}\longrightarrow \{1,2,\ldots,n\}$ such that $\mathcal{S}_{2}=\mathrm{Sgn}(\tau)\tau(\mathcal{S}_{1})+\mathcal{S}_{a}$, where 
$$
\tau(\mathcal{S}_{1})=\tau((f_1,f_2,\ldots,f_n)):=(f_{\tau(1)},f_{\tau(2)},\ldots,f_{\tau(n)}).
$$
\end{theorem}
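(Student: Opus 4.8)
The plan is to prove both directions by reducing to the already-established rank results together with Lemma \ref{permutation1}. The guiding observation is that $\mathcal{N}$ is defined through $\mathcal{R}$ and the Euclidean norm on $\mathbb{R}^{n}$, and the Euclidean norm is invariant under coordinate permutations and under multiplication by $\mathrm{Sgn}(\tau)=\pm 1$. So the real content lives at the level of ranks, and Theorem \ref{rank1} is the tool that turns an equality of ranks into a difference that is a tuple of $\mathbb{R}$.

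First I would handle the easy direction. Assume $\mathcal{S}_{2}=\mathrm{Sgn}(\tau)\tau(\mathcal{S}_{1})+\mathcal{S}_{a}$ with $\mathrm{deg}(\mathcal{S}_a)<\mathrm{deg}(\mathcal{S}_1)$. Applying the rank and using its additivity and homogeneity (Proposition following Theorem \ref{rank1}) together with the fact that a tuple of strictly smaller degree of expansion contributes $\mathcal{S}_{0}$ to the rank at the relevant phase, one gets $\mathcal{R}(\mathcal{S}_{2})=\mathrm{Sgn}(\tau)\mathcal{R}(\tau(\mathcal{S}_{1}))$. By Lemma \ref{permutation1} a permutation commutes with an expansion, hence with the $n$-fold iterate that computes the rank, so $\mathcal{R}(\tau(\mathcal{S}_{1}))=\tau(\mathcal{R}(\mathcal{S}_{1}))$. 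Taking norms and using $\|\tau(v)\|=\|v\|$ and $\|\mathrm{Sgn}(\tau)v\|=\|v\|$ in $\mathbb{R}^{n}$ yields $\mathcal{N}(\mathcal{S}_{2})=\|\mathcal{R}(\mathcal{S}_{1})\|=\mathcal{N}(\mathcal{S}_{1})$.

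For the converse, suppose $\mathcal{N}(\mathcal{S}_{1})=\mathcal{N}(\mathcal{S}_{2})$, i.e. $\|\mathcal{R}(\mathcal{S}_{1})\|=\|\mathcal{R}(\mathcal{S}_{2})\|$. Since both tuples have the same degree of expansion, $\mathcal{R}(\mathcal{S}_1)$ and $\mathcal{R}(\mathcal{S}_2)$ are vectors in $\mathbb{R}^{n}$ of equal Euclidean length; I would argue that in the situation at hand they must have the same multiset of coordinates, so there is a permutation $\tau$ and a sign $\varepsilon=\pm 1$ with $\mathcal{R}(\mathcal{S}_2)=\varepsilon\,\tau(\mathcal{R}(\mathcal{S}_1))$, and one may absorb $\varepsilon$ into $\mathrm{Sgn}(\tau)$ by choosing $\tau$ appropriately. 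Then by Lemma \ref{permutation1}, $\mathcal{R}(\mathcal{S}_2)=\mathrm{Sgn}(\tau)\,\mathcal{R}(\tau(\mathcal{S}_1))=\mathcal{R}(\mathrm{Sgn}(\tau)\tau(\mathcal{S}_1))$, so the tuples $\mathcal{S}_2$ and $\mathrm{Sgn}(\tau)\tau(\mathcal{S}_1)$ have equal rank. Since they also have the same degree of expansion (permutation and sign do not change degrees), Theorem \ref{rank1} gives $\mathcal{S}_2-\mathrm{Sgn}(\tau)\tau(\mathcal{S}_1)=(a_1,\ldots,a_n)$ with each $a_i\in\mathbb{R}$; set $\mathcal{S}_a:=(a_1,\ldots,a_n)$, which is a tuple of $\mathbb{R}$ and so has degree of expansion strictly less than $\mathrm{deg}(\mathcal{S}_1)$. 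This gives the desired representation.

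The main obstacle is the middle step of the converse: passing from equality of Euclidean norms $\|\mathcal{R}(\mathcal{S}_1)\|=\|\mathcal{R}(\mathcal{S}_2)\|$ to the existence of a signed permutation relating $\mathcal{R}(\mathcal{S}_1)$ and $\mathcal{R}(\mathcal{S}_2)$. Equality of norms alone does not force this for arbitrary vectors, so I would lean on the structural constraint that $\mathcal{S}_1$ and $\mathcal{S}_2$ share the same degree of expansion together with the specific form of the rank vectors produced by $(\gamma^{-1}\circ\beta\circ\gamma\circ\nabla)^{n}$; the claim should be read as asserting that the measure determines the rank up to exactly the symmetries (signed permutation) that the norm cannot see, and the proof must make that identification precise rather than treat $\mathcal{R}(\mathcal{S}_i)$ as a generic element of $\mathbb{R}^{n}$.
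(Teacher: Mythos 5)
Your proposal retraces the paper's own proof almost step for step: the forward direction via linearity of the rank, Lemma \ref{permutation1} to commute $\tau$ past the iterated expansion, and invariance of the Euclidean norm under coordinate permutation and multiplication by $\pm 1$; the converse via the claim that $\|\mathcal{R}(\mathcal{S}_1)\|=\|\mathcal{R}(\mathcal{S}_2)\|$ forces $\mathcal{R}(\mathcal{S}_2)$ to be a signed permutation of $\mathcal{R}(\mathcal{S}_1)$, followed by Lemma \ref{permutation1} and Theorem \ref{rank1}. Your forward direction is in fact slightly more careful than the paper's, which discards $\mathcal{S}_a$ by citing Theorem \ref{rank1} even though that theorem only treats differences that are tuples of constants, whereas the hypothesis allows any $\mathcal{S}_a$ of strictly smaller degree of expansion; your appeal to linearity plus the vanishing of the lower-degree term after $\deg(\mathcal{S}_1)$ phases is the right fix.

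The obstacle you flag in the converse is a genuine gap, and the paper does not close it either: its proof simply asserts ``It must be that $\mathcal{R}(\mathcal{S}_{1})$ is a permutation of $\mathcal{R}(\mathcal{S}_{2})$ upto signs'' with no supporting argument. Equality of Euclidean norms of two vectors in $\mathbb{R}^{n}$ does not imply they agree up to a signed permutation --- compare $(1,1,0)$ with $(\sqrt{2},0,0)$ --- and neither the paper nor your proposal supplies the structural analysis of rank vectors (which at the final phase have the constrained form $(C-c_1,\ldots,C-c_n)$ with $C=\sum_i c_i$, but are still far from being determined by their norm) that would rule such configurations out. There is also a secondary problem with ``absorbing $\varepsilon$ into $\mathrm{Sgn}(\tau)$'': the sign of a permutation is determined by the permutation, so if the required rearrangement is, say, the identity but $\varepsilon=-1$, no admissible $\tau$ exists unless the rank vector has repeated entries. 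As written, then, the converse direction is not established by this argument (and it is doubtful that it is true in the stated generality); only the forward implication of the theorem survives.
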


\begin{proof}
Let $\mathcal{S}_{1}$ and $\mathcal{S}_{2}$ be any two $n$ tuples of $\mathbb{R}[x]$, each having the same degree of expansion, and suppose there exist a tuple $\mathcal{S}_{a}$ such that $\mathrm{deg}(\mathcal{S}_a)<\mathrm{deg}(\mathcal{S}_2)$ and a permutation $\tau$ such that $\mathcal{S}_{2}=\mathrm{Sgn}(\tau)\tau(\mathcal{S}_{1})+\mathcal{S}_{a}$. It follows from Theorem \ref{rank1} that $\mathcal{R}(\mathcal{S}_{2})=\mathcal{R}(\mathrm{Sgn}(\tau)\tau(\mathcal{S}_{1})+\mathcal{S}_{a})=\mathcal{R}(\mathrm{Sgn}(\tau)\tau(\mathcal{S}_{1}))=(\gamma^{-1} \circ \beta \circ \gamma \circ \nabla)^{\mathrm{deg}(\mathcal{S}_{2})}(\mathrm{Sgn}(\tau)\tau(\mathcal{S}_{1}))=\mathrm{Sgn}(\tau)(\gamma^{-1} \circ \beta \circ \gamma \circ \nabla)^{\mathrm{deg}(\mathcal{S}_{2})}\circ \tau(\mathcal{S}_{1})$. Applying the lemma \ref{permutation1} for the $\mathrm{deg}(\mathcal{S}_{1})=\mathrm{deg}(\mathcal{S}_{2})$ number of times, we find that 
\begin{align}
\mathcal{R}(\mathcal{S}_{2})&=\mathrm{Sgn}(\tau)(\gamma^{-1} \circ \beta \circ \gamma \circ \nabla)^{\mathrm{deg}(\mathcal{S}_{2})}\circ \tau(\mathcal{S}_{1})\nonumber \\&=\mathrm{Sgn}(\tau)\tau \circ (\gamma^{-1} \circ \beta \circ \gamma \circ \nabla)^{\mathrm{deg}(\mathcal{S}_{2})}(\mathcal{S}_{1})\nonumber \\&=\mathrm{Sgn}(\tau)\tau \circ (\gamma^{-1} \circ \beta \circ \gamma \circ \nabla)^{\mathrm{deg}(\mathcal{S}_{1})}(\mathcal{S}_{1})\nonumber \\&=\mathrm{Sgn}(\tau)\tau(\mathcal{R}(\mathcal{S}_{1})).\nonumber 
\end{align}
It follows from the relation derived above that the rank of expansion of $\mathcal{S}_{1}$ is a permutation of the rank of expansion of $\mathcal{S}_{2}$ up to signs. Thus, we must have $\mathcal{N}(\mathcal{S}_{1})=||\mathcal{R}(\mathcal{S}_{1})||=||\mathcal{R}(\mathcal{S}_{2})||=\mathcal{N}(\mathcal{S}_{2})$. Conversely, suppose that $\mathcal{N}(\mathcal{S}_{1})=\mathcal{N}(\mathcal{S}_{2})$. By definition \ref{measure1}, we get $||\mathcal{R}(\mathcal{S}_{1})||=||\mathcal{R}(\mathcal{S}_{2})||$. This implies that $\mathcal{R}(\mathcal{S}_{1})$ is a permutation of $\mathcal{R}(\mathcal{S}_{2})$ up to signs. That is, there exists some permutation $\tau$ on $\mathcal{R}(\mathcal{S}_{1})$ such that $\mathrm{Sgn}(\tau)\tau(\mathcal{R}(\mathcal{S}_{1}))=\mathcal{R}(\mathcal{S}_{2})$. By Lemma \ref{permutation1}, we can write $\mathcal{R}(\mathcal{S}_{2})=\mathcal{R}(\mathrm{Sgn}(\tau)\tau(\mathcal{S}_{1}))$. Since $\mathrm{deg}(\mathcal{S}_{1})=\mathrm{deg}(\mathcal{S}_{2})$, Theorem \ref{rank1} implies that $\mathrm{Sgn}(\tau)\tau(\mathcal{S}_{1})-\mathcal{S}_{2}=\mathcal{S}_{b}$, where $\mathcal{S}_{b}$ is a tuple of $\mathbb{R}$ and $\mathrm{deg}(\mathcal{S}_b)<\mathrm{deg}(\mathcal{S}_2)$.
\end{proof}

\begin{conjecture}\label{measure inequality}
Let $\mathcal{S}$ be any tuple of $\mathbb{R}[x]$ with $\mathrm{deg}(\mathcal{S})>1$ and satisfying a certain initial conditions at each phase. The double inequality 
\begin{align}
||\mathcal{S}(deg(\mathcal{S}))||\asymp \mathcal{N}(\mathcal{S})\nonumber
\end{align}
holds.
\end{conjecture}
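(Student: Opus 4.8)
The plan is to compare the norm of the rank tuple $\mathcal{N}(\mathcal{S}) = \|\mathcal{R}(\mathcal{S})\|$ with the norm $\|\mathcal{S}(\deg(\mathcal{S}))\|$ of the tuple evaluated at the integer $\deg(\mathcal{S})$, by tracking how both quantities scale as $\deg(\mathcal{S}) \to \infty$ over a suitable family of tuples. First I would write $\deg(\mathcal{S}) = d$ and note, via the principal equation $\mathcal{L}(\mathcal{S}) + \dim(\mathcal{S}) = d$ together with Theorem \ref{lemma1}, that $\mathcal{R}(\mathcal{S}) = (\gamma^{-1}\circ\beta\circ\gamma\circ\nabla)^{d}(\mathcal{S})$ is obtained from $\mathcal{S}$ by applying $d$ copies of the expansion map. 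Each application of $\nabla$ drops degree by one and multiplies leading-type coefficients by the current degree, while $\beta$ only mixes entries by a fixed invertible matrix; so after $d$ steps the surviving constant entries of $\mathcal{R}(\mathcal{S})$ are, up to the bounded linear distortion coming from powers of the $\beta$-matrix, of size comparable to $d!$ times the top coefficients of $\mathcal{S}$. Hence one expects $\mathcal{N}(\mathcal{S}) \asymp d!\,\|(\text{top coefficients of }\mathcal{S})\|$.

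Next I would estimate $\|\mathcal{S}(d)\|$ from the other side. If $\mathcal{S} = (f_1,\dots,f_n)$ with $\max_i \deg f_i = d$, then each $f_i(d)$ is a polynomial expression in $d$ of degree at most $d$, so $\|\mathcal{S}(d)\| \ll C\, d^{d}$ for a constant depending on the coefficient bounds, and conversely the entry of maximal degree contributes a term of size at least comparable to $d^{d}$ (after absorbing the lower-order terms, which is where the ``certain initial conditions at each phase'' and $\deg(\mathcal{S}) > 1$ hypotheses are used to prevent cancellation of the leading term). Comparing with the previous paragraph via Stirling's estimate $d! \asymp d^{d} e^{-d}\sqrt{d}$, one gets $\mathcal{N}(\mathcal{S})$ and $\|\mathcal{S}(d)\|$ both of order $d^{d}$ up to subexponential factors — which is \emph{not} quite the same as $\asymp$ in the strict sense of the paper's definition (constants independent of $d$). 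So the honest version of the argument must either restrict to a family where the ratio is genuinely bounded, or reinterpret $\asymp$ as allowing the mild $e^{d}$-type discrepancy; I would make the bookkeeping of these polynomial-versus-factorial growth rates explicit and invoke Proposition \ref{measure2} to control the norm under the linear mixing by $\beta$.

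The main obstacle, and the step I expect to be delicate, is precisely this matching of constants: the expansion map introduces a product $\prod_{k=1}^{d} k$ of degree-factors from the iterated $\nabla$, whereas direct evaluation at $x = d$ produces only powers $d^{j}$, and reconciling these into a two-sided bound with $\deg(\mathcal{S})$-independent implied constants requires either a clever choice of the evaluation point (perhaps $x = \deg(\mathcal{S})$ is chosen in the conjecture exactly to make the dominant terms $d^{d}$ on both sides) or a more careful analysis showing the $\beta$-mixing and the initial conditions conspire to cancel the offending factorial. A secondary difficulty is ensuring no entry of $\mathcal{R}(\mathcal{S})$ vanishes and no catastrophic cancellation occurs among the $n$ entries when $\beta$ sums them — this is controlled by the standing assumption that at least two entries have distinct degrees and by the limit/rank machinery of Theorems \ref{exist} and \ref{lemma1}, but it needs to be stated carefully for the lower bound $\mathcal{N}(\mathcal{S}) \gg \|\mathcal{S}(\deg(\mathcal{S}))\|$ to hold.
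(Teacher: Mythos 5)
The statement you were asked to prove is not proved in the paper at all: it is stated as Conjecture \ref{measure inequality}, is invoked only as a hypothesis in the heuristic derivation of the mass law, and the author explicitly remarks that ``establishing the measure inequality seems to be a hard enough problem.'' So there is no proof in the paper to compare yours against, and the honest standard your proposal must meet is that of an actual proof of an open claim. It does not meet that standard, and to your credit you say so yourself: your own bookkeeping gives $\mathcal{N}(\mathcal{S})$ a factorial-type size $\sim d!\,\|(\text{top coefficients})\|$ (from $d$ iterated differentiations) while $\|\mathcal{S}(d)\|$ is of size $\sim d^{d}$, and by Stirling these differ by a factor of order $e^{d}$. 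Since the paper defines $f\asymp g$ to mean a two-sided bound with fixed constants $\alpha_1,\alpha_2$, an $e^{d}$ discrepancy is fatal; your sketch therefore does not establish the conjecture and, taken at face value, is evidence \emph{against} it as literally stated. A proposal that ends by observing its own central estimate contradicts the conclusion is a description of the obstruction, not a proof.

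Two further points in your sketch are also not right. First, the iterated action of $\beta$ is not a ``bounded linear distortion'': the matrix with $0$ on the diagonal and $1$ elsewhere has dominant eigenvalue $n-1$, so $d$ applications contribute a factor up to $(n-1)^{d}$ along the all-ones direction --- another unbounded exponential factor in $d$ that must be tracked, not absorbed into a constant. Second, your lower bound $\|\mathcal{S}(d)\|\gg d^{d}$ requires that the leading term of the entry of maximal degree not be cancelled by lower-order terms; the paper's ``certain initial conditions at each phase'' is never given a precise meaning that would supply this, so you are importing an assumption the hypotheses do not furnish. There is also a definitional problem you silently repair: as the paper defines $\asymp$ (an inequality holding ``for sufficiently large $n$''), the conjecture compares two fixed real numbers attached to a single tuple, for which $\asymp$ is vacuous; your reinterpretation as a uniform statement over a family with $\deg(\mathcal{S})\to\infty$ is reasonable but should be flagged as a reformulation rather than assumed. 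The correct conclusion of your analysis is that the conjecture, under the natural reading, cannot be proved by this route and likely needs to be restated (e.g.\ with the factorial normalization built in) before any proof is possible.
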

\bigskip

Conjecture \ref{measure inequality} relates the degree of expansion of any tuple of $\mathbb{R}[x]$ to their measure of expansion.

\subsection{The boundary of an expansion}\label{zero}

In this section, we introduce the concept of the \emph{boundary} of an expansion of a tuple of the polynomial ring $\mathbb{R}[x]$.

\begin{definition}
Let $\{\mathcal{S}_{j}\}_{j=1}^{\infty}$ be a collection  of all tuples of $\mathbb{R}[x]$. By the \emph{boundary point} of the $n^{th}$ phase expansion, denoted by 
$$
\mathcal{Z}[(\gamma^{-1} \circ \beta \circ \gamma \circ \nabla)^n(\mathcal{S}_j)]
$$ 
we mean the set of points
$$
\mathcal{Z}[(\gamma^{-1} \circ \beta \circ \gamma \circ \nabla)^n(\mathcal{S}_j)]:=\bigg\{(a_1,a_2,\ldots, a_m):\mathrm{Id}_{i}[(\gamma^{-1} \circ \beta \circ \gamma \circ \nabla)_{a_i}^{n}(\mathcal{S}_j)]=0\bigg\}.
$$
\end{definition}
\bigskip

 Here, we show that the boundary points decrease with expansion. That is, we expect fewer boundary points at high phases of expansion.

\begin{proposition}\label{bdecreasing}
Let $\{\mathcal{S}_{j}\}_{j=1}^{\infty}$ be a collection  of all tuples of $\mathbb{R}[x]$ and let $\mathcal{Z}[(\gamma^{-1} \circ \beta\circ\gamma \circ \nabla)^n(\mathcal{S}_j)]$ be the boundary points of the $n^{th}$ phase expansion. We have 
\begin{align}
\#\mathcal{Z}[(\gamma^{-1} \circ \beta \circ \gamma \circ \nabla)^{n_1}(\mathcal{S}_j)]>\#\mathcal{Z}[(\gamma^{-1} \circ \beta \circ \gamma \circ \nabla)^{n_2}(\mathcal{S}_j)]\nonumber
\end{align} 
for $1\leq n_1<n_2<\deg(\mathcal{S}_j)$.
\end{proposition}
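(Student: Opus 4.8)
The plan is to unpack the definition of the boundary set and track how differentiation shrinks it phase by phase. By definition, $\mathcal{Z}[(\gamma^{-1} \circ \beta \circ \gamma \circ \nabla)^n(\mathcal{S}_j)]$ is the set of points at which some coordinate of the $n$th phase expanded tuple vanishes; each such coordinate is, up to sign and indexing, a sum of derivatives of the entries of $\mathcal{S}_j$, hence a polynomial in $\mathbb{R}[x]$ whose degree strictly decreases with each application of $\nabla$ (this is exactly the degree-drop observation already used in Proposition \ref{finite}). So the key structural fact is: passing from phase $n_1$ to phase $n_2 > n_1$ replaces each coordinate polynomial by one of strictly smaller degree, and a polynomial of degree $d$ has at most $d$ real roots.

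First I would fix $j$ and write the $n$th phase expanded tuple as $(g_1^{(n)}, \ldots, g_m^{(n)})$ with each $g_k^{(n)} \in \mathbb{R}[x]$, noting via Proposition \ref{composite} that $g_k^{(n)} = \sum_{i \neq k} \frac{d}{dx} g_i^{(n-1)}$, so $\deg g_k^{(n)} < \max_i \deg g_i^{(n-1)}$ as long as we are below the limit of expansion. Second, I would observe that the boundary set at phase $n$ is the union over $k$ of the (finite) zero sets of the $g_k^{(n)}$, so $\#\mathcal{Z}[(\gamma^{-1} \circ \beta \circ \gamma \circ \nabla)^{n}(\mathcal{S}_j)] \leq \sum_k \deg g_k^{(n)}$. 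Third, since each application of an expansion differentiates, for $n_1 < n_2 < \deg(\mathcal{S}_j)$ we have $\deg g_k^{(n_2)} < \deg g_k^{(n_1)}$ for every $k$ (the strict drop is guaranteed because we stay strictly below the degree of expansion, so no coordinate has yet collapsed to a constant or to zero), and the bound on the count of roots therefore strictly decreases.

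The main obstacle is making the strict inequality genuinely strict rather than merely $\leq$: differentiation drops the degree of a single polynomial by exactly one, but the boundary set is a union of zero sets, and cancellation in the sums $\sum_{i\neq k}(\cdot)'$ could in principle make a coordinate drop by more than one or even cause a coordinate to share roots across phases, so one must argue that the total root-count bound $\sum_k \deg g_k^{(n)}$ is itself strictly monotone in $n$ — which follows as long as at least one $g_k$ has positive degree at phase $n_1$, and this is exactly the content of $n_1 < \deg(\mathcal{S}_j)$ together with the standing assumption that $\mathcal{S}_j$ has two entries of distinct degree (so the expansion has not yet reached its rank). I would close by remarking that the condition $n_2 < \deg(\mathcal{S}_j)$ ensures phase $n_2$ is not the null tuple, so the comparison is between two honest finite sets, and the strict inequality $\#\mathcal{Z}[(\gamma^{-1} \circ \beta \circ \gamma \circ \nabla)^{n_1}(\mathcal{S}_j)] > \#\mathcal{Z}[(\gamma^{-1} \circ \beta \circ \gamma \circ \nabla)^{n_2}(\mathcal{S}_j)]$ follows.
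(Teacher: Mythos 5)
Your proposal follows essentially the same route as the paper's own (very short) proof: bound the number of boundary points via the degrees of the coordinate polynomials, and note that each phase of expansion lowers those degrees. However, there is a genuine gap, one that you yourself identify as "the main obstacle" and then do not actually close. What your argument shows to be strictly decreasing is the \emph{upper bound} $\sum_k \deg g_k^{(n)}$ (or, given that the boundary is built from the zero sets of all the coordinates, a product of degrees), not the actual cardinality of the boundary. A real polynomial of degree $d$ has \emph{at most} $d$ real roots but may have far fewer, and the number of real roots is not monotone in the degree: $x^2+1$ has no real roots while $2x$ has one. So a coordinate of the phase-$n_1$ tuple could have an empty real zero set, forcing the phase-$n_1$ boundary to be empty, while every coordinate at phase $n_2$ has real roots and the phase-$n_2$ boundary is nonempty --- reversing the claimed inequality. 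Nothing in the hypotheses excludes this, and a strictly decreasing sequence of upper bounds does not yield a strictly decreasing sequence of counts. Your final sentence ("the strict inequality follows") is therefore not supported by anything preceding it.

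To be fair, this is not a defect you introduced: the paper's proof makes the identical leap from "the degrees decrease by one per phase" to "the boundary points must decrease." But as written, neither argument establishes the proposition over $\mathbb{R}$. A repair would require either counting roots with multiplicity over $\mathbb{C}$ (where degree equals root count, though repeated roots still threaten strictness), or adding a hypothesis that every coordinate polynomial at every phase $n<\deg(\mathcal{S}_j)$ attains its maximal number of distinct real zeros.
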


\begin{proof}
Recall that any polynomial of degree $n$ has at most $n$ roots. Since the degrees of entry of the tuples in the ring $\mathbb{R}[x]$ decrease by $1$ for successive phases of expansion, it follows that the boundary points must decrease with higher phases of expansion.
\end{proof}
\bigskip

We show that any two tuples of polynomials in $\mathbb{R}[x]$ having the same boundaries at some phase of expansion cannot be different.

\begin{theorem}\label{boundary coincides}
Let $\{\mathcal{S}_j\}_{j=1}^{\infty}$ be a collection of tuples of the polynomial ring $\mathbb{R}[x]$. For any $\mathcal{S}_{a}, \mathcal{S}_{b}\in \{\mathcal{S}_j\}_{j=1}^{\infty}$ with $\mathrm{\deg}(\mathcal{S}_a)=\mathrm{deg}(\mathcal{S}_b)$, we have $\mathcal{S}_{a}=\mathcal{S}_{b}+\mathcal{S}_{\mathbb{R}}$ if and only if 
\begin{align}
\mathcal{Z}[(\gamma^{-1}\circ\beta\circ\gamma\circ \nabla)^n(\mathcal{S}_a)]=\mathcal{Z}[(\gamma^{-1}\circ\beta\circ \gamma\circ\nabla)^n(\mathcal{S}_b)]\nonumber
\end{align}
for some $1\leq n<\mathrm{deg}(\mathcal{S}_a)-1=\mathrm{deg}(\mathcal{S}_{b})-1$.
\end{theorem}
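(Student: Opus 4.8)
The plan is to reduce this to Theorem \ref{rank1}, which already characterizes equality of ranks as differing by a tuple of $\mathbb{R}$. First I would observe that, by Proposition \ref{composite}, the $n$th phase expansion $(\gamma^{-1} \circ \beta \circ \gamma \circ \nabla)^n$ is a linear operator on tuples of $\mathbb{R}[x]$ and, crucially, its ``constant term'' behavior is transparent: after $n$ applications of the expansion map, each entry of the expanded tuple is a polynomial whose constant term depends on the coefficients of degree $n$ in the original entries. The key linear-algebra fact I would isolate as the engine of the proof is that for two tuples $\mathcal{S}_a$, $\mathcal{S}_b$ of equal degree, the $n$th expanded tuples $(\gamma^{-1} \circ \beta \circ \gamma \circ \nabla)^n(\mathcal{S}_a)$ and $(\gamma^{-1} \circ \beta \circ \gamma \circ \nabla)^n(\mathcal{S}_b)$ have the same zero set (boundary) for a given $n < \deg(\mathcal{S}_a)-1$ if and only if, entrywise, the corresponding expanded polynomials agree up to an overall nonzero scalar and then actually agree because they have equal leading coefficients inherited from equal degrees; tracking this carefully should force $(\gamma^{-1} \circ \beta \circ \gamma \circ \nabla)^n(\mathcal{S}_a) = (\gamma^{-1} \circ \beta \circ \gamma \circ \nabla)^n(\mathcal{S}_b)$.

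Next I would run the two implications. For the forward direction, assume $\mathcal{S}_a = \mathcal{S}_b + \mathcal{S}_{\mathbb{R}}$ with $\mathcal{S}_{\mathbb{R}}$ a tuple of constants. Applying $n \geq 1$ copies of the expansion map and using linearity (the expansion is linear, as proved earlier) together with the fact that $(\gamma^{-1} \circ \beta \circ \gamma \circ \nabla)$ annihilates any tuple of $\mathbb{R}$ — since $\nabla$ kills constants — we get $(\gamma^{-1} \circ \beta \circ \gamma \circ \nabla)^n(\mathcal{S}_a) = (\gamma^{-1} \circ \beta \circ \gamma \circ \nabla)^n(\mathcal{S}_b)$, and equal tuples trivially have equal boundary sets. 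For the converse, suppose the two boundary sets coincide for some $1 \leq n < \deg(\mathcal{S}_a)-1$. Here I would argue that because $n < \deg(\mathcal{S}_a) - 1$, the entries of the $n$th expanded tuples are still non-constant polynomials (indeed of degree $\deg(\mathcal{S}_a)-n \geq 2$), so the boundary set genuinely records their zero loci; combined with equal degrees forcing equal leading coefficients through the $\beta$-matrix action, the zero sets determine the polynomials, giving $(\gamma^{-1} \circ \beta \circ \gamma \circ \nabla)^n(\mathcal{S}_a) = (\gamma^{-1} \circ \beta \circ \gamma \circ \nabla)^n(\mathcal{S}_b)$. Applying the inverse operator $(\Delta \circ \gamma^{-1} \circ \beta^{-1} \circ \gamma)^n$ (which exists by the earlier bijectivity results) $n$ times, and absorbing the integration constants into a tuple of $\mathbb{R}$, yields $\mathcal{S}_a - \mathcal{S}_b = \mathcal{S}_{\mathbb{R}}$.

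Alternatively, and perhaps more cleanly, I would phrase the converse directly through Theorem \ref{rank1}: continue expanding both $n$th expanded tuples down to their ranks; since $\deg(\mathcal{S}_a) = \deg(\mathcal{S}_b)$, both reach rank at the same stage, and I would want to conclude $\mathcal{R}(\mathcal{S}_a) = \mathcal{R}(\mathcal{S}_b)$, whence Theorem \ref{rank1} delivers $\mathcal{S}_a - \mathcal{S}_b = (a_1,\ldots,a_n)$ with $a_i \in \mathbb{R}$. The bridge I need is that coinciding boundary sets at phase $n$ propagate to coinciding expanded tuples at phase $n$, which then propagates to equal ranks.

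The main obstacle I anticipate is precisely this bridge — that two polynomials of the same degree with the same zero set must be equal. This is false in general (they agree only up to a scalar multiple), so the proof must genuinely use the structural constraint that the leading coefficients of the $n$th expanded entries are determined identically by the $\beta$-matrix acting on the top-degree coefficients, which themselves need not match between $\mathcal{S}_a$ and $\mathcal{S}_b$. Making this step rigorous — rather than hand-waving that ``the boundary determines the tuple'' — is where the real work lies; the restriction $n < \deg(\mathcal{S}_a)-1$ and the equal-degree hypothesis are exactly what is needed to keep the expanded entries non-constant (so their roots are meaningful) and to pin down the scalar ambiguity, and I would need to check that an extra constant tuple (the kernel of one expansion step's recovery) is the only freedom that survives, matching the claimed $\mathcal{S}_{\mathbb{R}}$.
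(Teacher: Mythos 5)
Your forward direction is fine and is actually cleaner than the paper's: you apply $n$ copies of the expansion to $\mathcal{S}_a=\mathcal{S}_b+\mathcal{S}_{\mathbb{R}}$ and use linearity plus the fact that $\nabla$ kills constant tuples, whereas the paper routes through Theorem \ref{rank1}, the equality of ranks, and the recovery maps. The problem is the converse, and you have correctly diagnosed it yourself without resolving it. Your whole converse rests on the bridge ``equal boundary sets at phase $n$ implies equal expanded tuples at phase $n$,'' which requires passing from ``same zero set'' to ``same polynomial''; as you note, this is false in general, since $p$ and $2p$ have identical zero sets. Your proposed repair --- that the equal-degree hypothesis pins down the leading coefficients through the $\beta$-matrix --- does not work, because $\deg(\mathcal{S}_a)=\deg(\mathcal{S}_b)$ is a statement about degrees of expansion only and places no constraint whatsoever on the leading coefficients of the entries. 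Concretely, take $\mathcal{S}_b=2\mathcal{S}_a$: by linearity every phase expansion of $\mathcal{S}_b$ is twice the corresponding expansion of $\mathcal{S}_a$, so all boundary sets coincide and the degrees of expansion agree, yet $\mathcal{S}_a-\mathcal{S}_b=-\mathcal{S}_a$ is not a tuple of $\mathbb{R}$. So the bridge cannot be built along the lines you describe; the step you flagged as ``where the real work lies'' is a genuine gap, and this example shows the scalar ambiguity is an obstruction to the strategy itself, not a technicality to be cleaned up later.

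For comparison, the paper's converse takes an entirely different route: it argues by contradiction, supposing $\mathcal{S}_a-\mathcal{S}_b=\mathcal{S}_{\mathbb{R}[x]}$ is a nonconstant polynomial tuple, and then plays the inclusion $\mathcal{Z}[(\gamma^{-1}\circ\beta\circ\gamma\circ\nabla)^n(\mathcal{S}_a)]\subset \mathcal{Z}[(\gamma^{-1}\circ\beta\circ\gamma\circ\nabla)^n(\mathcal{S}_a-\mathcal{S}_b)]$ (which does follow from the coincidence of the two boundaries together with linearity) against a cardinality bound from Proposition \ref{bdecreasing} to force a contradiction. It never attempts to recover the expanded tuples from their zero sets, so it sidesteps the scalar-multiple issue you ran into --- though it trades it for a strict inequality on boundary-point counts that is itself only loosely justified. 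Either way, your proposal as written does not contain a proof of the converse direction.
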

\bigskip

\begin{proof}
Suppose that $\mathcal{S}_{a}=\mathcal{S}_{b}+\mathcal{S}_{\mathbb{R}}$. By Theorem \ref{rank1}, we have $\mathcal{R}(\mathcal{S}_{a})=\mathcal{R}(\mathcal{S}_{b})$. There exist some $k\geq 1$ such that $(\Delta \circ \gamma^{-1}\circ \beta^{-1} \circ \gamma)^{\deg(\mathcal{S}_a)-k}(\mathcal{S}_a)=(\gamma^{-1} \circ \beta \circ \gamma \circ \nabla)^n(\mathcal{S}_a)$ and $(\Delta \circ \gamma^{-1}\circ \beta^{-1} \circ \gamma)^{\deg(\mathcal{S}_b)-k}(\mathcal{S}_b)=(\gamma^{-1} \circ \beta \circ \gamma \circ \nabla)^n(\mathcal{S}_b)$. It follows that 
\begin{align}
(\gamma^{-1} \circ \beta \circ \gamma \circ \nabla)^n(\mathcal{S}_a)=(\gamma^{-1} \circ \beta \circ \gamma \circ \nabla)^n(\mathcal{S}_b).\nonumber
\end{align}
Thus, $\mathcal{Z}[(\gamma^{-1}\circ\beta\circ\gamma\circ \nabla)^n(\mathcal{S}_a)]=\mathcal{Z}[(\gamma^{-1} \circ \beta \circ \gamma \circ \nabla)^n(\mathcal{S}_b)]$. Conversely, let $ \mathcal{Z}[(\gamma^{-1}\circ\beta\circ\gamma\circ \nabla)^n(\mathcal{S}_a)]=\mathcal{Z}[(\gamma^{-1}\circ\beta\circ \gamma\circ\nabla)^n(\mathcal{S}_b)]$ and suppose that $\mathcal{S}_a=\mathcal{S}_b+\mathcal{S}_{\mathbb{R}[x]}$. It  follows that $\mathrm{deg}(\mathcal{S}_{\mathbb{R}[x]})\leq \mathrm{deg}(\mathcal{S}_a)=\mathrm{deg}(\mathcal{S}_b)$. By Proposition \ref{bdecreasing}, we deduce
\begin{align}
\# \mathcal{Z}[(\gamma^{-1}\circ\beta\circ\gamma\circ \nabla)^n(\mathcal{S}_{\mathbb{R}[x]})]&\leq \#\mathcal{Z}[(\gamma^{-1}\circ\beta\circ\gamma\circ \nabla)^n(\mathcal{S}_a)]\nonumber \\&=\#\mathcal{Z}[(\gamma^{-1} \circ\beta\circ\gamma\circ\nabla)^n(\mathcal{S}_b)].\nonumber
\end{align} 
On the other hand, we observe
\begin{align}
\mathcal{Z}[(\gamma^{-1}\circ\beta\circ\gamma\circ \nabla)^n(\mathcal{S}_a)]&\subset \mathcal{Z}[(\gamma^{-1} \circ \beta\circ\gamma\circ\nabla)^n(\mathcal{S}_a-\mathcal{S}_b)]\nonumber \\&\subset \mathcal{Z}[(\gamma^{-1}\circ \beta\circ\gamma\circ\nabla)^n(\mathcal{S}_{\mathbb{R}[x]})].\nonumber
\end{align}
Thus, it follows 
\begin{align}
\#\mathcal{Z}[(\gamma^{-1}\circ\beta\circ\gamma\circ \nabla)^n(\mathcal{S}_a)]&=\#\mathcal{Z}[(\gamma^{-1}\circ\beta \circ\gamma\circ\nabla)^n(\mathcal{S}_b)]\nonumber \\&<\# \mathcal{Z}[(\gamma^{-1}\circ\beta\circ\gamma\circ \nabla)^n(\mathcal{S}_{\mathbb{R}[x]})]\nonumber
\end{align}
which is a contradiction.
\end{proof}

\subsection{The co-boundary of expansion}\label{boundary}

In this section, we introduce the concept of the \emph{co-boundary} of an expansion.

\begin{definition}
Let $\{\mathcal{S}_j\}_{j=1}^{\infty}$ be a collection of tuples of the polynomial ring $\mathbb{R}[x]$, and let $\mathcal{S}_0$ be a boundary point of the $n$th phase expansion. By the \emph{free point} generated by $\mathcal{S}_0$, we mean the tuples \begin{align}
(\gamma^{-1}\circ\beta\circ\gamma\circ\nabla)_{a_i}^{n}(\mathcal{S}_j)\nonumber
\end{align}
where $\mathcal{S}_0=(a_1,a_2,\ldots,a_m)$. By \emph{co-boundary points} generated by the boundary points, we mean points of the form $a_i\mathcal{S}_e$ for $1\leq i \leq m$. The co-boundary points form the \emph{co-boundary}.
\bigskip

We will show that the norm of free points of the $n$th phase expansion for $n\geq 1$ cannot be small. It is also reasonable to believe that the boundary points are distant from each other for a higher phase expansion, where the boundary points become sparse.

\begin{conjecture}\label{distribution}
Let $\{\mathcal{S}_{j}\}_{j=1}^{\infty}$ be a collection  of all tuples of $\mathbb{R}[x]$ and let $\mathcal{Z}[(\gamma^{-1}\circ \beta\circ\gamma\circ\nabla)^n(\mathcal{S}_j)]$ be the boundary of the $n^{th}$ phase expansion. Let $\mathcal{S}_k$, $\mathcal{S}_l\in \mathcal{Z}[(\gamma^{-1}\circ\beta\circ\gamma \circ\nabla)^n(\mathcal{S}_j)]$ be any two boundary points. We have 
$$
\inf ||\mathcal{S}_k-\mathcal{S}_l||\geq \epsilon
$$ 
for all $n\geq n_0$ for some $n_0>0$.
\end{conjecture}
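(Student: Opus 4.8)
The plan is to deduce the statement from the termination of the expansion process together with the elementary fact that a nonzero polynomial has only finitely many zeros; throughout, $\mathcal{S}_j$ is regarded as fixed, and $\epsilon$ and $n_{0}$ are allowed to depend on it. First I would invoke Proposition \ref{finite}: $\mathcal{S}_j$ admits only finitely many expansions, and since each phase lowers the degree of every entry by at least one, the $i$th entry of $(\gamma^{-1} \circ \beta \circ \gamma \circ \nabla)^{n}(\mathcal{S}_j)$ is a polynomial of degree at most $\deg(\mathcal{S}_j)-n$; in particular the boundary $\mathcal{Z}[(\gamma^{-1} \circ \beta \circ \gamma \circ \nabla)^{n}(\mathcal{S}_j)]$ is meaningful only for $n$ in the finite range $1\leq n<\deg(\mathcal{S}_j)$. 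Under the standing assumption that the expansion flows freely, no entry of an admissible phase degenerates to the zero polynomial, so every entry has finitely many real zeros, and hence the boundary, being the product of these finite zero sets, is a finite subset of $\mathbb{R}^{m}$.

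With finiteness secured the conclusion is nearly automatic. Let $I$ be the set of $n$ with $1\leq n<\deg(\mathcal{S}_j)$ for which $\mathcal{Z}[(\gamma^{-1} \circ \beta \circ \gamma \circ \nabla)^{n}(\mathcal{S}_j)]$ has at least two points; this is a finite set. For $n\in I$ the quantity $\inf\{||\mathcal{S}_{k}-\mathcal{S}_{l}|| : \mathcal{S}_{k}\neq\mathcal{S}_{l}\text{ in the boundary}\}$ is an infimum over a nonempty finite set, hence attained and strictly positive; call it $\epsilon_{n}$. Setting $\epsilon:=\min_{n\in I}\epsilon_{n}$ (and $\epsilon:=1$, say, if $I=\emptyset$) gives $\epsilon>0$, and then $n_{0}=1$ works: for every admissible $n\geq n_{0}$ one has $\inf||\mathcal{S}_{k}-\mathcal{S}_{l}||\geq\epsilon$, vacuously if $n\notin I$.

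A sharper choice of $n_{0}$ is illuminating: by Proposition \ref{bdecreasing} the number of boundary points strictly decreases with $n$, and at the phase $n=\deg(\mathcal{S}_j)-1$ every entry of the expanded tuple is linear and so has exactly one zero, whence the boundary is a single point and the infimum over distinct pairs is vacuous. Thus $n_{0}=\deg(\mathcal{S}_j)-1$ already makes the inequality trivially true, and the finiteness argument above is needed only if one insists on the smaller threshold $n_{0}=1$.

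I do not expect a genuine obstacle. The statement rests entirely on the termination of the expansion (Proposition \ref{finite}) and on the fact that the zeros of a fixed nonzero polynomial are finite in number and therefore uniformly separated; everything else is bookkeeping. The only step requiring real care is checking that no entry of an admissible phase of the expansion becomes the zero polynomial --- if it did, the boundary would be infinite and the infimum would drop to $0$ --- but this is exactly the ``free flow of expansion'' hypothesis already in force. A quantitative strengthening, extracting $\epsilon$ explicitly from the coefficients of $\mathcal{S}_j$ by a Mahler-type root-separation bound, would take more work, but is unnecessary for the purely existential claim made here.
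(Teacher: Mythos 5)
The first thing to note is that the paper offers no proof of this statement: it is posed as Conjecture \ref{distribution} and deliberately left open, with only the later remark that Theorem \ref{area} ``partially solves'' it. So there is no argument of the paper's to compare yours against. On its own terms, your strategy --- each boundary is a finite set, there are only finitely many phases, hence a positive minimum separation exists --- is the right observation to make about the statement \emph{as literally quantified}, with $\epsilon$ and $n_0$ allowed to depend on $\mathcal{S}_j$: under that reading the conjecture is essentially vacuous, and your proof shows this. (The surrounding prose makes clear the author intends something stronger, namely that the separation reflects increasing sparsity as $n$ grows; your argument answers the letter of the statement rather than its spirit, which is worth flagging but is not a defect of the proof.)

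There is, however, one genuine gap, and it is precisely the point you defer to your final paragraph. The boundary $\mathcal{Z}[(\gamma^{-1}\circ\beta\circ\gamma\circ\nabla)^{n}(\mathcal{S}_j)]$ is the product of the zero sets of the entries of the expanded tuple, and it is finite only if no entry is identically zero. The paper's standing hypothesis --- at least two entries of distinct degrees --- guarantees that the whole tuple does not collapse to $\mathcal{S}_0$ before the rank is reached, but it does \emph{not} prevent a single entry $\sum_{i\neq k} f_i'$ from vanishing identically. For instance $\mathcal{S}=(x^2,-x^2,x^3)$ has entries of distinct degrees, yet the third entry of its first phase expansion is $f_1'+f_2'=2x-2x=0$; the boundary of that phase is then an infinite set of the form $\{r_1\}\times\{r_2\}\times\mathbb{R}$, the infimum of distances between distinct boundary points is $0$, and your $\epsilon_n$ does not exist for that $n$. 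The ``free flow of expansion'' convention you invoke does not cover this case, because the paper never proves (and it is false in general) that individual entries stay nonzero. To close the argument you must either add the hypothesis that every entry of every admissible phase is a nonzero polynomial, or prove a lemma excluding such cancellation; as written, the step ``the boundary is a finite subset of $\mathbb{R}^m$'' is unjustified.
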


\begin{remark}
We are now ready to prove the theorem. 
\end{remark}
\bigskip

\begin{theorem}
Let $\{\mathcal{S}_j\}_{j=1}^{\infty}$ be a collection of tuples of the polynomial ring $\mathbb{R}[x]$, and let $\mathcal{S}_t\in \mathcal{Z}[(\gamma^{-1}\circ\beta\circ\gamma\circ\nabla)^{n}(\mathcal{S}_j)]$ be a boundary point of the $n^{th}$ phase expansion where $n<\deg{\mathcal{S}_j}$. We have 
\begin{align}
||(\gamma^{-1}\circ\beta\circ\gamma\circ\nabla)_{a_i}^{n}(\mathcal{S}_j)||>0\nonumber
\end{align}
where $\mathcal{S}_{t}=(a_1,a_2,\ldots,a_m)$ such that $a_i\neq a_j$ for some $1\leq i,j\leq m$.
\end{theorem}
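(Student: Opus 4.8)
The plan is a proof by contradiction, playing the boundary condition satisfied by the coordinates of $\mathcal{S}_t$ against the hypothesis that not all of those coordinates coincide.

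First I would suppose, toward a contradiction, that the free point tuple vanishes, that is, $(\gamma^{-1} \circ \beta \circ \gamma \circ \nabla)^{n}_{a_i}(\mathcal{S}_j) = \mathcal{S}_0$. Writing the $n$th phase expanded tuple as $(\gamma^{-1} \circ \beta \circ \gamma \circ \nabla)^{n}(\mathcal{S}_j) = (g_1, g_2, \ldots, g_m)$ with each $g_k \in \mathbb{R}[x]$, this assumption reads $g_k(a_i) = 0$ for every $1 \le k \le m$; in other words, $a_i$ is a common real zero of every entry of the $n$th phase expanded tuple.

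Next I would bring in the hypothesis that $\mathcal{S}_t = (a_1, a_2, \ldots, a_m)$ is a boundary point of that same phase: by the definition of the boundary, $\mathrm{Id}_k[(\gamma^{-1} \circ \beta \circ \gamma \circ \nabla)^{n}_{a_k}(\mathcal{S}_j)] = g_k(a_k) = 0$ for every $k$. Since $a_i \ne a_j$ for some pair of indices, $\mathcal{S}_t$ is not a constant tuple, so there is an index $r$ with $a_r \ne a_i$. Then $g_r$ vanishes at the two distinct reals $a_i$ and $a_r$, hence $g_r$ has at least two distinct zeros and so $\deg(g_r) \ge 2$. On the other hand, iterating Definition \ref{expansion} (equivalently, applying the composite map of Proposition \ref{composite} $n$ times) shows that each phase of expansion lowers the degree of every entry, so every entry of the $n$th phase expanded tuple has degree at most $\deg(\mathcal{S}_j) - n$. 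Since $n < \deg(\mathcal{S}_j)$, the aim is to force $\deg(g_r) \le 1$, contradicting $\deg(g_r) \ge 2$; this contradiction shows the free point is not the null tuple, whence $||(\gamma^{-1} \circ \beta \circ \gamma \circ \nabla)^{n}_{a_i}(\mathcal{S}_j)|| > 0$.

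The main obstacle is precisely this last degree count. The crude inequality $\deg(g_r) \le \deg(\mathcal{S}_j) - n$ only yields $\deg(g_r) \le 1$ when $n$ is the penultimate admissible phase of expansion; for smaller $n$ one must instead propagate the common-zero relation backwards through the expansions — using repeatedly that the $k$th entry of a phase expansion is the sum $\sum_{l \ne k} f_l'$ of the derivatives of the remaining entries $f_l$, so that a common zero $a_i$ of every entry of the $n$th phase forces $a_i$ to be a common zero of the $n$th derivatives of the original entries of $\mathcal{S}_j$ — and then exploit that $\mathcal{S}_j$ carries two entries of distinct degree together with $n < \deg(\mathcal{S}_j)$ to rule this configuration out. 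Making that reduction work uniformly in $n$, rather than only at the last phase, is where the genuine work lies; one may also need Proposition \ref{bdecreasing} to control how the boundary sets shrink across the intermediate phases.
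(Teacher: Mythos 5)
Your proposal has a genuine gap, and you identify it yourself: the contradiction you aim for is never actually derived. From the vanishing of the free point you correctly extract that some entry $g_r$ of the $n$th phase expanded tuple has two distinct real zeros $a_i$ and $a_r$, hence $\deg(g_r)\geq 2$. But for a general phase $n<\deg(\mathcal{S}_j)$ the entries of the expanded tuple have degree $\deg(\mathcal{S}_j)-n$, which is typically at least $2$, and a polynomial of degree at least $2$ can perfectly well have two distinct real zeros. So there is nothing to contradict except in the single case $n=\deg(\mathcal{S}_j)-1$. The proposed repair --- propagating the common zero backwards to conclude that $a_i$ is a common zero of the $n$th derivatives of the original entries and then exploiting the distinct-degree hypothesis --- is only gestured at, and it is not clear it can succeed: a common real zero of a family of derivatives is not by itself an impossible configuration, and neither Proposition \ref{bdecreasing} nor the distinct-degree hypothesis on $\mathcal{S}_j$ obviously rules it out. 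As written, the argument proves the statement only at the penultimate phase.

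For comparison, the paper takes a different (counting) route: assuming the free point vanishes at each coordinate $a_i$ of $\mathcal{S}_t$, every constant tuple $a_i\mathcal{S}_e=(a_i,\ldots,a_i)$ satisfies the defining condition of a boundary point, so each co-boundary point lies in $\mathcal{Z}[(\gamma^{-1}\circ\beta\circ\gamma\circ\nabla)^{n}(\mathcal{S}_j)]$; since the $a_i$ are not all equal, these points are not all equal to $\mathcal{S}_t$, and the paper concludes that the boundary would then exceed its assumed cardinality $k$, a contradiction. Whatever one thinks of the rigor of that cardinality step, it is the mechanism the paper relies on, and it is absent from your proposal. If you want to salvage your degree-theoretic approach, you would need to supply the uniform-in-$n$ reduction you describe in your final paragraph; until that is done the proof is incomplete.
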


\begin{proof}
Let $\{\mathcal{S}_j\}_{j=1}^{\infty}$ be a collection of tuples of the polynomial ring $\mathbb{R}[x]$. Suppose that $\# \mathcal{Z}[(\gamma^{-1}\circ\beta\circ\gamma\circ\nabla)^{n}(\mathcal{S}_j)]=k$ for some $k>1$ and let $\mathcal{S}_t\in \mathcal{Z}[(\gamma^{-1}\circ\beta\circ\gamma\circ\nabla)^{n}(\mathcal{S}_j)]$ be a boundary point of the $n^{th}$ phase expansion where $n<\deg{\mathcal{S}_j}$. Suppose on the contrary that \begin{align}
||(\gamma^{-1} \circ\beta\circ\gamma\circ\nabla)_{a_i}^{n}(\mathcal{S}_j)||=0\nonumber
\end{align}
where $\mathcal{S}_{t}=(a_1,a_2,\ldots, a_m)$. We get $(\gamma^{-1}\circ\beta\circ\gamma\circ\nabla)_{a_i}^{n}(\mathcal{S}_j)=\mathcal{S}_0$ for $1\leq i \leq m$. Thus, the co-boundary point $a_i\mathcal{S}_{e}$ is also a boundary point. We deduce
\begin{align}
a_i\mathcal{S}_{e} \in \mathcal{Z}[(\gamma^{-1}\circ\beta\circ\gamma\circ\nabla)^{n}(\mathcal{S}_j)],\nonumber
\end{align}
for $1\leq i\leq m$. Since $\mathcal{S}_{t}=(a_1,a_2,\ldots, a_m)$ is such that $a_i\neq a_j$ for some $1\leq i,j\leq m$, it implies that $\mathcal{S}_t \neq a_i\mathcal{S}_{e}$ for some $1\leq i \leq m$. We obtain
\begin{align}
\#\mathcal{Z}[(\gamma^{-1}\circ\beta\circ\gamma\circ\nabla)^{n}(\mathcal{S}_j)]>k\nonumber
\end{align}
violating the size of the boundary.
\end{proof}
\bigskip

The above result places a barrier between the boundary and co-boundary points. In a sense, the boundary points (boundary) and the co-boundary points (co-boundary) generated by expansion should not overlap.

\begin{corollary}
Let $f_1,f_2,\ldots f_n\in\mathbb{R}[x]$. The system 
\begin{eqnarray}
f_2'+f_3'+\cdots +f_n'=0\nonumber \\f_1'+f_3'+\cdots +f_n'=0\nonumber \\ \vdots \vdots  \vdots \vdots \vdots \vdots \vdots \vdots \vdots \vdots \vdots \vdots \vdots \vdots \vdots \vdots \vdots \vdots \vdots \vdots \vdots \vdots \vdots \vdots \vdots \vdots \vdots \vdots \vdots \vdots \vdots \vdots \vdots \vdots \nonumber \\f'_1+f_2'+\cdots +f'_{n-1}=0\nonumber
\end{eqnarray}
has no non-trivial solution.
\end{corollary}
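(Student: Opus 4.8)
The plan is to read the given system as a single equation about expansion and then invoke Theorem \ref{main}. First I would set $\mathcal{S}=(f_1,f_2,\ldots,f_n)$ and note, via Definition \ref{expansion} (equivalently Proposition \ref{composite}), that the displayed system is nothing but the tuple identity $\mathcal{S}^{1}=\mathcal{S}_0$, where $\mathcal{S}^{1}=(\gamma^{-1}\circ\beta\circ\gamma\circ\nabla)(\mathcal{S})$ is the first-phase expanded tuple and $\mathcal{S}_0$ is the null tuple: the $j$th coordinate of $\mathcal{S}^{1}$ is exactly $\sum_{i\neq j}\frac{df_i}{dx}$. A solution is \emph{trivial} precisely when every $f_i$ lies in $\mathbb{R}$, and I would argue by contradiction, assuming a non-trivial solution exists so that $\mathcal{S}$ is not a tuple of $\mathbb{R}$.

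Next I would identify the limit of expansion. Since $\mathcal{S}^{1}=\mathcal{S}_0$, every entry of $\mathcal{S}^{1}$ is the zero polynomial, so the entries of $\mathcal{S}^{1}$ share a common degree; by Definition \ref{limit} this means $\lim(\mathcal{S}^{n})=\mathcal{S}^{1}=\mathcal{S}_0$ (this is exactly the situation invoked through Theorem \ref{local number} in the previous corollary of this section, where the degrees of the $n$ coordinate sums coincide). Thus the pair $(\mathcal{S},\lim(\mathcal{S}^{n})=\mathcal{S}_0)$ meets the hypotheses of Theorem \ref{main}: $\mathcal{S}$ is a tuple of $\mathbb{R}[x]$ which is not a tuple of $\mathbb{R}$, and $\lim(\mathcal{S}^{n})=\mathcal{S}_0$. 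Theorem \ref{main} asserts this system has no non-trivial solution, contradicting our assumption; hence every $f_i\in\mathbb{R}$ and the original system has only the trivial solution.

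The main obstacle is the standing regularity clause under which the theory operates: Definition \ref{expansion} is stated for tuples having at least two entries of distinct degrees, and Theorem \ref{main} also carries the ``certain initial conditions at each phase'' proviso. I would therefore need to verify separately that a putative non-trivial solution cannot consist of polynomials all of the same degree $d\geq 1$; this is immediate by comparing leading coefficients, since if $\deg f_i=d$ with leading coefficient $c_i$ then the $j$th entry of $\mathcal{S}^{1}$ has leading term $d\bigl(\sum_{i\neq j}c_i\bigr)x^{d-1}$, and demanding all of these vanish forces $c_1=\cdots=c_n=0$. With that degenerate case excluded, $\mathcal{S}$ genuinely admits the expansion process, the equation $\mathcal{S}^{1}=\mathcal{S}_0$ supplies the required initial data at the (sole) phase, and the argument above closes; the hardest part is really bookkeeping these side conditions rather than any substantive estimate.
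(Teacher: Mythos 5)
Your reading of the system as the single tuple equation $\mathcal{S}^{1}=(\gamma^{-1}\circ\beta\circ\gamma\circ\nabla)(\mathcal{S})=\mathcal{S}_{0}$ is right, and your overall route --- identify the left-hand sides with the first phase expansion, argue that $\lim(\mathcal{S}^{n})=\mathcal{S}^{1}=\mathcal{S}_{0}$, and invoke Theorem \ref{main} --- is essentially the same strategy the paper uses for the earlier, hypothesis-laden corollary in the section on systems of differential equations. For the corollary as stated here the paper supplies no proof at all (it is asserted bare, with every hypothesis of the earlier version dropped), so you are on your own, and the detour through Theorem \ref{main} is the weakest part of what you wrote: once $\mathcal{S}^{1}=\mathcal{S}_{0}$ the degree of expansion is $0$, the family $\{\mathcal{S}^{m}\}$ no longer has two entries of distinct degrees, and Theorem \ref{lemma1} (on which Theorem \ref{main} rests) requires a dimension $k<\deg(\mathcal{S})$, which is vacuous. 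So the machinery you are importing is not really licensed to run on exactly the tuples your contradiction hypothesis produces.

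The good news is that the argument you relegate to the ``degenerate case'' is in fact a complete and rigorous proof of the whole corollary, and you should promote it. The system says precisely that $\beta$ applied to the column vector $(f_1',\ldots,f_n')^{T}$ is zero, where $\beta$ is the all-ones-minus-identity matrix; the paper itself records (in the bijectivity proposition following Proposition \ref{composite}) that this matrix is invertible for $n\geq 2$ (its eigenvalues are $n-1$ and $-1$). Hence $f_i'=0$ for every $i$, so each $f_i$ is a constant and $\mathcal{S}$ is a tuple of $\mathbb{R}$, i.e.\ only the trivial solution exists. Your leading-coefficient computation is just this observation applied degree by degree. I would restructure the proof to lead with the invertibility of $\beta$ and drop the appeal to Theorem \ref{main} entirely; as written, the proposal reaches a true conclusion but routes it through lemmas whose hypotheses fail in the very situation being excluded.
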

\end{definition}
\bigskip

Here, we introduce a classification scheme for all tuples of the polynomial ring $\mathbb{R}[x]$. This scheme is based on the boundary points of a given phase of expansion.

\begin{definition}
Let $\mathcal{S}\in \mathcal{Z}[(\gamma^{-1}\circ\beta\circ\gamma \circ\nabla)^n(\mathcal{S}_j)]$ for $n\geq 0$. By the \emph{phase identifier}, we mean the value $|\mathrm{Id}_i(\mathcal{S})|$. We say that the phase identifier is \emph{weak} if $\inf |\mathrm{Id}_i(\mathcal{S})|\leq 1$; otherwise, we say it is \emph{strong}.
\end{definition}
\bigskip

In the language of expansivity, we state the celebrated Sendov conjecture which states that any zero of a polynomial must lie in the same unit disk with some zero of the derivative. We restate the conjecture in this language. It is also important to compare this version with the original Sendov conjecture. The zeros of the polynomial $P_n(x)$ of degree $n$ correspond to the co-boundary points, and the zeros of $P_n'(x)$ correspond to the boundary points in the language of expansivity. Thus, we restate the Sendov conjecture in the language as follows:

\begin{conjecture}[Sendov]
Let $P(x)=a_nx^n+a_{n-1}x^{n-1}+\cdots +a_1x+a_0$ be a polynomial of degree $n\geq 2$. Let $\{b_i\}_{i=1}^{n}$ be the set of zeros of $P(x)$ such that $|b_i|\leq 1$ and let $\mathcal{S}=(a_nx^n,a_{n-1}x^{n-1},\ldots, a_1x,a_0)$ be a tuple representation of $P(x)$. For each $b_{i}$, there exist some $\mathcal{S}_{a}\in \mathcal{Z}[(\gamma^{-1}\circ\beta\circ\gamma \circ \nabla)^1(\mathcal{S})]$ such that 
\begin{align}
|\mathrm{Id}_{n+1}(b_i\mathcal{S}_{e}-\mathcal{S}_{a})|\leq 1.\nonumber
\end{align}
\end{conjecture}

\begin{remark}
The Sendov conjecture, in the language of expansivity, can be stated as saying that any co-boundary point of the trivial expansion with weak phase identifier must in some sense be close to some boundary point of the first phase expansion with a weak phase identifier. 
\end{remark}

\subsection{The speed of an expansion}

\begin{definition}\label{speed}
Let $\{\mathcal{S}_j\}_{j=1}^{\infty}$ be a collection of tuples of the polynomial ring $\mathbb{R}[x]$. Let $\mathcal{S}\in \{\mathcal{S}_j\}_{j=1}^{\infty}$. By the \emph{speed} of expansion, denoted by $\upsilon(\mathcal{S})$, we mean \begin{align}
\upsilon (\mathcal{S})=\frac{\mathcal{N}(\mathcal{S})}{\mathrm{deg}(\mathcal{S})}.\nonumber
\end{align}
\end{definition}
\bigskip

Here, we relate the concept of the speed $\upsilon(\mathcal{S})$ of expansion of a tuple of the ring $\mathbb{R}[x]$ to the concept of the measure of expansion. We show that the speed of expansion is unique up to measure. 

\begin{proposition}
Let $\mathcal{F}=\{\mathcal{S}_j\}_{j=1}^{\infty}$ be a collection of $n$ tuples of polynomials in the ring $\mathbb{R}[x]$, and let $\mathcal{S}_{a}$, $\mathcal{S}_b\in \mathcal{F}$. If $\mathcal{N}(\mathcal{S}_a)=\mathcal{N}(\mathcal{S}_b)$, then $\upsilon(\mathcal{S}_a)=\upsilon(\mathcal{S}_b)$.  
\end{proposition}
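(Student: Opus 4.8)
The plan is to observe that, once the degrees of expansion of $\mathcal{S}_a$ and $\mathcal{S}_b$ are known to agree, the statement is a one-line manipulation of definitions, so the whole task reduces to justifying $\mathrm{deg}(\mathcal{S}_a)=\mathrm{deg}(\mathcal{S}_b)$. By Definition \ref{speed} one has $\upsilon(\mathcal{S}_a)=\mathcal{N}(\mathcal{S}_a)/\mathrm{deg}(\mathcal{S}_a)$ and $\upsilon(\mathcal{S}_b)=\mathcal{N}(\mathcal{S}_b)/\mathrm{deg}(\mathcal{S}_b)$; the hypothesis supplies $\mathcal{N}(\mathcal{S}_a)=\mathcal{N}(\mathcal{S}_b)$, so it suffices to match the two denominators, after which the chain $\upsilon(\mathcal{S}_a)=\mathcal{N}(\mathcal{S}_a)/\mathrm{deg}(\mathcal{S}_a)=\mathcal{N}(\mathcal{S}_b)/\mathrm{deg}(\mathcal{S}_b)=\upsilon(\mathcal{S}_b)$ finishes everything.

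For the equality of degrees I would appeal to the standing convention, inherited from Proposition \ref{measure2} and Theorem \ref{permutation2}, that the measure is only ever compared across tuples of a common degree of expansion; under it, $\mathrm{deg}(\mathcal{S}_a)=\mathrm{deg}(\mathcal{S}_b)$ is immediate. Theorem \ref{permutation2} then pins down the precise form of the relation between the two tuples: $\mathcal{S}_b=\mathrm{Sgn}(\tau)\tau(\mathcal{S}_a)+\mathcal{S}_c$ for some permutation $\tau$ of $\{1,2,\ldots,n\}$ and some $\mathcal{S}_c$ with $\mathrm{deg}(\mathcal{S}_c)<\mathrm{deg}(\mathcal{S}_a)$, and one checks directly that this form preserves the degree of expansion. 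Indeed, a sign change $\mathrm{Sgn}(\tau)$ and a relabelling $\tau$ of the entries do not change the number of iterated expansions needed to reach $\mathcal{S}_0$ — for the relabelling one uses Lemma \ref{permutation1}, which makes $\tau$ commute with every iterate of $\gamma^{-1}\circ\beta\circ\gamma\circ\nabla$ — and, by linearity of expansion, applying the $\mathrm{deg}(\mathcal{S}_a)$-th iterate sends $\mathcal{S}_c$ to $\mathcal{S}_0$ while sending $\mathrm{Sgn}(\tau)\tau(\mathcal{S}_a)$ to $\mathrm{Sgn}(\tau)\tau(\mathcal{R}(\mathcal{S}_a))\neq\mathcal{S}_0$, so adding $\mathcal{S}_c$ does not change the degree either. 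This confirms $\mathrm{deg}(\mathcal{S}_b)=\mathrm{deg}(\mathcal{S}_a)$.

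The one point that genuinely requires care is exactly this common-degree assumption: it is essential, since rescaling a tuple of strictly larger degree of expansion so that the norm of its rank matches $\mathcal{N}(\mathcal{S}_a)$ would produce a tuple with the same measure but a different speed. So the proposition is to be read with the identical convention under which the surrounding results on the measure are phrased; granting that, there is no real obstacle and the argument is purely formal, the heart of it being the single chain of equalities displayed in the first paragraph.
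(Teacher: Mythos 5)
Your proposal is correct and follows essentially the same route as the paper: invoke Theorem \ref{permutation2} to relate $\mathcal{S}_a$ and $\mathcal{S}_b$ via a signed permutation plus a lower-degree tuple, conclude $\mathrm{deg}(\mathcal{S}_a)=\mathrm{deg}(\mathcal{S}_b)$, and finish with Definition \ref{speed}. If anything, you are more careful than the paper on the one delicate point: the paper claims the equality of degrees "follows by Theorem \ref{permutation2}" even though that theorem already \emph{assumes} a common degree of expansion, whereas you correctly flag this as a standing convention and supply the verification (via Lemma \ref{permutation1} and linearity) that the signed-permutation-plus-translation form preserves the degree.
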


\begin{proof}
Let $\mathcal{F}=\{\mathcal{S}_j\}_{j=1}^{\infty}$ be a collection of tuples of the ring $\mathbb{R}[x]$, and suppose that $\mathcal{N}(\mathcal{S}_a)=\mathcal{N}(\mathcal{S}_b)$ for $\mathcal{S}_a,\mathcal{S}_b \in \mathcal{F}$. By Theorem \ref{permutation2} get $\mathcal{S}_a=\mathrm{Sgn}(\tau)\tau(\mathcal{S}_b)+\mathcal{S}_k$, where $\mathrm{deg}(\mathcal{S}_k)<\mathrm{deg}(\mathcal{S}_b)$ and $\tau:\{1,2,\ldots,n\}\longrightarrow \{1,2,\ldots,n\}$ is some permutation. By Theorem \ref{permutation2}, we deduce $\mathrm{deg}(\mathcal{S}_a)=\mathrm{deg}(\mathcal{S}_b)$. By definition \ref{speed}, the result follows immediately.
\end{proof}
\bigskip

Theorem \ref{measure2} suggests that the measure of expansion of elements in the collection $\mathcal{F}$ is a norm. The speed of expansion $\upsilon(\mathcal{S})$ inherits this property given the relationship with the measure. The following proposition verifies this claim.

\begin{proposition}
The speed of expansion $\upsilon(\mathcal{S})$ is a norm.
\end{proposition}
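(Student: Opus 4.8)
The plan is to piggyback on the fact that the measure $\mathcal{N}(\mathcal{S})$ has already been shown to be a norm in Proposition \ref{measure2}, and to transfer each of the three defining properties of a norm through the relation $\upsilon(\mathcal{S}) = \mathcal{N}(\mathcal{S})/\mathrm{deg}(\mathcal{S})$. The key observation is that the degree of expansion is a fixed nonnegative integer attached to a tuple, and it is invariant under scalar multiplication and under forming sums of tuples that have a common degree of expansion; so dividing by $\mathrm{deg}(\mathcal{S})$ scales each inequality in Proposition \ref{measure2} by the same positive constant and preserves it.

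First I would check positivity: since $\mathrm{deg}(\mathcal{S}) > 0$ for any tuple of $\mathbb{R}[x]$ that genuinely admits an expansion, and $\mathcal{N}(\mathcal{S}) \geq 0$ with equality precisely when $\mathcal{S} = \mathcal{S}_0$ by Proposition \ref{measure2}(i), the quotient $\upsilon(\mathcal{S}) = \mathcal{N}(\mathcal{S})/\mathrm{deg}(\mathcal{S})$ is nonnegative and vanishes exactly for the null tuple. Next, for homogeneity, fix $\mu \in \mathbb{R}$; multiplying a tuple by a scalar does not change which phase of expansion reaches the rank, hence $\mathrm{deg}(\mu \mathcal{S}) = \mathrm{deg}(\mathcal{S})$, and combining this with Proposition \ref{measure2}(ii) gives $\upsilon(\mu \mathcal{S}) = \mathcal{N}(\mu\mathcal{S})/\mathrm{deg}(\mu\mathcal{S}) = \mu\,\mathcal{N}(\mathcal{S})/\mathrm{deg}(\mathcal{S}) = \mu\,\upsilon(\mathcal{S})$. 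Finally, for the triangle inequality I restrict to tuples $\mathcal{S}_1, \mathcal{S}_2$ of the same degree of expansion $d$ (the only setting in which the sum has an unambiguous degree, exactly as in Proposition \ref{measure2}(iii)); then $\mathrm{deg}(\mathcal{S}_1 + \mathcal{S}_2) = d$, and dividing Proposition \ref{measure2}(iii) through by $d$ yields $\upsilon(\mathcal{S}_1 + \mathcal{S}_2) \leq \upsilon(\mathcal{S}_1) + \upsilon(\mathcal{S}_2)$.

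The main obstacle is not any computation but a bookkeeping subtlety: I must be careful that $\mathrm{deg}(\cdot)$ really is invariant under the operations in question, i.e. that scalar multiplication by a nonzero $\mu$ does not lower the degree of expansion and that the sum of two tuples with common degree of expansion $d$ again has degree of expansion $d$ rather than something smaller (which could happen if leading behaviours cancel). The cleanest way around this is to adopt the same standing hypotheses already used for Proposition \ref{measure2} — namely that all tuples under consideration share the same degree of expansion — so that $\mathrm{deg}$ acts as a common positive constant throughout and the norm axioms descend verbatim. With that convention in place the proof is a two-line transfer of each axiom, and I would present it as three short paragraphs mirroring the three items of Proposition \ref{measure2}.
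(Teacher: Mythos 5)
Your proposal is correct and takes essentially the same route as the paper: both transfer the three norm axioms of $\mathcal{N}$ from Proposition \ref{measure2} through the quotient $\upsilon(\mathcal{S})=\mathcal{N}(\mathcal{S})/\mathrm{deg}(\mathcal{S})$. If anything you are more careful than the paper on the triangle inequality — the paper passes from $\bigl(\mathcal{N}(\mathcal{S}_1)+\mathcal{N}(\mathcal{S}_2)\bigr)/\mathrm{deg}(\mathcal{S}_1+\mathcal{S}_2)$ to $\mathcal{N}(\mathcal{S}_1)/\mathrm{deg}(\mathcal{S}_1)+\mathcal{N}(\mathcal{S}_2)/\mathrm{deg}(\mathcal{S}_2)$ without justifying the comparison of degrees, whereas your restriction to a common degree of expansion $d$ makes that step legitimate.
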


\begin{proof}
Let $\mathcal{F}$ be a collection of tuples of the polynomial in the ring $\mathbb{R}[x]$. Let $\mathcal{S}\in \mathcal{F}$, then $\upsilon(\mathcal{S})>0$ since $\mathcal{N}(\mathcal{S})>0$. In the case $\upsilon(\mathcal{S})=0$, we get by definition \ref{speed} that $\mathcal{N}(\mathcal{S})=0$. Using Theorem \ref{measure2}, we deduce $\mathcal{S}=\mathcal{S}_0$. Conversely, if $\mathcal{S}=\mathcal{S}_0$, then $\mathcal{N}(\mathcal{S})=\mathcal{N}(\mathcal{S}_0)=0$. This implies $\upsilon(\mathcal{S})=0$. Now, let $a\in \mathbb{R}$ for $a>0$. We deduce 
\begin{align}
\upsilon(a\mathcal{S})&=\frac{\mathcal{N}(a\mathcal{S})}{\mathrm{deg}(a\mathcal{S})}\nonumber \\&=\frac{a\mathcal{N}(\mathcal{S})}{\mathrm{deg}(\mathcal{S})}\nonumber \\&=a\upsilon(\mathcal{S})\nonumber
\end{align}
since, by Theorem \ref{permutation2}, the measure $\mathcal{N}(\mathcal{S})$ is a norm. Also, we get 
\begin{align}
\upsilon(\mathcal{S}_1+\mathcal{S}_2)&=\frac{\mathcal{N}(\mathcal{S}_1+\mathcal{S}_2)}{\mathrm{deg}(\mathcal{S}_1+\mathcal{S}_2)}\nonumber \\& \leq \frac{\mathcal{N}(\mathcal{S}_1)+\mathcal{N}(\mathcal{S}_2)}{\mathrm{deg}(\mathcal{S}_1+\mathcal{S}_2)}\nonumber \\& \leq \frac{\mathcal{N}(\mathcal{S}_1)}{\mathrm{deg}(\mathcal{S}_1)}+\frac{\mathcal{N}(\mathcal{S}_1)}{\mathrm{deg}(\mathcal{S}_2)}\nonumber \\&=\upsilon(\mathcal{S}_1)+\upsilon(\mathcal{S}_2).\nonumber
\end{align}
\end{proof}
\bigskip

Here, we show that if the boundary of two tuples of the polynomial ring $\mathbb{R}[x]$ coincides at some phase of expansion, then they must have the same speed of expansion. 

\begin{proposition}
Let $\mathcal{F}=\{\mathcal{S}_j\}_{j=1}^{\infty}$ be a collection of the tuples of the polynomial ring $\mathbb{R}[x]$. For any $\mathcal{S}_a,\mathcal{S}_b\in \mathcal{F}$ with $\mathrm{deg}(\mathcal{S}_a)=\mathrm{deg}(\mathcal{S}_b)$, if there exists some $n\geq 1$ such that 
\begin{align}
\mathcal{Z}[(\gamma^{-1}\circ\beta\circ\gamma\circ \nabla)^n(\mathcal{S}_a)]=\mathcal{Z}[(\gamma^{-1}\circ\beta\circ \gamma\circ\nabla)^n(\mathcal{S}_b)],\nonumber
\end{align}
then $\upsilon(\mathcal{S}_a)=\upsilon(\mathcal{S}_b)$.
\end{proposition}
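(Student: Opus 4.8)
The plan is to funnel everything through the measure $\mathcal{N}$ and the already-developed machinery on boundaries and ranks: once we know $\mathcal{N}(\mathcal{S}_a)=\mathcal{N}(\mathcal{S}_b)$ together with $\deg(\mathcal{S}_a)=\deg(\mathcal{S}_b)$, Definition~\ref{speed} gives $\upsilon(\mathcal{S}_a)=\mathcal{N}(\mathcal{S}_a)/\deg(\mathcal{S}_a)=\mathcal{N}(\mathcal{S}_b)/\deg(\mathcal{S}_b)=\upsilon(\mathcal{S}_b)$ immediately. So the whole proof reduces to extracting the equality of measures from the coincidence of boundaries.

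The core step is to apply Theorem~\ref{boundary coincides} to the pair $\mathcal{S}_a,\mathcal{S}_b$. Its hypotheses are precisely $\deg(\mathcal{S}_a)=\deg(\mathcal{S}_b)$ together with $\mathcal{Z}[(\gamma^{-1}\circ\beta\circ\gamma\circ\nabla)^{n}(\mathcal{S}_a)]=\mathcal{Z}[(\gamma^{-1}\circ\beta\circ\gamma\circ\nabla)^{n}(\mathcal{S}_b)]$ for an admissible $n$, so the converse direction of that theorem produces a tuple $\mathcal{S}_{\mathbb{R}}$ of $\mathbb{R}$ with $\mathcal{S}_a=\mathcal{S}_b+\mathcal{S}_{\mathbb{R}}$. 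Since the two tuples share the same degree of expansion, Theorem~\ref{rank1} then yields $\mathcal{R}(\mathcal{S}_a)=\mathcal{R}(\mathcal{S}_b)$, and Definition~\ref{measure1} gives $\mathcal{N}(\mathcal{S}_a)=\lVert\mathcal{R}(\mathcal{S}_a)\rVert=\lVert\mathcal{R}(\mathcal{S}_b)\rVert=\mathcal{N}(\mathcal{S}_b)$, which is what remained.

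The main obstacle is the mismatch of ranges: Theorem~\ref{boundary coincides} is stated for $1\le n<\deg(\mathcal{S}_a)-1$, whereas the present hypothesis only asks for \emph{some} $n\ge 1$. I would dispose of the remaining phases $n\ge\deg(\mathcal{S}_a)-1$ separately: at such late phases the entries of the expanded tuples are constant (or linear), and by Proposition~\ref{bdecreasing} the boundary only shrinks as the phase increases, so applying the recovery map $\Delta\circ\gamma^{-1}\circ\beta^{-1}\circ\gamma$ enough times transports the coincidence of boundaries back to an admissible phase, reducing to the case already treated. A secondary subtlety is that Theorem~\ref{boundary coincides} distinguishes the case $\mathcal{S}_a=\mathcal{S}_b+\mathcal{S}_{\mathbb{R}}$ from $\mathcal{S}_a=\mathcal{S}_b+\mathcal{S}_{\mathbb{R}[x]}$; I would make sure the hypothesis $\deg(\mathcal{S}_a)=\deg(\mathcal{S}_b)$ is genuinely invoked to exclude the latter, exactly as in the proof of that theorem, before passing to Theorem~\ref{rank1}.
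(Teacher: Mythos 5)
Your proposal follows the paper's own route exactly: invoke Theorem \ref{boundary coincides} to get $\mathcal{S}_a=\mathcal{S}_b+\mathcal{S}_{\mathbb{R}}$, pass through Theorem \ref{rank1} to equate the ranks and hence the measures, and divide by the common degree via Definition \ref{speed}. Your extra remarks about the range restriction $1\le n<\deg(\mathcal{S}_a)-1$ versus the proposition's bare ``some $n\ge 1$'' address a mismatch the paper silently glosses over, but otherwise the argument is the same.
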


\begin{proof}
Let $\mathcal{F}=\{\mathcal{S}_j\}_{j=1}^{\infty}$ and let $\mathcal{S}_a,\mathcal{S}_b\in \mathcal{F}$ with $\mathrm{deg}(\mathcal{S}_a)=\mathrm{deg}(\mathcal{S}_b)$. Suppose that $\mathcal{Z}[(\gamma^{-1}\circ\beta\circ\gamma\circ \nabla)^n(\mathcal{S}_a)]=\mathcal{Z}[(\gamma^{-1}\circ\beta\circ \gamma\circ\nabla)^n(\mathcal{S}_b)]$ for some $n\geq 1$, then it follows from Theorem \ref{boundary coincides} that $\mathcal{S}_a=\mathcal{S}_b+\mathcal{S}_{\mathbb{R}}$. By Theorem \ref{rank1}, it follows that $\mathcal{N}(\mathcal{S}_a)=\mathcal{N}(\mathcal{S}_b)$. The claim follows from definition \ref{speed}.
\end{proof}

\subsection{Momentum of phase expansions}

\begin{definition}\label{momentum}
Let $\{\mathcal{S}_j\}_{j=1}^{\infty}=\mathcal{F}$ be any collection of the tuples of $\mathbb{R}[x]$. By the \emph{momentum} of the $n$th phase expansion, denoted by $\mathcal{M}(\mathcal{S}^n_j)$, we mean 
\begin{align}
\mathcal{M}(\mathcal{S}^n_j):=\upsilon(\mathcal{S}_j^{n_j})\mathcal{H}(\mathcal{S}_j^n)\nonumber
\end{align}
where $\mathcal{B}^n$ denotes the set of boundary points of the $n^{th}$ phase expansion and 
\begin{align}
\mathcal{H}(\mathcal{S}_j^n)&=\sum \limits_{\mathcal{S}_k\in \mathcal{B}^n}||\mathcal{S}_k||\nonumber
\end{align}
is the \emph{mass} of the $n^{th}$ phase expansion with 
$$
||\mathcal{S}_k||:=\sqrt{\sum \limits_{i=1}^{n}|a_i|^2}
$$ 
for $\mathcal{S}_k=(a_1,a_2,\ldots,a_n)$.
\end{definition}

\section{Inverse problems}\label{sec:inverse}

In this section, we devote our attention to the study of ways to recover a tuple from expanded tuples of $\mathbb{R}[x]$ at any given phase of expansion. Using Proposition \ref{composite} and the concepts of integration, which can be viewed as an inverse of differentiation on tuples of $\mathbb{R}[x]$, we find that for any given expanded tuple, say $\mathcal{S}_1$ and given that the composite map 
$$
\gamma^{-1} \circ \beta \circ \gamma \circ \nabla:\{\mathcal{S}_i\}_{i=1}^{\infty}\longrightarrow \{\mathcal{S}_i\}_{i=1}^{\infty}
$$ 
is bijective, we have $\gamma^{-1} \circ \beta \circ \gamma \circ \nabla(\mathcal{S})=\mathcal{S}_1$ if and only if $\Delta \circ \gamma^{-1}\circ \beta^{-1} \circ \gamma(\mathcal{S}_1)=\mathcal{S}$. In most cases, we would like to keep track of the original tuple that satisfies certain initial conditions. Assume that at $a\in \mathbb{R}$, the desired tuple satisfies $\mathcal{S}_{a}=(a_1,a_2,\ldots,a_n)$, then the one copy map $\Delta \circ \gamma^{-1}\circ\beta^{-1}\circ \gamma(\mathcal{S}_1)$ will be evaluated at $a$, denoted by $\Delta_a \circ \gamma_a^{-1}\circ \beta_a^{-1} \circ \gamma_a$.

\subsection{Inverse problem for second phase expansions}

Let $\mathcal{S}:=(f_1,f_2,\ldots,f_n)$ be a tuple of $\mathbb{R}[x]$ expanded to the second phase. Let $\mathcal{S}^{1}$ and $\mathcal{S}^{2}$ be the first and the second phase expanded tuple, respectively. To recover the original tuple $\mathcal{S}$, we need to recover the first phase expanded tuple from the second, and then the original from the first. We can recover the first phase expanded tuple from the composite map
\begin{align}
\Delta_a \circ\gamma_a^{-1}\circ\beta_a^{-1}\circ \gamma_a(\mathcal{S}^{2})=\mathcal{S}^{1},\nonumber
\end{align}
and the original is obtained by 
\begin{align}
\Delta_b \circ \gamma_b^{-1}\circ \beta_b^{-1} \circ \gamma_b(\mathcal{S}^{1})=\mathcal{S}.\nonumber
\end{align}
Thus, we find $\Delta_b \circ \gamma_b^{-1}\circ \beta_b^{-1} \circ \gamma_b(\Delta_a \circ \gamma_a^{-1}\circ \beta_a^{-1} \circ \gamma_a(\mathcal{S}^{2}))=\mathcal{S}$ and 
\begin{align}
(\Delta_a \circ \gamma_a^{-1}\circ \beta_a^{-1} \circ \gamma_a)^2(\mathcal{S}^2)=\mathcal{S}.\nonumber
\end{align} 
if and only if $a=b$ for $a,b\in \mathbb{R}$.

\subsection{Inverse problem for higher phase expansions}

To recover a tuple from the $n^{th}$ phase expanded tuple, we only need $n$ copies of the recovery map of each phase. The recovery process can be performed by applying the $n$ copies of the map $\Delta\circ\gamma^{-1}\circ\beta^{-1}\circ\gamma$ to the $n^{th}$ phase expanded tuple, for values of $n$ reasonably small. In practice, it will suffice to apply the $n$ distinct copies of the map $(\Delta_{a_1}\circ\gamma_{a_1}^{-1}\circ\beta_{a_1}^{-1} \circ\gamma_{a_1})\circ(\Delta_{a_2}\circ\gamma_{a_2}^{-1}\circ \beta_{a_2}^{-1}\circ\gamma_{a_2})\circ\cdots\circ(\Delta_{a_n} \circ\gamma_{a_n}^{-1}\circ\beta_{a_n}^{-1}\circ\gamma_{a_n})$ to the $n^{th}$ phase expanded tuple. However, this process becomes less efficient and very brutal if the phase expansion number $n$ is sufficiently large. So we ask a fairly natural question as follows: 

\begin{question}
What is the most efficient way to recover a tuple from the $n^{th}$ expanded tuple for sufficiently large values of $n$?
\end{question}

\begin{theorem}
The set $\mathcal{T}:=\{\mathrm{Id}\}\quad \cup \quad \bigcup_{k=1}^{\infty}\left \{(\Delta \circ \gamma^{-1}\circ \beta^{-1} \circ \gamma)^{k}\right\}\quad \cup \quad \bigcup_{k=1}^{\infty}\\ \left \{(\gamma^{-1} \circ \beta \circ \gamma \circ \nabla)^{k}\right\}$ forms a group. 
\end{theorem}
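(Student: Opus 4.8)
The plan is to show that $\mathcal{T}$ is a group under composition of maps by verifying the group axioms directly, using the structural facts already established. Write $E := \gamma^{-1} \circ \beta \circ \gamma \circ \nabla$ for the expansion map and $R := \Delta \circ \gamma^{-1} \circ \beta^{-1} \circ \gamma$ for the recovery map, both viewed as operators on the collection of tuples of $\mathbb{R}[x]$ satisfying the relevant initial conditions at each phase. The first observation to record is that $R$ is a genuine two-sided inverse of $E$: the discussion in the Inverse Problems section shows $E(\mathcal{S}) = \mathcal{S}_1$ if and only if $R(\mathcal{S}_1) = \mathcal{S}$, so $R \circ E = E \circ R = \mathrm{Id}$, and hence $R^k \circ E^k = E^k \circ R^k = \mathrm{Id}$ for every $k \geq 1$. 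In particular every element of $\mathcal{T}$ is a bijection with inverse again in $\mathcal{T}$: $\mathrm{Id}^{-1} = \mathrm{Id} \in \mathcal{T}$, $(E^k)^{-1} = R^k \in \mathcal{T}$, and $(R^k)^{-1} = E^k \in \mathcal{T}$. Associativity is automatic since composition of maps is always associative, and $\mathrm{Id}$ is manifestly a two-sided identity lying in $\mathcal{T}$.

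The substantive point is closure under composition, and here I would argue by cases on the types of the two factors. Composing $\mathrm{Id}$ with anything returns that thing, so those cases are trivial. For two "recovery powers" $R^j \circ R^k = R^{j+k} \in \mathcal{T}$, and similarly $E^j \circ E^k = E^{j+k} \in \mathcal{T}$; these use only that iterating the same operator adds exponents. The mixed case $R^j \circ E^k$ (and symmetrically $E^j \circ R^k$) is the one requiring the inverse relation: using $R \circ E = \mathrm{Id}$ one cancels $\min(j,k)$ paired copies, leaving $R^{j-k}$ if $j > k$, the identity if $j = k$, and $E^{k-j}$ if $j < k$ — in every case an element of $\mathcal{T}$. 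So closure holds, and combined with the previous paragraph $\mathcal{T}$ is a group (indeed it is the cyclic group generated by $E$, isomorphic to $\mathbb{Z}$, once one knows $E$ has infinite order as an abstract operator — though that last remark is not needed for the statement).

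The main obstacle, and the place where I would be most careful, is well-definedness of the operators as maps $\{\mathcal{S}_i\} \to \{\mathcal{S}_i\}$ together with the exact sense in which $R \circ E = \mathrm{Id}$. The expansion $E$ strictly lowers degrees, so $E$ is not surjective onto all tuples, and $R$ involves indefinite integration, which is only single-valued once an initial condition is fixed; this is precisely why the paper repeatedly insists that the tuples "satisfy certain initial conditions at each phase of expansion." The honest version of the proof must therefore work inside the ambient set on which both $E$ and $R$ are already known (from the earlier propositions on bijectivity) to be mutually inverse bijections, and must note that $E^k$ and $R^k$ preserve this ambient set. Once that bookkeeping is in place, the group axioms fall out exactly as sketched; the risk is purely that the domain/codomain conventions are left vague, in which case $R \circ E = \mathrm{Id}$ could fail on the nose. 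I would state the closure computation for the mixed case as a short displayed chain of equalities and leave the rest as the routine verification it is.
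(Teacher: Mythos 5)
Your proposal is correct and follows essentially the same route as the paper: direct verification of the group axioms, with closure handled by the same case analysis (adding exponents for like powers, cancelling paired copies of $E$ and $R$ in the mixed case) and inverses supplied by $E^{l}\circ R^{l}=R^{l}\circ E^{l}=\mathrm{Id}$. If anything you are more careful than the paper, which only asserts the inverse relation ``for $l\leq n$'' and does not confront the domain issue you flag — namely that $E$ strictly lowers degrees and eventually annihilates any tuple, so the sense in which arbitrary powers of $E$ are invertible on the whole collection needs exactly the bookkeeping you describe.
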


\begin{proof}
Let $\mathcal{S}$ be any tuple of $\mathbb{R}[x]$ satisfying certain initial conditions at each expansion phase. We observe $\mathrm{Id}(\mathcal{S})=\mathcal{S}$, where 
$$
\mathrm{Id}(\mathcal{S})=\mathrm{Id}(f_1,f_2,\ldots,f_n):=(\mathrm{Id}(f_1),\mathrm{Id}(f_2),\ldots,\mathrm{Id}(f_n))=(f_1,f_2,\ldots,f_n)=\mathcal{S}
$$ 
That is, $\mathrm{Id}$ leaves each tuple of $\mathbb{R}[x]$ invariant. Again, arbitrarily choose a tuple $\mathcal{S}$ of $\mathbb{R}[x]$. We get $(\Delta\circ \gamma^{-1}\circ \beta^{-1}\circ\gamma)^{l}\circ(\Delta\circ \gamma^{-1}\circ\beta^{-1}\circ\gamma)^{m}(\mathcal{S})=(\Delta \circ\gamma^{-1}\circ\beta^{-1}\circ\gamma)^{m}\circ(\Delta\circ \gamma^{-1}\circ \beta^{-1} \circ \gamma)^{l}(\mathcal{S})=(\Delta \circ \gamma^{-1}\circ \beta^{-1} \circ \gamma)^{m+l}(\mathcal{S})$. In other words, applying $m$ and $l$ copies of a recovery map is the same as applying $m+l$ copies of the recovery map to  the tuple $\mathcal{S}$. Thus $(\Delta \circ \gamma^{-1}\circ \beta^{-1} \circ \gamma)^{l}\circ (\Delta \circ \gamma^{-1}\circ \beta^{-1} \circ \gamma)^{m}\in \mathcal{T}$. A similar characterization applies to expansions. For mixed maps with distinct copies, we find that 
$$
(\gamma^{-1}\circ\beta\circ\gamma\circ\nabla)^{l}\circ(\Delta \circ\gamma^{-1}\circ\beta^{-1}\circ\gamma)^{m}=(\gamma^{-1}\circ \beta\circ\gamma\circ\nabla)^{l-m}
$$ 
is an expansion provided that $l-m>0$ and is a recovery map in the case $l-m<0$. Each of the either situations is still contained in the set $\mathcal{T}$. Thus, the set is closed. Again, arbitrarily choose a tuple whose degree of expansion is $n$. We deduce 
$$
(\gamma^{-1}\circ\beta\circ\gamma\circ\nabla)^{l}\circ(\Delta \circ\gamma^{-1}\circ\beta^{-1}\circ\gamma)^{l}=(\Delta\circ \gamma^{-1}\circ\beta^{-1}\circ\gamma)^{l}\circ(\gamma^{-1}\circ \beta\circ\gamma\circ\nabla)^{l}=\mathrm{Id}\in \mathcal{T}
$$ 
for $l\leq n$. Thus, each copy of an expansion has a recovery and vice-versa in the set $\mathcal{T}$. The associative property is easy to verify. Hence, the collection is a group.
\end{proof}

\section{Embedding and extension of expansions}\label{sec:expansion embedding and extension}

In this section, we introduce the notion of an \emph{embedding} and an \emph{extension} of a phase of an expansion.

\begin{definition}\label{embedding}
Let $\mathcal{F}=\{\mathcal{S}_j\}_{j=1}^{\infty}$ be a collection of tuples of $\mathbb{R}[x]$. Let $\mathcal{S}_a,\mathcal{S}_b\in \mathcal{F}$. We say that expansion $(\gamma^{-1} \circ \beta \circ\gamma\circ\nabla)^{n_1}(\mathcal{S}_b)$ is an \emph{embedding} of expansion $(\gamma^{-1}\circ\beta\circ \gamma\circ\nabla)^{n_2}(\mathcal{S}_a)$ if 
\begin{align}
\mathcal{Z}[(\gamma^{-1}\circ\beta\circ\gamma\circ\nabla)^{n_1}(\mathcal{S}_b)]\subset\mathcal{Z}[(\gamma^{-1}\circ\beta\circ \gamma\circ\nabla)^{n_2}(\mathcal{S}_a)]\nonumber
\end{align}
for some $n_1, n_2\in \mathbb{N}$. On the other hand, we say that $(\gamma^{-1}\circ\beta\circ\gamma\circ\nabla)^{n_2}(\mathcal{S}_a)$ is an \emph{extension} of the expansion $(\gamma^{-1}\circ\beta\circ\gamma\circ\nabla)^{n_1}(\mathcal{S}_b)$.
\end{definition}
\bigskip

The notion of \emph{embedding} and an \emph{extension} of expansion could be adapted in practice. Intuitively, as the cell wall of a living organism becomes turgid, the membranes expand to make room for this behaviour. This notion reinforces that, in practice, we can pinch any two portions of the membrane and join these ends together, thereby obtaining a membrane similar to the previous membrane but now with fewer materials of the previous membrane. In relation to our work, it is natural to ask whether there exists a tuple whose boundary of expansion represents this boundary. If it does, then how does this tuple relate to the tuple of the actual expansion. The sequel will be devoted to investigate these things in far greater detail.

\begin{proposition}\label{mass embedding}
Let $\mathcal{F}=\{\mathcal{S}_j\}_{j=1}^{\infty}$ be a collection of $n$ tuples of $\mathbb{R}[x]$ and $\mathcal{S}_{a}, \mathcal{S}_{b}\in \mathcal{F}$. If $(\gamma^{-1}\circ\beta\circ \gamma\circ\nabla)^{n_2}(\mathcal{S}_a)$ is an embedding of the expansion $(\gamma^{-1} \circ \beta \circ \gamma \circ \nabla)^{n_1}(\mathcal{S}_b)$, then 
\begin{align}
\mathcal{H}(\mathcal{S}^{n_2}_{a})<\mathcal{H}(\mathcal{S}^{n_1}_b).\nonumber
\end{align}
\end{proposition}

\begin{proof}
Let $\mathcal{S}_{a},\mathcal{S}_{b}\in \mathcal{F}$ and suppose that $(\gamma^{-1}\circ\beta\circ\gamma\circ\nabla)^{n_2}(\mathcal{S}_a)$ is an embedding of expansion $(\gamma^{-1} \circ\beta\circ\gamma\circ \nabla)^{n_1}(\mathcal{S}_b)$. It follows by definition \ref{embedding}
\begin{align}
\mathcal{Z}[(\gamma^{-1}\circ\beta\circ\gamma\circ\nabla)^{n_2}(\mathcal{S}_a)]\subset \mathcal{Z}[(\gamma^{-1} \circ \beta \circ \gamma\circ \nabla)^{n_1}(\mathcal{S}_b)]\nonumber
\end{align}
for some $n_1,n_2\in \mathbb{N}$. The claim follows from this condition by definition \ref{momentum}.
\end{proof}

We now prove the statement we made in the previous remarks concerning the finite process of embedding of expansions. 

\begin{theorem}
Let $\mathcal{F}=\{\mathcal{S}_j\}_{j=1}^{\infty}$ be a collection of tuples of the ring $\mathbb{R}[x]$. Some $\mathcal{S}_a\in \mathcal{F}$ do not admit embedding.
\end{theorem}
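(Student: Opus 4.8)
The plan is to run an infinite descent on the size of the boundary of an expansion, in the same spirit as Theorem~\ref{exist} and Proposition~\ref{finite}, using the monotonicity supplied by Proposition~\ref{mass embedding} and Proposition~\ref{bdecreasing}. The point is that if one phase expansion is an embedding of another then, by Definition~\ref{embedding}, its boundary set is a proper subset of the other's, so it has strictly fewer boundary points, while by Proposition~\ref{mass embedding} its mass $\mathcal{H}$ is strictly smaller. Since the number of boundary points of any phase expansion is a non-negative integer, a quantity of this type cannot strictly decrease indefinitely.

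Concretely, I would assume for contradiction that every $\mathcal{S}_j\in\mathcal{F}$ admits an embedding, and then build a sequence $\mathcal{S}_{j_0},\mathcal{S}_{j_1},\mathcal{S}_{j_2},\ldots$ of tuples of $\mathcal{F}$ together with phases $n_k$ for which $(\gamma^{-1} \circ \beta \circ \gamma \circ \nabla)^{n_{k+1}}(\mathcal{S}_{j_{k+1}})$ is an embedding of $(\gamma^{-1} \circ \beta \circ \gamma \circ \nabla)^{n_k}(\mathcal{S}_{j_k})$; this is possible because each $\mathcal{S}_{j_k}$, lying in $\mathcal{F}$, admits an embedding by the standing assumption. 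Definition~\ref{embedding} then forces the strict containments
\begin{align}
\mathcal{Z}[(\gamma^{-1} \circ \beta \circ \gamma \circ \nabla)^{n_{k+1}}(\mathcal{S}_{j_{k+1}})]\subsetneq\mathcal{Z}[(\gamma^{-1} \circ \beta \circ \gamma \circ \nabla)^{n_k}(\mathcal{S}_{j_k})]\nonumber
\end{align}
for every $k\geq 0$, so the non-negative integers $c_k:=\#\mathcal{Z}[(\gamma^{-1} \circ \beta \circ \gamma \circ \nabla)^{n_k}(\mathcal{S}_{j_k})]$ satisfy $c_0>c_1>c_2>\cdots$, which is impossible; equivalently, by Proposition~\ref{mass embedding} the masses $\mathcal{H}(\mathcal{S}_{j_k}^{n_k})$ form a strictly decreasing sequence of non-negative reals whose successive drops are each accounted for by the loss of at least one boundary point. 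Either way the descent cannot continue, the assumption collapses, and some $\mathcal{S}_a\in\mathcal{F}$ admits no embedding.

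The step that needs care -- and what I expect to be the main obstacle -- is the bookkeeping of phases in the descent: the hypothesis that ``$\mathcal{S}_{j_k}$ admits an embedding'' only yields a containment at \emph{some} phase of $\mathcal{S}_{j_k}$, not necessarily at the phase $n_k$ produced in the previous step, so a priori the displayed chain need not thread a single coherent sequence of phases and the integers $c_k$ need not be directly comparable. I would dispose of this by normalizing the phases. Because of the initial-condition convention the map $\Delta \circ \gamma^{-1}\circ \beta^{-1} \circ \gamma$ is a genuine two-sided inverse of $\gamma^{-1} \circ \beta \circ \gamma \circ \nabla$, so given an embedding witnessed at phases $n_1\leq n_2$ one may replace the inner tuple $\mathcal{S}_b$ by $(\Delta \circ \gamma^{-1}\circ \beta^{-1} \circ \gamma)^{n_2-n_1}(\mathcal{S}_b)$, which has the same boundary at phase $n_2$, and thus assume $n_1=n_2$; alternatively one restricts attention throughout to first-phase expansions -- the regime in which the Sendov reformulation of the previous subsection is stated -- where every boundary set is $\mathcal{Z}[(\gamma^{-1} \circ \beta \circ \gamma \circ \nabla)^1(\mathcal{S}_j)]$ and the $c_k$ are automatically a strictly decreasing sequence of non-negative integers by Proposition~\ref{bdecreasing}. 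With this normalization the infinite descent of the previous paragraph goes through and the theorem follows.
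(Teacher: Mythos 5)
Your proof follows the same overall skeleton as the paper's: assume every element of $\mathcal{F}$ admits an embedding, iterate to produce an infinite chain of nested boundaries $\mathcal{Z}[(\gamma^{-1} \circ \beta \circ \gamma \circ \nabla)^{n_{k+1}}(\mathcal{S}_{j_{k+1}})]\subsetneq\mathcal{Z}[(\gamma^{-1} \circ \beta \circ \gamma \circ \nabla)^{n_k}(\mathcal{S}_{j_k})]$, and derive a contradiction from the impossibility of infinite strict descent. Where you differ is in the termination step, and your version is the sounder one. The paper descends on the masses $\mathcal{H}(\mathcal{S}_{j_k}^{n_k})$ via Proposition~\ref{mass embedding} and concludes that the strictly decreasing sequence of masses "eventually descends to zero," contradicting $\mathcal{H}(\mathcal{S}_j^{n_j})>0$; but a strictly decreasing sequence of positive reals need not reach (or even approach) zero, so the paper's final step is a non sequitur as written. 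Your descent on the cardinalities $c_k=\#\mathcal{Z}[\cdots]$, which are non-negative integers and hence cannot strictly decrease forever, closes exactly this gap, and your remark that each mass drop is "accounted for by the loss of at least one boundary point" is the right way to see why the two descents are really the same one. Your paragraph on phase bookkeeping addresses a genuine imprecision that the paper simply glosses over (Definition~\ref{embedding} permits the containment at arbitrary pairs of phases), and your normalization via the inverse map is a reasonable fix, though you should be aware it leans on the initial-condition convention and on the boundary being preserved under that normalization, neither of which the paper verifies. In short: same strategy, but your choice of a well-founded (integer-valued) descending quantity repairs the weakest step of the paper's own argument.
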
 

\begin{proof}
Suppose that the collection $\mathcal{F}=\{\mathcal{S}_j\}_{j=1}^{\infty}$ admits an embedding for all $\mathcal{S}_a\in \mathcal{F}$. Then for some $\mathcal{S}_{1}\in \mathcal{F}$, it follows by definition \ref{embedding} that there exist some $\mathcal{S}_{2}\in \mathcal{F}$ such that 
\begin{align}
\mathcal{Z}[(\gamma^{-1}\circ\beta\circ\gamma\circ\nabla)^{n_2}(\mathcal{S}_2)]\subset \mathcal{Z}[(\gamma^{-1}\circ\beta\circ \gamma\circ\nabla)^{n_1}(\mathcal{S}_1)].\nonumber
\end{align}
By Proposition \ref{mass embedding}, we have  $\mathcal{H}(\mathcal{S}_2^{n_2})<\mathcal{H}(\mathcal{S}_1^{n_1})$. Since $\mathcal{S}_2\in \mathcal{F}$, it implies that there exists some $\mathcal{S}_3\in \mathcal{F}$ such that 
\begin{align}
\mathcal{Z}[(\gamma^{-1}\circ\beta\circ\gamma\circ\nabla)^{n_3}(\mathcal{S}_3)]\subset\mathcal{Z}[(\gamma^{-1}\circ\beta\circ \gamma\circ\nabla)^{n_2}(\mathcal{S}_2)].\nonumber
\end{align}
By Proposition \ref{mass embedding}, we get $\mathcal{H}(\mathcal{S}_3^{n_3})<\mathcal{H}(\mathcal{S}_2^{n_2})$. Since the collection is infinite, we obtain by induction 
\begin{align}
\mathcal{H}(\mathcal{S}_1^{n_1})>\mathcal{H}(\mathcal{S}_2^{n_2})>\cdots >\mathcal{H}(\mathcal{S}_n^{n_n})>\cdots >\mathcal{H}(\mathcal{S}_{n+1}^{k_{n+1}})>\cdots >\mathcal{H}(\mathcal{S}_r^{k_r})= 0.\nonumber
\end{align}
We eventually obtain a sequence of masses equal to zero. This cannot happen since the mass $\mathcal{H}(\mathcal{S}_j^{n_j})$ for $j\geq 1$ of elements in the collection $\mathcal{F}$ satisfies 
\begin{align}
\mathcal{H}(\mathcal{S}_j^{n_j})>0.\nonumber 
\end{align}
\end{proof}
\bigskip

\begin{proposition}
Let $\mathcal{F}=\{\mathcal{S}\}_{j=1}^{\infty}$ be a collection of tuples of $\mathbb{R}[x]$. Let $\mathcal{S}_{a}, \mathcal{S}_b\in \mathcal{F}$ and suppose that $(\gamma^{-1} \circ \beta\circ\gamma\circ\nabla)^{n_2}(\mathcal{S}_a)$ is an embedding of expansion $(\gamma^{-1} \circ \beta \circ \gamma \circ \nabla)^{n_1}(\mathcal{S}_b)$. If $\mathcal{M}(\mathcal{S}^{n_1}_a)=\mathcal{M}(\mathcal{S}^{n_2}_b)$, then \begin{align}
\nu (\mathcal{S}_b^{n_2})<\nu (\mathcal{S}_a^{n_1}).\nonumber
\end{align}
\end{proposition}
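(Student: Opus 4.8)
The plan is to reduce the statement to the single strict mass inequality furnished by Proposition \ref{mass embedding}, combined with the defining identity $\mathcal{M} = \upsilon \cdot \mathcal{H}$ of the momentum from Definition \ref{momentum}. First I would record that, because $(\gamma^{-1} \circ \beta \circ \gamma \circ \nabla)^{n_2}(\mathcal{S}_a)$ is an embedding of $(\gamma^{-1} \circ \beta \circ \gamma \circ \nabla)^{n_1}(\mathcal{S}_b)$, the boundary set of the former is contained in the boundary set of the latter, so Proposition \ref{mass embedding} gives the strict inequality of masses for the relevant phases; and since each mass $\mathcal{H}(\mathcal{S}_j^{n})$ is a sum of norms of (nonempty, by the boundary-count discussion around Proposition \ref{bdecreasing}) boundary points, every mass occurring is strictly positive. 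Hence all the quantities below are positive reals and division is legitimate.

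Next I would expand the hypothesis $\mathcal{M}(\mathcal{S}^{n_1}_a) = \mathcal{M}(\mathcal{S}^{n_2}_b)$ via Definition \ref{momentum}, writing it as $\upsilon(\mathcal{S}_a^{n_1})\,\mathcal{H}(\mathcal{S}_a^{n_1}) = \upsilon(\mathcal{S}_b^{n_2})\,\mathcal{H}(\mathcal{S}_b^{n_2})$. Solving for the ratio of speeds yields $\upsilon(\mathcal{S}_b^{n_2})/\upsilon(\mathcal{S}_a^{n_1}) = \mathcal{H}(\mathcal{S}_a^{n_1})/\mathcal{H}(\mathcal{S}_b^{n_2})$, so that the desired conclusion $\upsilon(\mathcal{S}_b^{n_2}) < \upsilon(\mathcal{S}_a^{n_1})$ is \emph{equivalent} to the mass comparison $\mathcal{H}(\mathcal{S}_a^{n_1}) < \mathcal{H}(\mathcal{S}_b^{n_2})$. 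Thus the whole proof collapses to establishing that the mass attached to the embedded expansion of $\mathcal{S}_a$ is strictly smaller than the mass attached to the ambient expansion of $\mathcal{S}_b$, which is exactly the content of Proposition \ref{mass embedding} applied to the embedding hypothesis, possibly read off directly from Definition \ref{embedding} at the matching phase together with Proposition \ref{bdecreasing} for the monotonicity of the boundary mass under further phases.

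The step I expect to be genuinely delicate is the bookkeeping of the phase indices $n_1, n_2$ on the two tuples: Proposition \ref{mass embedding} is phrased with $n_2$ sitting on $\mathcal{S}_a$ and $n_1$ on $\mathcal{S}_b$, whereas the momentum hypothesis pairs $n_1$ with $\mathcal{S}_a$ and $n_2$ with $\mathcal{S}_b$, so I would have to be careful that the inclusion of boundary sets I invoke is the one actually guaranteed by the embedding assumption — either by unwinding Definition \ref{embedding} at the correct phase, or by observing that it is the set-theoretic containment of boundary points (and hence the strict mass drop) that drives the comparison irrespective of the index labels. Once that is pinned down, the remainder — positivity of masses, homogeneity of $\upsilon$ from Definition \ref{speed}, and the elementary arithmetic of dividing an equality of positive products — is routine and I would not belabour it.
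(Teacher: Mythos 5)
Your proposal follows essentially the same route as the paper's own proof: invoke Proposition \ref{mass embedding} to get the strict mass inequality from the embedding hypothesis, then divide the equality of momenta $\mathcal{M}=\upsilon\cdot\mathcal{H}$ to convert it into the reversed strict inequality of speeds. The index mismatch you flag as the delicate point is real, but the paper's proof does not resolve it either --- it simply cites the mass inequality $\mathcal{H}(\mathcal{S}^{n_2}_{a})<\mathcal{H}(\mathcal{S}^{n_1}_b)$ and declares that the result ``follows immediately,'' so your treatment is, if anything, more candid about the gap than the original.
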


\begin{proof}
Let $\mathcal{F}=\{\mathcal{S}_j\}_{j=1}^{\infty}$ and $\mathcal{S}_a,\mathcal{S}_b\in \mathcal{F}$. Suppose that $(\gamma^{-1}\circ\beta\circ\gamma\circ\nabla)^{n_2}(\mathcal{S}_a)$ is an embedding of the expansion $(\gamma^{-1} \circ\beta\circ\gamma\circ\nabla)^{n_1}(\mathcal{S}_b)$. By Proposition \ref{mass embedding}, we have $\mathcal{H}(\mathcal{S}^{n_2}_{a})<\mathcal{H}(\mathcal{S}^{n_1}_b)$. Using equation 
\begin{align}
\mathcal{M}(\mathcal{S}_j^n)&=\nu (\mathcal{S}_j^n) \mathcal{H}(\mathcal{S}_j^n)\nonumber
\end{align}
with the condition $\mathcal{M}(\mathcal{S}^{n_1}_a)=\mathcal{M}(\mathcal{S}^{n_2}_b)$, the claim follows immediately.
\end{proof}

\subsection{The index of expansion}

In this section, we introduce the concept of the \emph{index} $\mathcal{I}(\mathcal{S}_j)$ of expansion of the tuple $\mathcal{S}_j$. 

\begin{definition}
Let $\mathcal{P}:=\{\mathcal{S}_j\}_{j=1}^{n}$ be a finite collection of tuples of $\mathbb{R}[x]$. By the \emph{index} of the $m^{th}$ phase expansion of the tuple $\mathcal{S}_k$ for $1\leq k \leq n$, we mean the ratio 
\begin{align}
\mathcal{I}(\mathcal{S}_k^m)=\frac{\sum \limits_{j=1}^{n}\mathcal{M}(\mathcal{S}^m_{j})}{\mathcal{M}(\mathcal{S}^m_k)}.\nonumber
\end{align}
\end{definition}
\bigskip

Here, we establish an inequality that relates the index of expansion of a tuple to the largest size of the number of embeddings of expansion.

\begin{theorem}\label{index}
Let $\mathcal{P}:=\{\mathcal{S}_j\}_{j=1}^{n}$ and suppose that $(\gamma^{-1}\circ\beta\circ\gamma\circ\nabla)^{n_k}(\mathcal{S}_{k})$ $(1\leq k\leq n)$ admits an embedding $(\gamma^{-1}\circ\beta\circ\gamma\circ\nabla)^{n_j}(\mathcal{S}_{j})$ for all $1\leq j\leq n$. If $\nu(\mathcal{S}_k^{n_k})\geq \nu (\mathcal{S}_j^{n_j})$ for all $1\leq j\leq n$, then 
\begin{align}
\mathcal{I}(\mathcal{S}^{n_k}_{k})<n.\nonumber
\end{align}
\end{theorem}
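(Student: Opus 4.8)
The plan is to unwind the definition of the index and exploit the hypothesis that $(\gamma^{-1} \circ \beta \circ \gamma \circ \nabla)^{n_j}(\mathcal{S}_{j})$ is an embedding of $(\gamma^{-1} \circ \beta \circ \gamma \circ \nabla)^{n_k}(\mathcal{S}_{k})$ for every $j$. First I would write
\begin{align}
\mathcal{I}(\mathcal{S}^{n_k}_{k})=\frac{\sum_{j=1}^{n}\mathcal{M}(\mathcal{S}^{n_j}_{j})}{\mathcal{M}(\mathcal{S}^{n_k}_{k})}=\sum_{j=1}^{n}\frac{\mathcal{M}(\mathcal{S}^{n_j}_{j})}{\mathcal{M}(\mathcal{S}^{n_k}_{k})},\nonumber
\end{align}
so that it suffices to show each summand is strictly less than $1$, except possibly the $j=k$ term which equals $1$; then the $n$ terms together sum to strictly less than $n$ provided at least one embedding is proper. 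So the core of the argument is the pointwise comparison $\mathcal{M}(\mathcal{S}^{n_j}_{j})<\mathcal{M}(\mathcal{S}^{n_k}_{k})$ for $j\neq k$.

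To get this, recall $\mathcal{M}(\mathcal{S}^n_j)=\nu(\mathcal{S}_j^{n_j})\,\mathcal{H}(\mathcal{S}_j^n)$ from Definition \ref{momentum}. Since $(\gamma^{-1} \circ \beta \circ \gamma \circ \nabla)^{n_j}(\mathcal{S}_{j})$ is an embedding of $(\gamma^{-1} \circ \beta \circ \gamma \circ \nabla)^{n_k}(\mathcal{S}_{k})$, Proposition \ref{mass embedding} gives $\mathcal{H}(\mathcal{S}^{n_j}_{j})<\mathcal{H}(\mathcal{S}^{n_k}_{k})$. Combining this with the hypothesis $\nu(\mathcal{S}_k^{n_k})\geq \nu(\mathcal{S}_j^{n_j})$, I would estimate
\begin{align}
\mathcal{M}(\mathcal{S}^{n_j}_{j})=\nu(\mathcal{S}_j^{n_j})\,\mathcal{H}(\mathcal{S}_j^{n_j})\leq \nu(\mathcal{S}_k^{n_k})\,\mathcal{H}(\mathcal{S}_j^{n_j})<\nu(\mathcal{S}_k^{n_k})\,\mathcal{H}(\mathcal{S}_k^{n_k})=\mathcal{M}(\mathcal{S}^{n_k}_{k}),\nonumber
\end{align}
which is exactly the required strict inequality (here one uses that $\nu(\mathcal{S}_k^{n_k})>0$, which follows from the measure/speed being a norm together with $\mathcal{S}_k$ not being constant, so the middle strict inequality survives multiplication). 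Dividing through by $\mathcal{M}(\mathcal{S}^{n_k}_{k})>0$ yields $\mathcal{M}(\mathcal{S}^{n_j}_{j})/\mathcal{M}(\mathcal{S}^{n_k}_{k})<1$ for each $j\neq k$, and summing over $j=1,\dots,n$ (the $k$-th summand contributing exactly $1$) gives $\mathcal{I}(\mathcal{S}^{n_k}_{k})<n$.

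The main obstacle I anticipate is not the chain of inequalities itself but justifying that the embeddings are proper inclusions, i.e.\ that Proposition \ref{mass embedding} really delivers a strict drop in mass for every $j\neq k$ and that the case $j=k$ (a trivial self-embedding) is handled consistently; one must also be careful that the index is well defined, i.e.\ $\mathcal{M}(\mathcal{S}^{n_k}_{k})\neq 0$, which again reduces to $\mathcal{S}_k$ not being a tuple of $\mathbb{R}$. Granting these, the proof is a short computation as above.
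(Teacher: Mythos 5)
Your proposal is correct and takes essentially the same route as the paper: both unwind $\mathcal{M}(\mathcal{S}_j^{n_j})=\nu(\mathcal{S}_j^{n_j})\mathcal{H}(\mathcal{S}_j^{n_j})$, invoke Proposition \ref{mass embedding} to bound each $\mathcal{H}(\mathcal{S}_j^{n_j})$ by $\mathcal{H}(\mathcal{S}_k^{n_k})$, and use the speed hypothesis to bound each $\nu(\mathcal{S}_j^{n_j})$ by $\nu(\mathcal{S}_k^{n_k})$, giving $\sum_j \mathcal{M}(\mathcal{S}_j^{n_j})<n\,\mathcal{M}(\mathcal{S}_k^{n_k})$. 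Your write-up is in fact slightly more careful than the paper's about where strictness enters, about the degenerate $j=k$ term, and about the positivity of the denominator, all of which the paper passes over silently.
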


\begin{proof}
Let $\mathcal{P}:=\{\mathcal{S}_j\}_{j=1}^{n}$ and suppose that $(\gamma^{-1}\circ\beta\circ\gamma\circ\nabla)^{n_k}(\mathcal{S}_{k})$ $(1\leq k\leq n)$ admits an embedding $(\gamma^{-1} \circ \beta \circ \gamma \circ \nabla)^{n_j}(\mathcal{S}_{j})$ for all $1\leq j \leq n$. We deduce
\begin{align}
\sum \limits_{j=1}^{n}\mathcal{M}(\mathcal{S}^{n_j}_j)&=\mathcal{M}(\mathcal{S}^{n_1}_1)+\mathcal{M}(\mathcal{S}^{n_2}_2)+\cdots+\mathcal{M}(\mathcal{S}^{s_k}_k)+\cdots+\mathcal{M}(\mathcal{S}^{n_n}_n)\nonumber \\&=\nu (\mathcal{S}_1^{n_1})\mathcal{H}(\mathcal{S}^{n_1}_1)+\nu (\mathcal{S}_2^{n_2})\mathcal{H}(\mathcal{S}^{n_2}_2)+\cdots+\nu (\mathcal{S}_k^{n_k})\mathcal{H}(\mathcal{S}^{n_k}_k)+\cdots+\nu (\mathcal{S}_n^{n_n})\mathcal{H}(\mathcal{S}^{n_n}_n)\nonumber \\&\leq \nu(\mathcal{S}_1^{n_1})\mathcal{H}(\mathcal{S}^{n_k}_k)+\nu(\mathcal{S}_2^{n_2})\mathcal{H}(\mathcal{S}^{n_k}_k)+\cdots+\nu(\mathcal{S}_k^{n_k})\mathcal{H}(\mathcal{S}^{n_k}_k)+\cdots+\nu(\mathcal{S}_n^{n_n})\mathcal{H}(\mathcal{S}^{n_k}_k)\nonumber \\&\leq n\nu (\mathcal{S}_k^{n_k})\mathcal{H}(\mathcal{S}^{n_k}_k)\nonumber \\&=n\mathcal{M}(\mathcal{S}^{n_k}_k)\nonumber
\end{align}
and the inequality is established.
\end{proof}

\begin{corollary}
Let $\mathcal{P}:=\{\mathcal{S}_j\}_{j=1}^{n}$ and suppose that $(\gamma^{-1}\circ\beta\circ\gamma\circ\nabla)^{n_k}(\mathcal{S}_{k})$ $(1\leq k\leq n)$ admits an embedding $(\gamma^{-1}\circ\beta\circ\gamma\circ\nabla)^{n_j}(\mathcal{S}_{j})$ for all $k<j\leq n$ for  $k\geq 1$.  If $\nu(\mathcal{S}_k^{n_k})\geq \nu (\mathcal{S}_j^{n_j})$ for all $k<j\leq n$~$(k\geq 1)$, then 
\begin{align}
\sum \limits_{r=1}^{n}\mathcal{I}(\mathcal{S}^{n_r}_{r})&<\frac{n(n+1)}{2}.\nonumber
\end{align}
\end{corollary}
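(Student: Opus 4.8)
The plan is to apply Theorem \ref{index} not to the whole collection $\mathcal{P}$ at once but to each of the $n$ nested ``tail'' sub-collections
\begin{align}
\mathcal{P}_r := \{\mathcal{S}_j\}_{j=r}^{n}, \qquad 1\leq r \leq n,\nonumber
\end{align}
and then to add up the resulting estimates.

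First I would fix $r$ and check that $\mathcal{P}_r$, together with $\mathcal{S}_r$ as the distinguished tuple, satisfies the hypotheses of Theorem \ref{index}. The collection $\mathcal{P}_r$ consists of $n-r+1$ tuples of $\mathbb{R}[x]$; by assumption $(\gamma^{-1} \circ \beta \circ \gamma \circ \nabla)^{n_r}(\mathcal{S}_r)$ admits the embedding $(\gamma^{-1} \circ \beta \circ \gamma \circ \nabla)^{n_j}(\mathcal{S}_j)$ for every $r<j\leq n$, while for $j=r$ the inclusion $\mathcal{Z}[(\gamma^{-1} \circ \beta \circ \gamma \circ \nabla)^{n_r}(\mathcal{S}_r)] \subset \mathcal{Z}[(\gamma^{-1} \circ \beta \circ \gamma \circ \nabla)^{n_r}(\mathcal{S}_r)]$ is trivial, so an embedding exists for every index occurring in $\mathcal{P}_r$. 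Similarly $\nu(\mathcal{S}_r^{n_r}) \geq \nu(\mathcal{S}_j^{n_j})$ holds for all $r<j\leq n$ by hypothesis and with equality for $j=r$. Hence Theorem \ref{index}, applied to $\mathcal{P}_r$, gives
\begin{align}
\mathcal{I}(\mathcal{S}_r^{n_r}) < n-r+1,\nonumber
\end{align}
where $\mathcal{I}(\mathcal{S}_r^{n_r}) = \dfrac{\sum_{j=r}^{n}\mathcal{M}(\mathcal{S}_j^{n_j})}{\mathcal{M}(\mathcal{S}_r^{n_r})}$ is the index taken relative to $\mathcal{P}_r$.

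Next I would sum these $n$ inequalities over $r=1,\ldots,n$ and reindex the right-hand side by $m=n-r+1$, obtaining
\begin{align}
\sum_{r=1}^{n}\mathcal{I}(\mathcal{S}_r^{n_r}) < \sum_{r=1}^{n}(n-r+1) = \sum_{m=1}^{n} m = \frac{n(n+1)}{2},\nonumber
\end{align}
which is the asserted bound.

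The point that needs the most attention is the bookkeeping of which collection each symbol $\mathcal{I}(\mathcal{S}_r^{n_r})$ refers to: Theorem \ref{index} bounds the index by the \emph{cardinality} of the collection to which it is applied, so the sharp constant $n(n+1)/2$ is forced precisely by running the theorem on the shrinking tails $\mathcal{P}_r$ rather than on $\mathcal{P}$ every time (the latter would only yield the cruder bound $n^2$). Beyond that the argument is routine: the trivial self-embedding supplies the missing index $j=r$ for each $\mathcal{P}_r$, the strict mass inequality $\mathcal{H}(\mathcal{S}_j^{n_j})<\mathcal{H}(\mathcal{S}_r^{n_r})$ coming from Proposition \ref{mass embedding} keeps each of the $n$ estimates strict, and the identity $\sum_{m=1}^{n}m=\tfrac{n(n+1)}{2}$ closes the sum.
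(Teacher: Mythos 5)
Your proof is correct and follows the paper's own (one-line) argument: the paper merely says the result follows by applying Theorem \ref{index}, and applying that theorem to the shrinking tails $\mathcal{P}_r=\{\mathcal{S}_j\}_{j=r}^{n}$ is the only reading under which the triangular bound $\frac{n(n+1)}{2}$ emerges, so your bookkeeping supplies exactly the detail the paper omits. The one microscopic caveat is the case $r=n$, where $\mathcal{P}_n$ is a singleton and $\mathcal{I}(\mathcal{S}_n^{n_n})=1$ exactly rather than $<1$; the strictness of the summed inequality still survives for $n\geq 2$ because the estimates for $r<n$ are strict via Proposition \ref{mass embedding}.
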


\begin{proof}
This follows by applying Theorem \ref{index}.
\end{proof}

\subsection{Application of mass embedding to the Sendov conjecture}

In this section, we prove a weak variant of the Sendov conjecture under the assumption that the first phase of an expansion of any tuple $\mathcal{S}_a\in \{\mathcal{S}_j\}_{j=1}^{\infty}$ is an embedding of the trivial expansion.

\begin{theorem}\label{sendov 1}
Let $\mathcal{F}=\{\mathcal{S}_j\}_{j=1}^{\infty}$ be a collection of tuples of $\mathbb{R}[x]$ and let $\mathcal{S}_a,\mathcal{S}_b \in \mathcal{F}$. Suppose that $(\gamma^{-1} \circ \beta \circ \gamma \circ \nabla)^{1}(\mathcal{S}_{a})$ is an embedding of $(\gamma^{-1}\circ\beta\circ\gamma\circ\nabla)^{0}(\mathcal{S}_{b}):=\mathcal{S}_b$, where $\mathcal{S}_b=(P(x), P(x),\ldots,P(x))$ with $P(x):=a_nx^n+a_{n-1}x^{n-1}+\cdots +a_1x+a_0$ for $n\geq 3$ and $\mathcal{S}_a=(\gamma^{-1}\circ \beta\circ\gamma\circ\nabla)^{1}(\mathcal{S}_{c})$, where $\mathcal{S}_c=(a_nx^n,a_{n-1}x^{n-1},\ldots, a_1x,a_0)$, a tuple representation of $P(x)$. If the mass $\mathcal{H}(\mathcal{S}^0_{b})$ of the trivial expansion $(\gamma^{-1} \circ \beta \circ \gamma \circ \nabla)^{0}(\mathcal{S}_{b})$ satisfies
\begin{align}
\mathcal{H}(\mathcal{S}^0_{b})<\delta\nonumber
\end{align}
where $\delta$ ($1>\delta>0$) is sufficiently small, then for each boundary point of the trivial expansion $(\gamma^{-1} \circ \beta \circ \gamma \circ \nabla)^{0}(\mathcal{S}_{b})$ of the form $b_i\mathcal{S}_e$ there exists a boundary point $\mathcal{S}_0\in \mathcal{Z}[(\gamma^{-1}\circ\beta\circ\gamma\circ\nabla)^{1}(\mathcal{S}_a)]$ such that 
\begin{align}
||b_i\mathcal{S}_e-\mathcal{S}_0||<1.\nonumber
\end{align}
\end{theorem}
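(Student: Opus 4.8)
The plan is to reduce the statement to two norm estimates, both flowing from the single mass bound $\mathcal{H}(\mathcal{S}^0_b)<\delta$, and then to conclude with the triangle inequality for $||\cdot||$ proved in Proposition \ref{measure2}. First I would unwind the definitions. Every entry of $\mathcal{S}_b$ equals $P(x)$, so a tuple $(c_1,\ldots,c_{n+1})$ lies in $\mathcal{Z}[(\gamma^{-1} \circ \beta \circ \gamma \circ \nabla)^0(\mathcal{S}_b)]=\mathcal{Z}[\mathcal{S}_b]$ exactly when $P(c_j)=0$ for every $j$; since $\deg P=n\ge 3$ this is a finite set, and by Definition \ref{momentum} the mass $\mathcal{H}(\mathcal{S}^0_b)$ is the sum of the nonnegative norms of its points. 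In particular each zero $b_i$ of $P$ yields the boundary point $b_i\mathcal{S}_e=(b_i,\ldots,b_i)\in\mathcal{Z}[\mathcal{S}_b]$, whence
\[
||b_i\mathcal{S}_e||\le\mathcal{H}(\mathcal{S}^0_b)<\delta .
\]

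Next I would produce the point $\mathcal{S}_0$. The embedding hypothesis together with Definition \ref{embedding} gives $\mathcal{Z}[(\gamma^{-1} \circ \beta \circ \gamma \circ \nabla)^1(\mathcal{S}_a)]\subset\mathcal{Z}[\mathcal{S}_b]$, so it is enough to know this set is nonempty and then bound the norm of one of its members. For nonemptiness, write $\mathcal{S}_c=(a_nx^n,a_{n-1}x^{n-1},\ldots,a_1x,a_0)$ and use Proposition \ref{composite} to compute $(\gamma^{-1} \circ \beta \circ \gamma \circ \nabla)^1(\mathcal{S}_a)=(\gamma^{-1} \circ \beta \circ \gamma \circ \nabla)^2(\mathcal{S}_c)$ coordinatewise; a short bookkeeping with $P$, $P'$, $P''$ and the leading coefficients shows each of its $n+1$ entries is a polynomial of degree exactly $n-2\ge 1$ (this is where $n\ge 3$ is used), hence has a zero, and collecting a zero $c_j$ of the $j$-th entry assembles $\mathcal{S}_0=(c_1,\ldots,c_{n+1})\in\mathcal{Z}[(\gamma^{-1} \circ \beta \circ \gamma \circ \nabla)^1(\mathcal{S}_a)]$. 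Fixing such an $\mathcal{S}_0$, its membership in $\mathcal{Z}[\mathcal{S}_b]$ together with the mass bound gives $||\mathcal{S}_0||\le\mathcal{H}(\mathcal{S}^0_b)<\delta$ (equivalently one may first apply Proposition \ref{mass embedding} to get $\mathcal{H}(\mathcal{S}^1_a)<\mathcal{H}(\mathcal{S}^0_b)$ and then bound $||\mathcal{S}_0||$ by $\mathcal{H}(\mathcal{S}^1_a)$).

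Finally, part (iii) of Proposition \ref{measure2} gives
\[
||b_i\mathcal{S}_e-\mathcal{S}_0||\le||b_i\mathcal{S}_e||+||\mathcal{S}_0||<2\delta ,
\]
which is $<1$ as soon as $\delta<1/2$, and this is precisely what ``$\delta$ sufficiently small'' means here. The step I expect to cost the most effort is the nonemptiness of $\mathcal{Z}[(\gamma^{-1} \circ \beta \circ \gamma \circ \nabla)^1(\mathcal{S}_a)]$: one must check that no coordinate of the doubly expanded tuple degenerates to a nonzero constant (this is exactly where both $n\ge 3$ and the precise shape of $\beta$ enter) and, if the theory is to be read over $\mathbb{R}$ rather than $\mathbb{C}$, that each such coordinate actually attains the value $0$; everything else is routine manipulation of the definitions of $\mathcal{Z}$, $\mathcal{H}$ and $||\cdot||$.
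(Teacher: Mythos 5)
Your proof is correct in substance but follows a genuinely different route from the paper's. The paper argues by contradiction: it assumes $||b_i\mathcal{S}_e-\mathcal{S}_0||\geq 1$ for all $\mathcal{S}_0$ in the first-phase boundary, asserts that this forces $\mathcal{H}(\mathcal{S}^1_a)\geq 1$, and then collides that with the mass-embedding inequality $\mathcal{H}(\mathcal{S}^0_b)>\mathcal{H}(\mathcal{S}^1_a)$ of Proposition \ref{mass embedding} to get $1>\delta>\mathcal{H}(\mathcal{S}^1_a)\geq 1$. You instead use the embedding hypothesis at the level of sets rather than masses: since $\mathcal{Z}[(\gamma^{-1}\circ\beta\circ\gamma\circ\nabla)^1(\mathcal{S}_a)]\subset\mathcal{Z}[\mathcal{S}_b]$, every admissible $\mathcal{S}_0$ is itself a boundary point of the trivial expansion, so both $||b_i\mathcal{S}_e||$ and $||\mathcal{S}_0||$ are dominated by $\mathcal{H}(\mathcal{S}^0_b)<\delta$, and the ordinary triangle inequality finishes with the stronger bound $||b_i\mathcal{S}_e-\mathcal{S}_0||<2\delta$ holding for \emph{every} $\mathcal{S}_0$ in the first-phase boundary, not merely some. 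This buys you two things: you avoid the paper's weakest step (the unexplained jump from the contradiction hypothesis to $\mathcal{H}(\mathcal{S}^1_a)\geq 1$, which at best yields $\geq 1-\delta$), and you make explicit where ``$\delta$ sufficiently small'' is used ($\delta<1/2$). Two caveats: the triangle inequality you need is that of the Euclidean norm $||\cdot||$ on tuples of reals, not part (iii) of Proposition \ref{measure2}, which concerns the measure $\mathcal{N}$ of an expansion, so that citation should be replaced by a direct appeal to the norm; and the nonemptiness of $\mathcal{Z}[(\gamma^{-1}\circ\beta\circ\gamma\circ\nabla)^1(\mathcal{S}_a)]$, which you rightly isolate as the delicate point (each coordinate of the doubly expanded tuple has degree $n-2\geq 1$, but over $\mathbb{R}$ a nonconstant polynomial need not vanish), is a gap shared with — indeed silently ignored by — the paper's own proof, and is only genuinely resolved in the later sections where the theory is recast over $\mathbb{C}[x]$.
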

\bigskip

\begin{proof}
Let $\mathcal{F}=\{\mathcal{S}_j\}_{j=1}^{\infty}$ be a collection of tuples of $\mathbb{R}[x]$ and let $\mathcal{S}_a,\mathcal{S}_b \in \mathcal{F}$. Suppose that $(\gamma^{-1}\circ\beta\circ \gamma \circ\nabla)^{1}(\mathcal{S}_{a})$ is an embedding of $(\gamma^{-1}\circ\beta\circ\gamma\circ\nabla)^{0}(\mathcal{S}_{b})=\mathcal{S}_b$, where 
\begin{align}
\mathcal{S}_b=(P(x),P(x),\ldots,P(x)) \nonumber
\end{align}
with $P(x):=a_nx^n+a_{n-1}x^{n-1}+\cdots +a_1x+a_0$ and $\mathcal{S}_a=(\gamma^{-1}\circ\beta\circ\gamma\circ\nabla)^{1}(\mathcal{S}_{c})$, where $\mathcal{S}_c=(a_nx^n,a_{n-1}x^{n-1},\ldots,a_1x+a_0)$ is a tuple representation of $P(x)$. Furthermore, suppose that for any boundary point $b_i\mathcal{S}_e$ of the trivial expansion $(\gamma^{-1} \circ \beta \circ \gamma \circ \nabla)^{0}(\mathcal{S}_{b})=\mathcal{S}_b$ 
\begin{align}
||b_i\mathcal{S}_e-\mathcal{S}_0||\geq 1\nonumber
\end{align}
for all $\mathcal{S}_0\in \mathcal{Z}[(\gamma^{-1}\circ\beta\circ \gamma\circ\nabla)^{1}(\mathcal{S}_a)]$. Since $\mathcal{H}(\mathcal{S}^0_b)<\delta$ and $\delta$ ($1>\delta>0$) is sufficiently small, it implies that the mass $\mathcal{H}(\mathcal{S}^{1}_a)\geq 1$. Since $(\gamma^{-1}\circ\beta\circ\gamma\circ\nabla)^{1}(\mathcal{S}_{a})$ is an embedding of $(\gamma^{-1}\circ\beta \circ\gamma\circ\nabla)^{0}(\mathcal{S}_{b})=\mathcal{S}_b$, it follows from proposition \ref{mass embedding}
\begin{align}
1>\mathcal{H}(\mathcal{S}^{0}_b)>\mathcal{H}(\mathcal{S}^{1}_a)\geq 1.\nonumber
\end{align}
This inequality cannot hold. This completes the proof of the Theorem.
\end{proof}

\begin{remark}
Theorem \ref{sendov 1} is close to proving the Sendov conjecture. It suggests that for any polynomial $P(x)$ with sufficiently small zeros, we can find a zero of $P'(x)$ that is somewhat close under the assumption that the set of zeros of $P'(x)$ are subsets of the zeros of $P(x)$. This result is much weaker than the statement of the Sendov conjecture.
\end{remark}
\bigskip

Theorem \ref{sendov 1} can be improved by replacing the mass-embedding condition with the assumption that the mass of each phase of expansion diminishes. In principle, the Sendov conjecture would be proven in full if we could unconditionally prove the mass diminishes with higher phase expansions. At the moment, we assume this property to prove Sendov's conjecture. A sequel to this paper will be devoted to studying phase expansions in relation to their masses.
\bigskip

\begin{theorem}\label{sendov 2}
Let $\mathcal{F}=\{\mathcal{S}_j\}_{j=1}^{\infty}$ be a collection of tuples of $\mathbb{R}[x]$ and let $\mathcal{S}_a,\mathcal{S}_b \in \mathcal{F}$. Suppose that $\mathcal{H}(\mathcal{S}^0_b)>\mathcal{H}(\mathcal{S}^1_a)$, where $\mathcal{S}_b=(P(x),P(x),\ldots,P(x))$ with $P(x):=a_nx^n+a_{n-1}x^{n-1}+\cdots+a_1x+a_0$ with $n\geq 3$ and $\mathcal{S}_a=(\gamma^{-1}\circ\beta\circ\gamma\circ\nabla)^{1}(\mathcal{S}_{c})$, where $\mathcal{S}_c=(a_nx^n,a_{n-1}x^{n-1},\ldots,a_1x,a_0)$, a tuple representation of $P(x)$. If the mass $\mathcal{H}(\mathcal{S}^0_{b})$ of the trivial expansion $(\gamma^{-1} \circ \beta \circ \gamma \circ \nabla)^{0}(\mathcal{S}_{b})$ satisfies
\begin{align}
\mathcal{H}(\mathcal{S}^0_{b})<\delta\nonumber
\end{align}
where $1>\delta>0$ is sufficiently small, then for each  boundary point of the trivial expansion $(\gamma^{-1} \circ \beta \circ \gamma \circ \nabla)^{0}(\mathcal{S}_{b})$ of the form $b_i\mathcal{S}_e$ there exists a boundary point $\mathcal{S}_0\in \mathcal{Z}[(\gamma^{-1}\circ\beta\circ\gamma\circ\nabla)^{1}(\mathcal{S}_a)]$ such that 
\begin{align}
||b_i\mathcal{S}_e-\mathcal{S}_0||<1.\nonumber
\end{align}
\end{theorem}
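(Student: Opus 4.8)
The plan is to run the argument of Theorem \ref{sendov 1} essentially verbatim; the only role the embedding hypothesis played there was to produce, via Proposition \ref{mass embedding}, the inequality $\mathcal{H}(\mathcal{S}^0_b)>\mathcal{H}(\mathcal{S}^1_a)$, and here that inequality is handed to us directly, so the proof shortens rather than lengthens. Concretely, I would argue by contradiction: suppose there is a boundary point $b_i\mathcal{S}_e$ of the trivial expansion $(\gamma^{-1} \circ \beta \circ \gamma \circ \nabla)^{0}(\mathcal{S}_{b})=\mathcal{S}_b$ with $\|b_i\mathcal{S}_e-\mathcal{S}_0\|\geq 1$ for every $\mathcal{S}_0\in \mathcal{Z}[(\gamma^{-1} \circ \beta \circ \gamma \circ \nabla)^{1}(\mathcal{S}_a)]$, and derive a numerical absurdity.

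The key step is to convert the smallness assumption $\mathcal{H}(\mathcal{S}^0_b)<\delta$ into the lower bound $\mathcal{H}(\mathcal{S}^1_a)\geq 1$. Since $\mathcal{H}(\mathcal{S}^0_b)$ is by Definition \ref{momentum} the sum of the norms of the boundary points of $\mathcal{S}_b$, and $b_i\mathcal{S}_e$ is one of them, we get $\|b_i\mathcal{S}_e\|<\delta<1$; coupling this with the contradiction hypothesis $\|b_i\mathcal{S}_e-\mathcal{S}_0\|\geq 1$ and the triangle inequality forces $\|\mathcal{S}_0\|\geq 1-\delta$ for each first-phase boundary point $\mathcal{S}_0$. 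Because $P(x)$ has degree $n\geq 3$, the first-phase boundary set is non-empty (its points encode the zeros of $P'(x)$), so $\mathcal{H}(\mathcal{S}^1_a)=\sum_{\mathcal{S}_0}\|\mathcal{S}_0\|\geq 1$, taking $\delta$ small enough. Then the standing hypothesis $\mathcal{H}(\mathcal{S}^0_b)>\mathcal{H}(\mathcal{S}^1_a)$ together with $\mathcal{H}(\mathcal{S}^0_b)<\delta<1$ yields $1>\delta>\mathcal{H}(\mathcal{S}^0_b)>\mathcal{H}(\mathcal{S}^1_a)\geq 1$, which is impossible. Hence no such $b_i\mathcal{S}_e$ exists, and for every such boundary point there is some $\mathcal{S}_0$ with $\|b_i\mathcal{S}_e-\mathcal{S}_0\|<1$, as claimed.

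The main obstacle I expect is precisely the inference $\mathcal{H}(\mathcal{S}^1_a)\geq 1$, which in Theorem \ref{sendov 1} is dispatched in a single sentence; making it airtight requires the triangle-inequality bookkeeping described above together with the non-emptiness of $\mathcal{Z}[(\gamma^{-1} \circ \beta \circ \gamma \circ \nabla)^{1}(\mathcal{S}_a)]$ and a choice of how small $\delta$ must be. Everything else — unwinding the definitions of mass, boundary points, and the hypotheses, and assembling the final chain of inequalities — is purely formal and mirrors the earlier theorem.

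\begin{proof}[Proof sketch]
This is a placeholder describing the intended argument; see the plan above for details.
\end{proof}
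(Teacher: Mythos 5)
Your proposal follows the paper's own proof essentially verbatim: the same contradiction setup, the same key step of forcing $\mathcal{H}(\mathcal{S}^1_a)\geq 1$ from the smallness of $\mathcal{H}(\mathcal{S}^0_b)$, and the same terminal chain $1>\delta>\mathcal{H}(\mathcal{S}^0_b)>\mathcal{H}(\mathcal{S}^1_a)\geq 1$. You actually supply more justification for the lower bound on $\mathcal{H}(\mathcal{S}^1_a)$ than the paper does (the paper simply asserts it); the only caveat is that your triangle-inequality bookkeeping yields $\|\mathcal{S}_0\|\geq 1-\delta$ per first-phase boundary point, so if that boundary consisted of a single point you would only get $\mathcal{H}(\mathcal{S}^1_a)\geq 1-\delta$ rather than $\geq 1$ --- but the contradiction $\delta>1-\delta$ still closes for $\delta<1/2$, so the argument survives.
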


\begin{proof}
Let $\mathcal{F}=\{\mathcal{S}_j\}_{j=1}^{\infty}$ be a collection of tuples of $\mathbb{R}[x]$ and let $\mathcal{S}_a, \mathcal{S}_b \in \mathcal{F}$. Suppose that $\mathcal{H}(\mathcal{S}^0_b)>\mathcal{H}(\mathcal{S}^1_a)$, where 
\begin{align}
\mathcal{S}_b=(P(x),P(x),\ldots,P(x))\nonumber
\end{align}
with $P(x):=a_nx^n+a_{n-1}x^{n-1}+\cdots+a_1x+a_0$ and $\mathcal{S}_a=(\gamma^{-1}\circ\beta\circ\gamma\circ\nabla)^{1}(\mathcal{S}_{c})$, where $\mathcal{S}_c=(a_nx^n,a_{n-1}x^{n-1},\ldots,a_1x+a_0)$ is a tuple representation of $P(x)$. Furthermore, suppose that for any boundary point of the form  $b_i\mathcal{S}_e$ of the trivial expansion $(\gamma^{-1}\circ \beta\circ\gamma\circ\nabla)^{0}(\mathcal{S}_{b})=\mathcal{S}_b$ 
\begin{align}
||b_i\mathcal{S}_e-\mathcal{S}_0||\geq 1\nonumber
\end{align}
for all $\mathcal{S}_0\in \mathcal{Z}[(\gamma^{-1}\circ\beta\circ \gamma\circ\nabla)^{1}(\mathcal{S}_a)]$. Since $\mathcal{H}(\mathcal{S}^0_b)<\delta$ and $1>\delta>0$ is sufficiently small and $n\geq 3$, it implies that the mass $\mathcal{H}(\mathcal{S}^{1}_a)\geq 1$. We obtain the inequality 
\begin{align}
1>\mathcal{H}(\mathcal{S}^{0}_b)>\mathcal{H}(\mathcal{S}^{1}_a)\geq 1.\nonumber
\end{align}
This inequality cannot hold.
\end{proof}
\bigskip

\begin{remark}
It is important to distinguish between Theorem \ref{sendov 1} and Theorem \ref{sendov 2}. The mass-embedding condition assumed in Theorem \ref{sendov 1} evokes a diminishing state of the expansion mass. However, the converse, as stated in Theorem \ref{sendov 2} does not necessarily evoke a mass-embedding regime. Indeed, the mass of an expansion at a given phase could be smaller than the mass of another expansion not because it is an embedding.
\end{remark}

\begin{theorem}\label{parody}
Let $\mathcal{F}=\{\mathcal{S}_j\}_{j=1}^{\infty}$ be a collection of tuples of $\mathbb{R}[x]$. For each $\mathcal{S}\in \mathcal{F}$
\begin{align}
\sum \limits_{k=0}^{\mathrm{deg}(\mathcal{S})-1}\nu (\mathcal{S}^k)&=\nu (\mathcal{S})\mathrm{deg}(\mathcal{S})\log (\mathrm{deg}(\mathcal{S}))+\mathrm{deg}(\mathcal{S})\nu(\mathcal{S})\gamma+O(\nu (\mathcal{S}))\nonumber
\end{align}
where $\gamma=0.5772\cdots$ is the Euler-Macheroni constant.
\end{theorem}

\begin{proof}
Clearly 
\begin{align}
\sum \limits_{k=0}^{\mathrm{deg}(\mathcal{S})-1}\nu(\mathcal{S}^k)&=\nu (\mathcal{S})+\nu (\mathcal{S}^{1})+\cdots+\nu (\mathcal{S}^{\mathrm{deg}(\mathcal{S})-1})\nonumber \\&=\frac{\mathcal{N}(\mathcal{S})}{\mathrm{deg}(\mathcal{S})}+\frac{\mathcal{N}(\mathcal{S}^1)}{\mathrm{deg}(\mathcal{S}^1)}+\cdots+\frac{\mathcal{N}(\mathcal{S}^{\mathrm{deg}(\mathcal{S})-1})}{\mathrm{deg}(\mathcal{S}^{\mathrm{deg}(\mathcal{S})-1})}\nonumber \\&=\mathcal{N}(\mathcal{S})\bigg(\frac{1}{\mathrm{deg}(\mathcal{S})}+\frac{1}{\mathrm{deg}(\mathcal{S}^1)}+\cdots+\frac{1}{\mathrm{deg}(\mathcal{S}^{\mathrm{deg}(\mathcal{S})-1})}\bigg)\nonumber \\&=\mathcal{N}(\mathcal{S})\bigg(\frac{1}{\mathrm{deg}(\mathcal{S})}+\frac{1}{\mathrm{deg}(\mathcal{S})-1}+\cdots+\frac{1}{2}+1\bigg)\nonumber \\&=\mathcal{N}(\mathcal{S})\sum \limits_{m=1}^{\mathrm{deg}(\mathcal{S})}\frac{1}{m}\nonumber \\&=\nu (\mathcal{S})\mathrm{deg}(\mathcal{S})\sum \limits_{m=1}^{\mathrm{deg}(\mathcal{S})}\frac{1}{m}\nonumber
\end{align}
 establishing the formula.
\end{proof}
\bigskip

As we shall see in the sequel, this formula will be useful for studying the diminishing state of the behaviour of the mass of expansions. For the time being, we use this formula to prove that the mass diminishes at some successive phase of expansion.

\begin{theorem}\label{bound}
Let $\mathcal{F}=\{\mathcal{S}_j\}_{j=1}^{\infty}$ be a collection of tuples of $\mathbb{R}[x]$. Suppose that $\mathcal{S}\in \mathcal{F}$, then 
\begin{align}
\mathcal{H}(\mathcal{S}^{n})>\mathcal{H}(\mathcal{S}^{n+1})\nonumber
\end{align}
for some $0\leq n\leq \mathrm{deg}(\mathcal{S})-2$.
\end{theorem}

\begin{proof}
Let $\mathcal{F}=\{\mathcal{S}_j\}_{j=1}^{\infty}$ be a collection of tuples of $\mathbb{R}[x]$ and specify $\mathcal{S}\in \mathcal{F}$. Consider the finite collection $\mathcal{P}=\{\mathcal{S}^k\}_{k=0}^{\mathrm{deg}(\mathcal{S})-1}$. Suppose that \begin{align}
\mathcal{H}(\mathcal{S}^n)\leq \mathcal{H}(\mathcal{S}^{n+1})\nonumber
\end{align}
for all $0\leq n \leq \mathrm{deg}(\mathcal{S})-2$. By Theorem \ref{parody}, we get 
\begin{align}
\sum \limits_{k=0}^{\mathrm{deg}(\mathcal{S})-1}\mathcal{M}(\mathcal{S}^k)&=\sum \limits_{k=0}^{\mathrm{deg}(\mathcal{S})-1}\mathcal{H}(\mathcal{S}^k)\nu (\mathcal{S}^k)\nonumber \\&\leq \mathcal{H}(\mathcal{S}^{\mathrm{deg}(\mathcal{S})-1})\sum \limits_{k=0}^{\mathrm{deg}(\mathcal{S})-1}\nu (\mathcal{S}^k)\nonumber \\& \ll \mathcal{H}(\mathcal{S}^{\mathrm{deg}(\mathcal{S})-1})\nu (\mathcal{S})\mathrm{deg}(\mathcal{S})\log (\mathrm{deg}(\mathcal{S}))\nonumber \\& \ll \mathcal{H}(\mathcal{S}^{\mathrm{deg}(\mathcal{S})-1})\nu (\mathcal{S}^{\mathrm{deg}(\mathcal{S})-1})\mathrm{deg}(\mathcal{S})\log (\mathrm{deg}(\mathcal{S}))\nonumber \\& \ll \mathcal{M}(\mathcal{S}^{\mathrm{deg}(\mathcal{S})-1})\mathrm{deg}(\mathcal{S})\log (\mathrm{deg}(\mathcal{S})).\nonumber
\end{align}
It implies that the index of expansion 
$$
\mathcal{I}((\mathcal{S}^{\mathrm{deg}(\mathcal{S})-2})^1)\ll \mathrm{deg}(\mathcal{S})\log (\mathrm{deg}(\mathcal{S}))
$$ 
contradicting the upper bound in Theorem \ref{index}.
\end{proof}
\bigskip

Here, we classify all phase of expansions with decreasing mass.

\begin{definition}
Let $\mathcal{F}=\{\mathcal{S}_j\}_{j=1}^{\infty}$ be a collection of tuples of $\mathbb{R}[x]$. For any $\mathcal{S}_k\in \mathcal{F}$, we say that the expansion $(\gamma^{-1}\circ\beta \circ\gamma\circ\nabla)^{n}(\mathcal{S}_k)$ is \emph{regular} if $\mathcal{H}(\mathcal{S}_k^n)>\mathcal{H}(\mathcal{S}_k^{n+1})$ for some $0\leq n\leq \mathrm{deg}(\mathcal{S}_k)-2$.
\end{definition}

\begin{theorem}\label{regular}
Let $\mathcal{F}=\{\mathcal{S}_j\}_{j=1}^{\infty}$ be a collection of tuples of $\mathbb{R}[x]$. Suppose that the expansion $(\gamma^{-1}\circ\beta\circ\gamma\circ\nabla)^{n}(\mathcal{S}_k)$ with ($n\leq \mathrm{deg}(\mathcal{S}_k)-3$) is \emph{regular} for $\mathcal{S}_k\in \mathcal{F}$. If  
\begin{align}
\mathcal{H}(\mathcal{S}^n_{k})<\delta \nonumber
\end{align}
for $\delta$ ($0<\delta <1$) sufficiently small, then for each  $\mathcal{S}_1\in Z[(\gamma^{-1}\circ\beta\circ\gamma\circ \nabla)^{n}(\mathcal{S}_{k})]$ there exist some $\mathcal{S}_0\in \mathcal{Z}[(\gamma^{-1}\circ\beta\circ\gamma\circ\nabla)^{n+1}(\mathcal{S}_k)]$ such that 
\begin{align}
||\mathcal{S}_1-\mathcal{S}_0||<1.\nonumber
\end{align}
\end{theorem}
\bigskip

\begin{proof}
Let $\mathcal{F}=\{\mathcal{S}_j\}_{j=1}^{\infty}$ be a collection of tuples of $\mathbb{R}[x]$. Choose arbitrarily $\mathcal{S}_k\in \mathcal{F}$ and suppose that the expansion $(\gamma^{-1} \circ \beta \circ \gamma \circ \nabla)^{n}(\mathcal{S}_k)$ is regular. Suppose that for each $\mathcal{S}_1\in Z[(\gamma^{-1}\circ\beta \circ\gamma\circ\nabla)^{n}(\mathcal{S}_{k})]$, then 
\begin{align}
||\mathcal{S}_1-\mathcal{S}_0||\geq 1\nonumber
\end{align}
for all $\mathcal{S}_0\in \mathcal{Z}[(\gamma^{-1}\circ\beta\circ \gamma\circ\nabla)^{n+1}(\mathcal{S}_k)]$. Since $\mathcal{H}(\mathcal{S}^n_{k})<\delta$ with $\delta$ ($0<\delta<1$) sufficiently small and $n\leq \mathrm{deg}(\mathcal{S}_k)-3$, we get $\mathcal{H}(\mathcal{S}_k^{n+1})\geq 1$. Under the regularity condition, we have 
\begin{align}
1>\delta >\mathcal{H}(\mathcal{S}_k^{n})>\mathcal{H}(\mathcal{S}_k^{n+1})\geq 1.\nonumber
\end{align}
This inequality cannot hold. This completes the proof of the theorem.
\end{proof}
\bigskip

The combination of Theorem \ref{bound} and Theorem \ref{regular} affirms that the Sendov conjecture is true at some phase of expansion.

\begin{conjecture}[The mass law]
Let $\mathcal{S}\in \mathcal{F}$ with $\mathrm{deg}(\mathcal{S})>1$, where $\mathcal{F}=\{\mathcal{S}_j\}_{j=1}^{\infty}$ is a collection of tuples of $\mathbb{R}[x]$. We have 
\begin{align}
\mathcal{H}(\mathcal{S}^{1})\gg \frac{||\mathcal{S}(\mathrm{deg}(\mathcal{S}))||\mathrm{deg}(\mathcal{S})}{\mathcal{N}(\mathcal{S})\log (\mathrm{deg}(\mathcal{S}))}.\nonumber
\end{align}
\end{conjecture}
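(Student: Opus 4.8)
The plan is to prove the inequality conditionally on Conjecture \ref{measure inequality}. That conjecture gives $\|\mathcal{S}(\deg(\mathcal{S}))\| \asymp \mathcal{N}(\mathcal{S})$, so substituting it into the right-hand side collapses the claim to the cleaner estimate $\mathcal{H}(\mathcal{S}^1) \gg \deg(\mathcal{S})/\log(\deg(\mathcal{S}))$. Thus the real content is a lower bound for the mass of the first phase expansion in terms of the degree alone, and everything below is aimed at that estimate.

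First I would assemble the finite family $\mathcal{P} = \{\mathcal{S}^k\}_{k=0}^{\deg(\mathcal{S})-1}$ of all phase expansions of $\mathcal{S}$ and record the two facts already available about it. The measure is phase-invariant, because the rank does not change under an expansion, so $\mathcal{N}(\mathcal{S}^k) = \mathcal{N}(\mathcal{S})$ and hence $\nu(\mathcal{S}^k) = \mathcal{N}(\mathcal{S})/(\deg(\mathcal{S})-k)$; and Theorem \ref{parody} gives $\sum_{k=0}^{\deg(\mathcal{S})-1}\nu(\mathcal{S}^k) \asymp \mathcal{N}(\mathcal{S})\log(\deg(\mathcal{S}))$. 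Combining these with the definition $\mathcal{M}(\mathcal{S}^k) = \nu(\mathcal{S}^k)\mathcal{H}(\mathcal{S}^k)$, the total momentum $\sum_k \mathcal{M}(\mathcal{S}^k)$ is squeezed between $\mathcal{N}(\mathcal{S})\log(\deg(\mathcal{S}))$ times, respectively, the smallest and the largest mass occurring among the phases.

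Next I would argue by contradiction: suppose $\mathcal{H}(\mathcal{S}^1) \ll \deg(\mathcal{S})/\log(\deg(\mathcal{S}))$ with an arbitrarily small implied constant. On one side, Theorem \ref{index} applied inside $\mathcal{P}$ to the phase $\mathcal{S}^{\deg(\mathcal{S})-1}$ of largest speed yields $\sum_k \mathcal{M}(\mathcal{S}^k) < \deg(\mathcal{S})\,\mathcal{M}(\mathcal{S}^{\deg(\mathcal{S})-1})$, and feeding the evaluation of $\sum_k\nu(\mathcal{S}^k)$ from Theorem \ref{parody} back in pins $\mathcal{M}(\mathcal{S}^{\deg(\mathcal{S})-1})$, and hence the total momentum, from below by a quantity of order $\mathcal{N}(\mathcal{S})/\log(\deg(\mathcal{S}))$. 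On the other side, Theorem \ref{bound} supplies a phase at which the mass strictly drops; running the same regularity bookkeeping used in the proof of Theorem \ref{bound} one propagates the smallness of $\mathcal{H}(\mathcal{S}^1)$ to the masses of the nearby phases, which forces $\sum_k \nu(\mathcal{S}^k)\mathcal{H}(\mathcal{S}^k)$ to be too small. Matching the two estimates for the total momentum then produces the contradiction, establishing the bound.

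The main obstacle is the step that converts a lower bound on the aggregate $\sum_k \mathcal{M}(\mathcal{S}^k)$ into a lower bound on the single mass $\mathcal{H}(\mathcal{S}^1)$: nothing in the development so far controls how the mass is distributed across phases beyond Theorem \ref{bound}, which only asserts a drop at some unspecified phase, so one genuinely needs a near-monotonicity statement for the sequence $\{\mathcal{H}(\mathcal{S}^k)\}$ — essentially a uniform strengthening of Theorem \ref{bound} — to localize the estimate at phase $1$. A naive direct route, namely bounding $\mathcal{H}(\mathcal{S}^1)$ below by the number of first-phase boundary points (which is large at an early phase, by Proposition \ref{bdecreasing}) times a lower bound on their norms, fails precisely because boundary points may cluster near the origin, so the norms carry no uniform lower bound. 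Finally, the whole argument is conditional on Conjecture \ref{measure inequality}; removing that dependence would require an independent proof that $\|\mathcal{S}(\deg(\mathcal{S}))\|$ and $\mathcal{N}(\mathcal{S})$ are of the same order of magnitude, which does not follow from anything established here.
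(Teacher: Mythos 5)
Your proposal follows essentially the same route as the paper's own (explicitly non-rigorous) derivation: both sandwich the total momentum $\sum_{k}\mathcal{M}(\mathcal{S}^k)$ between a lower bound of order $||\mathcal{S}(\deg(\mathcal{S}))||\deg(\mathcal{S})$ and an upper bound $(1+o(1))\,\mathcal{H}(\mathcal{S}^{1})\mathcal{N}(\mathcal{S})\log(\deg(\mathcal{S}))$ obtained from Theorem \ref{parody}, and both are conditional on Conjecture \ref{measure inequality} together with a uniform decrease of the mass across phases. The obstacle you single out --- that nothing established so far gives the near-monotonicity of $\{\mathcal{H}(\mathcal{S}^k)\}$ needed to localize the aggregate estimate at phase $1$ --- is exactly the gap the paper itself concedes when it says the derivation ``is not rigorous'', which is consistent with the statement being labelled a conjecture rather than a theorem.
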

\bigskip

We could demonstrate the validity of this conjecture under certain assumptions, namely that the mass decreases uniformly with each successive phase of expansion. Additionally, under the assumption of Conjecture \ref{measure inequality}, we have 
\begin{align}
\sum \limits_{k=0}^{\mathrm{deg}(\mathcal{S})-1}\mathcal{M}(\mathcal{S}^k)&\geq \sum \limits_{k=0}^{\mathrm{deg}(\mathcal{S})-1}\mathcal{H}(\mathcal{S}^k)\nu (\mathcal{S}^{\mathrm{deg}(\mathcal{S})-1})\nonumber \\&\gg ||\mathcal{S}(\mathrm{deg}(\mathcal{S}))||\sum \limits_{k=0}^{\mathrm{deg}(\mathcal{S})-1}\mathcal{H}(\mathcal{S}^k)\nonumber \\&\gg ||\mathcal{S}(\mathrm{deg}(\mathcal{S}))||\mathrm{deg}(\mathcal{S})\mathcal{H}(\mathcal{S}^{\mathrm{deg}(\mathcal{S})-1})\nonumber \\&\gg ||\mathcal{S}(\mathrm{deg}(\mathcal{S}))||\mathrm{deg}(\mathcal{S}).\nonumber
\end{align}
Using Theorem \ref{parody}, we have the upper bound 
\begin{align}
\sum \limits_{k=0}^{\mathrm{deg}(\mathcal{S})-1}\mathcal{M}(\mathcal{S}^{k})& \ll \mathcal{H}(\mathcal{S}^{1})\sum \limits_{k=0}^{\mathrm{deg}(\mathcal{S})-1}\nu (\mathcal{S}^{k})\nonumber \\&=(1+o(1))\mathcal{H}(\mathcal{S}^{1})\nu (\mathcal{S})\mathrm{deg}(\mathcal{S})\log (\mathrm{deg}(\mathcal{S}))\nonumber \\&=(1+o(1))\mathcal{H}(\mathcal{S}^{1})\mathcal{N}(\mathcal{S})\log (\mathrm{deg}(\mathcal{S})).\nonumber
\end{align}
Combining the upper bound and the lower bound establishes the lower bound for the mass of the first phase expansion. This derivation is not rigorous. We can make it rigorous without relying on unproven conjectures. At the moment, this quest seems out of reach, because asserting the diminishing state of the mass of expansion in a sufficiently uniform way and establishing the measure inequality seems to be a difficult problem.

\section{Isomorphic boundaries and expansions}\label{sec:isomorphic boundary and expansion}

In this section, we introduce the concept of \emph{isomorphic} boundaries and expansions. 

\begin{definition}\label{isomorphism}
Let $\mathcal{F}=\{\mathcal{S}_j\}_{j=1}^{\infty}$ be a collection of tuples of $\mathbb{R}[x]$. We say that expansions $(\gamma^{-1}\circ\beta\circ\gamma\circ \nabla)^{n_k}(\mathcal{S}_{k})$ and $(\gamma^{-1}\circ\beta\circ\gamma\circ \nabla)^{n_l}(\mathcal{S}_{l})$ ~(resp. boundaries) are \emph{isomorphic} if $\mathcal{H}(\mathcal{S}_{l}^{n_l})=\mathcal{H}(\mathcal{S}_{k}^{n_k})$. We write 
\begin{align}
\mathcal{Z}[(\gamma^{-1} \circ \beta \circ \gamma \circ \nabla)^{n_k}(\mathcal{S}_{k})] \simeq \mathcal{Z}[(\gamma^{-1} \circ \beta \circ \gamma \circ \nabla)^{n_l}(\mathcal{S}_{l})]\nonumber
\end{align}
to denote the isomorphism.
\end{definition}
\bigskip

The notion of isomorphism of expansions induces an equivalence relation and partitions expansions to equivalent classes. We have \begin{align}
\mathcal{Z}[(\gamma^{-1}\circ\beta\circ\gamma\circ\nabla)^{n_k}(\mathcal{S}_{k})]\simeq \mathcal{Z}[(\gamma^{-1}\circ\beta\circ \gamma\circ\nabla)^{n_k}(\mathcal{S}_{k})]\nonumber
\end{align}
since $\mathcal{H}(\mathcal{S}_{k}^{n_k})=\mathcal{H}(\mathcal{S}_{k}^{n_k})$. The symmetric property also holds. For the transitivity property, we have 
\begin{align}
\mathcal{Z}[(\gamma^{-1} \circ \beta \circ \gamma \circ \nabla)^{n_k}(\mathcal{S}_{k})] \simeq \mathcal{Z}[(\gamma^{-1} \circ \beta \circ \gamma \circ \nabla)^{n_l}(\mathcal{S}_{l})]\nonumber
\end{align}
which implies $\mathcal{H}(\mathcal{S}_{k}^{n_k})=\mathcal{H}(\mathcal{S}_{l}^{n_l})$ and 
\begin{align}
\mathcal{Z}[(\gamma^{-1} \circ \beta \circ \gamma \circ \nabla)^{n_l}(\mathcal{S}_{l})] \simeq \mathcal{Z}[(\gamma^{-1} \circ \beta \circ \gamma \circ \nabla)^{n_h}(\mathcal{S}_{h})]\nonumber
\end{align}
which implies $\mathcal{H}(\mathcal{S}_{l}^{n_l})=\mathcal{H}(\mathcal{S}_{h}^{n_h})$. We deduce
\begin{align}
\mathcal{Z}[(\gamma^{-1}\circ\beta\circ\gamma\circ\nabla)^{n_k}(\mathcal{S}_{k})]\simeq \mathcal{Z}[(\gamma^{-1}\circ\beta\circ \gamma\circ\nabla)^{n_h}(\mathcal{S}_{h})].\nonumber
\end{align}
\bigskip

\begin{proposition}
Let $\mathcal{F}=\{\mathcal{S}_j\}_{j=1}^{\infty}$ be a collection of tuples of $\mathbb{R}[x]$. Let $\mathcal{S}_{l},\mathcal{S}_k \in \mathcal{F}$. Suppose that $\mathcal{Z}[(\gamma^{-1}\circ \beta\circ\gamma\circ\nabla)^{n_k}(\mathcal{S}_{k})]\simeq \mathcal{Z}[(\gamma^{-1}\circ\beta\circ\gamma\circ\nabla)^{n_l}(\mathcal{S}_{l})]$ and that $(\gamma^{-1}\circ\beta\circ\gamma\circ \nabla)^{n_k}(\mathcal{S}_{k})$ is regular. If 
\begin{align}
\mathcal{Z}[(\gamma^{-1} \circ \beta \circ \gamma \circ \nabla)^{n_k+1}(\mathcal{S}_{k})] \simeq \mathcal{Z}[(\gamma^{-1} \circ \beta \circ \gamma \circ \nabla)^{n_l+1}(\mathcal{S}_{l})]\nonumber
\end{align}
then $(\gamma^{-1} \circ \beta \circ \gamma \circ \nabla)^{n_l}(\mathcal{S}_{l})$ is also regular.
\end{proposition}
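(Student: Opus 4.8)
The plan is to reduce the statement to the definition of an isomorphism of expansions (Definition \ref{isomorphism}), which is purely an assertion about equality of masses, and then to run a short chain of (in)equalities against the regularity hypothesis on the $\mathcal{S}_k$--expansion. First I would invoke Definition \ref{isomorphism} on the hypothesis $\mathcal{Z}[(\gamma^{-1} \circ \beta \circ \gamma \circ \nabla)^{n_k}(\mathcal{S}_{k})] \simeq \mathcal{Z}[(\gamma^{-1} \circ \beta \circ \gamma \circ \nabla)^{n_l}(\mathcal{S}_{l})]$ to extract $\mathcal{H}(\mathcal{S}_k^{n_k}) = \mathcal{H}(\mathcal{S}_l^{n_l})$, and likewise on $\mathcal{Z}[(\gamma^{-1} \circ \beta \circ \gamma \circ \nabla)^{n_k+1}(\mathcal{S}_{k})] \simeq \mathcal{Z}[(\gamma^{-1} \circ \beta \circ \gamma \circ \nabla)^{n_l+1}(\mathcal{S}_{l})]$ to extract $\mathcal{H}(\mathcal{S}_k^{n_k+1}) = \mathcal{H}(\mathcal{S}_l^{n_l+1})$. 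Next, since $(\gamma^{-1} \circ \beta \circ \gamma \circ \nabla)^{n_k}(\mathcal{S}_{k})$ is regular, the defining inequality gives $\mathcal{H}(\mathcal{S}_k^{n_k}) > \mathcal{H}(\mathcal{S}_k^{n_k+1})$ together with $0 \leq n_k \leq \deg(\mathcal{S}_k) - 2$. Substituting the two mass equalities into this strict inequality yields
\begin{align}
\mathcal{H}(\mathcal{S}_l^{n_l}) = \mathcal{H}(\mathcal{S}_k^{n_k}) > \mathcal{H}(\mathcal{S}_k^{n_k+1}) = \mathcal{H}(\mathcal{S}_l^{n_l+1}),\nonumber
\end{align}
which is exactly $\mathcal{H}(\mathcal{S}_l^{n_l}) > \mathcal{H}(\mathcal{S}_l^{n_l+1})$, i.e. the strict part of regularity for the $\mathcal{S}_l$--expansion.

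What remains, and this is the only genuinely delicate point, is to verify the range condition $0 \leq n_l \leq \deg(\mathcal{S}_l) - 2$, since Definition \ref{isomorphism} equates only masses and says nothing about the phase indices or degrees of the two tuples involved. I would handle this by observing that the hypothesis already references the phase-$(n_l+1)$ boundary $\mathcal{Z}[(\gamma^{-1} \circ \beta \circ \gamma \circ \nabla)^{n_l+1}(\mathcal{S}_{l})]$, which, under the convention fixed in Proposition \ref{bdecreasing} that boundary points of the $n$th phase are considered only for $n < \deg(\mathcal{S}_l)$, forces $n_l + 1 \leq \deg(\mathcal{S}_l) - 1$, hence $n_l \leq \deg(\mathcal{S}_l) - 2$; together with $n_l \geq 0$ this places $n_l$ in the admissible window. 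With the strict mass decrease and the range condition both in hand, the definition of a regular expansion is met by $(\gamma^{-1} \circ \beta \circ \gamma \circ \nabla)^{n_l}(\mathcal{S}_{l})$, completing the proof.

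I expect the main obstacle to be precisely this bookkeeping about the degree window rather than the algebra: the substitution step is a one-liner, but one must be careful that the notion ``isomorphic'' transfers the strict inequality between tuples of possibly different degrees without violating the phase-index constraint baked into regularity. If one prefers not to lean on the stated conventions, an alternative justification of the range condition is to note that regularity of the $\mathcal{S}_k$--expansion forces $\mathcal{H}(\mathcal{S}_k^{n_k}) > 0$, hence $\mathcal{H}(\mathcal{S}_l^{n_l}) > 0$, so the phase-$n_l$ expanded tuple of $\mathcal{S}_l$ cannot consist solely of constants; this keeps $n_l$ strictly below $\deg(\mathcal{S}_l)$, and the same observation applied at the phase-$(n_l+1)$ level closes the remaining gap.
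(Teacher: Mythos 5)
Your argument is correct and follows essentially the same route as the paper's own proof: extract the two mass equalities from Definition \ref{isomorphism}, insert the strict inequality supplied by regularity of the $\mathcal{S}_k$--expansion, and read off $\mathcal{H}(\mathcal{S}_l^{n_l})>\mathcal{H}(\mathcal{S}_l^{n_l+1})$. Your additional verification of the phase-index window $0\leq n_l\leq \deg(\mathcal{S}_l)-2$ is a point the paper passes over silently (its proof ends with ``the result follows immediately''), so that extra bookkeeping is a welcome refinement rather than a deviation.
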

\bigskip

\begin{proof}
Let $\mathcal{F}=\{\mathcal{S}_j\}_{j=1}^{\infty}$ be a collection of tuples of $\mathbb{R}[x]$. Let $\mathcal{S}_{l},\mathcal{S}_k \in \mathcal{F}$. Suppose that $\mathcal{Z}[(\gamma^{-1}\circ \beta\circ\gamma\circ\nabla)^{n_k}(\mathcal{S}_{k})]\simeq \mathcal{Z}[(\gamma^{-1}\circ\beta\circ\gamma\circ\nabla)^{n_l}(\mathcal{S}_{l})]$. By definition \ref{isomorphism}, we have 
\begin{align}
\mathcal{H}(\mathcal{S}_{k}^{n_k})=\mathcal{H}(\mathcal{S}_{l}^{n_l}).\nonumber
\end{align} 
Since $(\gamma^{-1}\circ\beta\circ\gamma\circ\nabla)^{n_k}(\mathcal{S}_{k})$ is regular, it implies that 
$$
\mathcal{H}(\mathcal{S}_k^{n_k})=\mathcal{H}(\mathcal{S}_{l}^{n_l})>\mathcal{H}(\mathcal{S}_{k}^{n_k+1}).
$$ 
Since 
\begin{align}
\mathcal{Z}[(\gamma^{-1} \circ\beta\circ\gamma\circ\nabla)^{n_k+1}(\mathcal{S}_{k})]\simeq \mathcal{Z}[(\gamma^{-1}\circ\beta\circ \gamma\circ\nabla)^{n_l+1}(\mathcal{S}_{l})]\nonumber
\end{align}
it follows by definition \ref{isomorphism} that 
$$
\mathcal{H}(\mathcal{S}_{l}^{n_l})=\mathcal{H}(\mathcal{S}_k^{n_k})>\mathcal{H}(\mathcal{S}_{k}^{n_k+1})=\mathcal{H}(\mathcal{S}_{l}^{n_l+1})
$$ 
and the claim follows immediately.
\end{proof}

\subsection{Boundary deformation of expansions}

In this section, we introduce and study the notion of \emph{deformation} of the boundary of expansions.

\begin{definition}\label{deform}
Let $\mathcal{F}=\{\mathcal{S}_j\}_{j=1}^{\infty}$ be a collection of tuples of $\mathbb{R}[x]$. We say that the boundary of the expansion $\mathcal{Z}[(\gamma^{-1} \circ \beta \circ \gamma \circ \nabla)^{n_l}(\mathcal{S}_{l})]$ is a \emph{deformation} of the boundary of expansion $\mathcal{Z}[(\gamma^{-1} \circ \beta \circ \gamma \circ \nabla)^{n_h}(\mathcal{S}_{h})]$ if there exists a map 
\begin{align}
\pi:\mathcal{Z}[(\gamma^{-1}\circ\beta\circ\gamma\circ \nabla)^{n_h}(\mathcal{S}_{h})]\longrightarrow \mathcal{Z}[(\gamma^{-1}\circ\beta\circ\gamma\circ\nabla)^{n_l}(\mathcal{S}_{l})]\nonumber 
\end{align}
such that 
$$
\#\mathcal{Z}[(\gamma^{-1} \circ \beta \circ \gamma \circ \nabla)^{n_h}(\mathcal{S}_{h})]>\# \mathcal{Z}[(\gamma^{-1} \circ \beta \circ \gamma \circ \nabla)^{n_l}(\mathcal{S}_{l})]
$$ 
with $\mathcal{H}(\mathcal{S}_{h}^{n_h})=\mathcal{H}(\mathcal{S}_{l}^{n_l})$.
\end{definition}
\bigskip

Here, we relate the properties of the deformation of boundaries of expansions with isomorphism of expansions.

\begin{theorem}
Let $\mathcal{F}=\{\mathcal{S}_j\}_{j=1}^{\infty}$ be a collection of tuples of $\mathbb{R}[x]$ and suppose that $\mathcal{Z}[(\gamma^{-1}\circ\beta\circ\gamma\circ\nabla)^{n_l}(\mathcal{S}_{l})]$ is a deformation of $\mathcal{Z}[(\gamma^{-1} \circ\beta\circ\gamma\circ\nabla)^{n_h}(\mathcal{S}_{h})]$. If $\mathcal{Z}[(\gamma^{-1}\circ\beta\circ\gamma\circ\nabla)^{n_r}(\mathcal{S}_{r})]$ is also a deformation of $\mathcal{Z}[(\gamma^{-1}\circ\beta\circ\gamma\circ\nabla)^{n_h}(\mathcal{S}_{h})]$,
then 
\begin{align}
\mathcal{Z}[(\gamma^{-1}\circ\beta\circ\gamma\circ\nabla)^{n_r}(\mathcal{S}_{r})]\simeq \mathcal{Z}[(\gamma^{-1}\circ\beta\circ \gamma\circ\nabla)^{n_l}(\mathcal{S}_{l})].\nonumber
\end{align}
\end{theorem}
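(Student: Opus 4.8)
The plan is to unwind the two deformation hypotheses using Definition \ref{deform}, extract the common mass equality they each encode, and then feed that equality into Definition \ref{isomorphism} to conclude the isomorphism. In other words, the argument is a transitivity-type chase through the mass functional $\mathcal{H}$.

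First I would invoke Definition \ref{deform} on the hypothesis that $\mathcal{Z}[(\gamma^{-1} \circ \beta \circ \gamma \circ \nabla)^{n_l}(\mathcal{S}_{l})]$ is a deformation of $\mathcal{Z}[(\gamma^{-1} \circ \beta \circ \gamma \circ \nabla)^{n_h}(\mathcal{S}_{h})]$. By that definition, beyond the existence of the surjection-type map $\pi$ and the strict inequality on cardinalities, we in particular obtain the mass equality
\begin{align}
\mathcal{H}(\mathcal{S}_{h}^{n_h})=\mathcal{H}(\mathcal{S}_{l}^{n_l}).\nonumber
\end{align}
Next I would apply Definition \ref{deform} a second time, now to the hypothesis that $\mathcal{Z}[(\gamma^{-1} \circ \beta \circ \gamma \circ \nabla)^{n_r}(\mathcal{S}_{r})]$ is a deformation of $\mathcal{Z}[(\gamma^{-1} \circ \beta \circ \gamma \circ \nabla)^{n_h}(\mathcal{S}_{h})]$, which likewise yields
\begin{align}
\mathcal{H}(\mathcal{S}_{h}^{n_h})=\mathcal{H}(\mathcal{S}_{r}^{n_r}).\nonumber
\end{align}

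Combining these two equalities through the common value $\mathcal{H}(\mathcal{S}_{h}^{n_h})$ gives $\mathcal{H}(\mathcal{S}_{l}^{n_l})=\mathcal{H}(\mathcal{S}_{r}^{n_r})$. Finally I would appeal to Definition \ref{isomorphism}, which declares two boundaries isomorphic precisely when the corresponding masses coincide; hence
\begin{align}
\mathcal{Z}[(\gamma^{-1} \circ \beta \circ \gamma \circ \nabla)^{n_r}(\mathcal{S}_{r})] \simeq \mathcal{Z}[(\gamma^{-1} \circ \beta \circ \gamma \circ \nabla)^{n_l}(\mathcal{S}_{l})],\nonumber
\end{align}
which is the claimed conclusion.

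There is no serious obstacle here: the only thing one must be careful about is that the notion of deformation in Definition \ref{deform} is not merely the existence of a map between boundary sets with a cardinality drop, but also carries the built-in requirement $\mathcal{H}(\mathcal{S}_{h}^{n_h})=\mathcal{H}(\mathcal{S}_{l}^{n_l})$; it is exactly this clause that makes the mass functional constant along deformations and drives the whole argument. Once that is noted, the proof is a two-line equality chase, and I would present it as such rather than dwelling on the maps $\pi$ themselves.
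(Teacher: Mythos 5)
Your proof is correct: by Definition \ref{deform} each deformation hypothesis carries the built-in mass equality with $\mathcal{H}(\mathcal{S}_{h}^{n_h})$, so $\mathcal{H}(\mathcal{S}_{l}^{n_l})=\mathcal{H}(\mathcal{S}_{r}^{n_r})$, which is precisely the condition Definition \ref{isomorphism} requires for the two boundaries to be isomorphic. The paper in fact states this theorem without supplying any proof, and your two-line equality chase through the mass functional is exactly the argument its definitions dictate.
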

\bigskip

\begin{theorem}
Let $\mathcal{F}=\{\mathcal{S}_j\}_{j=1}^{\infty}$ be a collection of tuples of $\mathbb{R}[x]$ and suppose that $\mathcal{Z}[(\gamma^{-1}\circ\beta\circ\gamma\circ\nabla)^{n_l}(\mathcal{S}_{l})]$ is a deformation of $\mathcal{Z}[(\gamma^{-1} \circ\beta\circ\gamma\circ\nabla)^{n_h}(\mathcal{S}_{h})]$. If $\mathcal{Z}[(\gamma^{-1}\circ\beta\circ\gamma\circ\nabla)^{n_r}(\mathcal{S}_{r})]$ is a deformation of $\mathcal{Z}[(\gamma^{-1} \circ\beta\circ\gamma\circ\nabla)^{n_l}(\mathcal{S}_{l})]$, then $\mathcal{Z}[(\gamma^{-1} \circ \beta \circ \gamma \circ \nabla)^{n_r}(\mathcal{S}_{r})]$ is a deformation of $\mathcal{Z}[(\gamma^{-1} \circ \beta \circ \gamma \circ \nabla)^{n_h}(\mathcal{S}_{h})]$
\end{theorem}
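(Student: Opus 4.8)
The plan is to read off the three defining conditions of a deformation from Definition \ref{deform} in each of the two hypotheses, and then verify that the composite data still satisfies all three conditions. Concretely, write $\mathcal{B}_h = \mathcal{Z}[(\gamma^{-1} \circ \beta \circ \gamma \circ \nabla)^{n_h}(\mathcal{S}_{h})]$, $\mathcal{B}_l = \mathcal{Z}[(\gamma^{-1} \circ \beta \circ \gamma \circ \nabla)^{n_l}(\mathcal{S}_{l})]$ and $\mathcal{B}_r = \mathcal{Z}[(\gamma^{-1} \circ \beta \circ \gamma \circ \nabla)^{n_r}(\mathcal{S}_{r})]$. Since $\mathcal{B}_l$ is a deformation of $\mathcal{B}_h$, there is a map $\pi_1:\mathcal{B}_h \longrightarrow \mathcal{B}_l$ with $\#\mathcal{B}_h > \#\mathcal{B}_l$ and $\mathcal{H}(\mathcal{S}_h^{n_h}) = \mathcal{H}(\mathcal{S}_l^{n_l})$. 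Since $\mathcal{B}_r$ is a deformation of $\mathcal{B}_l$, there is a map $\pi_2:\mathcal{B}_l \longrightarrow \mathcal{B}_r$ with $\#\mathcal{B}_l > \#\mathcal{B}_r$ and $\mathcal{H}(\mathcal{S}_l^{n_l}) = \mathcal{H}(\mathcal{S}_r^{n_r})$.

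Next I would take the composite $\pi := \pi_2 \circ \pi_1 : \mathcal{B}_h \longrightarrow \mathcal{B}_r$, which exists because the target of $\pi_1$ is the source of $\pi_2$; this supplies the required map between the two boundary sets. For the cardinality condition I would chain the two strict inequalities to get $\#\mathcal{B}_h > \#\mathcal{B}_l > \#\mathcal{B}_r$, hence $\#\mathcal{B}_h > \#\mathcal{B}_r$. For the mass condition I would chain the two equalities to get $\mathcal{H}(\mathcal{S}_h^{n_h}) = \mathcal{H}(\mathcal{S}_l^{n_l}) = \mathcal{H}(\mathcal{S}_r^{n_r})$, so $\mathcal{H}(\mathcal{S}_h^{n_h}) = \mathcal{H}(\mathcal{S}_r^{n_r})$. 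Having verified all three clauses of Definition \ref{deform}, I would conclude that $\mathcal{B}_r$ is a deformation of $\mathcal{B}_h$, which is the claim.

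There is no real obstacle here: the statement is precisely the transitivity of the relation ``is a deformation of,'' and Definition \ref{deform} is built from a composable map, a transitive order on cardinalities, and a transitive equality of masses, so the only work is bookkeeping. The single point that needs a word of care is noting that the composition $\pi_2\circ\pi_1$ is legitimately defined on all of $\mathcal{B}_h$ (no surjectivity of $\pi_1$ is required by the definition, so this is immediate), after which the conclusion follows at once.
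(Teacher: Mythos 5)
Your proposal is correct and follows essentially the same route as the paper's own proof: both form the composite map $\pi_2\circ\pi_1$ between the boundary sets, chain the strict cardinality inequalities, and chain the mass equalities to verify the definition of a deformation. (The paper even uses the same composite, though it unfortunately names it $\beta$, clashing with the $\beta$ in the expansion operator; your bookkeeping is if anything cleaner.)
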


\begin{proof}
Let $\mathcal{F}=\{\mathcal{S}_j\}_{j=1}^{\infty}$ be a collection of tuples of $\mathbb{R}[x]$ and suppose that $\mathcal{Z}[(\gamma^{-1}\circ\beta\circ\gamma\circ\nabla)^{n_l}(\mathcal{S}_{l})]$ is a deformation of $\mathcal{Z}[(\gamma^{-1} \circ\beta\circ\gamma\circ\nabla)^{n_h}(\mathcal{S}_{h})]$. By  definition \ref{deform}, there exist some map 
\begin{align}
\pi_1:\mathcal{Z}[(\gamma^{-1}\circ\beta\circ\gamma\circ \nabla)^{n_h}(\mathcal{S}_{h})]\longrightarrow \mathcal{Z}[(\gamma^{-1}\circ\beta\circ\gamma\circ\nabla)^{n_l}(\mathcal{S}_{l})]\nonumber
\end{align}
such that $\#\mathcal{Z}[(\gamma^{-1}\circ\beta\circ\gamma\circ \nabla)^{n_h}(\mathcal{S}_{h})]>\# \mathcal{Z}[(\gamma^{-1}\circ \beta\circ\gamma\circ \nabla)^{n_l}(\mathcal{S}_{l})]$ with $\mathcal{H}(\mathcal{S}_{l}^{n_l})=\mathcal{H}(\mathcal{S}_{h}^{n_h})$. Since $\mathcal{Z}[(\gamma^{-1}\circ \beta\circ\gamma\circ\nabla)^{n_r}(\mathcal{S}_{r})]$ is a deformation of $\mathcal{Z}[(\gamma^{-1}\circ\beta\circ\gamma \circ\nabla)^{n_l}(\mathcal{S}_{l})]$, there exists a mapping \begin{align}
\pi_2:\mathcal{Z}[(\gamma^{-1}\circ\beta\circ\gamma\circ \nabla)^{n_l}(\mathcal{S}_{l})]\longrightarrow \mathcal{Z}[(\gamma^{-1}\circ\beta\circ\gamma\circ\nabla)^{n_r}(\mathcal{S}_{r})]\nonumber
\end{align}
such that $\#\mathcal{Z}[(\gamma^{-1}\circ\beta\circ\gamma\circ \nabla)^{n_l}(\mathcal{S}_{l})]>\#\mathcal{Z}[(\gamma^{-1}\circ \beta\circ\gamma\circ\nabla)^{n_r}(\mathcal{S}_{r})]$ with $\mathcal{H}(\mathcal{S}_{l}^{n_l})=\mathcal{H}(\mathcal{S}_{r}^{n_r})$. Choosing $\beta=\pi_2 \circ \pi_1$, we deduce
\begin{align}
\beta:\mathcal{Z}[(\gamma^{-1}\circ\beta\circ\gamma\circ \nabla)^{n_h}(\mathcal{S}_{h})]\longrightarrow \mathcal{Z}[(\gamma^{-1}\circ\beta\circ\gamma\circ\nabla)^{n_r}(\mathcal{S}_{r})].\nonumber
\end{align}
Since $\mathcal{H}(\mathcal{S}_{h}^{n_h})=\mathcal{H}(\mathcal{S}_{l}^{n_l})=\mathcal{H}(\mathcal{S}_{r}^{n_r})$ and \begin{align}
\#\mathcal{Z}[(\gamma^{-1}\circ\beta\circ\gamma\circ\nabla)^{n_h}(\mathcal{S}_{h})]&>\#\mathcal{Z}[(\gamma^{-1}\circ\beta\circ \gamma\circ\nabla)^{n_l}(\mathcal{S}_{l})]\nonumber \\&>\# \mathcal{Z}[(\gamma^{-1}\circ\beta\circ\gamma\circ\nabla)^{n_r}(\mathcal{S}_{r})]\nonumber
\end{align}
the claim follows.
\end{proof}

\subsection{Overlapping and non-overlapping expansions}

In this section, we study the concept of \emph{overlapping} of expansions. 

\begin{definition}
Let $\mathcal{F}=\{\mathcal{S}_j\}_{j=1}^{\infty}$ be a collection  of tuples of $\mathbb{R}[x]$. The expansions $(\gamma^{-1}\circ \beta\circ\gamma\circ\nabla)^{n_l}(\mathcal{S}_{l})$ and $(\gamma^{-1}\circ\beta\circ\gamma\circ\nabla)^{n_k}(\mathcal{S}_{k})$ are said to be \emph{overlapping} if \begin{align}
\mathcal{Z}[(\gamma^{-1}\circ\beta\circ\gamma\circ\nabla)^{n_l}(\mathcal{S}_{l})]\bigcap \mathcal{Z}[(\gamma^{-1}\circ\beta\circ \gamma\circ\nabla)^{n_k}(\mathcal{S}_{k})]\neq \emptyset. \nonumber
\end{align}
We denote this overlapping region by $\mathcal{O}(\mathcal{S}_{l}^{n_l}, \mathcal{S}_{k}^{n_k})$. We call \begin{align}
\mathcal{D}^l[\mathcal{O}(\mathcal{S}_{l}^{n_l}, \mathcal{S}_{k}^{n_k})]=\frac{\#\mathcal{O}(\mathcal{S}_{l}^{n_l},\mathcal{S}_{k}^{n_k})}{\# \mathcal{Z}[(\gamma^{-1}\circ\beta\circ\gamma\circ\nabla)^{n_l}(\mathcal{S}_{l})]}\nonumber
\end{align}
the \emph{density} of the overlapping region relative to the expansion $(\gamma^{-1}\circ\beta\circ\gamma\circ\nabla)^{n_l}(\mathcal{S}_{l})$ and 
\begin{align}
\mathcal{D}^k[\mathcal{O}(\mathcal{S}_{l}^{n_l}, \mathcal{S}_{k}^{n_k})]=\frac{\#\mathcal{O}(\mathcal{S}_{l}^{n_l}, \mathcal{S}_{k}^{n_k})}{\#\mathcal{Z}[(\gamma^{-1}\circ\beta\circ \gamma\circ\nabla)^{n_k}(\mathcal{S}_{k})]}\nonumber
\end{align}
the \emph{density} of the overlapping region relative to the expansion $(\gamma^{-1}\circ\beta\circ\gamma\circ\nabla)^{n_k}(\mathcal{S}_{k})$.
\end{definition}
\bigskip

\section{Associate expansions}\label{sec:Associate expansions}

In this section, we introduce the notion of \emph{associate} of expansion. We study how this property interacts with the notion of isomorphism.

\begin{definition}\label{associate}
Let $\mathcal{F}=\{\mathcal{S}_j\}_{j=1}^{\infty}$ be a collection of tuples of $\mathbb{R}[x]$. Let $\mathcal{S}_k,\mathcal{S}_l\in \mathcal{F}$. We say that the expansion $(\gamma^{-1}\circ\beta \circ\gamma\circ\nabla)^{n_k}(\mathcal{S}_{k})$ is an \emph{associate} of the expansion $(\gamma^{-1}\circ\beta\circ \gamma\circ\nabla)^{n_l}(\mathcal{S}_{l})$ if for each \begin{align}
\mathcal{S}_0\in \mathcal{Z}[(\gamma^{-1}\circ\beta\circ\gamma \circ\nabla)^{n_l}(\mathcal{S}_{l})]\nonumber
\end{align}
there exists an $\mathcal{S}_1\in \mathcal{Z}[(\gamma^{-1}\circ \beta\circ\gamma\circ\nabla)^{n_k}(\mathcal{S}_{k})]$ such that $\mathcal{S}_0=m\mathcal{S}_1$ for some $m\in \mathbb{N}$.
\end{definition}

\begin{proposition}
Let $\mathcal{S}_k, \mathcal{S}_l\in\mathcal{F}=\{\mathcal{S}_j\}_{j=1}^{\infty}$ and suppose that $ \mathcal{Z}[(\gamma^{-1}\circ\beta\circ\gamma\circ\nabla)^{n_k}(\mathcal{S}_{k})]\simeq \mathcal{Z}[(\gamma^{-1}\circ\beta\circ \gamma\circ\nabla)^{n_l}(\mathcal{S}_{l})]$. Suppose that $\mathcal{S}_a\neq r\mathcal{S}_b$ ($r\in \mathbb{N}$) for $\mathcal{S}_a,\mathcal{S}_b\in \mathcal{Z}[(\gamma^{-1}\circ \beta\circ\gamma\circ\nabla)^{n_k}(\mathcal{S}_{k})]$. If  $(\gamma^{-1}\circ\beta\circ\gamma\circ\nabla)^{n_k}(\mathcal{S}_{k})$ is an associate of $(\gamma^{-1}\circ\beta \circ\gamma\circ\nabla)^{n_l}(\mathcal{S}_{l})$, then 
\begin{align}
\mathcal{Z}[(\gamma^{-1}\circ\beta\circ\gamma\circ\nabla)^{n_l}(\mathcal{S}_{l})]=\mathcal{Z}[(\gamma^{-1}\circ\beta\circ\gamma \circ\nabla)^{n_k}(\mathcal{S}_{k})].\nonumber
\end{align}
\end{proposition}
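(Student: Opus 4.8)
The plan is to turn the associate hypothesis into a map between the two boundary sets and then pin that map down with the equal--mass condition coming from the isomorphism. Abbreviate $\mathcal{B}_k := \mathcal{Z}[(\gamma^{-1} \circ \beta \circ \gamma \circ \nabla)^{n_k}(\mathcal{S}_{k})]$ and $\mathcal{B}_l := \mathcal{Z}[(\gamma^{-1} \circ \beta \circ \gamma \circ \nabla)^{n_l}(\mathcal{S}_{l})]$. Since $(\gamma^{-1} \circ \beta \circ \gamma \circ \nabla)^{n_k}(\mathcal{S}_{k})$ is an associate of $(\gamma^{-1} \circ \beta \circ \gamma \circ \nabla)^{n_l}(\mathcal{S}_{l})$, Definition \ref{associate} attaches to every $\mathcal{S}_0 \in \mathcal{B}_l$ some $\phi(\mathcal{S}_0) \in \mathcal{B}_k$ and some $m_{\mathcal{S}_0} \in \mathbb{N}$ with $\mathcal{S}_0 = m_{\mathcal{S}_0}\phi(\mathcal{S}_0)$. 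The first goal is to see that $\phi$ is a well defined \emph{bijection} of $\mathcal{B}_l$ onto $\mathcal{B}_k$: if $\mathcal{S}_0 = m\,\mathcal{T} = m'\,\mathcal{T}'$ with $\mathcal{T},\mathcal{T}' \in \mathcal{B}_k$ and $\mathcal{T} \neq \mathcal{T}'$, then $\mathcal{T}$ and $\mathcal{T}'$ are proportional, which the standing hypothesis $\mathcal{S}_a \neq r\mathcal{S}_b$ on $\mathcal{B}_k$ is precisely designed to rule out; this gives well--definedness, and the same hypothesis is what I would invoke to force injectivity and then (together with the mass constraint, see below) surjectivity.

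Granting that $\phi$ is a bijection, I would run the mass identity. Using the homogeneity $\|m\mathcal{S}\| = m\|\mathcal{S}\|$ for $m \in \mathbb{N}$ and the bijectivity of $\phi$,
\begin{align}
\mathcal{H}(\mathcal{S}_l^{n_l}) = \sum_{\mathcal{S}_0 \in \mathcal{B}_l}\|\mathcal{S}_0\| = \sum_{\mathcal{S}_0 \in \mathcal{B}_l} m_{\mathcal{S}_0}\,\|\phi(\mathcal{S}_0)\| \ \geq\ \sum_{\mathcal{S}_0 \in \mathcal{B}_l}\|\phi(\mathcal{S}_0)\| = \sum_{\mathcal{S}_1 \in \mathcal{B}_k}\|\mathcal{S}_1\| = \mathcal{H}(\mathcal{S}_k^{n_k}).\nonumber
\end{align}
By Definition \ref{isomorphism} the isomorphism hypothesis gives $\mathcal{H}(\mathcal{S}_l^{n_l}) = \mathcal{H}(\mathcal{S}_k^{n_k})$, so the single inequality above is forced to be an equality; since every summand is non--negative and $m_{\mathcal{S}_0} \geq 1$, this forces $m_{\mathcal{S}_0} = 1$ whenever $\|\phi(\mathcal{S}_0)\| > 0$, while the residual case $\|\phi(\mathcal{S}_0)\| = 0$ yields $\|\mathcal{S}_0\| = 0$ and hence $\mathcal{S}_0 = \phi(\mathcal{S}_0)$ anyway. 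Thus $\mathcal{S}_0 = \phi(\mathcal{S}_0) \in \mathcal{B}_k$ for every $\mathcal{S}_0 \in \mathcal{B}_l$, so $\mathcal{B}_l \subseteq \mathcal{B}_k$, and since $\phi$ is onto, $\mathcal{B}_k = \phi(\mathcal{B}_l) = \mathcal{B}_l$, which is the asserted equality $\mathcal{Z}[(\gamma^{-1} \circ \beta \circ \gamma \circ \nabla)^{n_l}(\mathcal{S}_{l})]=\mathcal{Z}[(\gamma^{-1} \circ \beta \circ \gamma \circ \nabla)^{n_k}(\mathcal{S}_{k})]$.

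The genuine obstacle I expect is the surjectivity of $\phi$ onto $\mathcal{B}_k$. Well--definedness and injectivity read off fairly directly from the no--proportionality hypothesis on $\mathcal{B}_k$, but Definition \ref{associate} only furnishes an ``into'' correspondence, and without surjectivity the mass chain does not close: the extra weight contributed by any multiplier $m_{\mathcal{S}_0} \geq 2$ could in principle be absorbed by the norms of the points of $\mathcal{B}_k$ not hit by $\phi$. I would attack this by a cardinality count, trying to show $\#\mathcal{B}_k \leq \#\mathcal{B}_l$ --- for instance by examining, for each $\mathcal{S}_1 \in \mathcal{B}_k$, the fibre $\phi^{-1}(\mathcal{S}_1) \subseteq \{\mathcal{S}_1, 2\mathcal{S}_1, 3\mathcal{S}_1,\dots\} \cap \mathcal{B}_l$ and using the no--proportionality of $\mathcal{B}_k$ together with the equal--mass constraint to force each fibre to be non--empty and a singleton --- so that an injective $\phi$ between finite sets of equal size is automatically onto. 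Should that fail to go through cleanly, the honest reading is that the proposition is being applied with the understanding that being an associate already supplies a bijective correspondence of boundary points.
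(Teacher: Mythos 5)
Your argument is essentially the paper's: both write $\mathcal{H}(\mathcal{S}_l^{n_l})$ as a sum over the other boundary via the associate correspondence $\mathcal{S}_a=m_b\mathcal{S}_b$, pull the multipliers out by homogeneity of the norm, and use the equal masses supplied by the isomorphism to force every multiplier to equal $1$. The surjectivity/bijectivity worry you flag at the end is genuine, but the paper does not resolve it either --- its proof simply asserts that the corresponding $\mathcal{S}_1$ is ``unique'' and then silently reindexes $\sum_{\mathcal{S}_a\in\mathcal{B}^{n_l}}\lVert\mathcal{S}_a\rVert$ as a sum over $\mathcal{B}^{n_k}$, which presupposes exactly the bijection you are worried about, so your version is if anything the more careful of the two.
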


\begin{proof}
Let $\mathcal{S}_k, \mathcal{S}_l\in\mathcal{F}=\{\mathcal{S}_j\}_{j=1}^{\infty}$ and suppose that $\mathcal{S}_a\neq r\mathcal{S}_b$ ($r\in \mathbb{N}$) for $\mathcal{S}_a,\mathcal{S}_b\in \mathcal{Z}[(\gamma^{-1}\circ \beta\circ\gamma\circ\nabla)^{n_k}(\mathcal{S}_{k})]$. Suppose that $(\gamma^{-1}\circ\beta\circ\gamma\circ\nabla)^{n_k}(\mathcal{S}_{k})$ is an associate of $(\gamma^{-1}\circ\beta \circ\gamma\circ\nabla)^{n_l}(\mathcal{S}_{l})$, then by definition \ref{associate}, for each $\mathcal{S}_0\in \mathcal{Z}[(\gamma^{-1}\circ\beta\circ\gamma\circ\nabla)^{n_k}(\mathcal{S}_{k})]$ there exists a unique $\mathcal{S}_1\in \mathcal{Z}[(\gamma^{-1}\circ\beta\circ\gamma\circ\nabla)^{n_l}(\mathcal{S}_{l})]$ such that $\mathcal{S}_0=m\mathcal{S}_1$ for some $m\in \mathbb{N}$. Since 
\begin{align}
\mathcal{Z}[(\gamma^{-1}\circ\beta\circ\gamma\circ\nabla)^{n_k}(\mathcal{S}_{k})]\simeq \mathcal{Z}[(\gamma^{-1}\circ\beta\circ \gamma\circ\nabla)^{n_l}(\mathcal{S}_{l})]\nonumber
\end{align}
it follows from the definition \ref{isomorphism}
\begin{align}
\mathcal{H}(\mathcal{S}_{l}^{n_l})=\mathcal{H}(\mathcal{S}_{k}^{n_k}).\nonumber
\end{align}
We deduce
\begin{align}
\mathcal{H}(\mathcal{S}_{l}^{n_l})&=\sum \limits_{\mathcal{S}_a\in \mathcal{B}^{n_l}}||\mathcal{S}_a||\nonumber \\&=\sum \limits_{\substack{\mathcal{S}_b\in \mathcal{B}^{n_k}\\\mathcal{S}_a=m_b\mathcal{S}_b}}||m_b\mathcal{S}_b||\nonumber \\&=\sum \limits_{\substack{\mathcal{S}_b\in \mathcal{B}^{n_k}\\\mathcal{S}_a=m_b\mathcal{S}_b}}m_b||\mathcal{S}_b||\nonumber \\&=\sum \limits_{\mathcal{S}_b\in \mathcal{B}^{n_l}}||\mathcal{S}_b||.\nonumber
\end{align}
We take $m_b=1$ and deduce the claim.
\end{proof}

\subsection{Sub-expansions}

In this section, we introduce the concept of \emph{sub-expansions} of an expansion.

\begin{definition}\label{sub}
Let $(\gamma^{-1}\circ\beta\circ\gamma\circ \nabla)^m(\mathcal{S}_a)$ and $(\gamma^{-1}\circ\beta\circ\gamma \circ\nabla)^n(\mathcal{S}_b)$ be any two expansions with $m<n$. We say that $(\gamma^{-1}\circ\beta\circ\gamma\circ \nabla)^m(\mathcal{S}_a)$ is a \emph{sub-expansion} of the expansion $(\gamma^{-1}\circ\beta\circ\gamma\circ \nabla)^n(\mathcal{S}_b)$, denoted by 
\begin{align}
(\gamma^{-1}\circ\beta\circ\gamma\circ\nabla)^m(\mathcal{S}_a) \leq (\gamma^{-1}\circ\beta\circ\gamma\circ \nabla)^n(\mathcal{S}_b)\nonumber
\end{align}
if there exist some $j\geq 1$ such that $(\gamma^{-1}\circ\beta \circ\gamma\circ\nabla)^m(\mathcal{S}_a)=(\gamma^{-1}\circ\beta \circ\gamma\circ\nabla)^{m+j}(\mathcal{S}_b)$. We say that the sub-expansion is \emph{proper} if $m+j=n$. We denote this proper sub-expansion by 
\begin{align}
(\gamma^{-1}\circ\beta\circ\gamma\circ\nabla)^m(\mathcal{S}_a)< (\gamma^{-1}\circ\beta\circ\gamma\circ \nabla)^n(\mathcal{S}_b)\nonumber
\end{align}
\end{definition}
\bigskip

Here, we show that the regularity condition on an expansion can be localized as well as extended through expansions. 

\begin{proposition}\label{sub-expansion}
Let $(\gamma^{-1}\circ\beta\circ\gamma\circ \nabla)^m(\mathcal{S}_a)<(\gamma^{-1}\circ\beta\circ\gamma\circ \nabla)^n(\mathcal{S}_b)$, which means $(\gamma^{-1}\circ\beta\circ\gamma\circ \nabla)^m(\mathcal{S}_a)$ is a proper sub-expansion of the expansion $(\gamma^{-1}\circ\beta\circ\gamma\circ \nabla)^n(\mathcal{S}_b)$. The expansion  $(\gamma^{-1}\circ\beta\circ\gamma\circ \nabla)^m(\mathcal{S}_a)$ is regular if and only if $(\gamma^{-1} \circ\beta\circ\gamma\circ\nabla)^n(\mathcal{S}_b)$ is regular.
\end{proposition}

\begin{proof}
Let $(\gamma^{-1}\circ\beta\circ\gamma\circ \nabla)^m(\mathcal{S}_a)<(\gamma^{-1}\circ\beta\circ\gamma\circ \nabla)^n(\mathcal{S}_b)$ and suppose that the expansion $(\gamma^{-1}\circ \beta\circ\gamma\circ\nabla)^m(\mathcal{S}_a)$ is regular. We deduce $\mathcal{H}(\mathcal{S}_a^{m})>\mathcal{H}(\mathcal{S}_a^{m+1})$ for some $1\leq m \leq \mathrm{deg}(\mathcal{S}_a)-2$. By definition \ref{sub}, it follows that there exist some $j\geq 1$ such that we can write  $(\gamma^{-1}\circ \beta\circ\gamma\circ\nabla)^m(\mathcal{S}_a)=(\gamma^{-1}\circ \beta\circ\gamma\circ\nabla)^{m+j}(\mathcal{S}_b)$. Since the expansion is proper, it implies $m+j=n$ and we have 
\begin{align}
(\gamma^{-1}\circ\beta\circ\gamma\circ\nabla)^m(\mathcal{S}_a)=(\gamma^{-1}\circ\beta\circ\gamma\circ\nabla)^{n}(\mathcal{S}_b).\nonumber
\end{align}
It follows that $\mathcal{H}(\mathcal{S}_a^m)=\mathcal{H}(\mathcal{S}_b^n)$. Since 
\begin{align}
(\gamma^{-1}\circ\beta\circ\gamma\circ\nabla)^{m+1}(\mathcal{S}_a)=(\gamma^{-1}\circ\beta\circ\gamma\circ\nabla)^{n+1}(\mathcal{S}_b)\nonumber
\end{align}
the regularity of the expansion $(\gamma^{-1}\circ\beta\circ \gamma\circ\nabla)^n(\mathcal{S}_b)$ also follows. The converse can also be shown using a similar argument.
\end{proof}

\section{Distribution of the boundary points of expansion}\label{sec:Boundary point distribution}

In this section, we study the distribution of the boundary points of any phase of expansion. We first  introduce the notion of an integration of polynomials along the boundaries of various phases of expansion, which we then use as our main tool.

\begin{definition}\label{special integral}
Let $f(x)=a_nx^n+a_{n-1}x^{n-1}+\cdots +a_1x+a_0$ be a polynomial of degree $n$. We call the tuple 
\begin{align}
\mathcal{S}_f&:=(a_nx^n,a_{n-1}x^{n-1},\ldots, a_1x+a_0)\nonumber \\&=(g_1(x), g_2(x),\ldots,g_n(x))\nonumber
\end{align}
the tuple representation of $f$. By the integral of $f(x)$ along the boundary of the $m^{th}$ phase expansion of $\mathcal{S}_f$, we mean the formal integral
\begin{align}
\int \limits_{\substack{\mathcal{B}^m\\m<n}}f(t)dt:=\sum \limits_{i=1}^{\# \mathcal{B}^m-1}\sum \limits_{\substack{\mathcal{S}_i, \mathcal{S}_{i+1}\in \mathcal{B}^m\\||\mathcal{S}_i||<||\mathcal{S}_{i+1}||}}\overrightarrow{O\Delta_{\mathcal{S}_i, \mathcal{S}_{i+1}}(\mathcal{S}_f)}\cdot \overrightarrow{O\mathcal{S}_e}\nonumber
\end{align}
where 
\begin{align}
\Delta_{\mathcal{S}_i,\mathcal{S}_{i+1}}(\mathcal{S}_f)=\bigg(\int \limits_{a_1}^{b_1}g_1(x)dx,\int \limits_{a_2}^{b_2}g_2(x)dx, \ldots,\int \limits_{a_n}^{b_n}g_n(x)dx\bigg)\nonumber
\end{align}
and where $\overrightarrow{O\Delta_{\mathcal{S}_i,\mathcal{S}_{i+1}}(\mathcal{S}_f)}$ and $\overrightarrow{O\mathcal{S}_e}$ are the position vectors of $\Delta_{\mathcal{S}_i,\mathcal{S}_{i+1}}(\mathcal{S}_f)$ and $\mathcal{S}_e$ respectively with $\mathcal{S}_i=(a_1,a_2,\ldots, a_n)$ and $\mathcal{S}_{i+1}=(b_1,b_2,\ldots, b_n)$.
\end{definition}
\bigskip

In practice, it may be difficult to assert the local distribution of the boundary points of an expansion. However, we can show that if we shrink the space bounded by the boundary of an expansion, then the boundary points must be closely packed together in some sense. We use the integral proposed by \ref{special integral} as a black box.

\begin{theorem}\label{area}
Let $f(x)=a_nx^n+a_{n-1}x^{n-1}+\cdots+a_1x+a_0$ be a polynomial of degree $n$. Then 
\begin{align}
\bigg |\bigg |\int \limits_{\substack{\mathcal{B}^m\\m<n}}f(t)dt\bigg |\bigg |>\epsilon\nonumber
\end{align}
for some $\epsilon>0$ if and only if $||\mathcal{S}_i-\mathcal{S}_{i+1}||>\delta$ for $\delta:=\delta(n)>0$ for some \begin{align}
\mathcal{S}_i\in \mathcal{Z}[(\gamma^{-1}\circ\beta\circ\gamma \circ\nabla)^{m}(\mathcal{S}_{f})]\nonumber
\end{align} 
with $1\leq i\leq \#\mathcal{B}^m-1$ and $||\mathcal{S}_i-\mathcal{S}_{i+1}||<||\mathcal{S}_i-\mathcal{S}_j||$ for $i+1\neq j$.
\end{theorem}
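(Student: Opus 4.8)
The plan is to strip Definition~\ref{special integral} down to a pair of elementary estimates on definite integrals of the monomial entries $g_k$ of the tuple representation $\mathcal{S}_f$. First I would observe that, because $\overrightarrow{O\mathcal{S}_e}$ is the all-ones vector, every summand in the defining sum collapses to a scalar,
\begin{align}
\overrightarrow{O\Delta_{\mathcal{S}_i,\mathcal{S}_{i+1}}(\mathcal{S}_f)}\cdot \overrightarrow{O\mathcal{S}_e}=\sum_{k}\int_{a_k}^{b_k}g_k(x)\,dx=\sum_k \frac{c_k}{d_k}\big(b_k^{d_k}-a_k^{d_k}\big),\nonumber
\end{align}
where $\mathcal{S}_i=(a_1,a_2,\ldots)$, $\mathcal{S}_{i+1}=(b_1,b_2,\ldots)$, and $c_k$, $d_k\le n+1$ are the coefficient and shifted exponent of $g_k$. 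Since the entries of $(\gamma^{-1}\circ\beta\circ\gamma\circ\nabla)^{m}(\mathcal{S}_f)$ are polynomials whose coefficients are fixed integer combinations of the $a_j$, Cauchy's bound on the moduli of roots confines every coordinate of every boundary point in $\mathcal{Z}[(\gamma^{-1}\circ\beta\circ\gamma\circ\nabla)^{m}(\mathcal{S}_f)]$ to a disc of radius $R=R(n,f)$; hence $\big|b_k^{d_k}-a_k^{d_k}\big|\le d_kR^{d_k-1}|b_k-a_k|$, and, away from the locus $b_k=\pm a_k$ where these powers coincide, $|b_k-a_k|$ is in turn controlled by $\sum_k\big|b_k^{d_k}-a_k^{d_k}\big|$. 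By Proposition~\ref{bdecreasing} the number of consecutive pairs $\#\mathcal{B}^m-1$ is at most $\deg(\mathcal{S}_f)$, so only finitely many terms ever occur.

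For the forward implication I would argue by contraposition. If every consecutive gap satisfies $\|\mathcal{S}_i-\mathcal{S}_{i+1}\|\le\delta$, the first estimate gives $\big|\int_{a_k}^{b_k}g_k\big|\le d_kR^{d_k-1}\delta$ for each $k$, whence by the triangle inequality $\big\|\int_{\mathcal{B}^m}f\,dt\big\|\le C(n,f)\,\delta$; choosing $\delta$ small against $\epsilon$ contradicts $\big\|\int_{\mathcal{B}^m}f\,dt\big\|>\epsilon$, so some consecutive pair must be $\delta$-separated, and selecting the pair realizing the minimal nearest-neighbour distance yields the $\mathcal{S}_i$ asserted in the statement. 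For the reverse implication I would run the chain the other way: a separation $\|\mathcal{S}_i-\mathcal{S}_{i+1}\|>\delta$ forces $|b_k-a_k|\gg_n\delta$ in some coordinate $k$, and, using the power-difference identity and a lower bound away from the coincidence locus, a definite lower bound $\big\|\Delta_{\mathcal{S}_i,\mathcal{S}_{i+1}}(\mathcal{S}_f)\big\|\gg_{n,f}1$ for that single summand; one then takes $\epsilon$ below this quantity.

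The delicate point, which I expect to be the main obstacle, is ruling out destructive cancellation in the reverse direction: $\int_{\mathcal{B}^m}f\,dt$ is a sum of up to $\deg(\mathcal{S}_f)$ scalars, and a single large summand need not keep the total away from $0$. I would control this using the norm-increasing ordering $\|\mathcal{S}_i\|<\|\mathcal{S}_{i+1}\|$ built into Definition~\ref{special integral}: along such a chain the top-degree monomial $c_k\big(b_k^{d_k}-a_k^{d_k}\big)$ in the leading coordinate keeps a fixed sign, so the contributions do not interfere destructively, and it then suffices to take $\epsilon$ below the resulting net lower bound, which depends only on $n$, the coefficients of $f$, and $\delta$. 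Once this non-degeneracy is secured both implications close, and the dependence $\delta=\delta(n)$ is read off from Cauchy's bound together with the bounded exponents $d_k\le n+1$.
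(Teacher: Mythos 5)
Your overall strategy coincides with the paper's: bound the formal integral above by a constant times the largest consecutive gap (giving the forward direction), and below by a constant times a single gap (giving the reverse direction). The paper does this with generic bounds $M=\max|g_i|$ and $R=\min|g_i|$ on the integrands rather than your explicit monomial antiderivatives and Cauchy's bound, but the skeleton is the same, and your forward direction is essentially the paper's argument run contrapositively.

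The genuine problem is exactly the point you flag and then do not close: destructive cancellation in the reverse direction. Your proposed fix — that the ordering $\|\mathcal{S}_i\|<\|\mathcal{S}_{i+1}\|$ forces the top-degree term $c_k\bigl(b_k^{d_k}-a_k^{d_k}\bigr)$ to keep a fixed sign — does not hold. An increase in the Euclidean norm of the whole tuple does not force any particular coordinate's modulus to increase, so $|b_k|>|a_k|$ can fail in the coordinate you single out; and even when $|b_k|>|a_k|$, for odd $d_k$ the sign of $b_k^{d_k}-a_k^{d_k}$ depends on the signs of $a_k$ and $b_k$ themselves, not on their moduli. Moreover different consecutive pairs may have their dominant contribution in different coordinates, so there is no single "leading coordinate" whose contributions you can align. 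Without sign coherence, one large summand gives no lower bound on $\bigl\|\int_{\mathcal{B}^m}f\,dt\bigr\|$, and the reverse implication is unproved. You should not feel too bad: the paper's own proof has the same hole, hidden in two places — it writes the lower bound with a factor $|\cos\alpha|$ that can be zero (or the individual dot products can have opposite signs and cancel), and it uses $R=\min\{|g_i(x)|:x\in[a_i,b_i]\}$, which vanishes whenever some $g_i$ has a root in the relevant interval, in which case the inequality $\bigl|\int_{a_i}^{b_i}g_i\bigr|\ge R\,|b_i-a_i|$ is vacuous. So the reverse direction of this equivalence is not actually established by either argument as written; only the forward direction (your contrapositive upper bound) is sound.
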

\bigskip

\begin{proof}
Let $f(x)=a_nx^n+a_{n-1}x^{n-1}+\cdots+a_1x+a_0\in \mathbb{R}[x]$ be a polynomial of degree $n$ and suppose that 
\begin{align}
\bigg |\bigg |\int \limits_{\substack{\mathcal{B}^m\\m<n}}f(t)dt\bigg |\bigg |>\epsilon\nonumber
\end{align}
for some $\epsilon>0$. Using repeated application of the triangle inequality, we find that 
\begin{align}
\bigg |\bigg |\int \limits_{\substack{\mathcal{B}^m\\m<n}}f(t)dt\bigg |\bigg |& \leq \sum \limits_{i=1}^{\# \mathcal{B}^m-1}\sum \limits_{\substack{\mathcal{S}_i, \mathcal{S}_{i+1}\in \mathcal{B}^m\\||\mathcal{S}_i||<||\mathcal{S}_{i+1}||}}||\overrightarrow{O\Delta_{\mathcal{S}_i, \mathcal{S}_{i+1}}(\mathcal{S}_f)}||||\overrightarrow{O\mathcal{S}_e}||\nonumber \\&=\sqrt{n}\sum \limits_{i=1}^{\# \mathcal{B}^m-1}\sum \limits_{\substack{\mathcal{S}_i, \mathcal{S}_{i+1}\in \mathcal{B}^m\\||\mathcal{S}_i||<||\mathcal{S}_{i+1}||}}||\overrightarrow{O\Delta_{\mathcal{S}_i, \mathcal{S}_{i+1}}(\mathcal{S}_f)}||\nonumber \\& \leq (\# \mathcal{B}^m-1)\sqrt{n}\mathrm{max}\left \{||\overrightarrow{O\Delta_{\mathcal{S}_i, \mathcal{S}_{i+1}}(\mathcal{S}_f)}||\right \}_{\substack{i=1\\||\mathcal{S}_i||<||\mathcal{S}_{i+1}||}}^{\# (\mathcal{B}^m-1)}.\nonumber 
\end{align}
Since the inequality \begin{align}||\overrightarrow{O\Delta_{\mathcal{S}_i, \mathcal{S}_{i+1}}(\mathcal{S}_f)}||&=\sqrt{|\int \limits_{a_1}^{b_1}g_1dx|^2+\cdots +|\int \limits_{a_n}^{b_n}g_ndx|^2}\nonumber \\&\leq M\sqrt{|a_1-b_1|^2+\cdots +|a_n-b_n|^2}\nonumber
\end{align}
holds, it follows that there exist some $\mathcal{S}_i, \mathcal{S}_{i+1}\in \mathcal{Z}[(\gamma^{-1}\circ\beta\circ \gamma\circ\nabla)^{m}(\mathcal{S}_{f})]$ with $||\mathcal{S}_i-\mathcal{S}_{i+1}||<||\mathcal{S}_i-\mathcal{S}_j||$ for all $i+1\neq j$. It follows that for some closest pair of boundary point, the inequality 
\begin{align}
\frac{\epsilon}{(\# \mathcal{B}^m-1)M\sqrt{n}}<\sqrt{|a_1-b_1|^2+\cdots +|a_n-b_n|^2}\nonumber
\end{align}
holds. Thus, we get  $||\mathcal{S}_i-\mathcal{S}_{i+1}||>\delta$ by choosing $\delta =\frac{\epsilon}{(\# \mathcal{B}^m-1)M\sqrt{n}}$. Conversely, suppose that there exist some closest boundary points $\mathcal{S}_i,\mathcal{S}_{i+1}\in \mathcal{Z}[(\gamma^{-1}\circ\beta\circ\gamma\circ\nabla)^{m}(\mathcal{S}_{f})]$ such that 
\begin{align}
||\mathcal{S}_i-\mathcal{S}_{i+1}||>\delta \nonumber
\end{align}
for some $\delta:=\delta(n)>0$. It follows that $\sqrt{|a_1-b_1|^2+\cdots +|a_n-b_n|^2}>\delta $. Choosing $R=\mathrm{min}\left \{|g_i(x)|:x\in [a_i,b_i] \right \}_{i=1}^{n}$, we find that 
\begin{align}
||\overrightarrow{O\Delta_{\mathcal{S}_i, \mathcal{S}_{i+1}}(\mathcal{S}_f)}||&=\sqrt{|\int \limits_{a_1}^{b_1}g_1dx|^2+\cdots +|\int \limits_{a_n}^{b_n}g_ndx|^2}\nonumber \\&\geq  R\sqrt{|a_1-b_1|^2+\cdots +|a_n-b_n|^2}\nonumber \\&=\delta R.\nonumber
\end{align}
We deduce
\begin{align}
\sum \limits_{i=1}^{\# \mathcal{B}^m-1}\sum \limits_{\substack{\mathcal{S}_i, \mathcal{S}_{i+1}\in \mathcal{B}^m\\||\mathcal{S}_i||<||\mathcal{S}_{i+1}||}}\overrightarrow{O\Delta_{\mathcal{S}_i, \mathcal{S}_{i+1}}(\mathcal{S}_f)}\cdot \overrightarrow{O\mathcal{S}_e}>&\sum \limits_{i=1}^{\# \mathcal{B}^m-1}\sum \limits_{\substack{\mathcal{S}_i, \mathcal{S}_{i+1}\in \mathcal{B}^m\\||\mathcal{S}_i||<||\mathcal{S}_{i+1}||}}\delta R||\overrightarrow{O\mathcal{S}_e}||\cos \alpha\nonumber \\&=\delta (\# \mathcal{B}^m-1)R\sqrt{n}\cos \alpha \nonumber
\end{align}
where $\alpha$ is the angle between the vectors $\overrightarrow{O\Delta_{\mathcal{S}_i,\mathcal{S}_{i+1}}(\mathcal{S}_f)}$ and $\overrightarrow{O\mathcal{S}_e}$. It follows that 
\begin{align}
\bigg |\bigg |\int \limits_{\substack{\mathcal{B}^m\\m<n}}f(t)dt\bigg |\bigg |>C(n)\delta (\# \mathcal{B}^m-1)R\sqrt{n}|\cos \alpha| \nonumber
\end{align}
for some $C(n)>0$. The result follows by taking 
\begin{align}
\delta:=\frac{\epsilon}{C(n)(\# \mathcal{B}^m-1)R\sqrt{n}|\cos \alpha|}.\nonumber
\end{align}
\end{proof}
\bigskip

Theorem \ref{area} partially solves Conjecture \ref{distribution}. Indeed, the space occupied by the boundaries increase with expansion. Thus, Theorem \ref{area} in the affirmative suggests that we can use the area as a yardstick to determine the distribution of the points of any given phase of expansion. We apply this new tool to study the mass of the corresponding phases of expansions as follows.

\begin{corollary}
Let $f(x)\in \mathbb{R}[x]$ be a polynomial of degree $n$. If \begin{align}
\bigg |\bigg |\int \limits_{\substack{\mathcal{B}^m\\m<n}}f(t)dt\bigg |\bigg |<1\nonumber
\end{align}
with $||\mathcal{S}_i||<1$ for some $\mathcal{S}_{i}\in \mathcal{Z}[(\gamma^{-1}\circ\beta\circ\gamma\circ\nabla)^{m}(\mathcal{S}_{f})]$, then $\mathcal{H}(\mathcal{S}_f^{m})<\epsilon$ for some $\epsilon:=\epsilon(n)>0$.
\end{corollary}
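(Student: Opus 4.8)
The plan is to obtain this as the contrapositive companion of Theorem \ref{area}. First I would unwind the hypothesis: since $\left\|\int_{\mathcal{B}^m} f(t)\,dt\right\| < 1$, the boundary integral is in particular bounded, so by the relevant direction of Theorem \ref{area} no nearest‑neighbour pair of boundary points can be separated by more than the associated $\delta := \delta(n)$; equivalently every pair of consecutive boundary points $\mathcal{S}_i,\mathcal{S}_{i+1}\in\mathcal{Z}[(\gamma^{-1}\circ\beta\circ\gamma\circ\nabla)^{m}(\mathcal{S}_f)]$ satisfies $\|\mathcal{S}_i-\mathcal{S}_{i+1}\|\le\delta(n)$. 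To make this airtight I would re‑enter the estimate inside the proof of Theorem \ref{area}: the integral is the sum of the terms $\overrightarrow{O\Delta_{\mathcal{S}_i,\mathcal{S}_{i+1}}(\mathcal{S}_f)}\cdot\overrightarrow{O\mathcal{S}_e}$, each of which is bounded below (up to the geometric factor $|\cos\alpha|$ and the positive constant $R=R(n)$) by a multiple of $\|\mathcal{S}_i-\mathcal{S}_{i+1}\|$, so smallness of the whole sum forces smallness of each consecutive gap.

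Second, I would use that the set of boundary points is finite with cardinality controlled purely by $n$: each coordinate of a point of $\mathcal{Z}[(\gamma^{-1}\circ\beta\circ\gamma\circ\nabla)^{m}(\mathcal{S}_f)]$ is a real zero of one of the entry polynomials of the $m$th phase expansion, all of degree at most $n$, so $\#\mathcal{B}^m\le c(n)$ for an explicit $c(n)$. Combining this with the hypothesis that $\|\mathcal{S}_i\|<1$ for some boundary point $\mathcal{S}_i$, and chaining the triangle inequality along the (finite) list of boundary points, gives $\|\mathcal{S}_k\|\le\|\mathcal{S}_i\|+\#\mathcal{B}^m\,\delta(n)\le 1+c(n)\delta(n)$ for every $\mathcal{S}_k\in\mathcal{B}^m$. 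Summing over the at most $c(n)$ boundary points then yields
\begin{align}
\mathcal{H}(\mathcal{S}_f^{m})=\sum_{\mathcal{S}_k\in\mathcal{B}^m}\|\mathcal{S}_k\|\le c(n)\bigl(1+c(n)\delta(n)\bigr)=:\epsilon(n),\nonumber
\end{align}
which is the claimed bound.

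The main obstacle I anticipate is the step that converts ``the integral is small'' into ``every consecutive gap is small'': the integral is a sum of dot products $\overrightarrow{O\Delta}\cdot\overrightarrow{O\mathcal{S}_e}$, so a priori a term could be small because $\overrightarrow{O\Delta_{\mathcal{S}_i,\mathcal{S}_{i+1}}(\mathcal{S}_f)}$ is nearly orthogonal to $\overrightarrow{O\mathcal{S}_e}$ rather than because $\|\mathcal{S}_i-\mathcal{S}_{i+1}\|$ is small, and there could be cancellation between the terms. I would treat this exactly as in the proof of Theorem \ref{area}, absorbing the angle factor $|\cos\alpha|$ and the lower constant $R=R(n)$ into the final $\epsilon(n)$; this is also the precise point where the dependence of $\epsilon$ on $n$ (as opposed to on the coefficients of $f$) has to be defended, since $R$ and $\delta$ a priori involve the coefficients, so a normalization of $f$ or a more careful tracking of constants is needed here. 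A secondary and purely cosmetic issue is reconciling the nearest‑neighbour ordering used in Theorem \ref{area} with the increasing‑norm ordering used in the definition of the boundary integral, which is routine as both orderings range over the same finite set $\mathcal{B}^m$.
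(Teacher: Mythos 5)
Your proposal follows essentially the same route as the paper: invoke Theorem \ref{area} to turn smallness of the boundary integral into smallness of the consecutive gaps $\|\mathcal{S}_i-\mathcal{S}_{i+1}\|$, anchor at the assumed point with $\|\mathcal{S}_i\|<1$, and sum over the finitely many boundary points to bound $\mathcal{H}(\mathcal{S}_f^{m})$. In fact your version is the more careful of the two: the paper leaps from ``one boundary point has norm $<1$ and all consecutive gaps are $<1$'' to ``all boundary points have norm $<1$,'' which the triangle inequality does not give, whereas your chained bound $\|\mathcal{S}_k\|\le 1+c(n)\delta(n)$ is what actually follows and still suffices since $\epsilon$ is only required to depend on $n$; your flagged concerns about cancellation in the dot products and about $R$, $\delta$ depending on the coefficients of $f$ rather than on $n$ alone are genuine weaknesses inherited from Theorem \ref{area} that the paper does not address either.
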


\begin{proof}
Let $f(x):=a_nx^n+a_{n-1}x^{n-1}+\cdots+a_1x+a_0\in \mathbb{R}[x]$ be a polynomial of degree $n$. Suppose that 
\begin{align}
\bigg |\bigg |\int \limits_{\substack{\mathcal{B}^m\\m<n}}f(t)dt\bigg |\bigg |<1.\nonumber
\end{align}
By applying Theorem \ref{area}, we get
\begin{align}
||\mathcal{S}_i-\mathcal{S}_{i+1}||<1\nonumber
\end{align}
for all $1\leq i \leq \#\mathcal{B}^m-1$. Since $||\mathcal{S}_i||<1$ for some $1\leq i\leq \#\mathcal{B}^m$, it follows that $||\mathcal{S}_j||<1$ for all $1\leq j \leq \#\mathcal{B}^m$ and the claim follows immediately.
\end{proof}

\begin{remark}
With this new tool available, we can now establish a uniform version of the diminishing state of the mass of phases of an expansion whose phase boundaries are produced from the expansion of some part of the boundary. We state this result at the compromise of taking sufficiently small boundaries.
\end{remark}

\begin{theorem}\label{tool}
Let $f(x):=a_nx^n+a_{n-1}x^{n-1}+\cdots+a_1x+a_0\in \mathbb{R}[x]$ be a polynomial of degree $n$. Suppose that 
\begin{align}
\sum \limits_{m=1}^{n-1}\bigg |\bigg |\int \limits_{\substack{\mathcal{B}^m\\m<n}}f(t)dt\bigg |\bigg |<1.\nonumber
\end{align}
If 
\begin{align}
\bigcap \limits_{m=1}^{n-1}\mathcal{Z}[(\gamma^{-1}\circ\beta \circ\gamma\circ\nabla)^{m}(\mathcal{S}_{f})]\neq \emptyset\nonumber
\end{align}
then $\mathcal{H}(\mathcal{S}_f^m)>\mathcal{H}(\mathcal{S}_f^{m+1})$ for all $1\leq m \leq n-1=deg(\mathcal{S}_f)-1$.
\end{theorem}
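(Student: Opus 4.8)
The plan is to argue by contradiction along the lines of the earlier regularity arguments, combining the hypothesis on the total boundary integral with the corollary to Theorem \ref{area} and with Theorem \ref{bound}. First I would record what the hypothesis buys us: since $\sum_{m=1}^{n-1}\big\|\int_{\mathcal{B}^m}f(t)dt\big\|<1$, each individual term satisfies $\big\|\int_{\mathcal{B}^m}f(t)dt\big\|<1$ for $1\le m\le n-1$. The nonempty intersection hypothesis $\bigcap_{m=1}^{n-1}\mathcal{Z}[(\gamma^{-1}\circ\beta\circ\gamma\circ\nabla)^m(\mathcal{S}_f)]\neq\emptyset$ produces a point $\mathcal{S}_i$ that is a boundary point of \emph{every} phase $m$ in the range; applying the corollary to Theorem \ref{area} at each such phase (the hypothesis $\|\mathcal{S}_i\|<1$ being supplied, after a possible harmless rescaling of $f$, by the smallness of the total integral) yields $\mathcal{H}(\mathcal{S}_f^m)<\epsilon(n)$ simultaneously for all $1\le m\le n-1$.

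Next I would invoke Theorem \ref{bound}: for the tuple $\mathcal{S}_f$ there exists at least one index $0\le n_0\le \deg(\mathcal{S}_f)-2$ with $\mathcal{H}(\mathcal{S}_f^{n_0})>\mathcal{H}(\mathcal{S}_f^{n_0+1})$, i.e. at least one regular phase. The strategy is then to propagate this single instance of strict decrease to all phases in the range, using the fact that under the stated intersection hypothesis the boundaries of consecutive phases are nested in a controlled way (each phase boundary is produced by expanding part of the previous one, as the remark preceding the theorem emphasizes). Concretely, suppose for contradiction that $\mathcal{H}(\mathcal{S}_f^m)\le \mathcal{H}(\mathcal{S}_f^{m+1})$ for some $m$ in the range. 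Feeding the uniform smallness $\mathcal{H}(\mathcal{S}_f^k)<\epsilon(n)$ for all $k$ into the chain of inequalities used in the proof of Theorem \ref{bound}—namely $\sum_{k=0}^{n-1}\mathcal{M}(\mathcal{S}_f^k)\ll \mathcal{H}(\mathcal{S}_f^{n-1})\nu(\mathcal{S}_f)\deg(\mathcal{S}_f)\log\deg(\mathcal{S}_f)$—but now with the direction of monotonicity reversed on the offending block, one again drives the index of expansion $\mathcal{I}$ of the appropriate sub-expansion above the bound $n$ from Theorem \ref{index}, which is the desired contradiction. I would also use Proposition \ref{sub-expansion} to transfer regularity between the global expansion $(\gamma^{-1}\circ\beta\circ\gamma\circ\nabla)^m(\mathcal{S}_f)$ and the relevant proper sub-expansions, so that a single regular phase forces all of them to be regular.

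The main obstacle I anticipate is the passage from ``one regular phase exists'' (Theorem \ref{bound}) to ``every phase in $1\le m\le n-1$ is regular,'' which is what the stated conclusion demands. Theorem \ref{bound} only guarantees strict decrease at \emph{some} phase, and nothing in the earlier development rules out the mass oscillating; the extra leverage must come entirely from the two hypotheses of this theorem—the bound on the total integral and the common boundary point across all phases. The intended mechanism is that a common boundary point $\mathcal{S}_i$ present at every phase, together with the corollary to Theorem \ref{area}, forces $\mathcal{H}(\mathcal{S}_f^m)$ to be uniformly tiny, and then the averaged inequality of Theorem \ref{parody}/Theorem \ref{index} has no room to accommodate any increase without violating the $\mathcal{I}<n$ bound. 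Making that ``no room'' step airtight—quantifying how small $\epsilon(n)$ must be relative to $\deg(\mathcal{S}_f)\log\deg(\mathcal{S}_f)$ and checking that the rescaling of $f$ needed to apply the Area corollary does not disturb the integral hypothesis—is the delicate point, and is where I would concentrate the bulk of the write-up.
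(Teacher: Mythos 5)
Your route is genuinely different from the paper's, and it has a gap precisely where you anticipate one. The paper does not invoke Theorem \ref{bound}, Theorem \ref{index}, or any propagation of regularity through sub-expansions. Its argument is: the hypothesis $\sum_{m=1}^{n-1}\big\|\int_{\mathcal{B}^m}f(t)dt\big\|<1$ gives $\big\|\int_{\mathcal{B}^m}f(t)dt\big\|<1$ for each $m$, which by (the contrapositive direction of) Theorem \ref{area} forces consecutive boundary points at each fixed phase to be close, hence all boundary points of a given phase to have comparable norms; the nonempty intersection $\bigcap_{m=1}^{n-1}\mathcal{Z}[(\gamma^{-1}\circ\beta\circ\gamma\circ\nabla)^{m}(\mathcal{S}_f)]$ supplies a single point common to every phase boundary, which chains these comparabilities together so that boundary points of \emph{all} phases have mutually comparable norms; and then, since $\#\mathcal{B}^{m}>\#\mathcal{B}^{m+1}$ (Proposition \ref{bdecreasing}), the mass $\mathcal{H}(\mathcal{S}_f^m)=\sum_{\mathcal{S}_k\in\mathcal{B}^m}\|\mathcal{S}_k\|$ — a sum of comparable terms over a strictly shrinking index set — strictly decreases. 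The decisive ingredient you never use is exactly this strictly decreasing count of boundary points; the intersection hypothesis is there to transfer norm-comparability \emph{across} phases, not to feed the Area corollary.

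The concrete gap in your plan is the passage from "some phase is regular" (Theorem \ref{bound}) to "every phase in $1\le m\le n-1$ is regular." The contradiction argument inside Theorem \ref{bound} needs $\mathcal{H}(\mathcal{S}^k)\le\mathcal{H}(\mathcal{S}^{k+1})$ for \emph{all} $k$ in order to bound $\sum_k\mathcal{H}(\mathcal{S}^k)\nu(\mathcal{S}^k)$ by $\mathcal{H}(\mathcal{S}^{\deg(\mathcal{S})-1})\sum_k\nu(\mathcal{S}^k)$; a single offending index $m$ with $\mathcal{H}(\mathcal{S}_f^m)\le\mathcal{H}(\mathcal{S}_f^{m+1})$ does not let you dominate the whole sum by its last term, so no violation of $\mathcal{I}<n$ follows, and uniform smallness $\mathcal{H}(\mathcal{S}_f^k)<\epsilon(n)$ does not repair this since the inequality you need is between consecutive masses, not an absolute bound on them. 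Proposition \ref{sub-expansion} also cannot rescue the step: it transfers regularity between an expansion and a proper sub-expansion that is literally the \emph{same} tuple written two ways, so it yields no new phases at which the mass decreases. Finally, the "harmless rescaling" needed to feed $\|\mathcal{S}_i\|<1$ into the corollary of Theorem \ref{area} is not harmless: multiplying $f$ by a constant leaves the boundary points (tuples of roots of the expanded entries) unchanged, while substituting $x\mapsto cx$ moves both the boundary points and the value of the boundary integral, so the hypothesis of the theorem is disturbed. To align with the paper you should drop the Theorem \ref{bound}/\ref{index} machinery entirely and instead combine Theorem \ref{area}, the common intersection point, and Proposition \ref{bdecreasing} as sketched above.
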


\begin{proof}
Let $f(x):=a_nx^n+a_{n-1}x^{n-1}+\cdots+a_1x+a_0\in \mathbb{R}[x]$ be a polynomial of degree $n$ and suppose that 
\begin{align}
\sum \limits_{m=1}^{n-1}\bigg |\bigg |\int \limits_{\substack{\mathcal{B}^m\\m<n}}f(t)dt\bigg |\bigg |<1.\nonumber
\end{align}
By Theorem \ref{area}, we get
\begin{align}
||\mathcal{S}_i-\mathcal{S}_{i+1}||<1\nonumber
\end{align}
for $\mathcal{S}_i\in \mathcal{Z}[(\gamma^{-1}\circ\beta\circ \gamma\circ\nabla)^{m}(\mathcal{S}_{f})]$ for all $1\leq i\leq \# \mathcal{B}^m-1$ for each $1\leq m\leq n-1=\mathrm{deg}(\mathcal{S}_f)-1$. It follows that $||\mathcal{S}_i||\approx ||\mathcal{S}_{i+1}||$. Since 
\begin{align}
\bigcap \limits_{m=1}^{n-1}\mathcal{Z}[(\gamma^{-1}\circ\beta \circ\gamma\circ\nabla)^{m}(\mathcal{S}_{f})]\neq \emptyset\nonumber
\end{align}
it implies that the boundary points of each phase of expansions are of size comparable to the size of the boundary points of other phases of expansions. This fact completes the proof, since the boundary points decrease with successive phases of expansions.
\end{proof}
\bigskip

Here, we show that this special integral can also be used as criterion for determining the sub-expansions of an expansion, provided it is small enough.

\begin{theorem}\label{areasubgroup}
Let $f(x):=a_nx^n+a_{n-1}x^{n-1}+\cdots+a_1x+a_0\in \mathbb{R}[x]$ be a polynomial of degree $n$. If 
\begin{align}
\bigg|\bigg |\int \limits_{\substack{\mathcal{B}^{m_1}\\m_1<n}}f(t)dt\bigg |\bigg |<\bigg |\bigg |\int \limits_{\substack{\mathcal{B}^{m_2}\\m_2<n}}f(t)dt\bigg |\bigg |<1\nonumber
\end{align}
then $(\gamma^{-1}\circ\beta\circ\gamma\circ\nabla)^{m_1}(\mathcal{S}_{f})<(\gamma^{-1}\circ\beta\circ\gamma\circ \nabla)^{m_2}(\mathcal{S}_{f})$.
\end{theorem}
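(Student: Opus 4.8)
The plan is to reduce the assertion to verifying the defining condition of a proper sub-expansion in Definition \ref{sub}: for two expansions on the \emph{same} tuple $\mathcal{S}_f$ it suffices to produce some $j\geq 1$ with $m_1+j=m_2$, and once the strict ordering $m_1<m_2$ of the two phases is in hand this is immediate by taking $j:=m_2-m_1$, so the entire burden of the proof is to extract $m_1<m_2$ from the hypothesis on the two special integrals.

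First I would record what each half of the hypothesis yields through Theorem \ref{area}. Since $||\int_{\mathcal{B}^{m_1}}f(t)\,dt||$ and $||\int_{\mathcal{B}^{m_2}}f(t)\,dt||$ are both positive and bounded above by $1$, Theorem \ref{area} — in its quantitative form, with the explicit $\delta=\delta(n)$ produced by the repeated triangle inequality in its proof — controls the minimal gap $\min_i||\mathcal{S}_i-\mathcal{S}_{i+1}||$ between consecutive boundary points of each phase in terms of the size of the corresponding integral. Thus the smaller integral $||\int_{\mathcal{B}^{m_1}}f||$ forces the boundary points of the $m_1$-th phase to be the more tightly packed configuration, and combining this with Proposition \ref{bdecreasing}, which says that $\#\mathcal{B}^m$ strictly decreases as $m$ increases up to $\deg(\mathcal{S}_f)$, pins down which phase each configuration can occupy.

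The central step is then the comparison: as already noted after Theorem \ref{area}, the region enclosed by the boundary of a phase grows under successive expansions, so the assignment $m\mapsto ||\int_{\mathcal{B}^m}f(t)\,dt||$ is (weakly) increasing on $1\leq m<n$. Hence the strict inequality $||\int_{\mathcal{B}^{m_1}}f||<||\int_{\mathcal{B}^{m_2}}f||$ cannot occur with $m_1\geq m_2$, and we conclude $m_1<m_2$. Setting $j:=m_2-m_1\geq 1$, the hypothesis $m_2<n=\deg(\mathcal{S}_f)$ gives $m_1+j=m_2<n$, so by Definition \ref{sub} the expansion $(\gamma^{-1}\circ\beta\circ\gamma\circ\nabla)^{m_1}(\mathcal{S}_f)$ is a proper sub-expansion of $(\gamma^{-1}\circ\beta\circ\gamma\circ\nabla)^{m_2}(\mathcal{S}_f)$, which is the claim.

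The step I expect to be the main obstacle is precisely the passage from the inequality of the two special integrals to $m_1<m_2$: the integral of Definition \ref{special integral} depends both on the number of boundary points of the phase (monotone by Proposition \ref{bdecreasing}) and on how those points are spread and oriented relative to $\mathcal{S}_e$, through the cosine factors that surface in the proof of Theorem \ref{area}, so the monotonicity $m\mapsto||\int_{\mathcal{B}^m}f||$ must be argued rather than merely quoted from a remark. I would handle this by isolating the combinatorial factor $\#\mathcal{B}^m-1$ together with the growth of the enclosed area as the dominant contribution and showing the angular factors cannot reverse the order; should that prove delicate, the fallback — consistent with the other results of this section — is to assume the two phases share a common boundary point, i.e. $\mathcal{Z}[(\gamma^{-1}\circ\beta\circ\gamma\circ\nabla)^{m_1}(\mathcal{S}_f)]\cap\mathcal{Z}[(\gamma^{-1}\circ\beta\circ\gamma\circ\nabla)^{m_2}(\mathcal{S}_f)]\neq\emptyset$, which makes the comparison of the integrals reduce directly to the comparison of $\#\mathcal{B}^{m_1}$ and $\#\mathcal{B}^{m_2}$ and hence to Proposition \ref{bdecreasing}.
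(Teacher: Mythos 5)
Your proposal follows essentially the same route as the paper: both arguments reduce the claim to showing that the ordering of the two special integrals forces the ordering $m_1<m_2$ of the phases, via Theorem \ref{area} and the sparsification of boundary points under expansion (Proposition \ref{bdecreasing}); the paper merely packages this as a proof by contradiction, assuming $(\gamma^{-1}\circ\beta\circ\gamma\circ\nabla)^{m_2}(\mathcal{S}_f)<(\gamma^{-1}\circ\beta\circ\gamma\circ\nabla)^{m_1}(\mathcal{S}_f)$ and deriving $||\int_{\mathcal{B}^{m_2}}f||\leq||\int_{\mathcal{B}^{m_1}}f||$. The step you single out as the main obstacle — the monotonicity of $m\mapsto||\int_{\mathcal{B}^m}f(t)\,dt||$, which depends on the competition between the shrinking count $\#\mathcal{B}^m$ and the growing gaps and angular factors — is exactly the step the paper disposes of with the one-line assertion that "points on the boundary become sparse for higher phases of expansion," so your proposal is no less complete than the published argument; neither supplies a proof of that comparison, and neither verifies the literal equality $(\gamma^{-1}\circ\beta\circ\gamma\circ\nabla)^{m_1}(\mathcal{S}_f)=(\gamma^{-1}\circ\beta\circ\gamma\circ\nabla)^{m_2}(\mathcal{S}_f)$ that Definition \ref{sub} nominally demands of a proper sub-expansion.
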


\begin{proof}
Let $f(x):=a_nx^n+a_{n-1}x^{n-1}+\cdots+a_1x+a_0\in \mathbb{R}[x]$ be a polynomial of degree $n$, and let 
\begin{align}
\bigg|\bigg |\int \limits_{\substack{\mathcal{B}^{m_1}\\m_1<n}}f(t)dt\bigg |\bigg |<\bigg |\bigg |\int \limits_{\substack{\mathcal{B}^{m_2}\\m_2<n}}f(t)dt\bigg |\bigg |<1.\nonumber
\end{align}
Suppose on the contrary that 
\begin{align}
(\gamma^{-1}\circ\beta\circ\gamma\circ\nabla)^{m_2}(\mathcal{S}_{f})<(\gamma^{-1}\circ\beta\circ\gamma\circ \nabla)^{m_1}(\mathcal{S}_{f}).\nonumber
\end{align}
By Theorem \ref{area}, we have $||\mathcal{S}_{i}-\mathcal{S}_{i+1}||<||\mathcal{S}_j-\mathcal{S}_{j+1}||<1$ with $\mathcal{S}_i,\mathcal{S}_{i+1}\in \mathcal{Z}[(\gamma^{-1}\circ \beta\circ\gamma\circ\nabla)^{m_2}(\mathcal{S}_{f})]$ and $\mathcal{S}_{j},\mathcal{S}_{j+1}\in \mathcal{Z}[(\gamma^{-1} \circ\beta\circ\gamma\circ\nabla)^{m_1}(\mathcal{S}_{f})]$ such that 
\begin{align}
||\mathcal{S}_i-\mathcal{S}_{i+1}||=\mathrm{inf}\left \{||\mathcal{S}_i-\mathcal{S}_{k}||:\mathcal{S}_k\in \mathcal{Z}[(\gamma^{-1}\circ\beta\circ\gamma\circ\nabla)^{m_2}(\mathcal{S}_{f})]\right \}\nonumber
\end{align} 
and 
\begin{align}
||\mathcal{S}_j-\mathcal{S}_{j+1}||=\mathrm{inf}\left \{||\mathcal{S}_j-\mathcal{S}_{l}||:\mathcal{S}_l\in \mathcal{Z}[(\gamma^{-1}\circ\beta\circ\gamma\circ\nabla)^{m_1}(\mathcal{S}_{f})]\right\}.\nonumber
\end{align}
It follows that the boundary points of the two distinct boundaries of expansions are of size comparable to each other, up to a very small error. Since points on the boundary become sparse for higher phase of expansions, we get
\begin{align}
\bigg |\bigg |\int \limits_{\substack{\mathcal{B}^{m_2}\\m_2<n}}f(t)dt\bigg |\bigg |\leq \bigg|\bigg |\int \limits_{\substack{\mathcal{B}^{m_1}\\m_1<n}}f(t)dt\bigg |\bigg |\nonumber
\end{align}
which violates the hypothesis.
\end{proof}

\begin{remark}
It is important to state that this pass is somewhat easy; the pass from the area bounded by the boundary of expansion to information about the sub-expansions of an expansion when we allow for only sufficiently small areas. The reverse, on the other hand, may not necessarily be true. 
\end{remark}

\subsection{Interior and exterior points of expansion}

We devote this section to study the \emph{interior} and the \emph{exterior} points of the boundary of expansions. We also introduce the concept of the \emph{neighbourhood} of the boundary of an expansion.

\begin{definition}\label{interiorexterior}
Let $(\gamma^{-1}\circ\beta\circ\gamma\circ\nabla)^{m}(\mathcal{S})$ be an expansion for $1\leq m \leq \mathrm{deg}(\mathcal{S})-1$ with the boundary $\mathcal{Z}[(\gamma^{-1}\circ \beta\circ\gamma\circ\nabla)^{m}(\mathcal{S})]$. By the \emph{interior} of the expansion, denoted by $\mathrm{Int}[(\gamma^{-1} \circ \beta \circ \gamma \circ \nabla)^{m}(\mathcal{S})]$, we mean the set of points
\begin{align}
\mathrm{Int}[(\gamma^{-1}\circ\beta\circ\gamma\circ\nabla)^{m}(\mathcal{S})]&:=\left \{\mathcal{S}_a\in \mathbb{R}^{n}:||\mathcal{S}_a||<||\mathcal{S}_j||~\mathrm{or}~||\mathcal{S}_a||>||\mathcal{S}_j||,~\mathrm{for}~\mathrm{most}~\mathcal{S}_j\in \mathcal{B}^m\right \}.\nonumber
\end{align}
The points in the interior of expansion are called the \emph{interior points} of expansion. The interior is said to be an \emph{upper interior}, denoted by 
$$
\mathrm{Int}_{u}[(\gamma^{-1} \circ\beta\circ\gamma\circ\nabla)^{m}(\mathcal{S})]
$$ 
if each interior point belongs to the set
\begin{align}
\mathrm{Int}_{u}[(\gamma^{-1}\circ\beta\circ\gamma\circ\nabla)^{m}(\mathcal{S})]&:=\left\{\mathcal{S}_a\in \mathbb{R}^{n}:||\mathcal{S}_a||>||\mathcal{S}_j||,~\mathrm{for}~\mathrm{most}~\mathcal{S}_j\in \mathcal{B}^m\right \}.\nonumber
\end{align}
Otherwise, it is a lower interior, denoted by 
\begin{align}
\mathrm{Int}_{l}[(\gamma^{-1}\circ\beta\circ\gamma\circ\nabla)^{m}(\mathcal{S})]&:=\left \{\mathcal{S}_a\in \mathbb{R}^{n}:||\mathcal{S}_a||<||\mathcal{S}_j||,~\mathrm{for}~\mathrm{most}~\mathcal{S}_j\in \mathcal{B}^m\right\}.\nonumber
\end{align}
Similarly the exterior of an expansion, denoted by
$$
\mathrm{Ext}[(\gamma^{-1} \circ \beta \circ \gamma \circ \nabla)^{m}(\mathcal{S})]
$$ 
is the set of points
\begin{align}
\mathrm{Ext}[(\gamma^{-1} \circ \beta \circ \gamma \circ \nabla)^{m}(\mathcal{S})]&=\left \{\mathcal{S}_a\in \mathbb{R}^{n}:||\mathcal{S}_a||>||\mathcal{S}_j||~\mathrm{or}~||\mathcal{S}_a||<||\mathcal{S}_j||,~\mathrm{for}~\mathrm{all}~\mathcal{S}_j\in \mathcal{B}^m\right \}.\nonumber
\end{align}
A similar characterization also holds for exterior of an expansion as does the interior of an expansion.
\end{definition}
\bigskip

We show that the interior of an expansion can be used to determine the mass of an expansion. We use the proposed integral as our main tool. 

\begin{proposition}\label{interiorexterior mass}
Let $f(x)=a_nx^n+a_{n-1}x^{n-1}+\cdots+a_1x+a_0$ be a polynomial of degree $n$, and let $\mathcal{S}_f$ be the tuple representation of $f$. Suppose that 
\begin{align}
\bigg |\bigg |\int \limits_{\substack{\mathcal{B}^{m}\\m<n}}f(t)dt\bigg |\bigg |<1\nonumber
\end{align}
and 
\begin{align}
\mathrm{Int}_{l}[(\gamma^{-1} \circ \beta \circ \gamma \circ \nabla)^{m}(\mathcal{S}_f)]&:=\left \{\mathcal{S}_a\in \mathbb{R}^{n}:||\mathcal{S}_a||<||\mathcal{S}_j||,~\mathrm{for}~\mathrm{most}~\mathcal{S}_j\in \mathcal{B}^m\right \}\nonumber \\&=\mathrm{Int}[(\gamma^{-1} \circ\beta\circ\gamma\circ\nabla)^{m}(\mathcal{S}_f)].\nonumber
\end{align}
If 
\begin{align}
\sum \limits_{\substack{\mathcal{S}_a\in \mathcal{R}\\\mathcal{R}\subset\mathrm{Int}[(\gamma^{-1}\circ \beta\circ\gamma\circ\nabla)^{m}(\mathcal{S}_f)]\\\# \mathcal{R}=\#\mathcal{B}^m}}||\mathcal{S}_a||>\epsilon\nonumber
\end{align}
for some $\epsilon>0$ and $||\mathcal{S}_a-\mathcal{S}_j||\geq 1$, then $\mathcal{H}(\mathcal{S}_f^{m})>\epsilon$.
\end{proposition}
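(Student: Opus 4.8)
The plan is to argue by contradiction, in the same spirit as the corollary immediately preceding this proposition. Suppose to the contrary that $\mathcal{H}(\mathcal{S}_f^{m}) \leq \epsilon$. By Definition \ref{momentum} this means $\sum_{\mathcal{S}_k \in \mathcal{B}^m} \|\mathcal{S}_k\| \leq \epsilon$, so in particular $\|\mathcal{S}_k\| \leq \epsilon$ for \emph{every} boundary point $\mathcal{S}_k$ of the $m$th phase expansion. The first step is thus to record that the boundary of the $m$th phase expansion is confined to a ball of radius at most $\epsilon$ about the origin once the negation is assumed.

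Next I would extract the geometric content of the hypothesis $\big\|\int_{\mathcal{B}^m,\,m<n} f(t)\,dt\big\| < 1$. By Theorem \ref{area} this forces $\|\mathcal{S}_i - \mathcal{S}_{i+1}\| < 1$ for every consecutive pair of boundary points of $\mathcal{B}^m$ ordered by norm; combined with the confinement from the previous step, it lets one promote the phrase ``$\|\mathcal{S}_a\| < \|\mathcal{S}_j\|$ for \emph{most} $\mathcal{S}_j \in \mathcal{B}^m$'' appearing in Definition \ref{interiorexterior} into a comparison against a full set of representatives of $\mathcal{B}^m$ when $\mathcal{S}_a$ ranges over $\mathcal{R}$. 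Using the identification $\mathrm{Int}_l[(\gamma^{-1} \circ \beta \circ \gamma \circ \nabla)^{m}(\mathcal{S}_f)] = \mathrm{Int}[(\gamma^{-1} \circ \beta \circ \gamma \circ \nabla)^{m}(\mathcal{S}_f)]$, the inclusion $\mathcal{R} \subset \mathrm{Int}[(\gamma^{-1} \circ \beta \circ \gamma \circ \nabla)^{m}(\mathcal{S}_f)]$, and the cardinality condition $\#\mathcal{R} = \#\mathcal{B}^m$, one obtains a bijection $\sigma : \mathcal{R} \to \mathcal{B}^m$ with $\|\mathcal{S}_a\| < \|\sigma(\mathcal{S}_a)\|$ for all $\mathcal{S}_a \in \mathcal{R}$; here the separation hypothesis $\|\mathcal{S}_a - \mathcal{S}_j\| \geq 1$ guarantees that no point of $\mathcal{R}$ is mistakenly counted as a boundary point, so the matching is honest. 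Summing over $\mathcal{R}$ then yields
\[
\epsilon < \sum_{\mathcal{S}_a \in \mathcal{R}} \|\mathcal{S}_a\| < \sum_{\mathcal{S}_a \in \mathcal{R}} \|\sigma(\mathcal{S}_a)\| = \sum_{\mathcal{S}_k \in \mathcal{B}^m} \|\mathcal{S}_k\| = \mathcal{H}(\mathcal{S}_f^{m}) \leq \epsilon,
\]
which is absurd, so $\mathcal{H}(\mathcal{S}_f^{m}) > \epsilon$, as claimed.

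The hard part will be the construction of the norm-decreasing bijection $\sigma$: Definition \ref{interiorexterior} only compares an interior point against \emph{most} boundary points, not all of them, so I expect to spend the real effort handling the finitely many pairs where the comparison fails. I would resolve this by an exchange argument over those exceptional pairs, leaning on the smallness of the boundary ball (from Theorem \ref{area} and the negation hypothesis) and on the separation $\|\mathcal{S}_a - \mathcal{S}_j\| \geq 1$ to show that such pairs cannot tip the total sum, so that the chain of strict inequalities above survives. All remaining steps — the passage through Definition \ref{momentum}, the triangle-inequality consequences of Theorem \ref{area}, and the final summation — are routine.
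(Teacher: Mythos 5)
Your proposal is essentially the paper's argument: the entire content is the term-by-term comparison $\sum_{\mathcal{S}_a\in\mathcal{R}}\|\mathcal{S}_a\| < \sum_{\mathcal{S}_b\in\mathcal{B}^m}\|\mathcal{S}_b\| = \mathcal{H}(\mathcal{S}_f^{m})$, obtained from $\mathcal{R}\subset\mathrm{Int}_l$, the cardinality matching $\#\mathcal{R}=\#\mathcal{B}^m$, and the near-equality of boundary norms furnished by Theorem \ref{area}, after which the hypothesis $\sum_{\mathcal{R}}\|\mathcal{S}_a\|>\epsilon$ finishes the claim. The paper runs this directly rather than by contradiction (your negation hypothesis is never actually needed once the strict chain is in hand), and where you propose an exchange argument to upgrade ``most'' to a genuine bijection, the paper simply asserts that ``the exceptional set of the interior is negligible''; so you are, if anything, more explicit about the one real gap than the source is.
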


\begin{proof}
Let $f(x)=a_nx^n+a_{n-1}x^{n-1}+\cdots+a_1x+a_0$ be a polynomial of degree $n$, and let $\mathcal{S}_f$ be the tuple representation of $f$. Suppose that
\begin{align}
\bigg |\bigg |\int \limits_{\substack{\mathcal{B}^{m}\\m<n}}f(t)dt\bigg |\bigg |<1.\nonumber
\end{align}
By Theorem \ref{area}, we deduce $||\mathcal{S}_i||\approx ||\mathcal{S}_j||$ for any pair of points $\mathcal{S}_i,\mathcal{S}_j\in \mathcal{Z}[(\gamma^{-1}\circ \beta\circ\gamma\circ\nabla)^{m}(\mathcal{S}_f)]$. Since \begin{align}
\mathrm{Int}_{l}[(\gamma^{-1}\circ\beta\circ\gamma\circ\nabla)^{m}(\mathcal{S}_f)]&:=\left \{\mathcal{S}_a\in \mathbb{R}^{n}:||\mathcal{S}_a||<||\mathcal{S}_j||,~\mathrm{for}~\mathrm{most}~\mathcal{S}_j\in \mathcal{B}^m\right \}\nonumber \\&=\mathrm{Int}[(\gamma^{-1}\circ\beta\circ\gamma\circ \nabla)^{m}(\mathcal{S}_f)]\nonumber
\end{align}
and the exceptional set of the interior is negligible, we have 
\begin{align}
\sum \limits_{\substack{\mathcal{S}_a\in \mathcal{R}\\\mathcal{R}\subset\mathrm{Int}[(\gamma^{-1}\circ\beta\circ\gamma\circ\nabla)^{m}(\mathcal{S}_f)]\\\#\mathcal{R}=\# \mathcal{B}^m}}||\mathcal{S}_a||&<\sum \limits_{\mathcal{S}_b \in \mathcal{Z}[(\gamma^{-1}\circ\beta\circ\gamma\circ\nabla)^{m}(\mathcal{S}_f)]}||\mathcal{S}_b||\nonumber \\&=\mathcal{H}(\mathcal{S}_f^m).\nonumber
\end{align}
This completes the proof of the assertion.
\end{proof}
\bigskip

We show that all points that are not on the boundary of an expansion occupying a small enough space must necessarily be exterior points.

\begin{proposition}\label{bunchedup}
Let $f(x)=a_nx^n+a_{n-1}x^{n-1}+\cdots+a_1x+a_0$ be a polynomial of degree $n$, and let $\mathcal{S}_f$ be the tuple representation of $f$. If 
\begin{align}
\bigg |\bigg |\int \limits_{\substack{\mathcal{B}^{m}\\m<n}}f(t)dt\bigg |\bigg |<\delta,\nonumber
\end{align}
where $0<\delta <1$, then $\mathrm{Ext}[(\gamma^{-1} \circ \beta \circ \gamma \circ \nabla)^{m}(\mathcal{S}_f)]\neq \emptyset$.
\end{proposition}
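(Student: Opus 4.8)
Looking at this final Proposition \ref{bunchedup}, I need to show that if the special integral over the $m$-th phase boundary is small (bounded by $\delta < 1$), then the exterior of that expansion is non-empty.

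Let me think about the structure here. The exterior of an expansion (Definition \ref{interiorexterior}) consists of points $\mathcal{S}_a \in \mathbb{R}^n$ whose norm is either strictly bigger than ALL boundary point norms, or strictly smaller than ALL boundary point norms. The key observation (invoking Theorem \ref{area}) is that smallness of the special integral forces all boundary points to have comparable norms — so the boundary points cluster in a thin annular shell. Once they cluster, one can simply pick a point far outside (or at the origin side) of that shell.

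Here's my plan.

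\begin{proof}
Let $f(x)=a_nx^n+a_{n-1}x^{n-1}+\cdots +a_1x+a_0\in \mathbb{R}[x]$ be a polynomial of degree $n$, with tuple representation $\mathcal{S}_f$, and suppose
\begin{align}
\bigg |\bigg |\int \limits_{\substack{\mathcal{B}^{m}\\m<n}}f(t)dt\bigg |\bigg |<\delta \nonumber
\end{align}
for some $0<\delta <1$. Since $\delta<1$, it follows from Theorem \ref{area} that $||\mathcal{S}_i-\mathcal{S}_{i+1}||<1$ for every consecutive pair of boundary points $\mathcal{S}_i,\mathcal{S}_{i+1}\in \mathcal{Z}[(\gamma^{-1} \circ \beta \circ \gamma \circ \nabla)^{m}(\mathcal{S}_{f})]$ ordered by norm, and hence by the triangle inequality $||\mathcal{S}_i||\approx ||\mathcal{S}_j||$ for all $\mathcal{S}_i,\mathcal{S}_j\in \mathcal{B}^m$. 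Thus there exists a bound $M:=M(n,\delta)$ such that $||\mathcal{S}_j||\leq M$ for every $\mathcal{S}_j\in \mathcal{B}^m$. Now choose any $\mathcal{S}_a\in \mathbb{R}^n$ with $||\mathcal{S}_a||>M$; for instance take $\mathcal{S}_a=(M+1)\mathcal{S}_e$, so that $||\mathcal{S}_a||=(M+1)\sqrt{n}>M\geq ||\mathcal{S}_j||$ for all $\mathcal{S}_j\in \mathcal{B}^m$. Such a point satisfies the defining condition of $\mathrm{Ext}[(\gamma^{-1} \circ \beta \circ \gamma \circ \nabla)^{m}(\mathcal{S}_f)]$ in Definition \ref{interiorexterior}, namely $||\mathcal{S}_a||>||\mathcal{S}_j||$ for all $\mathcal{S}_j\in \mathcal{B}^m$. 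Therefore $\mathrm{Ext}[(\gamma^{-1} \circ \beta \circ \gamma \circ \nabla)^{m}(\mathcal{S}_f)]\neq \emptyset$, which completes the proof.
\end{proof}

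The approach in plain terms: first convert the hypothesis on the special integral into a statement about boundary-point norms via Theorem \ref{area} — smallness of the integral means consecutive boundary points are within distance $1$, which telescopes to give that all boundary points have essentially the same norm and in particular are uniformly bounded by some $M$. Second, exhibit an explicit witness point whose norm exceeds $M$ (a scaled unit tuple is the cleanest choice), and check that it meets the definition of an exterior point. The main obstacle I anticipate is being careful about the exact quantifier structure in Definition \ref{interiorexterior}: the exterior requires the strict inequality to hold against \emph{all} boundary points with one consistent direction (all-bigger or all-smaller), so I must make sure the witness is genuinely on the "outside" of the whole clustered shell rather than merely outside most of it — which is exactly why picking a point of norm strictly larger than the uniform bound $M$ works, since it dominates every boundary point simultaneously.
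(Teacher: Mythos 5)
Your proof is correct and follows essentially the same route as the paper: both invoke Theorem \ref{area} to conclude that the small integral forces the boundary points to cluster with comparable norms, and then exhibit a point lying outside that cluster. The only difference is cosmetic — the paper picks a point at distance at least $1$ from every boundary point and argues without loss of generality on the direction of the inequality, whereas you pick an explicit witness whose norm strictly dominates a uniform bound on all boundary-point norms, which is if anything the cleaner way to satisfy the quantifier in Definition \ref{interiorexterior}.
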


\begin{proof}
Let $f(x)=a_nx^n+a_{n-1}x^{n-1}+\cdots+a_1x+a_0$ be a polynomial of degree $n$, and let $\mathcal{S}_f$ be the tuple representation of $f$. Suppose that 
\begin{align}
\bigg |\bigg |\int \limits_{\substack{\mathcal{B}^{m}\\m<n}}f(t)dt\bigg |\bigg |<\delta\nonumber
\end{align}
where $0<\delta <1$. By Theorem \ref{area}, we have 
\begin{align}
||\mathcal{S}_i-\mathcal{S}_{i+1}||<\epsilon\nonumber
\end{align}
for $\epsilon>0$ sufficiently small for all $1\leq i\leq \# \mathcal{B}^m-1$. It follows that $||\mathcal{S}_{k}||\approx ||\mathcal{S}_l||$ for all $\mathcal{S}_k,\mathcal{S}_l\in \mathcal{B}^m$. Now, we choose $\mathcal{S}_a$ such that $||\mathcal{S}_a-\mathcal{S}_i||\geq 1$ for all $1\leq i\leq \# \mathcal{B}^m$. We deduce $\mathcal{S}_a \not\in \mathcal{B}^m$. Without loss of generality, we may assume that $||\mathcal{S}_a||<||\mathcal{S}_k||$. It implies $||\mathcal{S}_a||<||\mathcal{S}_l||$. The result follows by inducting this argument on other boundary points. 
\end{proof}

\begin{theorem}\label{spaced up}
Let $f(x)=a_nx^n+a_{n-1}x^{n-1}+\cdots+a_1x+a_0$ be a polynomial of degree $n$, and let $\mathcal{S}_f$ be the tuple representation of $f$. If
\begin{align}
\bigg |\bigg |\int \limits_{\substack{\mathcal{B}^{m}\\m<n}}f(t)dt\bigg |\bigg |>\delta,\nonumber
\end{align}
for $\delta>0$, then $\mathrm{Int}[(\gamma^{-1}\circ\beta\circ\gamma\circ\nabla)^{m}(\mathcal{S})]\neq \emptyset$.
\end{theorem}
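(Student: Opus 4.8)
The plan is to run the argument of Proposition \ref{bunchedup} in reverse, using Theorem \ref{area} as the bridge between the size of the boundary integral and the mutual spacing of the boundary points of the $m$-th phase expansion. First I would assume, as hypothesis, that
\begin{align}
\bigg|\bigg|\int \limits_{\substack{\mathcal{B}^{m}\\m<n}}f(t)dt\bigg|\bigg|>\delta \nonumber
\end{align}
for the given $\delta>0$, and apply Theorem \ref{area} with $\epsilon=\delta$. This produces a threshold $\delta':=\delta'(n)>0$ together with a closest pair of boundary points $\mathcal{S}_i,\mathcal{S}_{i+1}\in \mathcal{Z}[(\gamma^{-1} \circ \beta \circ \gamma \circ \nabla)^{m}(\mathcal{S}_{f})]$ satisfying $\|\mathcal{S}_i-\mathcal{S}_{i+1}\|>\delta'$ and $\|\mathcal{S}_i-\mathcal{S}_{i+1}\|<\|\mathcal{S}_i-\mathcal{S}_j\|$ for all $i+1\neq j$. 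Consequently no two boundary points of this phase are closer than $\delta'$: the boundary is genuinely spread out, unlike the bunched-up regime exploited in Proposition \ref{bunchedup}, and the region it encloses carries positive $n$-dimensional content.

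Next I would construct a witness interior point. Since the boundary points are pairwise $\delta'$-separated and finite in number (at most $\deg(\mathcal{S}_f)=n$ of them), the union of the closed unit balls centred at them cannot cover the region enclosed by $\mathcal{Z}[(\gamma^{-1} \circ \beta \circ \gamma \circ \nabla)^{m}(\mathcal{S}_{f})]$; I would pick $\mathcal{S}_a$ lying in that enclosed region with $\|\mathcal{S}_a-\mathcal{S}_j\|\geq 1$ for every $\mathcal{S}_j\in \mathcal{B}^m$. Arguing as in Proposition \ref{bunchedup}, after relabelling one may assume $\|\mathcal{S}_a\|<\|\mathcal{S}_j\|$ for most $\mathcal{S}_j\in \mathcal{B}^m$, the exceptional boundary points being those of smallest norm, which form a negligible subfamily precisely because of the $\delta'$-separation produced above. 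By Definition \ref{interiorexterior} this places $\mathcal{S}_a$ in $\mathrm{Int}[(\gamma^{-1} \circ \beta \circ \gamma \circ \nabla)^{m}(\mathcal{S}_{f})]$, so the interior of the expansion is nonempty.

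The hard part will be the middle step: Theorem \ref{area} delivers separation of the boundary points in the Euclidean metric $\|\mathcal{S}_i-\mathcal{S}_{i+1}\|$, whereas Definition \ref{interiorexterior} is phrased entirely through the scalar norms $\|\mathcal{S}_j\|$, and two far-apart points may nonetheless carry the same norm. To close this gap cleanly I would have to promote the Euclidean spread to an honest spread of the values $\{\|\mathcal{S}_j\|:\mathcal{S}_j\in\mathcal{B}^m\}$ — combining the lower bound $\|\mathcal{S}_i-\mathcal{S}_{i+1}\|>\delta'$ with the ordering $\|\mathcal{S}_i\|<\|\mathcal{S}_{i+1}\|$ already built into the definition of the boundary integral — so that a norm level strictly between two consecutive boundary norms can be selected. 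That is what guarantees the ``for most'' comparison and rules out an accidental exterior or on-boundary placement of $\mathcal{S}_a$; everything else is a routine application of the triangle inequality and of the finiteness of $\mathcal{B}^m$.
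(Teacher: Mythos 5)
Your proposal diverges structurally from the paper's proof: you attempt a direct construction of a witness interior point, whereas the paper argues by contradiction. The paper assumes $\mathrm{Int}[(\gamma^{-1} \circ \beta \circ \gamma \circ \nabla)^{m}(\mathcal{S}_f)]=\emptyset$, notes via Theorem \ref{area} that the boundary points are $\epsilon$-spaced, and then considers a tuple $\mathcal{S}_l$ whose norm equals the average $\frac{1}{\# \mathcal{B}^m}\sum_{\mathcal{S}_i\in \mathcal{B}^m}||\mathcal{S}_i||$. Such an $\mathcal{S}_l$ is argued not to lie on the boundary, hence (by the emptiness of the interior) it must be exterior, so without loss of generality $||\mathcal{S}_l||<||\mathcal{S}_k||$ for \emph{all} boundary points; taking $\mathcal{S}_k$ of minimal norm gives $\sum_{\mathcal{S}_i\in \mathcal{B}^m}||\mathcal{S}_i||<\# \mathcal{B}^m\,||\mathcal{S}_k||$, i.e.\ the average is strictly below the minimum, which is absurd. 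Nothing in the paper's argument requires locating a point inside an ``enclosed region'' or at unit distance from the boundary.

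There is a genuine gap in your version, and you partly flag it yourself. First, the construction of the witness is unsupported: the boundary $\mathcal{B}^m$ is a finite set of points in $\mathbb{R}^n$, so an ``enclosed region'' is not defined, the separation constant $\delta'$ from Theorem \ref{area} may be far smaller than $1$, and nothing guarantees that a point at distance $\geq 1$ from every boundary point fails the ``for all'' comparison of Definition \ref{interiorexterior} — the natural candidates (points far from everything) land in the exterior, not the interior. Second, and more seriously, the step you defer as ``the hard part'' — promoting Euclidean separation $||\mathcal{S}_i-\mathcal{S}_{i+1}||>\delta'$ to a spread of the scalar norms $||\mathcal{S}_j||$ so that an intermediate norm level exists — is not merely unfinished but can fail outright: two boundary points may be well separated in $\mathbb{R}^n$ yet lie on the same sphere about the origin, leaving no norm value strictly between them. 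Since the interior and exterior of Definition \ref{interiorexterior} are distinguished only through these scalar norms, your argument as written does not produce an interior point. The paper's averaging device is precisely what substitutes for this missing step, so you would need either to adopt it or to supply an independent argument that the boundary norms are not all equal.
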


\begin{proof}
Let $f(x)=a_nx^n+a_{n-1}x^{n-1}+\cdots+a_1x+a_0$ be a polynomial of degree $n$, and let $\mathcal{S}_f$ be the tuple representation of $f$. Suppose that 
\begin{align}
\bigg |\bigg |\int \limits_{\substack{\mathcal{B}^{m}\\m<n}}f(t)dt\bigg |\bigg |>\delta\nonumber
\end{align}
for $\delta>0$ and assume to the contrary that  $\mathrm{Int}[(\gamma^{-1}\circ\beta\circ\gamma\circ\nabla)^{m}(\mathcal{S}_f)]=\emptyset$. By Theorem \ref{area}, it follows that 
\begin{align}
||\mathcal{S}_{i}-\mathcal{S}_{i+1}||>\epsilon \nonumber
\end{align}
for $\epsilon>0$ and $1\leq i \leq \# \mathcal{B}^m-1$ with \begin{align}||\mathcal{S}_{i}-\mathcal{S}_{i+1}||=\mathrm{Inf}\left \{||\mathcal{S}_i-\mathcal{S}_j||:\mathcal{S}_j\in \mathcal{B}^m\right \}.\nonumber
\end{align}
That is, the points on the boundary of expansion are mostly spaced out. Under the assumption that $\mathrm{Int}[(\gamma^{-1}\circ\beta\circ\gamma\circ\nabla)^{m}(\mathcal{S}_f)]=\emptyset$, we deduce that for any $\mathcal{S}_l\not\in \mathcal{B}^m$
\begin{align}
\mathcal{S}_l\in \mathrm{Ext}[(\gamma^{-1}\circ\beta\circ\gamma\circ\nabla)^{m}(\mathcal{S}_f)].\nonumber
\end{align}
Now, we choose $\mathcal{S}_l$ such that
\begin{align}
||\mathcal{S}_l||=\frac{1}{\# \mathcal{B}^m}\sum \limits_{\mathcal{S}_i\in \mathcal{B}^m}||\mathcal{S}_i||.\nonumber
\end{align}
We deduce that $\mathcal{S}_l\not \in \mathcal{B}^m$. Suppose that $\mathcal{S}_l\in \mathcal{B}^m$, then 
\begin{align}
\frac{1}{\#\mathcal{B}^m}\sum \limits_{\mathcal{S}_i\in \mathcal{B}^m}||\mathcal{S}_i||=||\mathcal{S}_j||\nonumber
\end{align}
for some $\mathcal{S}_j\in \mathcal{B}^m$. It follows that \begin{align}
\sum \limits_{\mathcal{S}_i\in \mathcal{B}^m}||\mathcal{S}_i||=\# \mathcal{B}^m||\mathcal{S}_j||.\nonumber
\end{align}
This violates the assumption that $||\mathcal{S}_{i}-\mathcal{S}_{i+1}||>\epsilon$ for $\epsilon>0$. Since $\mathcal{S}_l\in \mathrm{Ext}[(\gamma^{-1}\circ\beta\circ\gamma\circ\nabla)^{m}(\mathcal{S}_f)]$, we can assume (without loss of generality) that \begin{align}
\frac{1}{\#\mathcal{B}^m}\sum \limits_{\mathcal{S}_i\in \mathcal{B}^m}||\mathcal{S}_i||<||\mathcal{S}_k||\nonumber
\end{align}
for all $\mathcal{S}_k\in \mathcal{B}^m$. Choosing 
$$
||\mathcal{S}_k||=\mathrm{min}\left\{||\mathcal{S}_j||:\mathcal{S}_j\in\mathcal{B}^m\right\}
$$ 
we deduce
\begin{align}
\sum \limits_{\mathcal{S}_i\in \mathcal{B}^m}||\mathcal{S}_i||<\# \mathcal{B}^m||\mathcal{S}_k||.\nonumber
\end{align}
This inequality cannot hold.
\end{proof}

\subsection{The neighbourhood of expansion}

In this section, we introduce the concept of the \emph{neighbourhood} of an expansion. 

\begin{definition}\label{neighbourhood}
Let $\mathcal{B}^m=\mathcal{Z}[(\gamma^{-1}\circ\beta\circ\gamma\circ\nabla)^{m}(\mathcal{S}_f)]$ be the boundary of an expansion. By the \emph{neighbourhood} of $\mathcal{S}_j\in \mathcal{B}^m$ with radius $\epsilon$, denoted by $\mathcal{E}_{\epsilon}(\mathcal{S}_j)$, we mean the set 
\begin{align}
\mathcal{E}_{\epsilon}(\mathcal{S}_j):=\left \{\mathcal{S}_a:||\mathcal{S}_a-\mathcal{S}_j||<\epsilon~\mathrm{for}~\mathcal{S}_j\in \mathcal{B}^m\right \}.\nonumber
\end{align}
\end{definition}
\bigskip

We investigate the relationship between the region bounded by the boundary of an expansion and the distribution of points near the boundary.

\begin{proposition}
Let $f(x):=a_nx^n+a_{n-1}x^{n-1}+\cdots+a_1x+a_0\in \mathbb{R}[x]$ be a polynomial of degree $n$ and suppose that 
\begin{align}
\bigg |\bigg |\int \limits_{\substack{\mathcal{B}^m\\m<n}}f(t)dt\bigg |\bigg |<\delta\nonumber
\end{align}
for $\delta>0$ sufficiently small. We have 
\begin{align}
\mathcal{E}_{1}(\mathcal{S}_j)\bigcap\mathcal{E}_{\frac{1}{2}}(\mathcal{S}_{j+1})\neq\emptyset \nonumber
\end{align}
for $\mathcal{S}_j,\mathcal{S}_{j+1}\in \mathcal{B}^m$.
\end{proposition}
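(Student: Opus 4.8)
The plan is to reduce the statement to a single gap estimate between consecutive boundary points, supplied by Theorem~\ref{area}, and then to the elementary geometry of balls. First I would observe that, by Definition~\ref{neighbourhood}, $\mathcal{E}_\epsilon(\mathcal{S}_j)$ is precisely the open Euclidean ball of radius $\epsilon$ about $\mathcal{S}_j$ in $\mathbb{R}^n$, so two such neighbourhoods $\mathcal{E}_{r_1}(\mathcal{S}_j)$ and $\mathcal{E}_{r_2}(\mathcal{S}_{j+1})$ have nonempty intersection as soon as $\|\mathcal{S}_j-\mathcal{S}_{j+1}\|<r_1+r_2$. With $r_1=1$ and $r_2=\tfrac12$ this means it suffices to prove $\|\mathcal{S}_j-\mathcal{S}_{j+1}\|<\tfrac32$ for the relevant consecutive boundary points of the $m$th phase expansion; in fact I will get the much stronger bound $\|\mathcal{S}_j-\mathcal{S}_{j+1}\|<1$.

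Next I would invoke Theorem~\ref{area} in its contrapositive form, exactly as it is used in the Corollary immediately following it and in Proposition~\ref{bunchedup}: since $\bigl\|\int_{\mathcal{B}^m}f(t)\,dt\bigr\|<\delta$ with $\delta:=\delta(n)>0$ sufficiently small, Theorem~\ref{area} forces $\|\mathcal{S}_i-\mathcal{S}_{i+1}\|<\epsilon$ for $1\le i\le \#\mathcal{B}^m-1$, where $\epsilon$ can be made as small as we wish --- in particular $\epsilon<1$ --- by taking $\delta$ small enough (the factors $\#\mathcal{B}^m-1$, $\sqrt n$, $M$, $R$ appearing in the proof of Theorem~\ref{area} depend only on $n$ and the configuration). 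Thus the boundary points of $\mathcal{B}^m$ are bunched together, and in particular $\|\mathcal{S}_j-\mathcal{S}_{j+1}\|<1<\tfrac32$.

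Combining the two steps finishes the proof: for any consecutive pair $\mathcal{S}_j,\mathcal{S}_{j+1}\in\mathcal{B}^m$ the point $\mathcal{S}_{j+1}$ itself lies in $\mathcal{E}_1(\mathcal{S}_j)\cap\mathcal{E}_{1/2}(\mathcal{S}_{j+1})$, since $\|\mathcal{S}_{j+1}-\mathcal{S}_j\|<\epsilon<1$ and $\|\mathcal{S}_{j+1}-\mathcal{S}_{j+1}\|=0<\tfrac12$; hence this intersection is nonempty. The only point calling for care --- and the only real obstacle, a mild one --- is the interface with Theorem~\ref{area}: as literally stated it controls the minimal gap between consecutive boundary points, so to apply it to the particular pair $\mathcal{S}_j,\mathcal{S}_{j+1}$ named in the statement I would adopt the same reading already in force in the Corollary after Theorem~\ref{area} (small area $\Rightarrow$ all consecutive gaps small), or, if one prefers to be conservative, state the conclusion for the closest consecutive pair, which is all that the subsequent development actually uses. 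No estimate beyond Theorem~\ref{area} and the triangle inequality is required.
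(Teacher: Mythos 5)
Your proposal is correct and follows essentially the same route as the paper: both invoke Theorem \ref{area} (read in the direction that a small boundary integral forces $\|\mathcal{S}_i-\mathcal{S}_{i+1}\|<\epsilon$ for consecutive boundary points) and then finish with elementary ball geometry and the triangle inequality. The only cosmetic difference is that you exhibit $\mathcal{S}_{j+1}$ directly as a witness of the intersection, whereas the paper argues by contradiction from the assumption that the intersection is empty; your remark about the ``some'' versus ``all'' pairs reading of Theorem \ref{area} is a caveat the paper's own proof silently shares.
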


\begin{proof}
Let $f(x):=a_nx^n+a_{n-1}x^{n-1}+\cdots+a_1x+a_0\in \mathbb{R}[x]$ be a polynomial of degree $n$ and suppose that 
\begin{align}
\bigg |\bigg |\int \limits_{\substack{\mathcal{B}^m\\m<n}}f(t)dt\bigg |\bigg |<\delta\nonumber
\end{align}
for $\delta>0$ sufficiently small. By Theorem \ref{area}, we have
\begin{align}
||\mathcal{S}_i-\mathcal{S}_{i+1}||<\epsilon \nonumber
\end{align}
for $\epsilon>0$ sufficiently small for $\mathcal{S}_i, \mathcal{S}_{i+1}\in \mathcal{B}^m$ with $1\leq i\leq \# \mathcal{B}^m-1$. It follows that any two boundary points are sufficiently close to each other. The result follows from this fact. Suppose that $\mathcal{E}_{1}(\mathcal{S}_j)\bigcap\mathcal{E}_{\frac{1}{2}}(\mathcal{S}_{j+1})=\emptyset $, then for all $\mathcal{S}_a\in \mathcal{E}_{\frac{1}{2}}(\mathcal{S}_{j+1})$ we deduce $\mathcal{S}_a\not \in \mathcal{E}_{1}(\mathcal{S}_{j})$. This implies that $||\mathcal{S}_j-\mathcal{S}_a||\geq 1$. We deduce \begin{align}
\epsilon +||\mathcal{S}_{j+1}-\mathcal{S}_a||\geq 1\nonumber
\end{align}
and it follows that $\frac{1}{2}>||\mathcal{S}_{j+1}-\mathcal{S}_a||\geq 1-\epsilon$. This inequality cannot hold since $\epsilon>0$ is sufficiently small.
\end{proof}
\bigskip

Here, we prove that sub-expansions with an identical mass of an expansion occupying a small enough space by their boundaries should, in principle, have a substantially bigger mass on average compared to their mother expansion.

\begin{theorem}
Let $f(x)=a_nx^n+a_{n-1}x^{n-1}+\cdots+a_1x+a_0\in \mathbb{R}[x]$ be a polynomial of degree $n$. Let $(\gamma^{-1}\circ\beta\circ\nabla)^{m_2}(\mathcal{S}_f)<(\gamma^{-1}\circ\beta\circ\nabla)^{m_1}(\mathcal{S}_f)$ with $\mathcal{H}(\mathcal{S}_f^{m_1})\approx \mathcal{H}(\mathcal{S}_f^{m_2})$. If 
\begin{align}
\bigg|\bigg|\int \limits_{\substack{\mathcal{B}^{m_1}\\m_1<n}}f(t)dt\bigg|\bigg|<\bigg|\bigg|\int \limits_{\substack{\mathcal{B}^{m_2}\\m_2<n}}f(t)dt\bigg|\bigg|\nonumber
\end{align}
then 
\begin{align}
\frac{1}{\#\mathcal{B}^{m_2}}\sum \limits_{\mathcal{S}_a\in \mathcal{B}^{m_2}}||\mathcal{S}_a||>||\mathcal{S}_b||\nonumber
\end{align}
for some $\mathcal{S}_b\in \mathcal{B}^{m_1}$.
\end{theorem}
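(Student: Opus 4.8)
The plan is to argue by contradiction, playing the simple averaging identity for the masses against the dispersion estimate of Theorem~\ref{area} and the counting estimate of Proposition~\ref{bdecreasing}. Suppose the conclusion fails, so that
\begin{align}
\frac{1}{\#\mathcal{B}^{m_2}}\sum\limits_{\mathcal{S}_a\in\mathcal{B}^{m_2}}||\mathcal{S}_a||\le ||\mathcal{S}_b|| \qquad\text{for all }\mathcal{S}_b\in\mathcal{B}^{m_1},\nonumber
\end{align}
i.e. the average norm over $\mathcal{B}^{m_2}$ does not exceed $\min\{||\mathcal{S}_b||:\mathcal{S}_b\in\mathcal{B}^{m_1}\}$. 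Summing this inequality over the $\#\mathcal{B}^{m_1}$ boundary points of the $m_1$th phase and using the hypothesis $\mathcal{H}(\mathcal{S}_f^{m_1})\approx\mathcal{H}(\mathcal{S}_f^{m_2})$ yields
\begin{align}
\#\mathcal{B}^{m_1}\cdot\frac{\mathcal{H}(\mathcal{S}_f^{m_2})}{\#\mathcal{B}^{m_2}}\le\mathcal{H}(\mathcal{S}_f^{m_1})\approx\mathcal{H}(\mathcal{S}_f^{m_2}).\nonumber
\end{align}

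First I would unpack the hypothesis $(\gamma^{-1}\circ \beta \circ \gamma \circ \nabla)^{m_2}(\mathcal{S}_f)<(\gamma^{-1}\circ \beta \circ \gamma \circ \nabla)^{m_1}(\mathcal{S}_f)$ through Definition~\ref{sub}: being a proper sub-expansion forces $m_2<m_1$, whence Proposition~\ref{bdecreasing} gives the strict inequality $\#\mathcal{B}^{m_2}>\#\mathcal{B}^{m_1}$. This strict gap is what the rest of the argument must reconcile. Next I would bring in the integral inequality $\big|\big|\int_{\mathcal{B}^{m_1}}f(t)dt\big|\big|<\big|\big|\int_{\mathcal{B}^{m_2}}f(t)dt\big|\big|$. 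By Theorem~\ref{area} the norm of the special integral over a boundary controls the smallest consecutive spacing $||\mathcal{S}_i-\mathcal{S}_{i+1}||$ of its norm-ordered points; so the strictly smaller value on $\mathcal{B}^{m_1}$ confines the points of $\mathcal{B}^{m_1}$ to a narrow window, more tightly than on $\mathcal{B}^{m_2}$, and chaining the consecutive estimates around all $\#\mathcal{B}^{m_1}$ ordered points shows that every $||\mathcal{S}_b||$ with $\mathcal{S}_b\in\mathcal{B}^{m_1}$ agrees with the common scale $\mathcal{H}(\mathcal{S}_f^{m_1})/\#\mathcal{B}^{m_1}$ up to an error that is negligible in $n$.

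Substituting this near-uniformity of the $\mathcal{B}^{m_1}$-norms back into the displayed consequence of the contradiction hypothesis, and cancelling $\mathcal{H}(\mathcal{S}_f^{m_2})\approx\mathcal{H}(\mathcal{S}_f^{m_1})$, forces $\#\mathcal{B}^{m_1}$ and $\#\mathcal{B}^{m_2}$ into a relation incompatible, up to the negligible error, with the strict inequality $\#\mathcal{B}^{m_2}>\#\mathcal{B}^{m_1}$ established above; this absurdity is the intended end of the proof. As a cross-check and possible shortcut I would also run the integral inequality through Theorem~\ref{areasubgroup} (taking both integrals $<1$, as is implicit), which returns $(\gamma^{-1}\circ \beta \circ \gamma \circ \nabla)^{m_1}(\mathcal{S}_f)<(\gamma^{-1}\circ \beta \circ \gamma \circ \nabla)^{m_2}(\mathcal{S}_f)$; reconciling this with the hypothesised ordering collapses the situation to the degenerate case in which the two boundaries coincide, and there the averaging identity gives the claim directly.

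The hard part will be the quantitative bookkeeping in the second paragraph. Theorem~\ref{area} controls only the closest consecutive pair on a boundary, whereas what is really needed is control of the full spread of $\{||\mathcal{S}_b||:\mathcal{S}_b\in\mathcal{B}^{m_1}\}$, so one must verify that the error accumulated in chaining $\#\mathcal{B}^{m_1}$ consecutive estimates, together with the constants $C(n)$, $R$, $M$ and $|\cos\alpha|$ appearing in the proof of Theorem~\ref{area} and the slack hidden inside $\approx$, all stay strictly below the unit gap $\#\mathcal{B}^{m_2}-\#\mathcal{B}^{m_1}\ge 1$. I expect this to be the one genuinely delicate point; everything else is routine manipulation of the mass, the boundary counts, and the sub-expansion relation.
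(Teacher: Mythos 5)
Your averaging step is exactly the one the paper uses: assume $\frac{1}{\#\mathcal{B}^{m_2}}\sum_{\mathcal{S}_a\in\mathcal{B}^{m_2}}||\mathcal{S}_a||\leq ||\mathcal{S}_b||$ for every $\mathcal{S}_b\in\mathcal{B}^{m_1}$, sum over $\mathcal{B}^{m_1}$, and use $\mathcal{H}(\mathcal{S}_f^{m_1})\approx\mathcal{H}(\mathcal{S}_f^{m_2})$ to reduce everything to a comparison of $\#\mathcal{B}^{m_1}$ with $\#\mathcal{B}^{m_2}$. But you have resolved the sub-expansion hypothesis in the wrong direction, and the whole contradiction lives or dies on that sign. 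The paper's proof reads $(\gamma^{-1}\circ\beta\circ\gamma\circ\nabla)^{m_2}(\mathcal{S}_f)<(\gamma^{-1}\circ\beta\circ\gamma\circ\nabla)^{m_1}(\mathcal{S}_f)$ as saying $m_2=m_1+j$ with $j\geq 1$, i.e.\ the sub-expansion is the \emph{later} phase, so that the boundary count drops: $\#\mathcal{B}^{m_2}<\#\mathcal{B}^{m_1}$. Then your displayed inequality $\#\mathcal{B}^{m_1}\cdot\mathcal{H}(\mathcal{S}_f^{m_2})/\#\mathcal{B}^{m_2}\leq\mathcal{H}(\mathcal{S}_f^{m_1})\approx\mathcal{H}(\mathcal{S}_f^{m_2})$, i.e.\ $\#\mathcal{B}^{m_1}\lesssim\#\mathcal{B}^{m_2}$, immediately clashes with $\#\mathcal{B}^{m_2}<\#\mathcal{B}^{m_1}$ and the proof is over. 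With your reading ($m_2<m_1$, hence $\#\mathcal{B}^{m_2}>\#\mathcal{B}^{m_1}$), the same inequality $\#\mathcal{B}^{m_1}\lesssim\#\mathcal{B}^{m_2}$ is not a contradiction at all — it is \emph{implied} by the count inequality you established — so the argument never closes. (Admittedly Definition~\ref{sub} literally stipulates $m<n$ for $(\cdot)^m\leq(\cdot)^n$, so the blame for the ambiguity is partly the paper's; but the intended and workable convention is the one its proof uses.)

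The machinery you propose to rescue the wrong-direction version cannot do the job. Theorem~\ref{area} only tells you that a small boundary integral forces consecutive norm-ordered boundary points close together; chaining that into near-uniformity of all of $\{||\mathcal{S}_b||:\mathcal{S}_b\in\mathcal{B}^{m_1}\}$ makes the minimum of these norms approximately equal to their average, at which point your contradiction hypothesis becomes an approximate equality rather than an absurdity — no amount of bookkeeping of $C(n)$, $R$, $M$ turns that into a strict contradiction, because the decisive quantity is the ratio $\#\mathcal{B}^{m_1}/\#\mathcal{B}^{m_2}$ and nothing in the near-uniformity argument moves it. Your fallback through Theorem~\ref{areasubgroup} also does not apply: that theorem needs both integrals to be $<1$, which is not among the present hypotheses (only their relative ordering is assumed), and invoking it would make the hypotheses of the theorem self-contradictory rather than prove the conclusion. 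If you flip the direction of the sub-expansion relation to match the paper's usage, your first paragraph alone is the complete proof and the rest can be discarded.
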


\begin{proof}
Let $f(x)=a_nx^n+a_{n-1}x^{n-1}+\cdots+a_1x+a_0\in \mathbb{R}[x]$ be a polynomial of degree $n$ and suppose that $(\gamma^{-1}\circ\beta\circ\nabla)^{m_2}(\mathcal{S}_f)<(\gamma^{-1}\circ\beta\circ\nabla)^{m_1}(\mathcal{S}_f)$. It follows that $m_2=m_1+j$ for $j\geq 1$. That is, $m_2>m_1$. Since \begin{align}
\bigg|\bigg|\int \limits_{\substack{\mathcal{B}^{m_1}\\m_1<n}}f(t)dt\bigg|\bigg|<\bigg|\bigg|\int \limits_{\substack{\mathcal{B}^{m_2}\\m_2<n}}f(t)dt\bigg|\bigg|\nonumber
\end{align}
we get $||\mathcal{S}_i||\approx ||\mathcal{S}_{i+1}||$ for $1\leq i\leq \# \mathcal{B}^{m_1}-1$ and $||\mathcal{S}_j||\approx ||\mathcal{S}_{j+1}||$ for all $1\leq j \leq \# \mathcal{B}^{m_2}-1$. Suppose, on the contrary, that 
\begin{align}
\frac{1}{\#\mathcal{B}^{m_2}}\sum \limits_{\mathcal{S}_a\in \mathcal{B}^{m_2}}||\mathcal{S}_a||\leq ||\mathcal{S}_b||\nonumber
\end{align}
for all $\mathcal{S}_b\in \mathcal{B}^{m_1}$, then it implies \begin{align}
\frac{1}{\# \mathcal{B}^{m_2}}\sum \limits_{\mathcal{S}_a\in \mathcal{B}^{m_2}}||\mathcal{S}_a||\leq \frac{1}{\# \mathcal{B}^{m_1}}\sum \limits_{\mathcal{S}_b\in \mathcal{B}^{m_1}}||\mathcal{S}_b||\nonumber
\end{align}
if and only 
\begin{align}
\frac{\mathcal{H}(\mathcal{S}_f^{m_2})}{\# \mathcal{B}^{m_2}}\leq \frac{\mathcal{H}(\mathcal{S}_f^{m_1})}{\# \mathcal{B}^{m_1}}.\nonumber
\end{align}
This inequality cannot hold since $\mathcal{H}(\mathcal{S}_f^{m_2})\approx \mathcal{H}(\mathcal{S}_f^{m_1})$ and boundary points experience a sizable drop with expansion. This completes the proof.
\end{proof}
\bigskip

We can use these tools to study some statistics of polynomials.

\begin{theorem}
Let $f(x)=a_nx^n+a_{n-1}x^{n-1}+\cdots+a_1x+a_0$ and $g(x)=b_kx^k+b_{k-1}x^{k-1}+\cdots+b_1x+b_0$ be polynomials of degree $n$ and $k$, respectively. Let $\mathcal{H}(\mathcal{S}_f^{m})-\mathcal{H}(\mathcal{S}_g^{m})>1+\epsilon$ for any $\epsilon>0$ and $\mathcal{B}_g^{m}\cap \mathcal{B}_f^{m}\neq \emptyset$. If 
\begin{align}
\bigg|\bigg|\int \limits_{\substack{\mathcal{B}_f^{m}\\m<n}}f(t)dt\bigg|\bigg|<\delta <1 \quad \mathrm{and} \quad \bigg|\bigg|\int \limits_{\substack{\mathcal{B}_g^{m}\\m<k}}f(t)dt\bigg|\bigg|<\delta <1 \nonumber
\end{align}
then $\mathrm{deg}(f)>\mathrm{deg}(g)$.
\end{theorem}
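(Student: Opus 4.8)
The plan is to argue by contradiction: assume $\mathrm{deg}(f)\leq \mathrm{deg}(g)$, that is $n\leq k$, and derive a contradiction with the hypothesis $\mathcal{H}(\mathcal{S}_f^{m})-\mathcal{H}(\mathcal{S}_g^{m})>1+\epsilon$. The two hypotheses on the boundary integrals are exactly what lets us control the two masses. Since $||\int_{\mathcal{B}_f^m}f(t)dt||<\delta<1$ and $||\int_{\mathcal{B}_g^m}g(t)dt||<\delta<1$, Theorem \ref{area} forces $||\mathcal{S}_i-\mathcal{S}_{i+1}||<\eta$ for consecutive boundary points in each of $\mathcal{B}_f^m$ and $\mathcal{B}_g^m$, with $\eta>0$ as small as we wish; a repeated application of the triangle inequality then shows that all points of $\mathcal{B}_f^m$ (resp. of $\mathcal{B}_g^m$) have essentially the same norm.

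Next I would use the hypothesis $\mathcal{B}_g^{m}\cap \mathcal{B}_f^{m}\neq \emptyset$. Fixing a common point $\mathcal{S}_0$ in the intersection, the clustering just established gives that every boundary point of either expansion has norm approximately $||\mathcal{S}_0||=:r$. Hence, by the definition of the mass in Definition \ref{momentum}, $\mathcal{H}(\mathcal{S}_f^m)\approx r\cdot \#\mathcal{B}_f^m$ and $\mathcal{H}(\mathcal{S}_g^m)\approx r\cdot \#\mathcal{B}_g^m$, so that \begin{align}1+\epsilon<\mathcal{H}(\mathcal{S}_f^{m})-\mathcal{H}(\mathcal{S}_g^{m})\approx r\big(\#\mathcal{B}_f^m-\#\mathcal{B}_g^m\big).\nonumber\end{align} In particular $r>0$ (otherwise both masses are negligible and the difference cannot exceed $1+\epsilon$) and $\#\mathcal{B}_f^m>\#\mathcal{B}_g^m$.

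Finally I would invoke Proposition \ref{bdecreasing} together with the elementary fact that a polynomial of degree $d$ has at most $d$ roots: after $m$ phases of expansion the entries of the tuple representation of $f$ have degree $n-m$, so $\#\mathcal{B}_f^m\leq n-m$, and the analogous bound holds for $g$ with $k-m$. Under the standing assumption $n\leq k$ the number of boundary points of the $m$th phase expansion of $f$ cannot exceed that of $g$, i.e. $\#\mathcal{B}_f^m\leq \#\mathcal{B}_g^m$, which contradicts the strict inequality obtained in the previous paragraph. Therefore $n\leq k$ is impossible, and $\mathrm{deg}(f)>\mathrm{deg}(g)$.

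The step I expect to be the real obstacle is the comparison $\#\mathcal{B}_f^m\leq \#\mathcal{B}_g^m$ deduced from $n\leq k$: the degree only bounds the number of boundary points from above, so to make the comparison sharp one needs either the convention (used elsewhere in the paper) that each phase of expansion realizes the maximal number of boundary points available to it, or a finer count of $\#\mathcal{B}_g^m$ in terms of $k$ and $m$; I would make that genericity explicit. By contrast, the norm clustering and the passage to a common level $r$ are routine once Theorem \ref{area} and the nonempty intersection hypothesis are in hand.
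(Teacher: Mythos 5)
Your argument is essentially the paper's own proof run in the contrapositive: the paper likewise applies Theorem \ref{area} to each of the two integral hypotheses to force all points of $\mathcal{B}_f^m$ and of $\mathcal{B}_g^m$ onto (approximately) a single common norm level, uses $\mathcal{B}_f^m\cap\mathcal{B}_g^m\neq\emptyset$ to identify the two levels, and then reads the mass inequality $\mathcal{H}(\mathcal{S}_f^m)-\mathcal{H}(\mathcal{S}_g^m)>1+\epsilon$ as forcing $\#\mathcal{B}_f^m>\#\mathcal{B}_g^m$; up to your (surely intended) correction of the second integrand from $f$ to $g$, this part matches the paper step for step. The one place you diverge is the final implication, and the obstacle you flag there is real: the paper simply asserts ``we deduce that $n>k$'' from $\#\mathcal{B}_f^m>\#\mathcal{B}_g^m$, while you make explicit that this requires $n\le k\Rightarrow\#\mathcal{B}_f^m\le\#\mathcal{B}_g^m$, which does not follow from anything proved. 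Proposition \ref{bdecreasing} and the root-counting argument only give \emph{upper} bounds $\#\mathcal{B}_f^m\lesssim n-m$ and $\#\mathcal{B}_g^m\lesssim k-m$; a polynomial of degree $d$ can have far fewer than $d$ roots (especially over $\mathbb{R}$, where the boundary sets live), so a strict inequality between the counts of boundary points does not yield a strict inequality between degrees without the maximality or genericity convention you propose to add. That gap is present, unacknowledged, in the paper's proof as well, so your write-up is not weaker than the published argument --- it is the same argument with its missing step honestly labelled rather than elided.
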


\begin{proof}
Let $f(x)=a_nx^n+a_{n-1}x^{n-1}+\cdots+a_1x+a_0$ and $g(x)=b_kx^k+b_{k-1}x^{k-1}+\cdots+b_1x+b_0$ be polynomials of degree $n$ and $k$, respectively. Suppose that 
\begin{align}
\bigg|\bigg|\int \limits_{\substack{\mathcal{B}_f^{m}\\m<n}}f(t)dt\bigg|\bigg|<\delta <1 \quad \mathrm{and} \quad \bigg|\bigg|\int \limits_{\substack{\mathcal{B}_g^{m}\\m<k}}f(t)dt\bigg|\bigg|<\delta <1 \nonumber
\end{align}
then $||\mathcal{S}_i||\approx ||\mathcal{S}_{i+1}||$ for all $1\leq i\leq \#\mathcal{B}_f^m-1$ and $||\mathcal{S}_j||\approx ||\mathcal{S}_{j+1}||$ for all $1\leq j\leq \#\mathcal{B}_g^m-1$. Since $\mathcal{B}_g^{m}\cap \mathcal{B}_f^{m}\neq \emptyset$, it follows that $||\mathcal{S}_i||\approx ||\mathcal{S}_{i+1}||\approx ||\mathcal{S}_j||\approx ||\mathcal{S}_{j+1}||$ for all $1\leq i\leq \#\mathcal{B}_f^m-1$ and $1\leq j \leq \#\mathcal{B}_g^m-1$. Since $\mathcal{H}(\mathcal{S}_f^{m})-\mathcal{H}(\mathcal{S}_g^{m})>1+\epsilon$ for any $\epsilon>0$, it follows that $\#\mathcal{B}_f^m>\#\mathcal{B}_g^m$. We deduce $n>k$. This proves the claim.
\end{proof}

\subsection{Rotation of the boundary of expansion}

In this section, we introduce the concept of \emph{rotation} of the boundary of an expansion. 

\begin{definition}\label{Rotate}
Let $(\gamma^{-1}\circ\beta\circ\gamma\circ\nabla)^{m}(\mathcal{S})$ be an expansion with boundary $\mathcal{B}^m$. We say that the map $\Lambda$ is a \emph{rotation} of the boundary $\mathcal{B}^m$ if 
\begin{align}
\Lambda :\mathcal{B}^m\longrightarrow \mathcal{B}^m.\nonumber
\end{align} 
We say that an expansion admits a \emph{rotation} if there exists such a map. In other words, we say that the map $\Lambda$ induces a rotation on the expansion. We say that the boundary is \emph{stable} under rotation if $||\Lambda(\mathcal{S}_a)||\approx ||\mathcal{S}_a||$ for all $\mathcal{S}_a\in \mathcal{B}^m$. Otherwise, we say that it is \emph{unstable} under the rotation.
\end{definition}

\begin{proposition}\label{rotate2}
Let $f(x):=a_nx^n+\cdots +a_1x+a_0\in \mathbb{R}[x]$ be a polynomial of degree $n\geq 3$. Let $(\gamma^{-1}\circ\beta\circ\gamma\circ\nabla)^{m}(\mathcal{S}_f)$ be an expansion with boundary $\mathcal{B}^m$ that admits the rotation $\Lambda$. If 
\begin{align}
\bigg |\bigg |\int \limits_{\substack{\mathcal{B}^m\\m<n}}f(t)dt\bigg |\bigg |<1\nonumber
\end{align}
then the boundary $\mathcal{B}^m$ is stable.
\end{proposition}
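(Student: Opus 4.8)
The plan is to use Theorem \ref{area} to force the boundary points of the $m$th phase expansion to be closely packed, and then to transport this uniformity across the rotation $\Lambda$ using only the fact that $\Lambda$ maps $\mathcal{B}^m$ into itself.

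First I would feed the hypothesis $\left|\left|\int_{\substack{\mathcal{B}^m\\m<n}}f(t)dt\right|\right|<1$ into Theorem \ref{area}. Arguing by contraposition (exactly as in the corollary following that theorem), the failure of the lower bound $\left|\left|\int_{\mathcal{B}^m}f(t)dt\right|\right|>1$ forces $||\mathcal{S}_i-\mathcal{S}_{i+1}||<1$ for the consecutive (closest) pairs of boundary points $\mathcal{S}_i,\mathcal{S}_{i+1}\in\mathcal{Z}[(\gamma^{-1} \circ \beta \circ \gamma \circ \nabla)^{m}(\mathcal{S}_f)]$, for all $1\leq i\leq \#\mathcal{B}^m-1$. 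The next step is to upgrade this local spacing estimate to a global one: a repeated application of the triangle inequality along the chain of consecutive boundary points shows that $||\mathcal{S}_k||\approx ||\mathcal{S}_l||$ for every pair $\mathcal{S}_k,\mathcal{S}_l\in\mathcal{B}^m$, i.e. all the boundary points of the $m$th phase expansion are of comparable size.

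Then I would invoke Definition \ref{Rotate}. Since the expansion $(\gamma^{-1} \circ \beta \circ \gamma \circ \nabla)^{m}(\mathcal{S}_f)$ admits the rotation $\Lambda$, we have $\Lambda:\mathcal{B}^m\longrightarrow\mathcal{B}^m$, so for any $\mathcal{S}_a\in\mathcal{B}^m$ the image $\Lambda(\mathcal{S}_a)$ is again a point of $\mathcal{B}^m$. Applying the global comparability just established to the pair $\mathcal{S}_a$ and $\Lambda(\mathcal{S}_a)$ yields $||\Lambda(\mathcal{S}_a)||\approx ||\mathcal{S}_a||$ for all $\mathcal{S}_a\in\mathcal{B}^m$, which is precisely the defining condition for the boundary $\mathcal{B}^m$ to be stable under $\Lambda$, completing the proof.

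The only delicate point is the passage from the "closest pair" formulation of Theorem \ref{area} to a statement covering a spanning chain of consecutive boundary points; I would handle this by noting that, once the integral is below $1$, the bound $||\mathcal{S}_i-\mathcal{S}_{i+1}||<1$ holds for every nearest-neighbour pair (as is used in the corollary and in Theorem \ref{tool}), so the chain of triangle inequalities goes through. Quantifying the relation $\approx$ with an explicit error term depending on $n$ and $\#\mathcal{B}^m$ would require tracking the constants $M$, $R$ and $\#\mathcal{B}^m-1$ appearing in the proof of Theorem \ref{area}, but since the notion of stability in Definition \ref{Rotate} is itself stated in terms of $\approx$, this bookkeeping is not a genuine obstacle.
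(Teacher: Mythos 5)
Your proposal is correct and follows essentially the same route as the paper's own proof: apply Theorem \ref{area} to the hypothesis $\big|\big|\int_{\mathcal{B}^m}f(t)dt\big|\big|<1$ to conclude that all boundary points of the $m$th phase have comparable norm, and then use the fact that $\Lambda$ maps $\mathcal{B}^m$ into itself to get $||\Lambda(\mathcal{S}_a)||\approx ||\mathcal{S}_a||$. The only difference is that you make explicit the chaining of the nearest-neighbour estimates into a global comparability statement, a step the paper leaves implicit.
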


\begin{proof}
Let $f(x):=a_nx^n+\cdots +a_1x+a_0\in \mathbb{R}[x]$ be a polynomial of degree $n\geq 3$ and suppose that $(\gamma^{-1}\circ\beta\circ\gamma\circ\nabla)^{m}(\mathcal{S}_f)$ is an expansion with boundary $\mathcal{B}^m$ admitting the rotation $\Lambda$. Suppose also that 
\begin{align}
\bigg |\bigg |\int \limits_{\substack{\mathcal{B}^m\\m<n}}f(t)dt\bigg |\bigg |<1.\nonumber
\end{align}
By Theorem \ref{area}, we get  $||\mathcal{S}_j||\approx ||\mathcal{S}_{j+1}||$ for $1\leq j \leq \# \mathcal{B}^m-1$ with $\mathcal{S}_j, \mathcal{S}_{j+1}\in \mathcal{B}^m$. It follows that for the rotation $\Lambda:\mathcal{B}^m\longrightarrow \mathcal{B}^m$, we have that for any $\mathcal{S}_j\in \mathcal{B}^m$, then 
\begin{align}
\Lambda(\mathcal{S}_j)=\mathcal{S}_k\nonumber
\end{align}
for some $\mathcal{S}_k\in \mathcal{B}^m$. We deduce $||\Lambda(\mathcal{S}_j)||=||\mathcal{S}_k||\approx ||\mathcal{S}_j||$. This proves the stability.
\end{proof}

\section{Simple and compact expansions}\label{sec:simple and compact expansion}

In this section, we study a particular type of expansion. The main tool in the classification of these types of expansion is the concept of rotation of the boundary of an expansion. 

\begin{definition}
 Let $(\gamma^{-1}\circ\beta\circ\gamma\circ\nabla)^{m+1}(\mathcal{S})$ be an expansion with boundary $\mathcal{B}^{m+1}$. We say that the expansion is \emph{simple} if any rotation of $\mathcal{B}^{m+1}$ given by   $\Lambda:\mathcal{B}^{m+1}\longrightarrow \mathcal{B}^{m+1}$ is not a rotation of $\mathcal{B}^m$.
\end{definition}

\begin{definition}\label{compact}
Let $(\gamma^{-1}\circ\beta\circ\gamma\circ\nabla)^{m}(\mathcal{S})$ be an expansion and $\epsilon>0$ be small. We say that the expansion is \emph{compact} if there exist some $\mathcal{S}_l\in \mathcal{B}^{m+1}$ such that $\mathcal{S}_l\in \mathcal{E}_{\epsilon}(\mathcal{S}_j)$ for each $\mathcal{S}_j\in \mathcal{B}^{m}$ for all $1\leq m \leq \mathrm{deg}(\mathcal{S}_j)-1$. 
\end{definition}
\bigskip

Here, we prove that the mass of an expansion diminishes uniformly for compact expansions. In other words, the Sendov conjecture is true for compact expansions.

\begin{theorem}
If $(\gamma^{-1}\circ\beta\circ\gamma\circ\nabla)^{m}(\mathcal{S})$ is a compact expansion, then $\mathcal{H}(\mathcal{S}^{m+1})<\mathcal{H}(\mathcal{S}^{m})$ for all $1\leq m \leq \mathrm{deg}(\mathcal{S})-1$.
\end{theorem}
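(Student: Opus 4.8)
The plan is to argue by contradiction, converting the clustering forced by Definition \ref{compact} into a comparison of boundary cardinalities, and then invoking the strict decay of the number of boundary points established in Proposition \ref{bdecreasing}. Fix $m$ with $1\leq m \leq \deg(\mathcal{S})-1$. The extreme value $m=\deg(\mathcal{S})-1$ is disposed of separately at the end, so assume first that $1\leq m \leq \deg(\mathcal{S})-2$, and suppose, towards a contradiction, that $\mathcal{H}(\mathcal{S}^{m+1})\geq \mathcal{H}(\mathcal{S}^{m})$.

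First I would unpack compactness. By Definition \ref{compact} there is a point $\mathcal{S}_l\in \mathcal{B}^{m+1}$ with $\mathcal{S}_l\in \mathcal{E}_{\epsilon}(\mathcal{S}_j)$ for every $\mathcal{S}_j\in \mathcal{B}^{m}$ (Definition \ref{neighbourhood}), so the whole of $\mathcal{B}^m$ lies in the $\epsilon$-neighbourhood of the single point $\mathcal{S}_l$. Applying Definition \ref{compact} once more at level $m+1$ --- legitimate because $m+1\leq \deg(\mathcal{S})-1$ --- produces $\mathcal{S}_{l'}\in \mathcal{B}^{m+2}$ with $\mathcal{B}^{m+1}\subset \mathcal{E}_{\epsilon}(\mathcal{S}_{l'})$; since $\mathcal{S}_l\in \mathcal{B}^{m+1}$ this forces $||\mathcal{S}_l-\mathcal{S}_{l'}||<\epsilon$, and hence by the triangle inequality every point of $\mathcal{B}^{m}\cup \mathcal{B}^{m+1}$ lies within distance $2\epsilon$ of $\mathcal{S}_l$. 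Writing $\rho:=||\mathcal{S}_l||$, the reverse triangle inequality then shows that $||\mathcal{S}_k||$ differs from $\rho$ by less than $2\epsilon$ for every $\mathcal{S}_k\in \mathcal{B}^{m}\cup \mathcal{B}^{m+1}$, and summing over the two boundaries (using Definition \ref{momentum}) yields
\begin{align}
(\rho-2\epsilon)\,\#\mathcal{B}^{m}\;\leq\; \mathcal{H}(\mathcal{S}^{m})\;\leq\;(\rho+2\epsilon)\,\#\mathcal{B}^{m},\qquad (\rho-2\epsilon)\,\#\mathcal{B}^{m+1}\;\leq\; \mathcal{H}(\mathcal{S}^{m+1})\;\leq\;(\rho+2\epsilon)\,\#\mathcal{B}^{m+1}.\nonumber
\end{align}

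Now I would bring in Proposition \ref{bdecreasing}: from $m<m+1<\deg(\mathcal{S})$ we get $\#\mathcal{B}^{m}>\#\mathcal{B}^{m+1}$, so, these being integers, $\#\mathcal{B}^{m}\geq \#\mathcal{B}^{m+1}+1$. For $\epsilon$ small (so that $\rho-2\epsilon>0$) the contrary hypothesis $\mathcal{H}(\mathcal{S}^{m+1})\geq \mathcal{H}(\mathcal{S}^{m})$ combined with the displayed bounds gives $(\rho+2\epsilon)\,\#\mathcal{B}^{m+1}\geq (\rho-2\epsilon)(\#\mathcal{B}^{m+1}+1)$, i.e. $4\epsilon\,\#\mathcal{B}^{m+1}\geq \rho-2\epsilon$, which is impossible once $\epsilon$ is chosen small relative to $\rho$ and to the fixed finite number $\#\mathcal{B}^{m+1}$. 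This contradiction settles the range $1\leq m\leq \deg(\mathcal{S})-2$. For $m=\deg(\mathcal{S})-1$ one observes that $\mathcal{B}^{\deg(\mathcal{S})}$ is the boundary of the constant tuple $\mathcal{R}(\mathcal{S})$ and so contributes no positive mass, whereas $\mathcal{B}^{\deg(\mathcal{S})-1}$ is non-empty and carries positive mass, so the inequality is immediate there.

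The step I expect to be the real obstacle is the uniformity in $\epsilon$: the estimates above require $\epsilon$ to be small not absolutely but relative to $\rho=||\mathcal{S}_l||$ and to $\#\mathcal{B}^{m+1}$, and one must also ensure $\rho>0$, i.e. that the common cluster point is not the null tuple. Both can be secured because $\mathcal{S}$, and therefore all the finitely many cluster points and boundary cardinalities arising in the argument, are fixed before the small parameter $\epsilon$ of Definition \ref{compact} is invoked; making this quantifier order explicit is the one point that genuinely needs care, the remainder being the bookkeeping indicated above.
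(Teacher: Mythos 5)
Your proof is correct in substance and rests on the same two ingredients as the paper's own argument --- the clustering of boundary points forced by Definition \ref{compact} and the strict decrease of $\#\mathcal{B}^m$ from Proposition \ref{bdecreasing} --- but it supplies steps that the paper omits and that are genuinely needed. The paper applies compactness only at level $m$, obtaining $||\mathcal{S}_l-\mathcal{S}_j||<\epsilon$ for all $\mathcal{S}_j\in\mathcal{B}^m$, and then asserts the conclusion; this controls $\mathcal{H}(\mathcal{S}^{m})$ but gives no upper bound on $\mathcal{H}(\mathcal{S}^{m+1})$, since nothing is said about the norms of the points of $\mathcal{B}^{m+1}$ other than $\mathcal{S}_l$ itself. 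Your second application of compactness at level $m+1$, the resulting $2\epsilon$-clustering of $\mathcal{B}^m\cup\mathcal{B}^{m+1}$ about $\mathcal{S}_l$, and the integer inequality $\#\mathcal{B}^m\geq\#\mathcal{B}^{m+1}+1$ are precisely what close that gap, and the arithmetic $4\epsilon\,\#\mathcal{B}^{m+1}\geq\rho-2\epsilon$ is right. The reservations you flag are real, but they are defects of Definition \ref{compact} rather than of your reasoning: the definition leaves the quantifier on $\epsilon$ ambiguous (your argument needs licence to shrink $\epsilon$ after $\#\mathcal{B}^{m+1}$ and a positive lower bound for $\rho$ have been fixed), and $\rho>0$ fails exactly when every point of $\mathcal{B}^m$ is the null tuple, in which case both masses vanish and the strict inequality of the theorem is itself false, so some nondegeneracy assumption is unavoidable. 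One further point to fold into your endpoint discussion: at $m=\deg(\mathcal{S})-2$ the second application of compactness asks for a point of $\mathcal{B}^{\deg(\mathcal{S})}$, which is the boundary of the constant tuple $\mathcal{R}(\mathcal{S})$ and hence empty, so that level requires the same separate treatment you give to $m=\deg(\mathcal{S})-1$. None of these issues is confronted by the paper's two-line proof.
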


\begin{proof}
Suppose that $(\gamma^{-1}\circ\beta\circ\gamma\circ\nabla)^{m}(\mathcal{S})$ is a compact expansion. By definition \ref{compact}, we deduce that for some small $\epsilon>0$ there exist some $\mathcal{S}_l\in \mathcal{B}^{m+1}$ such that $\mathcal{S}_l\in \mathcal{E}_{\epsilon}(\mathcal{S}_j)$. It implies that 
\begin{align}
||\mathcal{S}_l-\mathcal{S}_j||<\epsilon\nonumber
\end{align}
for some small $\epsilon>0$ for all $\mathcal{S}_j\in \mathcal{B}^m$. Since the boundary points decrease with expansions, the claim follows immediately.
\end{proof}

\section{Further remarks and future works}

In this paper, we put a premium on inverse problems; in particular, inverse problems for higher--extremely higher--phase expansions, even though understanding higher phase inverse problems requires understanding the higher phase expansions. Simply put, we would require a nice formula that represents the $n$ copies of the recovery map $\Delta\circ\gamma^{-1}\circ\beta^{-1}\circ\gamma$. In other words, we write 
\begin{align}
(\Delta\circ\gamma^{-1}\circ\beta^{-1}\circ\gamma)^{n}=F\circ (\Delta\circ\gamma^{-1}\circ\beta^{-1}\circ \gamma)\nonumber
\end{align}
where $F$ is some smooth map depending on $n$? The theory developed is still open for further development. One area that remains underexplored is the study of the theory in the case the polynomial has several indeterminate, which one may consider as several variables \emph{expansivity theory}.

\bibliographystyle{amsplain}

\end{document}